\newcommand{\Z}{\mathbb{Z}}
\newcommand{\kf}{\mathbb{K}}
\newcommand{\C}{\mathbb{C}}
\newcommand{\N}{\mathbb{N}}
\newcommand{\U}{\mathcal{U}^+}
\newcommand{\He}{\mathcal{H}}
\theoremstyle{definition}
\newtheorem{thm}{Theorem}[section]
\newtheorem{cor}[thm]{Corollary}
\newtheorem{lem}[thm]{Lemma}
\newtheorem{prop}[thm]{Proposition}
\newtheorem{rem}[thm]{Remark}
\newtheorem{ex}[thm]{Example}
\newtheorem{defn}[thm]{Definition}
\numberwithin{equation}{section}
\begin{document}

\title{2-categorical affine symmetries of quantum enveloping algebras}
\author{Sam Qunell}

\begin{abstract}
We produce 2-representations of the positive part of affine quantum enveloping algebras on their finite-dimensional counterparts in type $A_n$. These 2-representations naturally extend the right-multiplication 2-representation of $U_q^+(\mathfrak{sl}_{n+1})$ on itself and are closely related to evaluation morphisms of quantum groups. We expect that our 2-representation exists in all simple types and show that the corresponding 1-representation exists in types $D_4$ and $C_2$. We also show that a certain quotient of our 1-representation in type $A_n$ is isomorphic to a prefundamental representation. We use this to provide a new proof of the prefundamental representation character formulas in these cases.
\end{abstract}
\maketitle

\tableofcontents


\section{Introduction}\label{Introduction}

We produce a 2-representation of $U_q^+(\hat{\mathfrak{sl}}_{n+1})$ on $U_q^+(\mathfrak{sl}_{n+1})$. These representations extend the natural right-multiplication 2-representation of $U_q^+(\mathfrak{sl}_{n+1})$ on itself and are given by formulas based on the quantum evaluation homomorphisms $U_q(\hat{\mathfrak{sl}}_{n+1})\twoheadrightarrow U_q(\mathfrak{gl}_{n+1})$. We expect that our 2-representation  can be defined outside of type $A_n$, which was a limitation of the quantum evaluation homomorphisms. To construct these 2-representations, we use the monoidal categorifications of quantum groups via KLR algebras introduced in \cite{khla} and \cite{2km}.

We prove several other interesting results along the way. Specifically, we construct surjective algebra homomorphisms including $U_q^+(\hat{\mathfrak{sl}}_2)\twoheadrightarrow U_q^+(\mathfrak{sl}_2\times \mathfrak{sl}_2)$. We also show that a certain quotient of $U_q^+(\mathfrak{sl}_{n+1})$ is isomorphic to one of the dual prefundamental representations of \cite{herjim}. We use this construction to provide a new proof of the character formulas described in \cite{charform} in these cases.

\subsection{Affine quantum groups}
Our initial motivation for producing this representation is the ``affinization" of the representation theory of $U_q(\mathfrak{g})$, i.e. exploring the relationship between the representation theory of $U_q(\mathfrak{g})$ and that of $U_q(\hat{\mathfrak{g}})$. The classification of generalized Cartan matrices shows that these algebras are closely related, and the loop space presentation of $\hat{\mathfrak{g}}$ makes a finer study of $U_q(\hat{\mathfrak{g}})$ possible. Chari and Pressley used this presentation to determine all of the finite-dimensional representations of $U_q(\hat{\mathfrak{g}})$ when $q$ is specialized to a transcendental complex number \cite{chapre}. For a given finite-dimensional representation of $U_q(\mathfrak{g})$, Chari introduced \cite{ch} a minimal affinization, which is roughly a representation of $U_q(\hat{\mathfrak{g}})$ that is as close to being an evaluation representation as possible. These minimal affinizations are now mostly classified.

Our 2-representation is related to evaluation homomorphisms of quantum enveloping algebras. The non-quantum evaluation homomorphism of enveloping algebras is straightforward. The affine Lie algebra $\hat{\mathfrak{g}}$ can be obtained from $\mathfrak{g}$ by extending scalars to $\C[z,z^{-1}]$, taking a central extension, and then adding a derivation. The evaluation homomorphism simply maps $z\rightarrow c$ for some fixed nonzero complex number $c$ and sends the central element and derivation to $0$. In terms of the Chevalley generators, the extra affine generator $E_0$ is mapped to a scalar multiple of the lowest weight root vector of $\mathfrak{g}$.

The quantum versions of these homomorphisms were introduced by Jimbo \cite{jimbo}. In the quantum evaluation morphism, the image of $E_0$ is $K * [F_n,[F_{n-1},\cdots[F_2,F_1]_q]_q]_q$, where $[A,B]_q=AB-qBA$ and $K\in U_q(\mathfrak{gl}_{n+1})$ is a semisimple element. Unfortunately, it is known that no such homomorphism splitting the obvious inclusion can exist outside of type $A_n$. The quantum evaluation homomorphism in type $A_2$ was categorified by Mackaay, Macpherson, and Vaz by taking mapping cones of certain morphisms in the KLR categorification of the full quantum group \cite{cat_eval}. This construction uses similar formulas to ours, although these two constructions do not appear to be directly related.

The representation we produce is not restricted to type $A_n$. In fact, it factors through a representation that is known to work in all symmetrizable Kac-Moody types. We use a renormalized $q$-boson algebra $B(\mathfrak{g})$ associated to a simple Lie algebra $\mathfrak{g}$. These are generated by $e_i$ and $f_i$ satisfying quantum Serre relations and $f_ie_j-q_i^{-C_{ij}}e_jf_i=\delta_{ij}/(1-q_i^2)$, where $C_{ij}$ is the associated Cartan matrix. The $q$-boson algebras were introduced by Kashiwara in \cite{kashi}, where they were also shown to act on $U_q^+(\mathfrak{g})$ as follows. The $e_i$ act by right multiplication by $E_i$, and the $f_i$ act by $E_i^*$, the adjoint to this right multiplication map under Lusztig's bilinear form $(*,*)_L$ satisfying $(E_i,E_j)_L=\delta_{ij}/(1-q_i^2)$. The relationship between the $q$-boson algebras and $U_q(\hat{\mathfrak{g}})$ has been investigated recently, for example in \cite{recentboson} and \cite{newkashiboson}.

In this paper, we focus on the case $\mathfrak{g}=\mathfrak{sl}_{n+1}$ due to the simpler structure of its positive root poset, although we show that our 1-representation works at least in types $D_4$ and $C_2$. We are primarily interested in the resulting action of $U_q^+(\hat{\mathfrak{g}})$ on $U_q^+(\mathfrak{g})$. It is known that the representation of $B(\mathfrak{g})$ on $U_q^+(\mathfrak{g})$ is faithful, see for example \cite{newkashiboson}. So, the existence of our representations of algebras can also be interpreted as the existence of algebra homomorphisms $U_q^+(\hat{\mathfrak{g}})\rightarrow B(\mathfrak{g})$. After this article was submitted, we constructed a categorification of $B(\mathfrak{g})$ in our paper \cite{meboson}. We expect that the results of the present paper can be interpreted as the existence of a monoidal functor between the homotopy categories of the catgorifications of $U_q^+(\hat{\mathfrak{g}})$ and $B(\mathfrak{g})$, although we do not prove this. The image of $E_0$ under such a functor may be related to the projective resolutions in \cite{bkm}.

For our desired action of $U_q^+(\hat{\mathfrak{sl}}_{n+1})$, we replace the $F_i$ in the above presentation of $E_0$ with the adjoint operators $E_i^*$. We will see that no extra semisimple element $K$ is needed for our action. As a result of the 2-representation we construct in this paper, we show that making $E_0$ act by $[E_n^*,[E_{n-1}^*,\dots [E_2^*,E_1^*]_q]_q]_q$ gives a well-defined representation of $U_q^+(\hat{\mathfrak{sl}}_{n+1})$. This representation extends to one of the affine Borel quantum group $U_q(\hat{\mathfrak{b}})$. We show also that a minor adjustment to the formula gives additional representations in type $A_n$ and also gives a representation in types $D_4$ and $C_2$.

The $\mathfrak{sl}_2$ case of our representation has also been studied by Hernandez in \cite{shifted}. Here, $U_q^+(\mathfrak{g})$ is viewed as an analogue of a Verma module for a $q$-oscillator algebra $U_{qos}(\mathfrak{sl}_2)$, which is an extension of $B(\mathfrak{sl}_2)$ by semisimple elements.  Bazhanov, Lukyanov, and Zamolodchikov observed in \cite{blz} that there is an evaluation homomorphism $U_q(\hat{\mathfrak{b}})\rightarrow U_{qos}(\mathfrak{sl}_2)$. It is easy to check that the resulting representation on $U_q^+(\mathfrak{sl}_2)$ cannot be extended to the full affine algebra $U_q(\hat{\mathfrak{sl}}_2)$, but it can be extended to a shifted quantum affine algebra. These shifted algebras were first defined by Finkelberg and Tsymbaliuk in \cite{ft} and studied further by Hernandez in \cite{shifted}. They are obtained by modifying Drinfeld's loop presentation for $U_q(\hat{\mathfrak{g}})$. In the $\mathfrak{sl}_2$ case, one can directly produce an evaluation homomorphism from the shifted affine algebra to a localized $q$-oscillator algebra analogous to the quantum evaluation homomorphism of $U_q(\hat{\mathfrak{sl}}_2)$. Similar morphisms were introduced in \cite{laxmat} for $\mathfrak{g}=\mathfrak{gl}_{n+1}$ using an RTT-type presentation.

While an earlier draft of this article was in preparation, we were informed that an essentially dual representation of $U_q(\hat{\mathfrak{b}})$ on $U_q^-(\mathfrak{sl}_{n+1})$ was constructed in \cite{unipotent}, and later generalized to other types in \cite{unipotent2}. The main results of these papers are new explicit constructions of prefundamental representations. These prefundamental representations were introduced in \cite{herjim} and are important in the representation theory of $U_q(\hat{\mathfrak{b}})$. For example, in an appropriate version of category $\mathcal{O}$ for $U_q(\hat{\mathfrak{b}})$, every simple module arises as a subquotient of tensor powers of the prefundamental and 1-dimensional representations. We will show that a quotient of $U_q^+(\mathfrak{sl}_{n+1})$ is isomorphic to a prefundamental representation, compared to the unipotent coordinate ring submodule of \cite{unipotent}. However, due to a few simplifying assumptions made in our case, we are able to prove slightly more. In \cite{unipotent}, the character formulas of \cite{charform} are needed to make the identification with the prefundamental representation. Instead, we prove the simplicity of the quotient directly, thereby giving a new proof of the character formulas. Similar constructions of the prefundamental representations as quotients of ``Verma-like" modules appear in \cite{neguţcato}, although these constructions are less explicit. These prefundamental representations were originally defined as limits of Kirillov-Reshetikhin modules, with the action of $U_q(\hat{\mathfrak{b}})$ given in terms of Drinfeld's loop generators. So, the results of \cite{unipotent} and this article are somewhat surprising in that the action of the Chevalley generators can be given explicitly for many prefundamental representations. It would be interesting to compare our construction more precisely to those of \cite{unipotent} and \cite{neguţcato}.

It seems likely that our results can be generalized to other types. Our most significant technical result, Theorem \ref{thm:newproj}, which produces an explicit basis for a key module, does not assume any particular type. Moreover, we use various technical tools from \cite{bkm} which are not limited to type $A_n$. Related representations of $U_q^+(\hat{\mathfrak{g}})$ in more general types appear in \cite{unipotent} and \cite{unipotent2}. Generalizing our constructions to all simple types is work in progress.

It is an open problem to find purely algebraic categorifications of the loop generators in an affine quantum group. Because of this, it is interesting to see that evaluation-like homomorphisms have natural categorifications via the Kac-Moody presentation.
\subsection{Techniques}
In order to construct our 2-representation, we use a few novel techniques. Firstly, we need to modify the category being acted upon in order to define the relevant functors. Typically, one studies categories of finitely-generated projective or finite-dimensional representations of KLR algebras. However, we will be using restriction to categorify the action of $E^*$, and restriction is not defined if we only consider finitely-generated modules. So, we expand our category to the category of left-bounded locally finite-dimensional graded projective modules over KLR algebras. 
 
 Another of our novel techniques is in how we categorify the deformed Lie brackets $[E_i^*,E_j^*]_q=E_i^*E_j^*-qE_j^*E_i^*$. Additive categories are generally not suitable for categorifying differences. Subtraction is often categorified by taking an appropriate mapping cone within the triangulated homotopy category, see for example \cite{laurent}. However, we find that this is not necessary for our representation, and so we can work with only additive categories. We categorify $[E_i^*,E_j^*]_q$ by taking the cokernel of a degree $1$ natural transformation from the functor corresponding to $E_j^*E_i^*$ to that of $E_i^*E_j^*$. These natural transformations can be viewed as multiplication by certain elements in the KLR algebras. The resulting functor $E_0$ can be viewed as tensoring via a specific bimodule whose properties are carefully investigated in this paper. Utilizing cokernels instead of mapping cones introduces several technical difficulties. Firstly, it needs to be shown that the bimodules we produce remain projective as left modules, and therefore tensoring gives a well-defined functor on our categories. Secondly, it needs to be shown that the functor we produce has the correct action on the Grothendieck group, i.e. that the maps we take cokernels of are really injective. An earlier version of this paper resolved both difficulties with an explicit basis theorem for the quotient bimodule, although we now use some techniques of \cite{bkm} following the advice of our anonymous reviewer. Our basis theorem is still used elsewhere in the paper, and we can now interpret it as providing a basis for a large class of KLR algebra modules including various standard modules of \cite{bkm}. The main tool used in our remaining calculations is the existence of least common multiples, or joins, in the symmetric group $S_k$. Since the KLR algebras are deformations of the group algebra for $S_k$, this tool allows us to control which deformed braid relations are applied during our computations.
 
 While it is generally difficult to explicitly produce representations of our monoidal categories, we find that the most naive choice of endofunctors and natural transformations satisfying the properties we need gives us a 2-representation of $U_q^+(\hat{\mathfrak{g}})$, which suggests that this representation is more than just happenstance.

\subsection{Organization}
This paper is organized as follows. In Section \ref{Background}, we review the basic theory of quantum groups and 2-representation theory as well as fix notational conventions. A few key motivating examples are given here.

In Section \ref{sec:general}, we construct the 2-representation in types $A_n$ for $n\geq 2$. Our $E_0$ functor is defined as tensoring by a bimodule that is constructed as a quotient of a certain projective module. The most difficult part of this section is producing an explicit basis for the corresponding quotient module, as this is needed for some of our more technical computations. This analysis is where working with other types becomes difficult, although our work strongly suggests that the results should hold in all types. We conclude this section with additional investigation of the decategorified representation.

In Section \ref{sec:sl2}, we construct our 2-representation in type $A_1$, i.e. of $U_q^+(\hat{\mathfrak{sl}}_2)$ on $U_q^+(\mathfrak{sl}_2)$. Originally, this 2-representation was discovered by Rouquier. Moreover, the corresponding 1-representation is the homomorphism to $U_{qos}(\mathfrak{sl}_2)$ of \cite{blz} composed with the action on $U_q^+(\mathfrak{sl}_2)$ of \cite{kashi}. The proofs and techniques in this section are, however, new. We treat this case separately because there are certain special adjustments that need to be made only in the $\mathfrak{sl}_2$ case. As a consequence of some of the results in this section, we deduce the existence of quotient maps of algebras including $U_q^+(\hat{\mathfrak{sl}}_2)\twoheadrightarrow U_q^+(\mathfrak{sl}_2\times \mathfrak{sl}_2)$. 

In Section \ref{sec:d4}, we discuss how to potentially generalize our construction in order to make it work outside of type $A_n$ and to give additional representations within type $A_n$. We produce a representation of $U_q^+(\hat{\mathfrak{so}}_8)$ on $U_q^+(\mathfrak{so}_8)$ and of $U_q^+(\hat{\mathfrak{sp}}_4)$ on $U_q^+(\mathfrak{sp}_4)$ generalizing our previous constructions. We also briefly discuss possible categorifications.

Finally, in Section \ref{sec:prefund}, we prove that a certain quotient of $U_q^+(\mathfrak{sl}_{n+1})$ is isomorphic to a dual prefundamental representation. We use this to provide new proofs of the character formulas in \cite{charform}, and we give the actions of the Kac-Moody generators of $U_q(\hat{\mathfrak{b}})$ explicitly.

\subsection*{Acknowledgments}
We thank Rapha\"el Rouquier for his guidance and feedback. We also thank David Hernandez for his interest and for introducing us to several of the references, including \cite{unipotent}. We thank the anonymous reviewer who had several useful suggestions, including the suggestion that the results of \cite{bkm} and \cite{affinizationklr} might be useful for simplifying our construction of the $E_0$ functor and $T_{00}$ natural transformation. We also thank Gurbir Dhillon for helpful conversations.

\section{Background}\label{Background}
\subsection{Quantum groups}
We review the relevant facts about quantum groups and Kac-Moody Lie algebras and fix notation. Standard references for this material are \cite{lusbook} and \cite{kacinf}.

\begin{defn}\label{quantum}
Let $\mathfrak{g}=\mathfrak{g}(C)$ be a complex Kac-Moody Lie algebra associated to the indecomposable generalized Cartan matrix $(C_{ij})_{i,j\in I}$, where $I$ is an indexing set. In this paper, we implicitly assume $C$ is symmetric for notational convenience. Denote by $\N$ the set of nonnegative integers. Let $q$ be an indeterminant, and for $0\leq k \leq n\in \N$, denote by $[n]_q:=(q^n-q^{-n})/(q-q^{-1})$, $[n]_q!:=[n]_q*[n-1]_q*\dots[2]_q*[1]_q$, and $\binom{n}{k}_q:=([n]_q!)/([k]_q![n-k]_q!)$. 
  We define the \emph{quantum universal enveloping algebra} of $\mathfrak{g}$ to be the associative algebra $U_q(\mathfrak{g})$ over $\C(q)$ generated by elements $\{E_i,F_i,K_i^{\pm 1}\}_{i\in I}$ with relations

\[K_iK_i^{-1}=K_i^{-1}K_i=1, K_iK_j=K_jK_i,\]
\[K_iE_j=q^{C_{ij}}E_jK_i, K_iF_j=q^{-C_{ij}}F_jK_i,\]
\[E_iF_j-F_jE_i=\delta_{ij}\frac{K_i-K_i^{-1}}{q-q^{-1}},\]
\[\sum_{k=0}^{1-C_{ij}}\binom{1-C_{ij}}{k}_q (-1)^kE_i^kE_jE_i^{1-C_{ij}-k} =0 \text{ for $i\neq j$},\]
\[\sum_{k=0}^{1-C_{ij}}\binom{1-C_{ij}}{k}_q (-1)^kF_i^kF_jF_i^{1-C_{ij}-k} =0 \text{ for $i\neq j$}.\]
\end{defn}

The last two of these relations are known as the \emph{quantum Serre relations}. Denote by $\N[I]$ the semigroup of all nonnegative formal linear combinations of $I$ elements. For $i\in I$, we denote by 
$\alpha_i$ its corresponding generator in $\N[I]$. The algebra $U_q(\mathfrak{g})$ has a natural $\Z[I]$-grading, with $gr(E_i)=-gr(F_i)=\alpha_i$, and $gr(K_i)=0$. This grading corresponds to a weight space decomposition of $U_q^+(\mathfrak{g})$, and all representations studied in this paper have a weight space decomposition.

\begin{defn}
The \emph{Dynkin diagram} of $C$ is the graph with vertices labelled by $I$, no simple edge loops, and $|C_{ij}|$ edges between any two distinct vertices $i$ and $j$. 
\end{defn}

In this article, we will only consider when the generalized Cartan matrix $C$ is finite type (invertible) or affine (corank 1). There is a canonical bijection between the isomorphism classes of generalized Cartan matrices in (untwisted) affine and finite types. In this bijection, the affine-type Dynkin diagram may be obtained from the corresponding finite-type Dynkin diagram by adding a unique extending vertex along with 1 or 2 edges adjacent to it. In case $C$ is finite type, we will henceforth denote its corresponding affine matrix as $\hat{C}$ and associated Kac-Moody algebra as $\hat{\mathfrak{g}}$. 
For $C$ finite type, we use $I=\{1,2,3,\dots n\}$, and for $\hat{C}$, we use $I=\{0,1,2,\dots n\}$, where 0 labels the unique extending vertex, and all other labellings identical to those in the Dynkin diagram for $C$. 

Denote by $U_q(\mathfrak{b})$, respectively $U_q^+(\mathfrak{g})$, the subalgebra of $U_q(\mathfrak{g})$ generated by the $E_i$ and the $K_i^{\pm 1}$, respectively just the $E_i$.

\begin{defn}
    On $U_q(\mathfrak{g})$ there is a $\C$-linear algebra involution $\bar{*}$ defined by $\bar{E_i}=E_i$, $\bar{F_i}=F_i$, $\bar{K_i}=K_i^{-1}$, and $\bar{q}=q^{-1}$. This involution can also be restricted either to $U_q(\mathfrak{b})$ or to $U_q^+(\mathfrak{g})$.
\end{defn}
\begin{defn}
    On $U_q^+(\mathfrak{g})$, there is a unique symmetric bilinear form $(*,*)_L$ defined by $(1,1)_L=1$, $(1,E_i)_L=0$, $(E_i,E_j)_L=\frac{\delta_{ij}}{1-q^2}$, and extended to the rest of $U_q^+(\mathfrak{g})$ via the Hopf pairing rule $(ab,c)_L=(a\otimes b,\Delta(c))_L$, for $\Delta$ the comultiplication of the standard twisted bialgebra structure. This is exactly the form defined on Lusztig's algebra $\mathbf{f}$, although our $q$ is his $v^{-1}$. This form is nondegenerate.
\end{defn} 
The intermediate algebra that facilitates our representation is the $q$-boson algebra analogue for $\mathfrak{g}$. We define a renormalized version.
\begin{defn}
    The \emph{$q$-boson} algebra associated to a  Cartan matrix $(C_{ij})_{i,j\in I}$ is the $\C(q)$-algebra $B(C)$ with generators $e_i,f_i$ for $i\in I$ satisfying quantum Serre relations and 
    \[f_ie_j-q^{-C_{ij}}e_jf_i=\frac{\delta_{ij}}{1-q^2}.\]
    We will also denote it as $B(\mathfrak{g})$ for $\mathfrak{g}=\mathfrak{g}(C)$.
\end{defn}
Kashiwara showed in \cite{kashi} that $B(\mathfrak{g})$ acts on $U_q^+(\mathfrak{g})$ via $e_i$ acting as right multiplication by $E_i$ and $f_i$ the corresponding adjoint $E_i^*$ under $(*,*)_L$, albeit with different choices of normalization.

\subsection{Symmetric groups}
We denote by $S_n$ the symmetric group on $n$ symbols, and for $\omega\in S_n$, we denote by $l(\omega)$ the length of any reduced presentation of this element.
\subsection{KLR algebras}
We review Khovanov-Lauda-Rouquier (KLR) algebras as well as certain categories of their representations. The generators and relations for these algebras will be exactly those for our 2-representation of $U_q^+(\hat{\mathfrak{g}})$. These algebras were originally defined by Khovanov, Lauda, and Rouquier in \cite{2km} and \cite{khla}, and all results of this subsection are from there. For a more introductory survey, see \cite{brundan}.

\begin{defn}\label{klr}
    
Let $\mathbb{K}$ be an algebraically closed field of characteristic 0. Let $\vec{Q}$ be a quiver, i.e., a finite directed graph without simple edge loops. Denote by $I$ the vertex set, and for $i,j\in I$, $m_{ij}$ the number of edges pointing from $i$ to $j$. We define $\N[I]$ as before. For $\alpha\in \N[I]$, denote by $I^\alpha$ the set of all words $i_1i_2\dots i_n$  for which all $i_k\in I$ and $ \alpha_{i_1}+\alpha_{i_2}+\dots \alpha_{i_n}=\alpha$. For $v\in I^\alpha$, denote by $v_i$ to be $i$'th letter of $v$. Denote also $|\alpha|:=|\sum_{i\in I} c_i*\alpha_i|=\sum_{i\in I} c_i$ where $c_i\in \N$. The symmetric group $S_{|\alpha|}$ has a transitive left action on $I^\alpha$ with the transposition $s_k$ acting on $v$ by swapping its $k$ and $k+1$ entries. For $i,j\in I$, define $Q_{ij}(u,v)\in \mathbb{K}[u,v]$ by \[Q_{ij}(u,v):=(1-\delta_{i,j})(v-u)^{m_{ij}}(u-v)^{m_{ji}}.\]

We define the \emph{KLR algebra} $H_{\alpha}(\vec{Q})$ to be the associative $\mathbb{K}$-algebra generated by elements $\{1_v\}_ {v\in I^\alpha}\cup \{x_i\}_{1\leq i\leq |\alpha|}\cup \{\tau_i\}_{1\leq i\leq |\alpha|-1}$ with relations
\begin{enumerate}
    \item $\text{The elements } 1_v \text{ are orthogonal idempotents whose sum is } 1$,

    \item $1_vx_i=x_i1_v,$
    
    \item $1_v\tau_i=\tau_i1_{s_i(v)}$

    \item $x_ix_j=x_jx_i$, 

    \item $(\tau_ix_j-x_{s_{i}(j)}\tau_i)1_v= \begin{cases}
    1_v \text{ if } j=i+1 \text{ and } v_i=v_{i+1}\\
    -1_v \text{ if } j=i \text{ and } v_i=v_{i+1}\\
    0 \text{ otherwise}
    \end{cases}$,

    \item $\tau_i\tau_j=\tau_j\tau_i$ if $|i-j|>1$,
    \item $\tau_i^21_v=Q_{v_i,v_{i+1}}(x_i,x_{i+1})1_v,$

    \item $(\tau_{i+1}\tau_i\tau_{i+1}-\tau_i\tau_{i+1}\tau_i)1_v=\delta_{v_i,v_{i+2}}\frac{Q_{v_{i},v_{i+1}}(x_{i+2},x_{i+1})-Q_{v_i,v_{i+1}}(x_{i},x_{i+1})}{x_{i+2}-x_i}1_v$.

\end{enumerate}

It can be shown that $Q_{v_i,v_{i+1}}(x_i,x_{i+1})-Q_{v_i,v_{i+1}}(x_{i+2},x_{i+1})$ is always divisible by $x_{i+2}-x_i$, and so the expression on the right-hand side of relation (8) is always a well-defined element of $H_\alpha(\vec{Q})$. The relations above will henceforth be called the \emph{KLR relations}. 

For $v\in I^\alpha$ and $w\in I^\beta$, we may write either $vv'$ or $v,v'$ for the concatenation $vv'\in I^{\alpha+\beta}$ given by $(vv')_i=v_i$ if $i\leq |\alpha|$ and $(vv')_i=v'_{i-|\alpha|}$ otherwise. If we have $w\in I^\beta$ for some $\beta \in \N[I]$ with $|\beta|<|\alpha|$, then we denote 
\[1_{*w}:=\sum_{\substack{v\in  I^{\alpha}\\ 
    v=w'w,\\
    w'\in I^{\alpha-\beta}}} 1_v\in H_\alpha(\vec{Q})
                    .\]
                    
We give $H_\alpha(\vec{Q})$ the structure of a graded ring where $\text{deg}(1_v)=0$, $\text{deg}(x_i)=2$, $\text{deg}(\tau_i1_v)=-2$ if $v_i=v_{i+1}$, and $\text{deg}(\tau_i1_v)=-m_{v_i,v_{i+1}}-m_{v_{i+1},v_i}$ if $v_i\neq v_{i+1}$.

These algebras are also called \emph{quiver Hecke algebras}. When $\vec{Q}$ is clear, we will simply write $H_\alpha$ in place of $H_\alpha(\vec{Q})$.
\end{defn}

The following basis theorem aids in later computations. 
\begin{thm}\label{pbw}
    Write $n:=|\alpha|$, and for each $\omega\in S_n$, fix a reduced presentation $\omega=s_{i_1}\dots s_{i_k}$. Denote $\tau_\omega=\tau_{i_1}\dots \tau_{i_k}$. Then the following set forms a basis for $H_\alpha(\vec{Q})$ as a free $\kf$-module.
    \[\{x_1^{l_1}\dots x_n^{l_n}\tau_\omega 1_v \vert l_i\in \N, \omega\in S_n, v\in I^\alpha\}.\]
    \end{thm}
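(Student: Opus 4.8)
The plan is to prove the two halves of the claim separately: that the listed elements span $H_\alpha(\vec{Q})$ over $\kf$, and that they are $\kf$-linearly independent. \textbf{Spanning.} Since $H_\alpha(\vec{Q})$ is generated by the $1_v$, $x_i$ and $\tau_i$ and $\sum_v 1_v = 1$, it suffices to rewrite an arbitrary word $g_1\cdots g_m 1_v$, with each $g_k$ one of the $x_i$ or $\tau_j$, into the claimed normal form $\sum c_{l,\omega,v}\, x_1^{l_1}\cdots x_n^{l_n}\tau_\omega 1_v$. I would induct on the number $r$ of $\tau$-factors, with a secondary induction on a statistic measuring how far to the right of the $\tau$-factors the $x$-factors sit. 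First, using relations (2), (4) and (5), move every $x_i$ to the left of every $\tau_j$: relation (5) contributes a main term with the same number of $\tau$'s (but the moved $x$ shifted one index to the left) plus a correction term equal to $\pm 1_v$, which has strictly fewer $\tau$'s and so is in normal form by the outer induction. This reduces us to $p(x)\,\tau_{j_1}\cdots\tau_{j_r}1_v$ with $p\in\kf[x_1,\dots,x_n]$. Now straighten the $\tau$-word: if $r > l(s_{j_1}\cdots s_{j_r})$ the word is non-reduced, and some sequence of commutations (relation (6), which is exact) and braid moves (relation (8), whose correction term is a polynomial in the $x_i$ with strictly fewer $\tau$'s) brings two adjacent indices to agree, so relation (7) replaces $\tau_i^2$ by a polynomial and lowers $r$; if $r = l(\omega)$ with $\omega := s_{j_1}\cdots s_{j_r}$, then by the Tits/Matsumoto theorem any two reduced words for $\omega$ are joined by braid and commutation moves, and relations (6) and (8) (with the divisibility remark after relation (8)) show each move changes $\tau_{j_1}\cdots\tau_{j_r}1_v$ only by terms with strictly fewer $\tau$'s, whence $\tau_{j_1}\cdots\tau_{j_r}1_v \equiv \tau_\omega 1_v$ modulo normal-form elements of lower $\tau$-degree. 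A last use of relation (5) pushes the remaining polynomials back to the left, finishing the induction.

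\textbf{Linear independence.} I would exhibit the faithful polynomial representation of \cite{khla} and \cite{2km}: on $\mathrm{Pol}_\alpha := \bigoplus_{v\in I^\alpha}\kf[x_1,\dots,x_n]\cdot 1_v$, let $1_v$ act by projection onto the $v$-summand, each $x_i$ by multiplication, and $\tau_i$ by the divided-difference operator $f 1_v \mapsto \tfrac{s_i f - f}{x_i - x_{i+1}}1_v$ when $v_i = v_{i+1}$, and by an appropriately $Q$-weighted twist $f 1_v \mapsto (\text{weight})\cdot(s_i f)\,1_{s_i v}$ when $v_i \neq v_{i+1}$, the weight being a monomial in $Q_{v_i,v_{i+1}}$, $Q_{v_{i+1},v_i}$ chosen so that relation (7) holds on both $1_v$ and $1_{s_i v}$. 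After checking that these operators satisfy relations (1)--(8), faithfulness on the listed elements follows, exactly as for the nilHecke algebra, from a triangularity argument: for fixed $v$, the operators $x_1^{l_1}\cdots x_n^{l_n}\tau_\omega 1_v$ act on $\kf[x_1,\dots,x_n]\cdot 1_v$ unitriangularly with respect to a suitable filtration — one feeds $\tau_\omega$ a test polynomial carrying enough degree for the divided-difference steps (e.g. a staircase monomial) and records which summand of $\mathrm{Pol}_\alpha$ the result occupies together with its leading monomial there, from which the triple $(v,(l_1,\dots,l_n),\omega)$ can be recovered. Hence the images of distinct listed elements in $\mathrm{End}(\mathrm{Pol}_\alpha)$ are $\kf$-linearly independent, and therefore so are those elements in $H_\alpha(\vec{Q})$. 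Combined with the spanning step, this proves Theorem \ref{pbw}.

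\textbf{Main obstacle.} The genuinely delicate points are entirely internal to the polynomial representation: verifying relation (8), which amounts to recognizing its right-hand side $\delta_{v_i,v_{i+2}}\tfrac{Q_{v_i,v_{i+1}}(x_{i+2},x_{i+1}) - Q_{v_i,v_{i+1}}(x_i,x_{i+1})}{x_{i+2}-x_i}$ as precisely the defect of the braid relation among the three corresponding $\tau$-operators, and choosing the $Q$-normalization in the $v_i \neq v_{i+1}$ case so that relation (7) holds on both relevant idempotents simultaneously; one must also set up the filtration in the triangularity step so that the distinguished summand and leading monomial are genuinely extremal. The spanning half, by contrast, is a routine if somewhat lengthy normal-form computation.
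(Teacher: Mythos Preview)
The paper does not give its own proof of this theorem; it is stated as a background result imported from \cite{khla} and \cite{2km}. Your proposal follows exactly the standard argument from those references---straightening into normal form for spanning, and the faithful polynomial representation (which the paper itself later invokes in Subsection~\ref{subsec:poly}) for linear independence---so it is correct and matches what the paper implicitly cites.
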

    
    We may refer to this theorem later as the PBW-basis theorem for KLR algebras and to the corresponding basis elements as PBW basis elements. This shows also that the $\{\tau_\omega1_v\vert \omega\in S_n, v\in I^\alpha\}$ form a basis for $H_\alpha(\vec{Q})$ as a $\kf[x_1,\dots x_n]$-algebra. Later, we will want to choose specific convenient reduced presentations for each $S_n$ element.
    
    If we have a fixed $\omega\in S_n$ with fixed reduced presentation $\omega=s_{i_1}\dots s_{i_k}$, then we will informally call $\tau_{i_1}\dots \tau_{i_k}\in H_\alpha(\vec{Q})$ the KLR algebra element \emph{associated} to this $\omega$ and presentation. We will also say that the $1_{\omega(v)}\tau_{i_1}\dots \tau_{i_k}1_v\in 1_{\omega(v)}H_\alpha(\vec{Q})1_v$ are associated for any $v\in I^\alpha$.

Here is one more property of the $\tau_i$ that we will use.
\begin{prop}\label{symcommute}
    The $\tau_i1_v$ commute with polynomials that are symmetric in $x_i$ and $x_{i+1}$.
\end{prop}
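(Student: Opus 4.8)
The plan is to reduce the claim to checking that $\tau_i 1_v$ commutes with a finite set of algebra generators of the ring of symmetric polynomials, and then to verify each such generator directly from relation~(5) of Definition~\ref{klr}.

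First I would fix $i$ and $v$ and observe that the centralizer $Z := \{\, a \in H_\alpha \mid a\,\tau_i 1_v = \tau_i 1_v\, a \,\}$ is a unital $\kf$-subalgebra of $H_\alpha$, since the set of elements commuting with a fixed element of any associative algebra is closed under addition and multiplication and contains $1$. So it suffices to produce a set of $\kf$-algebra generators of the ring $R^{s_i}$ of polynomials in $x_1,\dots,x_{|\alpha|}$ invariant under the transposition $x_i \leftrightarrow x_{i+1}$ and to check that each generator lies in $Z$. Since $\kf[x_i,x_{i+1}]^{S_2} = \kf[\,x_i+x_{i+1},\, x_i x_{i+1}\,]$, the ring $R^{s_i}$ is generated over $\kf$ by $\{x_j \mid j \neq i, i+1\}$ together with $e := x_i + x_{i+1}$ and $p := x_i x_{i+1}$.

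Next I would verify these generators commute with $\tau_i 1_v$. Writing $\delta := \delta_{v_i,v_{i+1}}$ and using relations (2), (3), (5), relation~(5) rewrites as $\tau_i x_i 1_v = x_{i+1}\tau_i 1_v - \delta 1_v$, $\tau_i x_{i+1} 1_v = x_i \tau_i 1_v + \delta 1_v$, and $\tau_i x_j 1_v = x_j \tau_i 1_v$ for $j\neq i,i+1$; the last identity (together with (2)--(3)) already gives $x_j \in Z$ for such $j$. Adding the first two identities shows $\tau_i e 1_v = e\,\tau_i 1_v$, so $e \in Z$. For $p$ one computes, sliding $1_v$ to the right with (2) whenever needed, $\tau_i x_i x_{i+1} 1_v = (\tau_i x_i 1_v)\,x_{i+1} = (x_{i+1}\tau_i 1_v - \delta 1_v)\,x_{i+1} = x_{i+1}(\tau_i x_{i+1} 1_v) - \delta x_{i+1} 1_v = x_{i+1}(x_i\tau_i 1_v + \delta 1_v) - \delta x_{i+1}1_v = x_i x_{i+1}\tau_i 1_v$, so $p \in Z$. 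As $Z$ is a subalgebra containing every generator of $R^{s_i}$, it contains $R^{s_i}$, which is the claim. There is no real obstacle here; the only point to watch is that relation~(5) is stated with the idempotent $1_v$ appearing on the right, so each rewriting step must keep $1_v$ in that position, which is the role of relations (2)--(3) in the computation above.
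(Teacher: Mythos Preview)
Your proof is correct. The paper does not actually supply a proof of this proposition: it is stated in the background section as a standard property of KLR algebras, presumably known from the original sources \cite{2km}, \cite{khla}. Your argument via reduction to the elementary symmetric generators $x_i+x_{i+1}$ and $x_ix_{i+1}$ is the natural one and matches how this fact is typically verified in the literature.
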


\subsection{Categories}
We describe the many types of categories that we will use. We also define some of the specific categories studied in this paper and review a key theorem from the literature.

All categories used in this paper will be $\kf$-linear. 

\begin{defn}
    A \emph{graded category}, or $\Z$-\emph{graded category}, is a category equipped with an autoequivalence $T$. 
\end{defn}
When our category is additive, this autoequivalence endows the Grothendieck group with the structure of a $\Z[q,q^{-1}]$ module, where $q[M]=[T(M)]$ and $q$ is an indeterminant. If, moreover, our category has a compatible monoidal structure, then the Grothendieck group will in fact have the structure of a $\Z[q,q^{-1}]$ algebra. In what follows, we will refer to morphisms $T^j(M)\rightarrow N$ as morphisms $M\rightarrow N$ of degree $j$. 
\begin{defn}
    Given an additive graded category $\mathcal{C}$, we can contract the grading to define a simpler additive category $\mathcal{C}-\text{ungr}$. The objects of $\mathcal{C}-\text{ungr}$ are the same as those of $\mathcal{C}$, and $\text{Hom}_{\mathcal{C}-\text{ungr}}(M,N)=\bigoplus_{j}\text{Hom}_{\mathcal{C}}(M,T^j(N))$, viewed only as an abelian group. In $\mathcal{C}-\text{ungr}$, we have that $T^j(M)\simeq M$ for any $j$.
\end{defn}

We now define one of the fundamental categories studied in this paper.
\begin{defn}\label{2cat}
    For $\vec{Q}$ a quiver with vertex set $I$, we define $\mathcal{U}^{'+}(\vec{Q})$ to be the strict monoidal additive $\kf$-linear category generated by objects $E_i$ and morphisms $X_i:E_i\rightarrow E_i$ and $T_{ij}:E_iE_j\rightarrow E_jE_i$, where $i\in I$. These morphisms satisfy the following relations. We denote the product $A\otimes B$ simply as $AB$, since we will eventually be viewing these objects as endofunctors of various categories. The identity morphism of $E_i$ will also be denoted as $E_i$. For $F$ an endomorphism of $M$ and $G$ and endomorphism of $N$, we denote $FG:=F\otimes G:MN\rightarrow MN$. We denote by $Q$ the same matrix associated to $\vec{Q}$ as in Definition \ref{klr}.
    \begin{enumerate}        
        \item $T_{ij}\circ X_iE_j-E_jX_i\circ T_{ij}=\delta_{ij}\text{id}_{E_iE_j},$
        \item 
        $T_{ij}\circ E_iX_j-X_jE_i\circ T_{ij}=-\delta_{ij}\text{id}_{E_iE_j}$,
        \item $T_{ij}\circ T_{ji}=Q_{ij}(E_jX_i,X_jE_i)$,
        \item $ T_{jk}E_i\circ E_jT_{ik}\circ T_{ij}E_k- E_kT_{ij}\circ T_{ik}E_j \circ E_iT_{jk}=\delta_{i,k}\frac{Q_{ij}(X_iE_j,E_iX_j)E_i-E_iQ_{ij}(E_jX_i,X_jE_i)}{X_iE_jE_i-E_iE_jX_i}$.
    \end{enumerate}
    The relations above will also be called the \emph{KLR relations}.
\end{defn}
One can show that there is an isomorphism of algebras $H_\alpha(\vec{Q})\xrightarrow{\sim} 
 End_{\mathcal{U}^{'+}}(\bigoplus_{v\in I^\alpha} E_{v_{|\alpha|}}\dots E_{v_1})$. Under this map, 
 \[1_v\rightarrow \text{id}_{E_{v_{|\alpha|}}\dots E_{v_1}},\]
 \[x_i1_v\rightarrow E_{v_{|\alpha|}}\dots E_{v_{i+1}}X_{v_i}E_{v_{i-1}}\dots E_{v_1},\]
 \[\tau_{i}1_v\rightarrow E_{v_|\alpha|}\dots E_{v_{i+2}}T_{v_{i+1},v_i}E_{v_{i-1}}\dots E_{v_1}.\]
 Note that the coordinates of $v$ are written left-to-right in $H_n(\vec{Q})$, but are written right-to-left on the right-hand side.

 We need to make a few adjustments to $\mathcal{U}^{'+}(\vec{Q})$ to ensure it has all of the properties we will need. Firstly, we would like this category to be idempotent closed. There are several idempotents in $H_\alpha(\vec{Q})$, but the associated morphisms in $\mathcal{U}^{'+}(\vec{Q})$ do not always have an image object.
 
 \begin{defn}
     We define $\mathcal{U}^+(\vec{Q}):=\mathcal{U}^{'+}(\vec{Q})^i$ is the idempotent (Karoubi) completion of $\mathcal{U}^{'+}(\vec{Q})$. The objects of this category are pairs $(M,e)$ where $M$ is an object of $\mathcal{U}'^+(\vec{Q})$ and $e$ is an idempotent endomorphism of $M$. The morphisms from $(M,e)$ to $(M',e')$ are morphisms $f:M\rightarrow M'$ such that $e'fe=f$. The pair $(M,e)$ should then be viewed as the ``image of $e$" as an object of $\mathcal{U}^{'+}(\vec{Q})^i$.
 \end{defn} Secondly, we would like this category to be $\Z$-graded. Since our morphisms are based on the KLR algebra, they come with a natural grading. We use that $\text{deg}(X_i)=\text{deg}(x_i)=2$ and $\text{deg}(T_{ij})=\text{deg}(\tau_11_{ij})=-C_{ij}$. We will formally add shifted versions of all objects. 
 
 \begin{defn}
     We define $\mathcal{U}_q^+(\vec{Q}):= \mathcal{U}^{'+}(\vec{Q})^i-\text{gr}$. The objects of this category are collections $\{M_j\}_{j\in \Z}$ where $M_j\in \mathcal{U}^{'+}(\vec{Q})^i$ and only finitely many of the $M_j$ are nonzero. Morphisms are given by \[\text{Hom}_{\mathcal{U}_q^+(\vec{Q})}(\{M_{j}\},\{N_j\})=\bigoplus_j \text{Hom}_{\mathcal{U}'^+(\vec{Q})^i}(M_j,N_j)_0,\] where we only allow degree zero maps on the right-hand side. The autoequivalence $T$ of $\mathcal{U}_q^+(\vec{Q})$ is the functor $T(\{M_j\})_k=M_{k-1}$ and obvious effect on morphisms. Note that $\U_q(\vec{Q})-\text{ungr}=\U(\vec{Q})$. 
 \end{defn}
 It is easier to study this category after showing it is equivalent to another category defined in terms of KLR algebras. First, some notation.

 \begin{defn}
     For $R=\bigoplus_{i\in \Z} R_i$ a graded ring with graded pieces $R_i$, denote by $R-\text{grmod}$ the category whose objects are graded finitely-generated (left) $R$-modules and morphisms are degree zero graded maps. This is a graded category with $T(\{M_i\})_j=M_{j-1}$. Similarly, we have a graded full subcategory $R-\text{grproj}$ the category of finitely generated (left) projective $R$-modules.  We will eventually consider also graded modules that are not necessarily finitely generated. The corresponding categories will be denoted $R-\text{grMod}$ and $R-\text{grProj}$ with otherwise the same notation. 
 \end{defn}
 In the cases we consider, $R_i=0$ for $i<<0$ and $\text{dim}_{\kf}(R_i)<\infty$, so if our module $\{M_i\}$ is finitely generated, we have $M_i=0$ for $i<<0$. Note that there is a faithful functor $(R-\text{grMod})-\text{ungr}\rightarrow R-\text{Mod}$ with $\{M_i\}\rightarrow \bigoplus_i M_i$. 
 \begin{defn}
      For $\alpha\in \N[I]$, let $\He^{fg}_\alpha:= H_\alpha-\text{grproj}$. Then define $\He^{fg}(\vec{Q})=\bigoplus_{\alpha\in \N[I]} \He^{fg}_\alpha$. This is clearly a graded additive category, and it can be given the structure of a monoidal category as follows. For any $\alpha, \beta\in \N[I]$, there is a canonical injective (non-unital) morphism of graded rings $H_{\alpha}\otimes_{\kf} H_{\beta}\hookrightarrow H_{\alpha+\beta}$ given by concatenation, i.e. $1_{v}\otimes_{\kf} 1_{v'}\rightarrow 1_{vv'}$. The corresponding induction functor can be converted into our desired bifunctor. We define the bifunctor $\otimes:H_{\alpha}-\text{grmod}\times H_{\beta}-\text{grmod}\rightarrow H_{\alpha+\beta}-\text{grmod}$ via $(M,N)\rightarrow H_{\alpha+\beta}\otimes_{\alpha,\beta}(M\otimes_{\kf} N)$, where the outermost tensor is over $H_{\alpha}\otimes_{\kf} H_{\beta}$-modules. One can show that products of projective modules remain projective, and therefore, these products can be restricted to $\He_\alpha^{fg}(\vec{Q})$. The unit object of the corresponding monoidal product on $\He^{fg}(\vec{Q})$ is $H_{0}(\vec{Q})= \kf\in \mathcal{H}_0^{fg}$. This product endows $K_0(\He^{fg}(\vec{Q}))$ with the structure of a $\Z[q,q^{-1}]$-algebra. 
 \end{defn}
 For any $v\in I^\alpha$, we have that $H_{\alpha}1_v$ is an object of $\He^{fg}_\alpha$. However, these are in general not indecomposable.
 
 There is an equivalence of additive monoidal graded categories $\mathcal{U}^+_q(\vec{Q})\rightarrow \He^{fg}(\vec{Q})$ sending $E_i$ to $H_{\alpha_i}(\vec{Q})\in \He^{fg}_{\alpha_i}$. Since $\mathcal{U}^+_q(\vec{Q})$ is defined in terms of generators and relations, there is a straightforward process to defining monoidal functors out of it. Since $\He^{fg}(\vec{Q})$ has very explicit objects, it is easier to define functors acting on it.

 The following result is the key reason why KLR algebras have become so prominent in categorical representation theory. 
 \begin{thm}[\cite{khla2}]\label{decat}
     Let $\vec{Q}$ be an orientation of the Dynkin diagram for generalized Cartan matrix $C$. There is an isomorphism of $\N[I]$-graded $\C(q)$-algebras $U_{q}^+(\mathfrak{g}(C))\simeq \C(q)\otimes_{\Z[q,q^{-1}]} K_0(\He^{fg}(\vec{Q}))$.
 \end{thm}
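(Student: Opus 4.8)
The plan is to follow the approach of Khovanov and Lauda \cite{khla2}: I would construct an explicit $\N[I]$-graded $\C(q)$-algebra homomorphism
$\gamma\colon U_q^+(\mathfrak{g}(C))\to \C(q)\otimes_{\Z[q,q^{-1}]}K_0(\He^{fg}(\vec{Q}))$
by sending the Chevalley generator $E_i$ to the class of the rank-one projective $H_{\alpha_i}(\vec{Q})=\kf[x_1]\in\He^{fg}_{\alpha_i}$, and then prove separately that $\gamma$ is surjective and injective. The multiplication on the target is the one induced by the induction product, so by construction $E_i$ (of degree $\alpha_i$) maps into $\C(q)\otimes K_0(\He^{fg}_{\alpha_i})$, making $\gamma$ automatically $\N[I]$-graded and $\C(q)$-linear; the only thing to check is that it is a well-defined ring map.

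Since $U_q^+(\mathfrak{g}(C))$ is presented by the $E_i$ subject only to the quantum Serre relations, defining $\gamma$ reduces to verifying that for $i\neq j$ the class $\sum_{k=0}^{1-C_{ij}}(-1)^k\binom{1-C_{ij}}{k}_q[H_{\alpha_i}]^{k}[H_{\alpha_j}][H_{\alpha_i}]^{1-C_{ij}-k}$ vanishes in $K_0$. For this I would use that the $n$-fold induction product $H_{\alpha_i}^{\otimes n}$ is isomorphic, as a left module, to the nilHecke algebra $H_{n\alpha_i}\cong\mathrm{Mat}_{n!}\big(\kf[x_1,\dots,x_n]^{S_n}\big)$, whose unique indecomposable projective $P_i^{(n)}$ satisfies $[H_{\alpha_i}]^n=[n]_q!\,[P_i^{(n)}]$; one then exhibits, for each $i\neq j$, an exact complex of graded projective $H_{(1-C_{ij})\alpha_i+\alpha_j}$-modules whose terms are grading shifts of the $P_i^{(k)}\otimes H_{\alpha_j}\otimes P_i^{(1-C_{ij}-k)}$, so that exactness forces the alternating sum of classes, hence the displayed Serre relation, to vanish. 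This is exactly the construction carried out in \cite{khla2}.

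Surjectivity is essentially formal. For $v=v_1\cdots v_n\in I^\alpha$ the projective $H_\alpha 1_v$ is the induction product $H_{\alpha_{v_n}}\otimes\cdots\otimes H_{\alpha_{v_1}}$, so $[H_\alpha 1_v]=\gamma(E_{v_n}\cdots E_{v_1})$ lies in the image of $\gamma$; and since $\bigoplus_{v\in I^\alpha}H_\alpha 1_v\cong H_\alpha$ is a projective generator, the classes $[H_\alpha 1_v]$ generate $K_0(\He^{fg}_\alpha)$ over $\Z[q,q^{-1}]$ — this follows from Krull--Schmidt together with the unitriangularity of the transition matrix to the indecomposable projectives established in \cite{khla2}. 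Hence $\gamma$ is onto.

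Injectivity is the real obstacle, and I would handle it by matching the Euler form of $\He^{fg}$ with Lusztig's form $(\ast,\ast)_L$. By the PBW basis theorem (Theorem \ref{pbw}) each $H_\alpha(\vec{Q})$ is bounded below in its grading with finite-dimensional graded pieces, so for finitely generated projectives $P=H_\alpha e$ and $P'=H_\alpha e'$ the graded space $\operatorname{Hom}_{H_\alpha}(P,P')=e'H_\alpha e$ has a well-defined graded dimension, and $\langle[P],[P']\rangle:=\operatorname{gdim}_{\kf}\operatorname{Hom}_{H_\alpha}(P,P')$ descends to a bilinear pairing on $K_0(\He^{fg}_\alpha)$ (short exact sequences of projectives split). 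One computes $\langle[H_{\alpha_i}],[H_{\alpha_j}]\rangle=\delta_{ij}\operatorname{gdim}\kf[x_1]=\delta_{ij}/(1-q^2)$, matching $(E_i,E_j)_L$, and the adjunction between the induction and restriction functors yields $\langle xy,z\rangle=\langle x\otimes y,r(z)\rangle$ where $r\colon K_0\to K_0\otimes K_0$ is induced by restriction; since restriction realizes the twisted comultiplication $\Delta$, this is precisely the Hopf-pairing rule defining $(\ast,\ast)_L$. Consequently $\langle\gamma(x),\gamma(y)\rangle=(x,y)_L$ up to the harmless bar-involution $q\mapsto q^{-1}$. As $(\ast,\ast)_L$ is nondegenerate on each weight space of $U_q^+(\mathfrak{g}(C))$ (standard for symmetrizable Kac--Moody, \cite{lusbook}) and $\gamma$ is surjective, $\gamma(x)=0$ would force $(x,y)_L=0$ for all $y$, hence $x=0$. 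Therefore $\gamma$ is bijective and so an isomorphism of $\N[I]$-graded $\C(q)$-algebras. The two technical linchpins, both supplied by \cite{khla2}, are the exactness of the Serre complexes and the identification of restriction on Grothendieck groups with $\Delta$ in the correct normalization.
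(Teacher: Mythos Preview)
The paper does not prove this theorem; it is stated with the citation \cite{khla2} and used as a black box (the only comment following it is that $E_i\mapsto[H_{\alpha_i}]$ under the isomorphism). So there is no ``paper's own proof'' to compare against. Your proposal is a faithful outline of the original Khovanov--Lauda argument from the cited reference: define $\gamma$ on generators, verify the quantum Serre relations via the divided-power projectives and the exact Serre complexes, obtain surjectivity from the $H_\alpha 1_v$ generating $K_0$, and deduce injectivity by identifying the graded Hom-pairing with Lusztig's form and using its nondegeneracy. That is exactly the standard proof, and your sketch is correct.
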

Under this isomorphism, $E_i\rightarrow [H_{\alpha_i}]$. We henceforth use the symbol $E_i$ to refer to both the element $E_i\in U_q^+(\mathfrak{g}(C))$ and the object $E_i\in \mathcal{U}_q^+(\vec{Q})$ when no confusion is possible. The self-dual indecomposable projectives of $\He^{fg}(\vec{Q})$ for the duality described below correspond to elements of Lusztig's canonical basis for $U_q^+(\mathfrak{g})$ \cite{qha,vv}. One can also deduce that by ignoring the grading, we obtain an isomorphism with the classical enveloping algebra, i.e. $U^+(\mathfrak{g})\simeq \C\otimes_\Z K_0(\mathcal{U}(\vec{Q}))$.

Some of the other classical structures on $U_q^+(\mathfrak{g})$ can be given categorical meaning.
\begin{defn}
    For $M=\{M_i\}_{i\in \Z}$ a $\Z$-graded $\kf$-vector space with each $\text{dim}_{\kf}(M_i)<\infty$, we define the \emph{graded dimension} of $M$, denoted $\text{grdim}(M)$, as $\sum_{i\in \Z}\text{dim}_\kf(M_i)q^i$. If we assume that all of our graded vector spaces satisfy $M_i=0$ for $i<<0$, then the graded dimension takes values in $\Z[[q]][q^{-1}]$, the ring of formal integral Laurent series in the variable $q$. Note that $\text{grdim}(T(M))=q\cdot\text{grdim}(M)$.
\end{defn}
\begin{defn}
     For $P,Q\in \He^{fg}_\alpha$, we denote by $\text{Hom}_{H_\alpha}^\bullet (P,Q)$ the graded $\kf$-vector whose degree $i$ component is the group of degree $i$ morphisms from $P$ to $Q$. This vector space gives us a duality on $\He^{fg}_\alpha$ with $P\rightarrow P^{\#}:=\text{Hom}_{H_\alpha}^\bullet (P,H_\alpha)$. Here, the left $H_\alpha$-module structure comes from twisting with a diagram-reversal  antiinvolution of $H_\alpha$.  Note that $H_\alpha$ and its summands are self-dual, but $T(H_\alpha)^\#=T^{-1}(H_\alpha)$. On $K_0$, this descends to a duality $v\rightarrow \bar{v}$ exchanging $q$ and $q^{-1}$.
\end{defn}
\begin{defn}\label{form}
    There is a $\Z[[q]][q^{-1}]$-valued, $\Z$-bilinear form $(*,*)$ on $K_0(\He^{fg}_\alpha)$ defined by \[([P],[Q])=\text{grdim}(\text{Hom}_{H_\alpha}^\bullet(P,Q)).\] It is $q$-semilinear in the sense that $([P],[T(Q)])=([T^{-1}(P)],[Q])=q\cdot([P],[Q])$. We can extend this form to all of $K_0(\He^{fg})$ by enforcing that if $P\in \He^{fg}_\alpha$ and $Q\in \He^{fg}_\beta$ for $\alpha\neq \beta$ then $([P],[Q])=0$.
\end{defn}

This is  the form $(*,*)'$ studied in \cite{khla}. On $U_q^+(\mathfrak{g})$, the symmetric form $(\overline{*},*)$ can be identified with $(*,*)_L$.  

In the remainder of this paper, we generally drop $\vec{Q}$ from our categories when no ambiguity is possible. 
\subsection{2-representations}
We describe the data of a 2-representation of $U_q^+(\mathfrak{g})$ and give a motivating example.

\begin{defn}\label{2rep}
    We define a \emph{2-representation} of $U_q^+(\mathfrak{g})$, or of $\mathcal{U}_q^+(\vec{Q})$ for $\vec{Q}$ an orientation of the Dynkin diagram for $\mathfrak{g}$, is a strict monoidal additive graded functor $\rho:\mathcal{U}_q^+(\vec{Q})\rightarrow End_{\oplus,\Z}(\mathcal{C})$, where $\mathcal{C}$ is some additive graded category. This is the data of an endofunctor of $\mathcal{C}$ for each object of $\mathcal{U}_q^+(\vec{Q})$ along with natural transformations $X_i\in End(\rho(E_i))$ and $T_{ij}\in End(\rho(E_iE_j))$ that satisfy the KLR algebra relations in Definition \ref{2cat}. There are also equalities $\rho(X\oplus Y)\simeq \rho(X)\oplus \rho(Y)$ and $\rho(XY)=\rho(X)\circ \rho(Y)$. For compatibility with the grading, we require $\rho(T(X))=T\circ \rho(X)$. The 2-representations we consider in this paper will also be \emph{weight} 2-representations, i.e., $\mathcal{C}=\bigoplus_{\alpha\in \N[I]} \mathcal{C}_\alpha$ for $\mathcal{C}_\alpha$ additive categories, and each $\rho(E_i)$ is a collection of functors $\rho(E_i)_\alpha:\mathcal{C}_\alpha\rightarrow \mathcal{C}_{\alpha+\alpha_i}$. We can similarly define a 2-representation of $U^+(\mathfrak{g})$ on an additive category $\mathcal{D}$ as a strict monoidal functor $\rho:\mathcal{U}^+(\vec{Q})\rightarrow End_\oplus(\mathcal{D})$. In what follows, we will generally omit the $\rho$ and identify $E_i\in \mathcal{U}_q^+(\vec{Q})$ with its corresponding endofunctor of $\mathcal{C}$. 
\end{defn}
Producing a 2-representation of $U_q^+(\mathfrak{g})$ on an additive graded category $\mathcal{C}$ is slightly more difficult than doing so for $U^+(\mathfrak{g})$ since the natural transformations $X_i\in End(\rho(E_i))$ and $T_{ij}\in End(\rho(E_iE_j))$ must now have the correct grading. Given a 2-representation of $U_q^+(\mathfrak{g})$, we obtain a representation of $\C(q)$ algebras of $U_q^+(\mathfrak{g})$ on $\C(q)\otimes K_0(\mathcal{C})$.

The following example is the basis for our later constructions.
\begin{ex}\label{leftmult}
    The right multiplication representation of $U_q^+(\mathfrak{g})$ is naturally categorified. Fix again an orientation $\vec{Q}$ for the Dynkin diagram for $\mathfrak{g}$. For $M$ a right $H_\alpha$-module and $N$ a left $H_\alpha$-module, denote by $M \otimes_\alpha N$ the corresponding $H_\alpha$-bilinear tensor product. Recall the monoidal structure on $\He^{fg}(\vec{Q})$. For any $N\in \He^{fg}_\alpha$, the monoidal product $N\otimes H_{\alpha_i}$ is therefore identified with $H_{\alpha+\alpha_i}1_{*i}\otimes_{\alpha} N$. Here, we are viewing $H_{\alpha+\alpha_1}1_{*i}$ as a unital $(H_{\alpha+\alpha_i},H_\alpha)$-bimodule. We therefore define $E_{i,\alpha}:\He^{fg}_\alpha\rightarrow \He^{fg}_{\alpha+\alpha_i}$ to be the functor $H_{\alpha+\alpha_i}1_{*i}\otimes_\alpha$, and define $E_i:\He^{fg}(\vec{Q})\rightarrow \He^{fg}(\vec{Q})$ to be the sum over all $E_{i,\alpha}$. We can also make the identification $E_iE_j=H_{\alpha+\alpha_i+\alpha_j}1_{*ji}\otimes_{\alpha}$. We have natural transformations 
    \[X_i:E_i\rightarrow E_i,\]  \[a\otimes_\alpha b\rightarrow ax_{|\alpha|+1}\otimes_\alpha b\] and 
    \[T_{ij}:E_iE_j\rightarrow E_jE_i,\]
    \[a\otimes_\alpha b\rightarrow a\tau_{|\alpha|+1}\otimes_\alpha b.\]
    
\end{ex}
\section{The \texorpdfstring{$\mathfrak{sl}_{n+1}$}{sl n+1} case}\label{sec:general}

\subsection{Topological \texorpdfstring{$K_0$}{K0} \& adjoint functors}\label{targets}
We introduce the categories that will be acted upon. They are made larger than those ordinarily studied so that our functors are well-defined. 

We fix $n \geq 2$. We orient the Dynkin diagram for $\mathfrak{sl}_{n+1}$ by choosing the right-facing orientation, i.e. there are only arrows of the form $i\rightarrow i+1$. To orient the Dynkin diagram for $\hat{\mathfrak{sl}}_{n+1}$, the additional edges will be oriented $0\rightarrow n$ and $0\rightarrow 1$. These orientations are displayed in Table \ref{orientpic2}. With these orientations fixed, we omit the orientation in all categories and algebras in this section. 
\begin{table}[ht]
\begin{center}
\begin{tikzcd}
	&& 0 \\
	\\
	1 & 2 & {\dots } & {n-1} & n
	\arrow[from=1-3, to=3-1]
	\arrow[from=1-3, to=3-5]
	\arrow[from=3-1, to=3-2]
	\arrow[from=3-2, to=3-3]
	\arrow[from=3-3, to=3-4]
	\arrow[from=3-4, to=3-5]
\end{tikzcd}
\end{center}
\caption{Dynkin diagram orientation in this section}
\label{orientpic2}
\end{table}

 We want to define a 2-representation of $U_q^+(\hat{\mathfrak{sl}}_{n+1})$ on $\He^{fg}(\mathfrak{sl}_{n+1})$ extending the ``right multiplication" 2-representation of Example \ref{leftmult}. As a first step, we want to categorify the adjoints of right multiplication by $E_i$. Since $E_i$ can be viewed as an induction functor $H_{\alpha+\alpha_i} 1_{*i}\otimes_{\alpha}$, we may try to define $E_i^*$ as the corresponding restriction functor $1_{*i}H_{\alpha+\alpha_i}\otimes_{\alpha+\alpha_i}$. While $1_{*i}H_{\alpha+\alpha_i}$ is indeed projective as a left $H_\alpha$ module, it is not finitely generated over $H_\alpha$, so this functor is unfortunately not well-defined on $\He^{fg}$. There is an easy fix for this, however. 

\begin{defn}
 We enlarge each $\He^{fg}_\alpha(\mathfrak{sl}_{n+1})$ by defining $\mathcal{H}_\alpha$ as the full subcategory of $H_\alpha(\mathfrak{sl}_{n+1})-\text{grProj}$ containing objects $M$ for which $M_i=0$ for $i << 0$ (left-bounded) and $\text{dim}_\kf(M_i)<\infty$ for all $i$ (locally finite-dimensional). We then also enlarge $\He^{fg}$ by defining $\mathcal{H}:=\bigoplus_{\alpha\in \N[I]} \mathcal{H}_\alpha$.
\end{defn}

Extra caution much be taken in studying such a category since certain infinite direct sum decompositions are now possible, and this will collapse the ungraded split Grothendieck group. The correct notion of Grothendieck group to use is the \emph{topological} split Grothendieck group, which we also denote by $K_0$. Naisse and Vaz developed the theory of these groups in \cite{nava}. The topological split Grothendieck group of $\mathcal{H}$ carries the structure of a topological $\Z[[q]][q^{-1}]$ module, which essentially says that sums like $(1+q+q^2+q^3+\dots )[M]=[N]$ make sense. The following theorem shows that $\He$ has not lost any Lie-theoretic information. Let $\C((q))$ be the ring of formal complex Laurent series in the variable $q$. 

\begin{thm}[\cite{nava} Theorem 5.13]
    For $C$ a generalized Cartan matrix and $\vec{Q}$ an orientation of its Dynkin diagram, $\C((q))\otimes_{\Z[[q]][q^{-1}]} K_0(\He(\vec{Q}))\simeq \C((q))\otimes_{\C(q)} U_q^+(\mathfrak{g}(C))$.
\end{thm}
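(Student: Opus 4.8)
The plan is to reduce the statement to the ungraded-at-the-generators description of $U_q^+(\mathfrak{g})$ already available from Theorem \ref{decat}, and then to control the effect of enlarging $\He^{fg}$ to $\He$ at the level of (topological) Grothendieck groups. First I would recall that by Theorem \ref{decat} we have an isomorphism of $\N[I]$-graded $\C(q)$-algebras $U_q^+(\mathfrak{g}(C))\simeq \C(q)\otimes_{\Z[q,q^{-1}]}K_0(\He^{fg}(\vec Q))$, and that in each fixed weight $\alpha$ the category $\He^{fg}_\alpha$ is a Krull--Schmidt category with finitely many indecomposables up to grading shift, these being exactly the self-dual indecomposable projectives $P_b$ indexed by Lusztig's canonical basis in weight $\alpha$. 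Thus $K_0(\He^{fg}_\alpha)$ is a free $\Z[q,q^{-1}]$-module on the classes $[P_b]$, and it suffices to understand how these classes behave after passing to $\He_\alpha$.

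The key step is the comparison $K_0(\He^{fg}_\alpha)\otimes_{\Z[q,q^{-1}]}\Z[[q]][q^{-1}]\xrightarrow{\ \sim\ }K_0(\He_\alpha)$, which is precisely (the weight-$\alpha$ component of) Naisse--Vaz's Theorem 5.13 as cited; I would quote it rather than reprove it. Concretely, every object of $\He_\alpha$ is a left-bounded, locally finite-dimensional graded projective $H_\alpha$-module, and since $H_\alpha$ is positively graded with finite-dimensional graded pieces and only finitely many indecomposable projectives $P_b$ up to shift, such a module decomposes as a (possibly infinite, but grading-locally-finite) direct sum $\bigoplus_b\bigoplus_{j} P_b\langle j\rangle^{\oplus m_{b,j}}$ with $m_{b,j}=0$ for $j\ll 0$; this is exactly what lets one write $[M]=\sum_b f_b(q)[P_b]$ with $f_b(q)\in\Z[[q]][q^{-1}]$, giving surjectivity, and the left-boundedness plus local finite-dimensionality give well-definedness and injectivity of the coefficient extraction, hence that $K_0(\He_\alpha)$ is the indicated completion. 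Summing over $\alpha\in\N[I]$ and tracking the monoidal (induction) product — which on $K_0$ is continuous for the topology and agrees with multiplication in $U_q^+$ under Theorem \ref{decat} — gives $K_0(\He(\vec Q))\simeq U_q^+(\mathfrak g(C))\otimes_{\Z[q,q^{-1}]}\Z[[q]][q^{-1}]$ as topological $\Z[[q]][q^{-1}]$-algebras. Finally I would extend scalars along $\Z[[q]][q^{-1}]\hookrightarrow\C((q))$: since $\C((q))$ is flat over $\Z[[q]][q^{-1}]$ and $\C((q))\otimes_{\Z[[q]][q^{-1}]}\big(U_q^+\otimes_{\Z[q,q^{-1}]}\Z[[q]][q^{-1}]\big)=\C((q))\otimes_{\Z[q,q^{-1}]}U_q^+=\C((q))\otimes_{\C(q)}U_q^+(\mathfrak g(C))$, we obtain the claimed isomorphism $\C((q))\otimes K_0(\He(\vec Q))\simeq\C((q))\otimes U_q^+(\mathfrak g(C))$.

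The main obstacle is the comparison between the ordinary Grothendieck group of $\He^{fg}_\alpha$ and the topological one of $\He_\alpha$: one must be careful that in $\He_\alpha$ infinite direct sums genuinely exist and that the naive split Grothendieck group would collapse (an Eilenberg-swindle-type phenomenon), so the topology is essential, and the completeness/Hausdorff properties of $\Z[[q]][q^{-1}]$ relative to the grading filtration are what make the coefficient functions $f_b(q)$ well-defined Laurent power series rather than ill-defined infinite sums. Here I would lean on the cited Naisse--Vaz result for the precise statement and only sketch why $\He_\alpha$ satisfies their hypotheses (positively graded algebra of finite global dimension with finitely many indecomposable gradable projectives, left-bounded locally finite modules). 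A secondary, more routine point is checking that the monoidal structures match up continuously so that the isomorphism is one of algebras and not merely of modules; this follows because induction sends left-bounded locally finite-dimensional projectives to the same and is exact, so it induces a continuous bilinear map on topological $K_0$ compatible under Theorem \ref{decat} with the product on $U_q^+$.
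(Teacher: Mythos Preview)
The paper does not supply its own proof of this theorem: it is stated with attribution to Naisse--Vaz (Theorem 5.13 of \cite{nava}) and is simply quoted as background. Your proposal, which amounts to invoking that same result and sketching why $\He_\alpha$ meets its hypotheses before base-changing to $\C((q))$, is therefore in line with the paper's treatment---indeed it is more detailed than what the paper itself says.

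One small caution about your sketch: you list ``finite global dimension'' among the hypotheses to check. For a general symmetrizable Cartan matrix $C$ the KLR algebras $H_\alpha$ need not have finite global dimension (this is known in finite type but not in general), so if you were to expand this into a genuine argument you should rely instead on the properties Naisse--Vaz actually use (positive grading with finite-dimensional graded pieces, local finiteness, Krull--Schmidt), rather than on global dimension.
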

This isomorphism restricts to that of  Theorem \ref{decat} when we look at the full subcategory $\He^{fg}$. We see that the right multiplication 2-representation of Example \ref{leftmult} extends naturally to $\He$.

We can deduce from Theorem \ref{pbw} that $1_{*i}H_{\alpha+\alpha_i}\in \mathcal{H}_\alpha$, and therefore we can define our new adjoint functors 
\[E_i^*: \mathcal{H}_{\alpha+\alpha_i}\rightarrow \mathcal{H}_\alpha, E_i^*(M)=1_{*i}H_{\alpha+\alpha_i}\otimes_{\alpha+\alpha_i}M.\]

This functor descends to an operator $[E_i^*]$ on $\C((q))\otimes_{\Z[[q]][q^{-1}]}K_0(\mathcal{H}(\vec{Q}))$. Restricting to the subring $U_q^+(\mathfrak{g}(C))$, it is not hard to see that $[E_i^*]$ agrees with the right adjoint to right multiplication by $E_i$ under the semilinear form $(\bar{*},*)_L$.  Since $\bar{E_i}=E_i$, we see $[E_i^*]$ is also right adjoint under $(*,*)_L$, and thus equals the action of the $f_i\in B(\mathfrak{g})$. One could also see this by using the categorification of the $q$-boson relations in Proposition $\ref{prop:kk}$ below and induction over the weight spaces of $U_q^+(\mathfrak{g}(C))$. We henceforth use $E_i^*$ to refer to both the functor and the corresponding linear operator when no ambiguity is possible. Note that this 1-representation can be defined on $U_q^+(\mathfrak{sl}_{n+1})$ even though we needed to extend scalars of the Grothendieck group to Laurent series. 

Categorifying the action of $E_0\in U_q^+(\hat{\mathfrak{sl}}_{n+1})$ is more difficult, since at the level of algebras, its definition involves subtractions. To categorify the innermost bracket $[E_2^*,E_1^*]_q=E_2^*E_1^*-qE_1^*E_2^*$, we look for an object-wise injective natural transformation $\tau:E_1^*E_2^*\rightarrow E_2^*E_1^*$ of degree $1$ and take the cokernel. It is easy to see that left multiplication by $\tau_{|\alpha|-1}$ works. Categorifying the remaining brackets amounts to iterating this process. Our functor for $E_0$ will therefore be tensoring by a $(H_{\alpha-\sum \alpha_i},H_\alpha)$-bimodule that is defined as a quotient of $1_{*n,n-1,\dots 1}H_\alpha$. This functor is constructed in Subsection \ref{dtf}. Our categorification result is stated precisely in Theorem \ref{thm:thebigkahuna}. But first, we will need to introduce some technical tools.

\subsection{Weak Bruhat order}\label{bruhat}
We review the weak Bruhat order on $S_k$ and least common multiples for this order. These techniques will be crucial in the rest of our construction. The results in this subsection are known to experts, see for example \cite{lcmmeetjoin}.

The weak Bruhat order gives us a way to compare reduced presentations for different elements of $S_k$.
\begin{defn}
    The \emph{weak (right) Bruhat order} on $S_k$ is the partial ordering given by $\sigma \leq \tau$ iff there are reduced presentations $\sigma=s_{i_1}\dots s_{i_l}$ and $\tau=s_{i_1}\dots s_{i_{l'}}$ for some $l'\geq l$. If $\sigma \leq \tau$, we informally say that $\tau$ is \emph{prefixed} by $\sigma$. We also say that any reduced presentation of $\tau$ with $\sigma$ as a leftmost subword is prefixed by $\sigma$.
\end{defn}
 Note that if $\sigma \leq \tau$ then $l(\sigma)\leq l(\tau)$.
\begin{prop}
    For any $\sigma\in S_k$, $l(s_i\sigma)=l(\sigma)-1$ iff $s_i\leq \sigma$.
\end{prop}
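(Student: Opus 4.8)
The plan is to reduce the statement to the standard fact that left-multiplication by a Coxeter generator changes length by exactly $\pm 1$, and then simply unwind the definition of $\leq$. First I would record the subadditivity $l(\sigma\tau)\leq l(\sigma)+l(\tau)$, which is immediate from concatenating reduced presentations. Applying this to the pair $(s_i,\sigma)$ and to the pair $(s_i,s_i\sigma)$ gives $l(s_i\sigma)\leq l(\sigma)+1$ and $l(\sigma)\leq l(s_i\sigma)+1$, so $|l(s_i\sigma)-l(\sigma)|\leq 1$. To rule out equality I would use the sign homomorphism $\mathrm{sgn}\colon S_k\to\{\pm 1\}$: since every $s_i$ is an odd permutation, $\mathrm{sgn}(\omega)=(-1)^{l(\omega)}$ for all $\omega\in S_k$, hence $l(s_i\sigma)$ and $l(\sigma)$ have opposite parity. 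Combining the two observations, $l(s_i\sigma)=l(\sigma)\pm 1$ always.

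With this dichotomy in hand, the proposition is a matter of rewriting words. For the forward direction, suppose $l(s_i\sigma)=l(\sigma)-1$ and write $l=l(\sigma)$; pick a reduced presentation $s_i\sigma=s_{j_1}\cdots s_{j_{l-1}}$. Then $\sigma=s_i s_{j_1}\cdots s_{j_{l-1}}$ is a presentation of $\sigma$ of length $l=l(\sigma)$, hence reduced, and it is prefixed by $s_i$; by definition of the weak right Bruhat order this says exactly $s_i\leq\sigma$. Conversely, if $s_i\leq\sigma$ then $\sigma$ admits a reduced presentation $\sigma=s_i s_{j_1}\cdots s_{j_{l-1}}$ with $l=l(\sigma)$, and then $s_i\sigma=s_{j_1}\cdots s_{j_{l-1}}$ exhibits $l(s_i\sigma)\leq l-1=l(\sigma)-1$; together with the inequality $l(s_i\sigma)\geq l(\sigma)-1$ from the first paragraph, this forces $l(s_i\sigma)=l(\sigma)-1$.

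There is no genuine obstacle: the only nontrivial input is the parity-plus-subadditivity argument giving $l(s_i\sigma)=l(\sigma)\pm 1$, which is classical for $S_k$. If one prefers to avoid invoking the sign homomorphism, the same conclusion can be obtained from the deletion condition in $S_k$ (a non-reduced length-$(l+1)$ word $s_i s_{j_1}\cdots s_{j_{l-1}}$ for an element of length $l$ must contain a deletable pair of letters, which after simplification again yields a reduced word beginning with $s_i$). Either route closes the argument, and the rest is bookkeeping with reduced presentations.
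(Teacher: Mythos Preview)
Your proof is correct. The paper states this proposition without proof, since it is a standard fact about the weak Bruhat order (the surrounding subsection explicitly notes that its results are known to experts); your argument via subadditivity and the sign homomorphism to establish $l(s_i\sigma)=l(\sigma)\pm 1$, followed by direct manipulation of reduced presentations, is a clean and self-contained justification.
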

We use the following tool as a convenient way of comparing elements in the weak Bruhat order.

\begin{defn}
For $\sigma\in S_k$, denote by $\text{In}_\sigma$ the \emph{(left) inversion set} of $\sigma$, i.e., the set of pairs $(a,b)$ for $1\leq a < b \leq k$ for which $\sigma^{-1} (a)>\sigma^{-1}(b)$. For a pair $(a,b)\in \text{In}_\sigma$, we say that $\sigma^{-1}$ \emph{inverts} $a$ and $b$.
\end{defn}

\begin{prop}\label{prop:inversion}
    For $\sigma,\omega\in S_k$, $\sigma \leq \omega$ iff $\text{In}_\sigma \subset \text{In}_\omega$.
\end{prop}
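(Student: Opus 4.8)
The plan is to prove Proposition~\ref{prop:inversion} by establishing both implications directly from the definition of the weak right Bruhat order, using the standard theory of reduced words in $S_k$.

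\textbf{Setup.} First I would record the basic facts I intend to use, all of which are classical (and are essentially the content of the preceding Proposition). For $\sigma \in S_k$, the inversion set $\text{In}_\sigma$ (as defined, a set of pairs of \emph{values} inverted by $\sigma^{-1}$) has cardinality exactly $l(\sigma)$. Moreover, for a simple transposition $s_i$, we have $l(s_i\sigma) = l(\sigma)+1$ iff $s_i \not\leq \sigma$ iff the pair of values at positions $\sigma^{-1}$ does not get inverted$\ldots$ more precisely: left-multiplying by $s_i$ swaps the values $i$ and $i+1$ wherever they sit, and $l(s_i \sigma) = l(\sigma) + 1$ precisely when $(i,i+1) \notin \text{In}_\sigma$, in which case $\text{In}_{s_i\sigma} = \text{In}_\sigma \sqcup \{(i,i+1)\}$; otherwise $\text{In}_{s_i\sigma} = \text{In}_\sigma \setminus \{(i,i+1)\}$. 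I would state this as a preliminary observation with a one-line justification.

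\textbf{The forward implication ($\sigma \leq \omega \Rightarrow \text{In}_\sigma \subseteq \text{In}_\omega$).} Assume $\sigma \leq \omega$, so by definition there is a reduced word $\omega = s_{i_1}\cdots s_{i_{l'}}$ whose initial segment $s_{i_1}\cdots s_{i_l}$ is a reduced word for $\sigma$. I would build $\omega$ from $\sigma$ one generator at a time, reading off the letters $s_{i_{l+1}}, s_{i_{l+2}}, \ldots$ Because the whole word is reduced, each successive left-multiplication \emph{increases} length, so by the preliminary observation each step only \emph{adds} a pair to the inversion set. Hence $\text{In}_\sigma \subseteq \text{In}_{s_{i_l}\cdots s_{i_{l+1}}\sigma} \subseteq \cdots \subseteq \text{In}_\omega$. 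Wait — I should be careful about multiplication order: the reduced word $s_{i_1}\cdots s_{i_{l'}}$ for $\omega$ means $\omega = (s_{i_1}\cdots s_{i_l}) \cdot (s_{i_{l+1}}\cdots s_{i_{l'}}) = \sigma \cdot (s_{i_{l+1}}\cdots s_{i_{l'}})$, which is \emph{right} multiplication of $\sigma$, corresponding to permuting \emph{positions} rather than values. So I would instead argue on $\sigma^{-1} \leq \omega^{-1}$ — note $\sigma \leq \omega$ in the right weak order iff $\sigma^{-1}\leq\omega^{-1}$ in the left weak order, and the inversion set as defined here is exactly the \emph{left} inversion set of $\sigma^{-1}$ — or, cleaner, I would just phrase everything in terms of the inversion set attached to $\sigma$ directly and verify that appending a letter on the right to a reduced word for $\sigma$ (to get a longer reduced word) adds exactly one element to $\text{In}_\sigma$. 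Either bookkeeping works; I would pick the convention that makes the one-step lemma cleanest and stick with it.

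\textbf{The reverse implication ($\text{In}_\sigma \subseteq \text{In}_\omega \Rightarrow \sigma \leq \omega$).} This is the direction requiring a genuine argument, and I expect it to be the main obstacle. I would proceed by induction on $l(\sigma) = |\text{In}_\sigma|$. If $l(\sigma) = 0$ then $\sigma = e \leq \omega$ trivially. If $l(\sigma) > 0$, pick some $i$ with $s_i \leq \sigma$, equivalently (by the preceding Proposition) $l(s_i\sigma) = l(\sigma) - 1$, equivalently the pair $(i,i+1) \in \text{In}_\sigma$. Since $\text{In}_\sigma \subseteq \text{In}_\omega$, also $(i,i+1) \in \text{In}_\omega$, hence $l(s_i\omega) = l(\omega) - 1$, i.e.\ $s_i \leq \omega$. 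Now I claim $\text{In}_{s_i\sigma} \subseteq \text{In}_{s_i\omega}$: by the one-step lemma, $\text{In}_{s_i\sigma} = \text{In}_\sigma \setminus \{(i,i+1)\}$ and $\text{In}_{s_i\omega} = \text{In}_\omega\setminus\{(i,i+1)\}$, and removing the same element from a subset preserves the inclusion. By the inductive hypothesis $s_i\sigma \leq s_i\omega$. Finally I need: $s_i\sigma \leq s_i\omega$ together with $s_i\leq\sigma$, $s_i\leq\omega$ implies $\sigma = s_i(s_i\sigma) \leq s_i(s_i\omega) = \omega$. This last step is the delicate point — left-multiplying both sides of a weak order relation by $s_i$ is \emph{not} order-preserving in general, but it is when the multiplication increases length on both sides: if $\tau \leq \rho$ with $l(s_i\tau) = l(\tau)+1$ and $l(s_i\rho) = l(\rho)+1$, take a reduced word $\rho = s_{j_1}\cdots s_{j_m}$ with prefix $\tau = s_{j_1}\cdots s_{j_r}$; then $s_i\rho$ has reduced word $s_i s_{j_1}\cdots s_{j_m}$ (length goes up by one, so it is reduced) with prefix $s_i s_{j_1}\cdots s_{j_r}$, which is a reduced word for $s_i\tau$ (again length up by one). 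Applying this with $\tau = s_i\sigma$, $\rho = s_i\omega$ (where left-multiplication by $s_i$ indeed increases length, since $s_i\not\leq s_i\sigma$ as $l(s_i(s_i\sigma)) = l(\sigma) = l(s_i\sigma)+1$) gives $\sigma \leq \omega$, completing the induction. I would isolate this "left-multiplication by $s_i$ respects $\leq$ when it increases length on both sides" as a small lemma, since it is used twice in slightly different guises, and take care throughout that the definition of $\text{In}$ given in the paper (pairs of values, tied to $\sigma^{-1}$) is the one I am manipulating — if it is cleaner I will translate once at the start to the position-based inversion set of $\sigma$ and do all the induction there.
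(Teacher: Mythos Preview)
The paper does not prove this proposition; it is stated as a standard fact with a reference. Your overall strategy (forward direction by building up a reduced word, reverse direction by induction on $l(\sigma)$ after stripping a left descent) is the standard one and is sound. However, your ``one-step lemma'' is false as stated, and the reverse direction relies on it. You claim that when $(i,i+1)\in\text{In}_\sigma$ one has $\text{In}_{s_i\sigma}=\text{In}_\sigma\setminus\{(i,i+1)\}$. Take $k=3$, $\sigma=s_1s_2$: then $\text{In}_\sigma=\{(1,2),(1,3)\}$, but $s_1\sigma=s_2$ has $\text{In}_{s_2}=\{(2,3)\}$, not $\{(1,3)\}$. Left-multiplying by $s_i$ swaps the \emph{labels} $i$ and $i+1$ in every pair, not just in the pair $(i,i+1)$.

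The fix is small. Let $\phi_i$ act on pairs $(a,b)$ with $a<b$ by applying $s_i$ to each coordinate and reordering; this is an involution fixing $(i,i+1)$. A direct check gives, for any $\rho$ with $(i,i+1)\in\text{In}_\rho$, that $\text{In}_{s_i\rho}=\phi_i\bigl(\text{In}_\rho\setminus\{(i,i+1)\}\bigr)$. Since $\phi_i$ is a bijection, $\text{In}_\sigma\subseteq\text{In}_\omega$ with $(i,i+1)$ in both still yields $\text{In}_{s_i\sigma}\subseteq\text{In}_{s_i\omega}$, and your induction goes through. Alternatively, you can run the whole induction via \emph{right} multiplication: if $l(\sigma s_j)=l(\sigma)-1$ then the pair removed from $\text{In}_\sigma$ is $(\sigma(j+1),\sigma(j))$, and this same pair lies in $\text{In}_\omega$, forcing $l(\omega s_j)=l(\omega)-1$ as well; here the removed pair genuinely is the same on both sides, so ``remove the same element'' works without relabeling. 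Either repair is routine once you notice the issue; just be explicit about which pair is added or removed rather than asserting it is $(i,i+1)$.
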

\begin{prop}\label{prop:redinversion}
    For $\sigma$, $\omega\in S_k$, $l(\sigma\omega)=l(\sigma)+l(\omega)$ iff $\text{In}_{\sigma} \subset \text{In}_{\sigma\omega}$.
\end{prop}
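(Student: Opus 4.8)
The plan is to reduce Proposition \ref{prop:redinversion} to Proposition \ref{prop:inversion} via the weak Bruhat order. The bridge is the elementary observation that, for $\sigma,\omega\in S_k$,
\[
l(\sigma\omega)=l(\sigma)+l(\omega)\quad\Longleftrightarrow\quad \sigma\leq\sigma\omega .
\]
First I would establish this. For ``$\Leftarrow$'': if $\sigma\leq\sigma\omega$, there is a reduced presentation of $\sigma\omega$ having a reduced presentation of $\sigma$ as a leftmost subword; the complementary rightmost subword represents $\sigma^{-1}(\sigma\omega)=\omega$, and since rightmost subwords of reduced presentations are themselves reduced, its length is $l(\sigma\omega)-l(\sigma)$, which therefore equals $l(\omega)$. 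For ``$\Rightarrow$'': concatenating reduced presentations of $\sigma$ and $\omega$ always yields a presentation of $\sigma\omega$ of length $l(\sigma)+l(\omega)$; under the hypothesis this equals $l(\sigma\omega)$, so the presentation is reduced and exhibits $\sigma\leq\sigma\omega$.

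Granting this, the proposition is immediate: by Proposition \ref{prop:inversion}, $\sigma\leq\sigma\omega$ holds precisely when $\text{In}_\sigma\subseteq\text{In}_{\sigma\omega}$, which is exactly the stated condition.

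If one instead wants a self-contained argument not routing through reduced words, I would prove the cocycle identity $\text{In}_{\sigma\omega}=\text{In}_\sigma\,\triangle\,\psi_\sigma(\text{In}_\omega)$, where $\triangle$ is symmetric difference and $\psi_\sigma$ is the bijection on two-element subsets of $\{1,\dots,k\}$ given by $\psi_\sigma(\{a,b\})=\{\sigma(a),\sigma(b)\}$. This is checked by unwinding $(a,b)\in\text{In}_{\sigma\omega}\iff \omega^{-1}\sigma^{-1}(a)>\omega^{-1}\sigma^{-1}(b)$ and splitting on the sign of $\sigma^{-1}(a)-\sigma^{-1}(b)$: in the first case the pair contributes exactly when it lies in $\psi_\sigma(\text{In}_\omega)\setminus\text{In}_\sigma$, in the second exactly when it lies in $\text{In}_\sigma\setminus\psi_\sigma(\text{In}_\omega)$. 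Since $\psi_\sigma$ is a bijection, $|\psi_\sigma(\text{In}_\omega)|=l(\omega)$, so
\[
l(\sigma\omega)=\bigl|\text{In}_\sigma\,\triangle\,\psi_\sigma(\text{In}_\omega)\bigr|=l(\sigma)+l(\omega)-2\,\bigl|\text{In}_\sigma\cap\psi_\sigma(\text{In}_\omega)\bigr|,
\]
so $l(\sigma\omega)=l(\sigma)+l(\omega)$ iff $\text{In}_\sigma\cap\psi_\sigma(\text{In}_\omega)=\emptyset$; and the symmetric-difference formula shows that intersection is empty exactly when $\text{In}_\sigma\subseteq\text{In}_{\sigma\omega}$.

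There is no genuine obstacle here, as the statement is classical; the only real choice is packaging. I would present the short Bruhat-order version, since Proposition \ref{prop:inversion} is already available, and there the one point needing a moment's care is the claim that leftmost and rightmost subwords of a reduced presentation are reduced. In the self-contained variant the slightly fiddly point is getting the bookkeeping of the inversion-set identity exactly right (equivalently, the monotonicity $\text{In}_\sigma\subsetneq\text{In}_{\sigma s_i}$ under a length-raising right multiplication, iterated along a reduced presentation of $\omega$).
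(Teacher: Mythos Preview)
Your proof is correct. The paper does not actually prove Proposition~\ref{prop:redinversion}: it is stated without proof alongside Proposition~\ref{prop:inversion} as a standard fact about the weak Bruhat order, with a reference to the literature. Your Bruhat-order reduction to Proposition~\ref{prop:inversion} is clean and appropriate given that Proposition~\ref{prop:inversion} is already assumed; the alternative cocycle argument is also fine and entirely self-contained.
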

We often need to compare multiple different reduced presentations of a fixed $\sigma\in S_k$. This is made easier by the existence of least common multiples for the weak Bruhat order.
\begin{prop}[\cite{lcmmeetjoin} Theorem 3.2.1]
    For any two $\sigma,\tau\in S_k$, there exists a unique least common multiple $\Delta$ in $S_k$, i.e., $\sigma, \tau \leq \Delta$ and $\Delta$ is least for this property. 
\end{prop}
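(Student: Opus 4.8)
The plan is to reduce the statement to a combinatorial fact about inversion sets and then to produce $\Delta$ by an explicit closure construction. By Proposition~\ref{prop:inversion}, the weak Bruhat order on $S_k$ is isomorphic to the poset of inversion sets ordered by inclusion, where an inversion set is a subset of $P := \{(a,b) : 1 \le a < b \le k\}$. This poset has a maximum element: the longest element $w_0 \in S_k$ has $\text{In}_{w_0} = P$, so every pair $\sigma, \tau$ already admits a common upper bound, and it suffices to exhibit a \emph{least} one. The classical input here, which I would take from \cite{lcmmeetjoin} or prove directly by induction on $k$, is the characterization of which subsets $S \subseteq P$ arise as $\text{In}_\sigma$: exactly the \emph{biconvex} ones, meaning that both $S$ and $P \setminus S$ are transitively closed, where a set $T$ is transitively closed if $(a,b), (b,c) \in T$ with $a < b < c$ forces $(a,c) \in T$. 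From a biconvex $S$ one recovers $\sigma$ uniquely by an inductive procedure on $k$, peeling off one entry of the relevant one-line notation at a time.

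Given $\sigma, \tau$, put $A = \text{In}_\sigma$, $B = \text{In}_\tau$, and let $C \subseteq P$ be the transitive closure of $A \cup B$, obtained by repeatedly adjoining $(a,c)$ whenever $(a,b), (b,c)$ are already present with $a < b < c$. By construction $C$ is transitively closed and $A \cup B \subseteq C$. The crux of the proof is to show that $P \setminus C$ is transitively closed as well, so that $C = \text{In}_\Delta$ for some $\Delta \in S_k$. Granting this, $\Delta$ is the required least common multiple: since $\text{In}_\Delta = C \supseteq \text{In}_\sigma$ and $\text{In}_\Delta \supseteq \text{In}_\tau$, Proposition~\ref{prop:inversion} gives $\sigma, \tau \le \Delta$; and if $\omega$ is any common upper bound, then $\text{In}_\omega$ is transitively closed and contains $A \cup B$, hence contains its transitive closure $C = \text{In}_\Delta$, so $\Delta \le \omega$. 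A least element is unique, which gives uniqueness of $\Delta$. Applying Proposition~\ref{prop:redinversion} to $\text{In}_\sigma \subseteq \text{In}_\Delta$ and $\text{In}_\tau \subseteq \text{In}_\Delta$ also yields reduced factorizations $\Delta = \sigma \sigma' = \tau \tau'$, which is the form in which the existence of joins will actually be used later.

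I expect the main obstacle to be exactly the claim that $P \setminus C$ stays transitively closed: the analogous statement fails for closures of unions of biconvex sets in general root systems, and its truth here is equivalent to the weak order on $S_k$ being a lattice. I would attack it by induction on the length of a shortest derivation witnessing membership in $C$. Suppose $(a,c) \in C$ with $a < b < c$ but $(a,b) \notin C$ and $(b,c) \notin C$. Either $(a,c) \in A \cup B$ already, and then biconvexity of $A$ or of $B$ forces $(a,b)$ or $(b,c)$ into $A \cup B \subseteq C$, a contradiction; or $(a,c)$ was adjoined from some $(a,d), (d,c)$ with $a < d < c$ appearing in strictly shorter derivations, and one splits into the cases $d < b$, $d = b$, $d > b$ (the middle case being immediate since then $(a,b) = (a,d) \in C$), in each remaining case interpolating between $b$ and $d$ to produce a pair with a strictly shorter derivation that violates co-closedness, contradicting minimality. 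Once this combinatorial lemma is in hand, everything else is bookkeeping.
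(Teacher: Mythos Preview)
The paper does not supply a proof of this proposition at all: it is stated with a citation to \cite{lcmmeetjoin}, Theorem~3.2.1, and used as a black box. So there is no argument in the paper to compare yours against.

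Your approach is the standard one and is correct. The identification of the weak order with the containment order on biconvex subsets of $P$ via Proposition~\ref{prop:inversion}, followed by taking the transitive closure $C$ of $\text{In}_\sigma \cup \text{In}_\tau$ and verifying that $P\setminus C$ remains transitively closed, is exactly how one shows the weak order on $S_k$ is a lattice. Your inductive argument for co-closedness is fine: in the case $(a,c)\in A\cup B$ you use co-closedness of $A$ (resp.\ $B$) to force one of $(a,b),(b,c)$ into $A\cup B\subseteq C$; in the derived case you use the induction hypothesis on the shorter-derivation pair $(d,c)$ (or $(a,d)$) to produce $(d,b)\in C$ (or $(b,d)\in C$), and then transitive closedness of $C$ gives $(a,b)\in C$ (or $(b,c)\in C$). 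One small remark: your aside that the analogous statement fails in general root systems is true for infinite Coxeter groups, but the weak order on any \emph{finite} Coxeter group is a lattice, so the phenomenon you allude to is specific to the infinite setting. This does not affect your argument for $S_k$.
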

Such a $\Delta$ is sometimes also called the \emph{join} of $\sigma$ and $\tau$.
\begin{ex}\label{ex:basecase}
The least common multiple of $s_i$ and $s_j$ for $|i-j|>1$ is $s_is_j=s_js_i$. The least common multiple of $s_i$ and $s_{i+1}$ is $s_is_{i+1}s_i=s_{i+1}s_is_{i+1}$.
\end{ex}

We provide a proof for the following lemma, since we could not find it stated explicitly in the literature.
\begin{lem}\label{lem:abpres}
Fix $s_i$ and $\sigma\in S_k$ for which $l(s_i\sigma)=l(\sigma)+1$. Then if $\Delta$ is the least common multiple of $s_i$ and $\sigma$, then $s_i\sigma \leq \Delta$.
\end{lem}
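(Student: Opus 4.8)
The plan is to use the inversion-set characterization of the weak Bruhat order from Proposition \ref{prop:inversion} together with the length-additivity criterion of Proposition \ref{prop:redinversion}. Write $\Delta$ for the join of $s_i$ and $\sigma$. Since $s_i \leq \Delta$ and $\sigma \leq \Delta$, we have $\text{In}_{s_i} \subset \text{In}_\Delta$ and $\text{In}_\sigma \subset \text{In}_\Delta$. To conclude $s_i\sigma \leq \Delta$, by Proposition \ref{prop:inversion} it suffices to show $\text{In}_{s_i\sigma} \subset \text{In}_\Delta$, and for this I would first pin down exactly what $\text{In}_{s_i\sigma}$ is.

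The key computational input is that the hypothesis $l(s_i\sigma) = l(\sigma)+1$ means, by Proposition \ref{prop:redinversion} applied to the product $s_i \cdot \sigma$, that $\text{In}_{s_i} \subset \text{In}_{s_i\sigma}$; moreover one has the general fact $\text{In}_{s_i\sigma} = \text{In}_{s_i} \,\triangle\, s_i(\text{In}_\sigma)$ or, more usefully in the length-additive case, $\text{In}_{s_i\sigma} = \text{In}_{s_i} \cup s_i(\text{In}_\sigma)$ as a disjoint union (here $s_i$ acts on pairs by permuting the two entries and reordering). Concretely, $\text{In}_{s_i}$ is the single pair $(i,i+1)$, and since $s_i \leq \Delta$ this pair already lies in $\text{In}_\Delta$. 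So the remaining task is to show that $s_i(\text{In}_\sigma) \subset \text{In}_\Delta$. Since $\text{In}_\sigma \subset \text{In}_\Delta$ and $s_i \leq \Delta$, I would argue that applying $s_i$ to an inversion of $\sigma$ still lands in $\text{In}_\Delta$: the only way $s_i$ can move a pair $(a,b)$ out of a given inversion set is if $\{a,b\} = \{i,i+1\}$, but the pair $(i,i+1)$ is in $\text{In}_\Delta$ and is fixed as a set by $s_i$, while every other pair in $\text{In}_\sigma \subset \text{In}_\Delta$ either is untouched by $s_i$ or gets sent to another pair which — using $s_i \leq \Delta$, i.e. $(i,i+1) \in \text{In}_\Delta$, and the fact that inversion sets of permutations are exactly the "closed" subsets (biconvex sets) of the root poset — must again lie in $\text{In}_\Delta$. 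This closure property of $\text{In}_\Delta$ is the essential structural fact; since $\Delta$ is an element of $S_k$, $\text{In}_\Delta$ is automatically biconvex, and biconvexity is precisely what lets one propagate membership under the action of $s_i$ once $(i,i+1)$ is known to belong.

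I expect the main obstacle to be the bookkeeping in that last step: verifying cleanly that $s_i(\text{In}_\sigma) \subset \text{In}_\Delta$ requires being careful about which pairs $s_i$ actually moves and invoking the right convexity property of inversion sets, rather than anything deep. An alternative, possibly slicker route avoiding explicit inversion-set manipulation: take a reduced presentation of $\Delta$ beginning with $\sigma$, say $\Delta = \sigma\, s_{j_1}\cdots s_{j_m}$ (which exists since $\sigma \leq \Delta$), and separately note $s_i \leq \Delta$. Then argue that $s_i$ must be "absorbable" on the left — formally, consider $s_i\Delta$: if $l(s_i\Delta) = l(\Delta)-1$ then $s_i \leq \Delta$ gives us a reduced word for $\Delta$ starting with $s_i$, and combining with $s_i\sigma$ being reduced of length $l(\sigma)+1$, a deletion/exchange argument shows $s_i\sigma \leq \Delta$; the case $l(s_i\Delta) = l(\Delta)+1$ would make $s_i\Delta$ a strictly larger common multiple only if $\sigma \leq s_i\Delta$, which would contradict minimality of $\Delta$ unless $s_i\sigma \leq \Delta$ already. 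I would present whichever of these two arguments turns out to need the fewest auxiliary lemmas, but I lean toward the inversion-set version since Propositions \ref{prop:inversion} and \ref{prop:redinversion} are already available and do most of the work.
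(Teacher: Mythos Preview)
Your inversion-set approach is correct and genuinely different from the paper's. The paper gives only a sketch: it reduces by induction on $l(\sigma)$ to the base case $l(\sigma)=1$, which is then handled by Example~\ref{ex:basecase}. Your argument is instead a direct verification that $\text{In}_{s_i\sigma}\subset\text{In}_\Delta$, with no induction, and in fact it proves the stronger statement that $s_i\sigma\leq w$ for \emph{any} common upper bound $w$ of $s_i$ and $\sigma$, not just the join.

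One sharpening worth making when you carry out the bookkeeping: the word ``biconvexity'' is slightly misleading for what you actually need. In the case analysis for $s_i(\text{In}_\sigma)\subset\text{In}_\Delta$, two of the four nontrivial cases (e.g.\ $(i+1,c)\in\text{In}_\sigma$ with $c>i+1$, giving $s_i(i+1,c)=(i,c)$) follow from \emph{transitive closedness} of $\text{In}_\Delta$ together with $(i,i+1)\in\text{In}_\Delta$. The other two (e.g.\ $(i,c)\in\text{In}_\sigma$ with $c>i+1$, giving $s_i(i,c)=(i+1,c)$) do \emph{not} follow from biconvexity of $\text{In}_\Delta$ alone---co-closedness only gives an ``or''---but rather from the hypothesis $(i,i+1)\notin\text{In}_\sigma$, which forces $(i+1,c)\in\text{In}_\sigma$ already. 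So the argument uses convexity of $\text{In}_\Delta$ for half the cases and the length hypothesis on $\sigma$ for the other half. Your alternative exchange-lemma sketch is vaguer and would need more work; the inversion-set route is the cleaner one to write up.
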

The simplest case of this lemma can be seen in Example \ref{ex:basecase}.
\begin{proof}
Pick a reduced presentation of $\Delta$ that is prefixed by $\sigma$. Since $s_i\nleq \sigma$, applying the weak exchange condition to this presentation shows that $\sigma \leq s_i\Delta $, and so $s_i\sigma \leq \Delta$.
\end{proof}
We conclude this subsection with a more well-known result.
\begin{thm}[\cite{bourb}]\label{thm:bourb}
Any two reduced presentations for a fixed element of $S_k$ can be obtained from each other by applying a sequence of the relations $s_is_j=s_js_i$ for $|i-j|>1$ and $s_is_{i+1}s_i=s_{i+1}s_is_{i+1}$.
\end{thm}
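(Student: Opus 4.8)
The final statement to prove is Theorem~\ref{thm:bourb}, the word property (Matsumoto--Tits) for the symmetric group: any two reduced words for a fixed $\sigma \in S_k$ are connected by braid and commutation moves. I note first that the paper attributes this to Bourbaki, so in context the ``proof'' is meant to be a clean self-contained argument rather than a deep new result; my plan is to give the standard exchange-condition induction.

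\medskip

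The plan is to induct on $l(\sigma) = l$. The cases $l \le 1$ are trivial since such $\sigma$ has a unique reduced word. For the inductive step, take two reduced words $\sigma = s_{i_1}\cdots s_{i_l} = s_{j_1}\cdots s_{j_l}$. If $i_1 = j_1$, then cancelling the common first letter gives two reduced words for $s_{i_1}\sigma$, which has length $l-1$, so the inductive hypothesis connects them by braid/commutation moves, and prepending $s_{i_1}$ finishes. So assume $i_1 \neq j_1$; write $a = i_1$, $b = j_1$. The key input is the structure of the parabolic subgroup $\langle s_a, s_b\rangle$, which is dihedral: by Example~\ref{ex:basecase}, if $|a-b|>1$ this subgroup is $\{e, s_a, s_b, s_as_b\}$ with $s_as_b = s_bs_a$, and if $|a-b| = 1$ it is the order-$6$ group with longest element $s_as_bs_a = s_bs_as_b$. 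In either case let $w_0^{ab}$ denote the longest element of $\langle s_a,s_b\rangle$, of length $m \in \{2,3\}$, with its two reduced words $s_as_b\cdots$ ($m$ letters, alternating) and $s_bs_a\cdots$.

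\medskip

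The heart of the argument is: since $s_a \le \sigma$ and $s_b \le \sigma$ (both $a$ and $b$ start reduced words for $\sigma$), the join of $s_a$ and $s_b$ — which is exactly $w_0^{ab}$ by Example~\ref{ex:basecase} — satisfies $w_0^{ab} \le \sigma$ by the least-common-multiple property (Proposition~\ref{prop:inversion} together with the existence of joins). Hence $\sigma = w_0^{ab}\,\tau$ with $l(\sigma) = l(w_0^{ab}) + l(\tau) = m + l(\tau)$. Now build a ``hub'' reduced word for $\sigma$ by concatenating either reduced word of $w_0^{ab}$ with a fixed reduced word for $\tau$; the two resulting words for $\sigma$ are connected to each other by the single braid (or commutation) move inside $w_0^{ab}$. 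It remains to connect the original word $s_{i_1}\cdots s_{i_l}$ (which starts with $s_a$) to the hub word starting with $s_a$: strip the leading $s_a$ from both, obtaining two reduced words for $s_a\sigma$ of length $l-1$, and apply the inductive hypothesis; likewise connect $s_{j_1}\cdots s_{j_l}$ (starting with $s_b$) to the hub word starting with $s_b$. Chaining these three segments of moves connects the two original words.

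\medskip

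The main obstacle is establishing the fact that the join $w_0^{ab}$ of $s_a,s_b$ is a left-divisor of $\sigma$ in the sense $l(\sigma) = l(w_0^{ab}) + l(w_0^{ab\,-1}\sigma)$; this is precisely the content made available by Lemma~\ref{lem:abpres} (applied to peel off letters one at a time: knowing $s_a \le \sigma$ and $s_b \le \sigma$, one shows the longer alternating prefix still divides $\sigma$) in combination with Proposition~\ref{prop:redinversion}, which converts the divisibility $w_0^{ab} \le \sigma$ into the length-additivity $l(\sigma) = l(w_0^{ab}) + l(\tau)$. Granting that, the rest is bookkeeping: the inductive hypothesis handles everything of length $< l$, and the only genuinely new move used at level $l$ is the braid/commutation relation living inside the rank-two parabolic $\langle s_a, s_b\rangle$, exactly the relations listed in the statement. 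One should also double-check the degenerate possibility that $\tau = e$, i.e. $\sigma = w_0^{ab}$ itself, but then $l = m \le 3$ and the claim is Example~\ref{ex:basecase} directly.
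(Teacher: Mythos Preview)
The paper does not prove this theorem at all: it is stated with a citation to Bourbaki and used as a black box. So there is no ``paper's own proof'' to compare against, and your task was really to supply a self-contained argument.

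Your argument is the standard Matsumoto--Tits induction and is correct. Two minor remarks. First, your invocation of Lemma~\ref{lem:abpres} is a red herring: once you know $s_a \le \sigma$ and $s_b \le \sigma$, the defining property of the join (least upper bound) immediately gives $w_0^{ab} = \mathrm{lcm}(s_a,s_b) \le \sigma$, and then Propositions~\ref{prop:inversion} and~\ref{prop:redinversion} (or simply the definition of weak order as ``prefix of a reduced word'') give the length-additive factorization $\sigma = w_0^{ab}\tau$. Lemma~\ref{lem:abpres} is about a different situation and is not needed here. Second, you should be aware of a potential circularity hazard: the existence of joins in weak order is itself a nontrivial fact, and in some treatments its proof passes through Matsumoto's theorem. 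The reference the paper cites for joins proves the lattice property via inversion sets, independently of Theorem~\ref{thm:bourb}, so in this paper's logical setup your argument is non-circular; but it is worth flagging that you are leaning on that external input rather than giving a proof from first principles.
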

Combining Theorems \ref{thm:bourb} and \ref{pbw} and using induction gives us the following.
\begin{cor}\label{cor:error}
    Suppose we have two reduced presentations of the same element $s_{i_1}\dots s_{i_l}=s_{j_1}\dots s_{j_l}$ in $S_k$. Fix any $\alpha \in \N[I]$ with $|\alpha|=k$, and $v\in I^\alpha$. Fix now reduced presentations for all $\sigma\in S_k$, and denote by $\tau_\sigma$ the associated product of $\tau_i$ in $H_\alpha$.
    
    We have in $H_\alpha$ that $\tau_{i_1}\dots \tau_{i_l}1_v\in \tau_{j_1}\dots \tau_{j_l}1_v+\bigoplus_{l(\sigma)<l} \kf[x_1,\dots x_k]\tau_\sigma 1_v$.
\end{cor}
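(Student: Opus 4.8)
The plan is to induct on the number of braid/commutation moves needed to pass from the first reduced presentation to the second, which is finite by Theorem~\ref{thm:bourb}. The base case is when the two presentations are identical, where there is nothing to prove. For the inductive step it suffices to handle a single application of one of the two basic relations, applied to some contiguous subword: replacing $\dots s_a s_b \dots$ by $\dots s_b s_a\dots$ with $|a-b|>1$, or replacing $\dots s_a s_{a+1} s_a\dots$ by $\dots s_{a+1}s_a s_{a+1}\dots$. Thus I must show that making such a replacement inside a fixed reduced word $s_{i_1}\dots s_{i_l}$ changes the KLR element $\tau_{i_1}\dots\tau_{i_l}1_v$ only by a $\kf[x_1,\dots,x_k]$-linear combination of $\tau_\sigma 1_v$ with $l(\sigma)<l$.

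The key local computation uses the KLR relations directly. Write the word as $\tau_A \cdot (\text{short piece}) \cdot \tau_B 1_v$, where $\tau_A$ is the product of $\tau$'s to the left of the piece being modified and $\tau_B 1_v$ the product to the right; note $\tau_B 1_v = 1_{w}\tau_B 1_v$ for $w$ the appropriate permuted word, so all idempotents are determined. For the far-commutation case, relation~(6) of Definition~\ref{klr} gives $\tau_a\tau_b = \tau_b\tau_a$ on the nose (for $|a-b|>1$), so the two KLR elements are literally equal and the error term is zero. For the braid case, relation~(8) gives
\[
(\tau_{a+1}\tau_a\tau_{a+1}-\tau_a\tau_{a+1}\tau_a)1_u = \delta_{u_a,u_{a+2}}\,\frac{Q_{u_a,u_{a+1}}(x_{a+2},x_{a+1})-Q_{u_a,u_{a+1}}(x_a,x_{a+1})}{x_{a+2}-x_a}1_u,
\]
where $u$ is the word appearing at that spot (i.e.\ $u = \sigma_B(v)$ for $\sigma_B$ the permutation of the right part). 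The right-hand side is a polynomial in the $x$'s times $1_u$, involving no $\tau$'s; after multiplying on the left by $\tau_A$ and on the right by $\tau_B1_v$, it becomes $\tau_A \cdot p(x) \cdot \tau_B 1_v$ for some polynomial $p$. Using relation~(2) to move the polynomial to the left past the $\tau$'s of $\tau_A$ (which only changes indices and produces, via relation~(5), lower-order $\tau$-terms) and applying the PBW basis theorem~\ref{pbw} to each resulting term, every summand is of the form $(\text{polynomial})\cdot\tau_\rho 1_v$ where $\rho$ is a product of at most the $\tau$'s in $\tau_A\tau_B$ together with a proper subword coming from $p(x)$'s index shifts — in particular $l(\rho) \le l(A)+l(B) < l$. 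This is exactly the claimed bound, so the single-move error lies in $\bigoplus_{l(\sigma)<l}\kf[x_1,\dots,x_k]\tau_\sigma 1_v$.

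Finally, to close the induction I observe that the submodule $\bigoplus_{l(\sigma)<l}\kf[x_1,\dots,x_k]\tau_\sigma 1_v$ is stable under left and right multiplication by any $\tau_j$ in the sense needed: by the PBW basis theorem, for any word $s_{j_1}\dots s_{j_m}$ and any $\sigma$ with $l(\sigma)<l$, the product $\tau_{j_1}\dots\tau_{j_m}\tau_\sigma 1_v$ expands in the PBW basis with $\tau_\rho$-terms where $l(\rho)\le l(\sigma)+m$, and whenever this is a presentation of a shorter element the length can only drop; combined with relations~(2) and~(7) this keeps everything inside $\bigoplus_{l(\sigma)<l}\kf[x_1,\dots,x_k]\tau_\sigma 1_v$. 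Thus composing several single-move errors stays in this submodule, and the triangle inequality on lengths is preserved. The main obstacle is the bookkeeping in this last step: one must check that when a relation is applied deep inside a long word, the ``lower-order'' terms produced by the non-$\tau$ pieces genuinely have strictly smaller length after collapsing braid relations, rather than merely being expressed via shorter words that happen to represent the same long element. Corollary~\ref{cor:error}'s hypothesis that both sides are \emph{reduced} presentations of the \emph{same} element is what rules this out, so the induction must be set up carefully to preserve reducedness at each stage — or, more robustly, to track only the upper bound $l(\rho)\le l$ with equality forcing $\tau_\rho1_v$ to be the associated element of the \emph{fixed} reduced presentation, exactly as in the statement.
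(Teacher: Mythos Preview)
Your approach—induct on the number of braid moves via Theorem~\ref{thm:bourb}, and use the KLR relations together with Theorem~\ref{pbw} to bound the length appearing in each single-move error—is exactly what the paper does (its entire proof is the sentence ``Combining Theorems~\ref{thm:bourb} and~\ref{pbw} and using induction''). Two minor cleanups: the relation for commuting polynomials past $\tau_i$ is (5), not (2); and your final paragraph is unnecessary, since once each single braid move contributes an error in $\bigoplus_{l(\sigma)<l}\kf[x_1,\dots,x_k]\tau_\sigma 1_v$, a telescoping sum over the chain of moves finishes the argument without any stability claim.
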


\subsection{Defining the \texorpdfstring{$E_0$}{E0} functor}\label{dtf}
We define the endofunctor of $\mathcal{H}$ corresponding to the action of $E_0\in U_q^+(\hat{\mathfrak{sl}}_{n+1})$ on $U_q^+(\mathfrak{sl}_{n+1})$. Much like the functors for the other $E_i$, it can be expressed as tensoring by a certain bimodule. The most important result in this subsection is the one providing an explicit description of this module, as this will enable various computations later.

 Following the inclusion of monoidal categories $\U_q(\mathfrak{sl}_{n+1})\hookrightarrow \U_q(\hat{\mathfrak{sl}}_{n+1})$, the objects $E_i$ for $1\leq i \leq n$ will act as induction along $\alpha_i$ exactly as in the right multiplication representation of Example \ref{leftmult}. We will define the functor corresponding to $E_0$ in accordance with the ideas expressed in Subsection \ref{targets}. Namely, our functor will have the form $\bigoplus_\alpha M_\alpha \otimes_\alpha \text{\textendash}$, where $M_\alpha$ is a $(H_{\alpha-\alpha_1-\dots \alpha_n},H_\alpha)$-bimodule quotient of $1_{*n,n-1,\dots 1}H_\alpha$. We will first describe the bimodule more abstractly to highlight its relationship to certain modules studied in the literature before.

We introduce some notation for KLR algebras before proceeding. First, for $i\leq n$, we write $\beta_i:=\sum_{j\leq i} \alpha_j$, and $\vec{\beta}_i:=(i,i-1,\dots 1)$. We will also use $\beta:=\beta_n$ and $\vec{\beta}:=\vec{\beta}_n$. Moreover, for any $\alpha\in \N[I]$, $1\leq i\leq |\alpha|$, and $1\leq j\leq |\alpha|-1$, we write that $x_{-i}:=x_{|\alpha|-i+1}$ and $\tau_{-j}:=\tau_{|\alpha|-j}$. Note that in this notation we have $(\tau_{-1}x_{-1}-x_{-2}\tau_{-1})1_{*11}=1_{*11}$, and that under a module embedding $H_\alpha\hookrightarrow 1_{*i}H_{\alpha+\alpha_i}$, we have $x_{-i}\rightarrow x_{-i-1}1_{*i}$ and $\tau_{-i}\rightarrow \tau_{-i-1}1_{*i}$.

The main component of our construction of $M_\alpha$ is the standard right module $\Delta(\beta)$ as in \cite{bkm} associated to a certain ordering of the positive roots. This module also arises as a certain affinization in the sense of \cite{affinizationklr}. We review a construction of $\Delta(\beta)$ due to \cite{bkm}. First, we order the positive roots with the ``down and to the left" convex ordering, i.e. the one given by $\alpha_n < \alpha_{n-1}+\alpha_n<\alpha_{n-1}<\alpha_{n-2}+\alpha_{n-1}+\alpha_n<\dots <\alpha_1$. So, $\beta_{i-1}>\alpha_i$. 
With this order, we have for any $i\leq n$ that $(\beta_{i-1},\alpha_i)$ is a \emph{minimal pair} for $\beta_i$ in the sense of \cite{bkm}. Define the simple root standard (right) modules $\Delta(\alpha_i)$ as the right regular graded representation of $H_{\alpha_i}=\kf[x]$ and note $\beta_1=\alpha_1$. We define the right $H_{\beta_i}$-modules $\Delta(\beta_i)$ inductively following Section 4.4 of \cite{bkm}.
\begin{prop}[\cite{bkm} Lemma 4.9 and Theorem 4.10]\label{prop:bkmses}
    For any $i<n$, there is an injective homomorphism of right $H_{\beta_{i+1}}$-modules $q\Delta(\beta_i)\otimes \Delta(\alpha_{i+1})\rightarrow \Delta(\alpha_{i+1})\otimes \Delta(\beta_{i})$ descended from left multiplication by $\tau_1\tau_2\dots \tau_{n-1}$.
\end{prop}
We define $\Delta(\beta_i)$ to be the corresponding quotient. So, we have that $\Delta(\beta)$ is the quotient of $ 1_{\vec{\beta}}H_\beta$ by the right $H_\beta$-submodule generated by the $\{1_{\vec{\beta}}\tau_j\tau_{j+1}\dots \tau_{n-1}\}_{j\leq n-1}$.

\begin{defn}
    For any $\alpha\in \N[I]$, we define the \emph{affine bimodule} $M_\alpha$ to be the $(H_{\alpha-\beta},H_\alpha)$-bimodule $H_{\alpha-\beta}\otimes \Delta(\beta)$, where we view $H_{\alpha-\beta}$ as a right $H_{\alpha-\beta}$-module and $\Delta(\beta)$ as a right $H_\beta$-module for the monoidal product. It inherits the left $H_{\alpha-\beta}$-module structure from $H_{\alpha-\beta}$. It is projective as a left $H_{\alpha-\beta}$-module since $H_{\alpha-\beta}$ is, although it is infinitely generated. As a left $H_{\alpha-\beta}$-module, $M_\alpha$ is in $\mathcal{H}_\alpha$.
    
    Due to exactness of the monoidal product, we may also explicitly define $M_\alpha$ as the quotient \[1_{*\vec{\beta}}H_\alpha/1_{*\vec{\beta}}(\tau_{-1},\tau_{-2}\tau_{-1},\dots, \tau_{-(n-1)}\dots \tau_{-1})H_\alpha.\] Here, the denominator is the $(H_{\alpha-\beta},H_\alpha)$-subbimodule of $1_{*\vec{\beta}}H_\alpha$ generated by the set \[\{1_{*\vec{\beta}}\tau_{-1},1_{*\vec{\beta}}\tau_{-2}\tau_{-1},\dots,1_{*\vec{\beta}}\tau_{-(n-1)}\dots\tau_{-1}\}.\]
\end{defn}
These generators can be simplified.

\begin{lem}\label{lem:newgens}
    The graded $(H_{\alpha-\beta},H_\alpha)$-subbimodules of $1_{*\vec{\beta}}H_\alpha$ generated by  
    \[\{1_{*\vec{\beta}}\tau_{-1},1_{*\vec{\beta}}\tau_{-2},\dots,1_{*\vec{\beta}}\tau_{-(n-1)}\}\]
    and
    \[\{1_{*\vec{\beta}}\tau_{-1},1_{*\vec{\beta}}\tau_{-2}\tau_{-1},\dots,1_{*\vec{\beta}}\tau_{-(n-1)}\dots\tau_{-1}\}\]
    coincide.
\end{lem}

\begin{proof}
    We compute that $1_{*\vec{\beta}}\tau_{-k}\dots \tau_{-1}\times \tau_{-1}\dots \tau_{-(k-1)}=1_{\vec{\beta}}\tau_{-k}$. 
\end{proof}

We now have a well-defined functor $E_{0,\alpha}\coloneqq M_\alpha \otimes_\alpha \text{\textendash}:\He_\alpha\rightarrow \He_{\alpha-\beta}$. Taking the direct sum gives our desired functor 
\[E_0\coloneqq \bigoplus_\alpha E_{0,\alpha}.\] Although $E_0$ resembles a restriction functor, this functor is not the adjoint of the functor for taking the monoidal product with a standard left module for $\beta$ due to the $q$-antilinearity of taking adjoints.

Before giving a more explicit description of $M_\alpha$, we show that the functor $E_0$ yields exactly the representation described in the introduction upon decategorifying. More precisely, we first write that the lowest root vector for $\mathfrak{sl}_{n+1}$ can be written in terms of the Chevalley generators as $v=[F_n,[F_{n-1},\dots [F_2,F_1]]]$. We will show that, after taking Grothendieck groups, the functor $\bigoplus_\alpha M_\alpha\otimes_\alpha \text{\textendash}$ acts as $[E_n^*,[E_{n-1}^*,\dots [E_2^*,E_1^*]_q]_q]_q$, where $[A,B]_q=AB-qBA$. The functors $1_{*i}H_\alpha\otimes_\alpha \text{\textendash}$ for restriction along $\alpha_i$ categorify the $E_i^*$. We categorify a bracket $[A,B]_q$ by finding an injective natural transformation $BA\rightarrow AB$ of degree 1, and taking the cokernel. 

\begin{lem}\label{lem:K0action}
    On $K_0(\He)$, the action of $[E_0]$ is $[E_n^*,[E_{n-1}^*,[\dots[E_2^*,E_1^*]_q]_q]_q]_q$.
\end{lem}
\begin{proof}
   Denote by $M_{\alpha,i}$ the $(H_{\alpha-\beta_i},H_\alpha)$-bimodule $H_{\alpha-\beta_i}\otimes \Delta(\beta_i)$, where $H_{\alpha-\beta_i}$ is viewed as a right $H_{\alpha-\beta_i} $-module for the monoidal product. If we take the monoidal product of $H_{\alpha-\beta_{i+1}}$ with the defining short exact sequence for $\Delta(\beta_{i+1})$, we obtain 
    \[M_{\alpha-\alpha_{i+1},i}\otimes_{\alpha-\alpha_{i+1}}1_{*i+1}H_\alpha\hookrightarrow 1_{*i+1}H_{\alpha-\beta_i}\otimes_{\alpha-\beta_i}M_{\alpha,i}\twoheadrightarrow M_{\alpha,i+1},\] where the injection is degree $1$. As a short exact sequence of projective left modules, this sequence splits.
    So, for any graded projective left $H_{\alpha}$-module $B$, we can tensor this sequence to obtain the split-exact sequence
    \[(M_{\alpha-\alpha_{i+1},i}\otimes_{\alpha-\alpha_i}1_{*i+1}H_\alpha)\otimes_\alpha B\hookrightarrow (1_{*i+1}H_{\alpha-\beta_i}\otimes_{\alpha-\beta_i}M_{\alpha,i})\otimes_\alpha B\twoheadrightarrow M_{\alpha,i+1}\otimes_\alpha B.\] This gives the following equality in $K_0(\He_{\alpha-\beta_{i+1}})$:
    \[[(1_{*i+1}H_{\alpha-\beta_i}\otimes_{\alpha-\beta_i}M_{\alpha,i})\otimes_\alpha B]-q[(M_{\alpha-\alpha_{i+1},i}\otimes_{\alpha-\alpha_i}1_{*i+1}H_\alpha)\otimes_\alpha B]=[M_{\alpha,i+1}\otimes_\alpha B].\]
    Since we have that $[1_{*i+1}H_\alpha\otimes_\alpha \text{\textendash}]$ acts via $E_{i+1}^*$, an induction on $i$ gives the claim.
\end{proof}

We will now produce an explicit basis for $M_\alpha$ as a left $H_{\alpha-\beta}$-module. This will be needed for later computations. Our approach to doing so is to explicitly split the defining quotient map from $1_{*\vec{\beta}}H_\alpha$ to $M_\alpha$. We will partition the PBW basis elements of Theorem \ref{pbw} for a particular set of reduced presentations.

The result we prove is somewhat more general than we need. There are a few reasons for this. Firstly, the more general result may be needed if one wants to construct a similar functor outside of type $A_n$. Additionally, this sort of explicit splitting of a quotient map may be of independent interest.

For now, we fix $\vec{Q}$ \emph{any} quiver without simple 
edge loops, $\alpha\in \N[I]$, and $v\in I^\beta$ for some $\beta \leq \alpha$ coordinate-wise. We set $n=|\beta|$. As before, we drop $\vec{Q}$ from our algebras.

 Let $S$ be any subset of $S_{|\alpha|}$. We can partition $S_{|\alpha|}$ by defining $P\subset S_{|\alpha|}$ as the set of all $\omega\in S_{|\alpha|}$ for which there exists $\sigma\in S$ with $\sigma \leq \omega$, and denote by $P^c$ its complement in $S_{|\alpha|}$. We will produce a corresponding splitting of $1_{*v}H_\alpha$ via the PBW basis. Before we can attempt to split the PBW basis of $1_{*v}H_\alpha$, we will need to once and for all fix a reduced presentation for each $\omega\in S_{|\alpha|}$. We do so as follows.  We define a total order on reduced presentations of a given symmetric group element. For two reduced presentations of the same element, $\sigma=s_{i_1}s_{i_2}\dots s_{i_k}=s_{j_1}s_{j_2}\dots s_{j_k}$, we say that $s_{j_1}s_{j_2}\dots s_{j_k} > s_{i_1}s_{i_2}\dots s_{i_k}$ iff the list of indices is larger lexicographically, i.e. $j_1 > i_1$ or  $j_1=i_1$, $j_2>i_2$ or $j_1=i_1,j_2=i_2, j_3> i_3\dots $. For each $ \sigma \in P^c$, pick its minimal presentation. For $\sigma\in P$, pick the maximal presentation among those prefixed by the $\omega$ for $\omega\in S$.  The precise choice of presentation is not as important, but is convenient for our later analysis. These presentations will be referred to as the \emph{chosen} reduced presentations to emphasize that this is somewhat arbitrary, but to also distinguish them for use in proof. For a chosen reduced presentation $\sigma=s_{i_1}\dots s_{i_k}$, we associate an element of the KLR algebra $\tau_\sigma :=\tau_{i_1}\dots \tau_{i_k}$. If we need to associate a KLR algebra element to some non-chosen reduced presentation, we will denote this element as $\tau'_\sigma$ or $\tau''_\sigma$.

Suppose we also have a splitting of graded left $\kf[x_1,\dots x_{-(n+1)}]$-modules $1_{*v}\kf[x_1,\dots x_{-1}]=L\oplus R$. For example, one could use $1_{*v}\kf[x_1,\dots x_{-1}]=1_{*v}\kf[x_1,\dots x_{-n}]\oplus 1_{*v}(x_{-n}-x_{-(n-1)},\dots x_{-2}-x_{-1})\kf[x_1,\dots x_{-1}]$. This gives a splitting of left $\kf[x_1,\dots x_{-(n+1)}]$-modules
\[1_{*v}H_\alpha = 1_{*v}\bigoplus_{\sigma\in P^c}L\tau_\sigma \oplus 1_{*v}(R\bigoplus_{\sigma \in P^c} \tau_\sigma \oplus \bigoplus_{\sigma \in P} \kf [x_1,\dots x_{-1}]\tau_\sigma).\]
For convenience, we denote now the first summand as $N_l$ and the second as $N_r$. Our main idea is that under certain conditions on $v$, $L$, $R$, and $S$ that this will in fact be a splitting of left $H_{\alpha-\beta}$ modules. 
\begin{thm}\label{thm:newproj}
    Suppose that we have an $(H_{\alpha-\beta},H_\alpha)$-subbimodule $N$ of $1_{*v}H_\alpha$ along with subsets $G_x,G_\tau\subset N$ and a left $\kf[x_1,\dots x_{-(n+1)}]$-module $L$ that satisfy the following conditions. 
    \begin{enumerate}
    \item \label{it:mostpoly} The set $R:= 1_{*v}\kf[x_1,\dots,x_{-1}]\cap N$ of all polynomials in $N$ yields a splitting of graded left $\kf[x_1,\dots x_{-(n+1)}]$-modules $1_{*v}\kf[x_1,\dots x_{-1}]=L\oplus R$, and $L\supset 1_{*v}\kf[x_1,\dots x_{-n}]$. Moreover, for any $i < |\alpha|-n$, we have that $\tau_i L \subset L+L\tau_i$. 
    \item\label{it:rightside} $N=H_{\alpha-\beta}1_{*v}(G_x\cup G_\tau)H_\alpha$, where $G_x$ is a finite set of polynomials contained in $\kf[x_{-n}\dots x_{-1}]$, and $G_\tau$ is a set of words in the $\tau_{-i}$ for $i < n$. Note that the elements of $G_x$ and $G_\tau$ commute with elements of $H_{\alpha-\beta}$. For each $\tau_{i_1}...\tau_{i_k}\in G_\tau$, we require that $s_{i_1}...s_{i_k}$ is a reduced expression for the corresponding element of $S_{|\alpha|}$.  
        \item\label{it:goodpoly} For each $\tau\in G_\tau$, the braid diagram associated to $1_{*v}\tau$ does not swap any two braids with the same label. Because of this, the braid relations hold in $1_{*v}H_\alpha$ for these $1_{*v}\tau$. We will therefore denote elements of $G_\tau$ by $\tau_\sigma$, where $\sigma$ is the corresponding $S_{|\alpha|}$ element. We note that this notation for $\tau_\sigma$ agrees with the associated $1_{*v}H_\alpha$ element of $\sigma$. We take $S$ to be the set of $\sigma$ for which $\tau_\sigma\in G_\tau$, and define $P$, $P^c$, $N_r$, and $N_l$ as above.
       
        \item\label{it:lcmerror} ``Lcm errors" of $G_\tau$ elements are prefixed by $G_\tau$ elements. Let $\sigma, \sigma'\in S_{|\alpha|}$ be such that $\tau_\sigma$, $\tau_{\sigma'}\in G_\tau$. Let $\Delta$ be the least common multiple of $\sigma$ and $\sigma'$ in $S_{|\alpha|}$, i.e., the unique least element above both $\sigma$ and $\sigma'$ in the weak Bruhat order. Then if $s_{i_1}\dots s_{i_k}$ and $s_{j_1}\dots s_{j_k}$ are two reduced presentations for $\Delta$, then $1_{*v}(\tau_{i_1}\tau_{i_2}\dots \tau_{i_k}-\tau_{j_1}\dots \tau_{j_k})\in N_r$.
        
        \item\label{it:removethissoon} Fix $\tau_\sigma\in G_\tau$ with reduced presentation $\tau_\sigma=\tau_{i_1}\dots \tau_{i_k}$. Fix also any $j \leq k$ and denote $\Delta = lcm(s_{i_j}, s_{i_{j+1}}\dots s_{i_{k}})$. Then if $\tau'_\Delta$ and $\tau''_\Delta$ are the KLR algebra elements associated to any two reduced presentations of $\Delta$, then $1_{*v}(\tau_{i_1}\dots \tau_{i_{j}}\tau'_\Delta-\tau_{i_1}\dots \tau_{i_j}\tau''_{\Delta})\in N_r$. In case $j=k$, we say  $\Delta=s_{i_k}$. 
        \item\label{it:isodelete} ``Isomorphism deletions" of $G_\tau$ elements are prefixed by $G_\tau$ elements.  Fix $\tau_\sigma\in G_\tau$ with reduced presentation $\tau_\sigma=\tau_{i_1}\dots \tau_{i_k}$. Suppose some $\tau_{i_j}$ is an isomorphism, i.e., swaps braids of non-adjacent and distinct labels in the braid diagram for $1_{*v}\tau_\sigma$. Denote $\Delta = lcm(s_{i_j}, s_{i_{j+1}}\dots s_{i_{k}})$. Then if $1_{*v}\tau'_{s_{i_j}\Delta}$ is a $1_{*v}H_\alpha$ element associated to any reduced presentation of $s_{i_j}\Delta$, then we have $1_{*v}\tau_{i_1}\dots \tau_{i_{j-1}}\tau'_{s_{i_j}\Delta}\in N_r$. In case $j=k$, we say  $\Delta=s_{i_k}$.

        \item\label{it:nonisodelete} ``No other polynomial coefficients in $N$".  Fix $\tau_\sigma\in G_\tau$ with reduced presentation $\tau_\sigma=\tau_{i_1}\dots \tau_{i_k}$. Suppose some $\tau_{i_j}$ is not an isomorphism, i.e., swaps braids of adjacent labels in the braid diagram for $1_{*v}\tau_\sigma$. Denote $\Delta=lcm(s_{i_j},s_{i_{j+1}}\dots s_{i_k})$. Then if $1_{*v}\tau'_{s_{i_j}\Delta}$ is a $1_{*v}H_\alpha$ element associated to any reduced presentation of $s_{i_j}\Delta$, then we have that $1_{*v}\tau_{i_1}\dots \tau_{i_{j-1}}(x_{i_j}-x_{i_j+1})^l\tau'_{s_{i_j}\Delta}\in N_r$. Here, $l=-C_{a,b}$ for $a$ and $b$ the labels of the two braids swapped by $\tau_{i_j}$ in $1_{*v}\tau_{\sigma}$. In case $j=k$, we say  $\Delta=s_{i_k}$.
    \end{enumerate}
    Then $N=N_r$, and there is a splitting of left $H_{\alpha-\beta}$-modules $1_{*v}H_\alpha=N_l\oplus N_r$. 
\end{thm}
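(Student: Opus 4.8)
### Proof proposal for Theorem \ref{thm:newproj}

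The plan is to establish the decomposition $1_{*v}H_\alpha = N_l \oplus N_r$ as left $H_{\alpha-\beta}$-modules by first checking it as a decomposition of left $\kf[x_1,\dots,x_{-(n+1)}]$-modules (which follows immediately from the PBW basis theorem, Theorem \ref{pbw}, together with the chosen reduced presentations and the polynomial splitting $1_{*v}\kf[x_1,\dots,x_{-1}] = L \oplus R$ from hypothesis \eqref{it:mostpoly}), and then upgrading it to a decomposition of left $H_{\alpha-\beta}$-modules. The module $H_{\alpha-\beta}$ is generated over $\kf[x_1,\dots,x_{-(n+1)}]$ by the $\tau_i$ for $i < |\alpha|-n$, so the upgrade amounts to a single verification: that $\tau_i N_l \subseteq N_l$ for all such $i$. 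Granting that, $N_l$ is a left $H_{\alpha-\beta}$-submodule and a $\kf[x_1,\dots,x_{-(n+1)}]$-module complement forces $N_r$ to be one as well; since $1_{*v}H_\alpha$ is left-projective (a summand of $H_\alpha$ restricted along the non-unital inclusion, using the left-projectivity statement implicit in Subsection \ref{targets}), both summands are left-projective. It also remains to identify $N_r$ with the bimodule $N$, i.e.\ to show $N = N_r$: the inclusion $N_r \subseteq N$ should be the easy direction since $N_r$ is spanned by products of $H_{\alpha-\beta}$-elements, the generators $G_x \cup G_\tau$, and $H_\alpha$-elements, while $N \subseteq N_r$ needs that $N_r$ is already a subbimodule containing the generators — again reducing to closure computations.

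First I would set up the PBW bookkeeping: fix the chosen reduced presentations as prescribed (minimal for $\sigma \in P^c$, maximal-among-$S$-prefixed for $\sigma \in P$), so that $\{x_1^{l_1}\cdots x_{-1}^{l_{|\alpha|}}\tau_\sigma 1_{*v}\}$ is a $\kf$-basis of $1_{*v}H_\alpha$ and the split into $N_l$ (the span with $\sigma \in P^c$ and polynomial part in $L$) and $N_r$ (everything else) is literally a partition of basis elements. Then the core of the proof is to show $\tau_i \cdot (\ell \tau_\sigma 1_{*v}) \in N_l$ for $\ell \in L$ and $\sigma \in P^c$, for each $i < |\alpha| - n$. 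I would push $\tau_i$ to the right past $\ell$ using KLR relation (5): by the last clause of hypothesis \eqref{it:mostpoly}, $\tau_i L \subseteq L + L\tau_i$, and since $i < |\alpha|-n$ the transposition $s_i$ acts only on the first $|\alpha|-n$ strands, commuting with the "tail" braid $\tau_\sigma$ in a controlled way. So the problem reduces to analyzing $\tau_i \tau_\sigma 1_{*v}$ and showing it lies in $N_l$ modulo $N_r$. If $l(s_i\sigma) = l(\sigma)+1$, then $s_i\sigma \in P^c$ again (one must check $P^c$ is closed under such left multiplication, which follows from Proposition \ref{prop:inversion} and the fact that being above \emph{some} element of $S$ is preserved upward) and $\tau_i\tau_\sigma 1_{*v}$ equals $\tau_{s_i\sigma}1_{*v}$ up to the "lcm error" controlled by Corollary \ref{cor:error} — and hypotheses \eqref{it:lcmerror}, \eqref{it:removethissoon} are exactly what guarantees these error terms land in $N_r$. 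If instead $l(s_i\sigma) = l(\sigma)-1$, then $s_i \leq \sigma$, I would rewrite $\tau_\sigma = \tau_i \tau_{\sigma'}$ with $\sigma' = s_i\sigma$ shorter, apply KLR relation (7) so $\tau_i^2 = Q_{v_i,v_{i+1}}(x_i,x_{i+1})$, and then either this is zero (if the two strands carry the same label — but hypothesis \eqref{it:goodpoly}/\eqref{it:isodelete} handles isomorphism strands) or it is a polynomial in the $x$'s times $\tau_{\sigma'}$, and hypotheses \eqref{it:isodelete}, \eqref{it:nonisodelete} say precisely that the resulting terms with these polynomial coefficients lie in $N_r$. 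The remaining case $l(s_i\sigma) = l(\sigma)$ cannot occur for $\sigma \in S_{|\alpha|}$.

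The main obstacle I anticipate is the bookkeeping in the $l(s_i\sigma) = l(\sigma)+1$ case: after rewriting $\tau_i \tau_\sigma 1_{*v}$ in terms of the \emph{chosen} reduced presentation of $s_i\sigma$, the difference is not zero but a $\kf[x]$-linear combination of $\tau_\omega 1_{*v}$ with $l(\omega) < l(s_i\sigma)$, and I need \emph{all} of these error terms — possibly arising from iterated braid moves — to lie in $N_r$, not just the leading one. This is where the full strength of hypotheses \eqref{it:lcmerror}--\eqref{it:nonisodelete} is needed, and the argument should proceed by downward induction on length: the join $\Delta = \mathrm{lcm}(s_i, \sigma)$ (which by Lemma \ref{lem:abpres} satisfies $s_i\sigma \leq \Delta$) organizes the braid moves, and each individual move either replaces one reduced presentation of a common sub-element by another (covered by \eqref{it:removethissoon}) or deletes a generator via relation (5) or (7), producing a strictly shorter word with a polynomial coefficient (covered by \eqref{it:isodelete}, \eqref{it:nonisodelete}). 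Threading the induction so that every intermediate word encountered is controlled — in particular, that once a term is certified to be in $N_r$ all its further rewrites stay in $N_r$ — is the delicate point; I would isolate it as a lemma stating that $N_r$ is closed under left multiplication by any $\tau_j$ with $j < |\alpha|-n$ \emph{and} under the substitutions coming from KLR relations, proved by the same downward induction. Once that closure lemma is in hand, the theorem follows by combining it with the polynomial-level splitting.
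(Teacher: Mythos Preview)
There are two genuine gaps in your plan.

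\textbf{First}, the claim that once $N_l$ is shown to be a left $H_{\alpha-\beta}$-submodule, ``a $\kf[x_1,\dots,x_{-(n+1)}]$-module complement forces $N_r$ to be one as well'' is false: a direct-sum complement of a submodule is in general only a quotient, not a submodule. The paper avoids this by proving $N_r=N$ \emph{first}; since $N$ is a subbimodule by hypothesis, $N_r$ inherits the left $H_{\alpha-\beta}$-module structure for free. Only afterwards does the paper separately show that $N_l$ is closed under left multiplication by $\tau_i$ for $i<|\alpha|-n$.

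\textbf{Second}, you have misallocated hypotheses \eqref{it:lcmerror}--\eqref{it:nonisodelete}. These conditions concern elements of $G_\tau$, which by \eqref{it:rightside} are words in the $\tau_{-j}$ for $j<n$ and act only on the rightmost $n$ strands. They have nothing to say about products $\tau_i\tau_\sigma$ with $i<|\alpha|-n$: such $s_i$ commute with every element of $S$, so no LCM with a $G_\tau$-element ever arises there, and your appeal to \eqref{it:lcmerror}--\eqref{it:nonisodelete} in the $N_l$-closure argument is vacuous. The paper's $N_l$-closure argument instead exploits this commutation via a factorization $\sigma=\sigma'\sigma''$ with $\sigma'\in S_{|\alpha|-n}$ (Lemmas \ref{prop1}--\ref{prop4}), so that $\tau_i$ interacts only with $\tau_{\sigma'}$; the resulting error terms from Corollary~\ref{cor:error} have polynomial coefficients in $\kf[x_1,\dots,x_{-(n+1)}]\subset L$ and land in $N_l$ by induction on length, using only condition \eqref{it:mostpoly}. (In particular, your expectation that these errors land in $N_r$ is backwards.)

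Where conditions \eqref{it:lcmerror}--\eqref{it:nonisodelete} are genuinely needed is exactly the step you dismiss as ``again reducing to closure computations'': proving $N\subseteq N_r$, i.e.\ that $\tau_\sigma\tau_\omega\in N_r$ for each $\tau_\sigma\in G_\tau$ and arbitrary $\omega\in S_{|\alpha|}$. This is closure of $N_r$ under \emph{right} $H_\alpha$-multiplication, and it is the hard half. The paper runs an induction on $l(\sigma)+l(\omega)$: when $l(\sigma\omega)<l(\sigma)+l(\omega)$ one produces a $\tau_{i_j}^2$ inside $\tau_\sigma$ and applies \eqref{it:removethissoon}, \eqref{it:isodelete}, \eqref{it:nonisodelete} to absorb the result; when $l(\sigma\omega)=l(\sigma)+l(\omega)$ one must pass from a $\sigma$-prefixed reduced expression of $\sigma\omega$ to the chosen $\sigma'$-prefixed one (for possibly a different $\sigma'\in S$), and \eqref{it:lcmerror} controls the discrepancy. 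Your outline supplies no mechanism for this step.
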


 The set $R$ is closed under multiplication by any $\kf[x_1,\dots x_{-1}]$ element since $N$ is a right $H_\alpha$-module. This implies $N_r$ is also closed under multiplication on the left by any polynomial. Conditions \ref{it:isodelete} and \ref{it:nonisodelete} could easily be combined into one, but are kept separate for conceptual reasons. In practice, condition \ref{it:isodelete} is best viewed as a restriction on $G_\tau$, whereas condition \ref{it:nonisodelete} is best viewed as a restriction on $R$. In fact, in this article, we will only need the weaker form of condition \ref{it:nonisodelete} in which, after bringing the polynomial to the left side, the polynomial coefficient of $1_{*v}\tau_{i_1}\dots \tau_{i_{j-1}}(x_{i_j}-x_{i_{j}+1})^l$ is in $N$. Conditions \ref{it:goodpoly}, \ref{it:lcmerror}, and \ref{it:removethissoon} are immediate in the cases considered in this paper. The set $G_x$ is also not needed in this paper, but may be useful for types besides $A_n$. We will eventually take  $v=\vec{\beta}$, $G_x=\emptyset$, $G_\tau=\{\tau_{-1},\dots \tau_{-(n-1)}\}$, $R=1_{*v}(x_{-1}-x_{-2},\dots x_{-(n+1)}-x_{-n})\kf[x_1,\dots x_{-1}]$, and $L=1_{*v}\kf[x_1,\dots x_{-n}]$ to deduce the relevant result for the affine bimodule $M_\alpha$, so these motivating choices should be kept in mind during the proof. 
 
 Note that while $N$ may have multiple generating sets like $G_\tau$, not all of them will satisfy conditions \ref{it:lcmerror} through \ref{it:nonisodelete}. These 4 conditions essentially tell us how to take a given set of generators of this form and complete them to one that is compatible with the sorts of error terms that arise in this proof. A basic example of this procedure is in Lemma \ref{lem:newgens}.
Note also that this is generally not a splitting of right $H_\alpha$-modules.

\begin{proof}\renewcommand{\qedsymbol}{}
In this proof, we generally drop the idempotent $1_{*v}$ from elements of $1_{*v}H_\alpha$ for simplicity. We first show that $N_r$ is exactly $N$, which implies also that $N_r$ is a left $H_{\alpha-\beta}$-module. By conditions \ref{it:mostpoly}, \ref{it:rightside}, and \ref{it:goodpoly}, we see that $N_r \subset N$. For the reverse inclusion, first note that $G_x 1_{*v}H_\alpha\subset N_r$ by definition. For the various $\tau_\sigma\in G_\tau$, we study the image of the maps 
    \[\tau_\sigma\times:1_{*\sigma^{-1}(v)}H_\alpha\rightarrow 1_{*v}H_\alpha.\]
   
    We have the following decomposition of $\kf[x_1,\dots x_{-1}]$ modules;
    \[1_{*\sigma^{-1}(v)}H_\alpha = \bigoplus_{\omega\in S_{|\alpha|}}\kf[x_1,\dots x_{-1}]1_{*\sigma^{-1}(v)}\tau_\omega.\]
    By condition \ref{it:goodpoly}, $\tau_{\sigma}\times  (\kf[x_1,\dots x_{-1}]1_{*\sigma^{-1}(v)})\subset \kf[x_1,\dots x_{-1}]\tau_{\sigma} 1_{*\sigma^{-1}(v)}$. $N_r$ is closed under multiplication by any polynomial, so we need only show each $1_{*v}\tau_{\sigma} \tau_\omega\in N_r$. Fix $\omega \in S_{|\alpha|}$. We argue by induction on $l(\omega)+l(\sigma)$. We have two cases.

    \begin{enumerate}
        \item $l(\sigma\omega) < l(\sigma)+l(\omega)$. Write $\sigma=s_{i_1}s_{i_2}\dots s_{i_k}$ as its chosen reduced presentation. Then for some $j\leq k$, we have that $l(s_{i_{j+1}}\dots s_{i_k}\omega)=l(\omega)+k-j$, and $\omega':=s_{i_{j+1}}\dots s_{i_k}\omega$ has a reduced presentation prefixed by $s_{i_j}$. If $\Delta$ is the least common multiple of $s_{i_j}$ and $s_{i_{j+1}}\dots s_{i_k}$, then $\omega'$ therefore has a reduced presentation prefixed by $\Delta$ and $s_{i_{j+1}}\dots s_{i_k}$. Pick such a presentation, and fix the resulting presentation of $\Delta$. We consider the difference $\tau_{\sigma} \tau_\omega-\tau_{i_1}\dots \tau_{i_{j}}\tau'_\Delta \tau'_{\Delta^{-1}\omega'}$ where $\tau'_\Delta$ is associated to this new presentation, and $\tau'_{\Delta^{-1}\omega'}$ is the $1_{*v}H_\alpha$ element associated to some reduced presentation of $\Delta^{-1}\omega'$. By choice of the presentation of $\Delta$, this difference is $\tau_{\sigma}$ times a sum of error terms, and so this difference is in $N_r$ by induction.  By Lemma \ref{lem:abpres}, $\Delta$ has a presentation prefixed by $s_{i_j}\dots s_{i_k}$. If $\tau''_\Delta$ is associated to such a presentation, then by condition \ref{it:removethissoon} and induction, $\tau_{i_1}\dots \tau_{i_j}\tau'_\Delta\tau'_{\Delta^{-1} \omega'}-\tau_{i_1}\dots \tau_{i_j}\tau''_\Delta\tau'_{\Delta^{-1} \omega'}\in N_r$. We now compute $\tau_{i_1}\dots \tau_{i_j}\tau''_{\Delta}\tau'_{\Delta^{-1} \omega'}=\tau_{i_1}\dots \tau_{i_j}^2\tau'_{s_{i_j}\Delta}\tau'_{\Delta^{-1}\omega'}$. By condition \ref{it:goodpoly},  $\tau_{i_j}$  either swaps two braids of adjacent labels or of distinct non-adjacent labels. If this $\tau_{i_j}$ swaps two braids of adjacent labels, then $\tau_{i_1}\dots \tau_{i_j}^2\tau'_{s_{i_j}\Delta}\tau'_{\Delta^{-1}\omega'}=\pm \tau_{i_1}\dots \tau_{i_{j-1}}(x_{i_j}-x_{i_j+1})^l\tau'_{s_{i_j}\Delta}\tau'_{\Delta^{-1}\omega'}$. By condition \ref{it:nonisodelete} and induction, this is contained in $N_r$. If instead this $\tau_{i_j}$ swaps braids of non-adjacent distinct labels, then $\tau_{i_1}\dots \tau_{i_j}^2\tau_{s_{i_j}\Delta}'\tau'_{\Delta^{-1}\omega'}=\tau_{i_1}\dots \tau_{i_{j-1}}\tau_{s_{i_j}\Delta}'\tau'_{\Delta^{-1}\omega'}$. By condition \ref{it:isodelete} and induction, this too is contained in $N_r$. Altogether, this shows $\tau_{\sigma} \tau_\omega\in N_r$ as well.
        \item $l(\sigma\omega)=l(\sigma)+l(\omega)$. Then $\sigma\omega\in P$. If the chosen presentation for $\sigma\omega$ is prefixed by $\sigma$, then $\tau_{\sigma} \tau_\omega-\tau_{\sigma\omega}=\tau_{\sigma} \tau_\omega-\tau_{\sigma} \tau'_\omega=\tau_{\sigma}(\tau_\omega-\tau'_\omega)$, where $\tau'_\omega$ corresponds to some  presentation of $\omega$. By induction, this difference is in $N_r$. $\tau_{\sigma\omega}$ is in $N_r$ by definition, so this implies $\tau_{\sigma} \tau_\omega$ is as well.

        Now suppose instead there exists a distinct $\sigma'\in S$ for which the chosen presentation of $\sigma\omega$ is prefixed by $\sigma'$. Firstly, we take a reduced presentation for $\sigma\omega$ that is prefixed by both $\Delta:=lcm(\sigma,\sigma')$ and the chosen reduced presentation of $\sigma$. This yields also a particular reduced presentation of $\omega$. By a similar argument to above, $\tau_{\sigma} \tau_\omega-\tau_{\sigma} \tau'_\omega \in N_r$, where $\tau'_\omega$ is the $1_{*v}H_\alpha$ element associated to the new presentation for $\omega$. We may rewrite $\tau_{\sigma} \tau'_\omega = \tau'_\Delta \tau'_{\Delta^{-1}\sigma\omega}$, where $\tau'_\Delta$ is associated to a presentation for $\Delta$ that is prefixed by $\sigma$, and $\tau'_{\Delta^{-1}\sigma\omega}$ is the element associated to some reduced presentation of $\Delta^{-1}\sigma\omega$. By condition \ref{it:lcmerror} and induction, the difference $\tau'_\Delta \tau'_{\Delta^{-1}\sigma\omega}-\tau_\Delta \tau'_{\Delta^{-1}\sigma\omega}\in N_r$. Now, the chosen reduced presentations for $\Delta$  must begin in the chosen reduced presentation of $\sigma'$ by choice of reduced presentation. $\tau_\Delta \tau'_{\Delta^{-1}\sigma\omega}-\tau_{\sigma\omega}$ is $\tau_{\sigma'}$ times a sum of error terms. Induction gives that this difference is in $N_r$. Finally, $\tau_{\sigma\omega}$ is of course in $N_r$. Altogether, these facts imply $\tau_{\sigma} \tau_\omega\in N_r$.
    \end{enumerate}
\end{proof}

We require a few easy technical lemmas before we can prove that $N_l$ also has the structure of a left $H_{\alpha-\beta}$-module.

  \begin{lem}\label{prop1}
        Let $\sigma\in P^c$, and let its chosen reduced presentation be $s_{i_1}s_{i_2}\dots s_{i_k}$. Then if $i_1 <|\alpha|-n$, then $\sigma':=s_{i_2}\dots s_{i_k}\in P^c$ and has the given reduced presentation as its chosen one.
    \end{lem}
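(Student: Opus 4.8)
\textbf{Proof plan for Lemma \ref{prop1}.}

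The plan is to reason directly about the definition of the set $P$ and the construction of chosen reduced presentations. Recall that $\sigma \in P^c$ means there is no $\omega \in S$ with $\omega \leq \sigma$ in the weak Bruhat order; since $S = G_\tau$ consists of words in the $\tau_{-i}$ for $i < n$, every $\omega \in S$ satisfies $\omega \leq \Delta_0$ for $\Delta_0$ the longest element supported on the simple transpositions $s_{-1}, \dots, s_{-(n-1)}$, i.e. those $s_j$ with $j \geq |\alpha| - n + 1 > |\alpha| - n$. So all generators of $P$ ``live at the right end'' of the braid, using only transpositions $s_j$ with index $j > |\alpha| - n$.

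First I would show $\sigma' \in P^c$. Suppose for contradiction $\sigma' \in P$, so some $\omega \in S$ has $\omega \leq \sigma'$. By Proposition \ref{prop:inversion}, $\text{In}_\omega \subset \text{In}_{\sigma'}$. Since the chosen presentation of $\sigma$ is $s_{i_1} s_{i_2} \dots s_{i_k}$ with $\sigma' = s_{i_2}\dots s_{i_k}$ and $l(\sigma) = l(\sigma') + 1$, we have $\sigma = s_{i_1}\sigma'$ with $l(s_{i_1}\sigma') = l(\sigma')+1$, so by Proposition \ref{prop:redinversion} (applied with the factorization reversed appropriately, or directly from the inversion-set description of a length-additive product) $\text{In}_{\sigma'} \subset \text{In}_{\sigma}$. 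Hence $\text{In}_\omega \subset \text{In}_{\sigma}$, giving $\omega \leq \sigma$ by Proposition \ref{prop:inversion}, contradicting $\sigma \in P^c$. Therefore $\sigma' \in P^c$.

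Next I would check that the given word $s_{i_2}\dots s_{i_k}$ is indeed the \emph{chosen} presentation of $\sigma'$. Since $\sigma' \in P^c$, by definition the chosen presentation of $\sigma'$ is the lexicographically minimal reduced presentation among all of them. So I must rule out the existence of a reduced presentation $s_{j_2}\dots s_{j_k}$ of $\sigma'$ that is lexicographically smaller than $s_{i_2}\dots s_{i_k}$. If there were one, then $s_{i_1} s_{j_2}\dots s_{j_k}$ would be a reduced presentation of $\sigma$ (it has the right length and represents $s_{i_1}\sigma' = \sigma$, and it is reduced since its length equals $l(\sigma)$), and it would be lexicographically smaller than the chosen presentation $s_{i_1} s_{i_2}\dots s_{i_k}$ of $\sigma$ — because the two agree in the first slot ($i_1 = i_1$) and then $(j_2,\dots) < (i_2,\dots)$ lexicographically. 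This contradicts the minimality defining the chosen presentation of $\sigma \in P^c$. Hence $s_{i_2}\dots s_{i_k}$ is lexicographically minimal for $\sigma'$, i.e. it is the chosen presentation.

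The main obstacle is essentially bookkeeping: making sure that ``lexicographically smaller after deleting the first letter'' correctly propagates to ``lexicographically smaller before deletion,'' which works precisely because the deleted letter $s_{i_1}$ is common to both candidate presentations of $\sigma$; and making sure the hypothesis $i_1 < |\alpha| - n$ is used — it guarantees $s_{i_1}$ does not interact with the right-end transpositions defining $S$, so that the implication $\sigma' \in P \Rightarrow \sigma \in P$ above is not the only place it matters, but it is harmless here since that implication used only the inversion-set containment $\text{In}_{\sigma'} \subset \text{In}_\sigma$, which holds regardless. I would double-check whether the condition $i_1 < |\alpha| - n$ is genuinely needed for this particular statement or is only there for uniformity with the companion lemmas that follow (about $N_l$ being a left $H_{\alpha-\beta}$-module); in the write-up I would state it as a hypothesis and note that it is what lets $\tau_{i_1}$ act within the ``$L$-part'' of the splitting, which is the use to which this lemma will be put.
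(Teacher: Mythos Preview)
Your argument for $\sigma' \in P^c$ contains a genuine error: the claim $\text{In}_{\sigma'} \subset \text{In}_\sigma$ is false in general for a length-additive factorization $\sigma = s_{i_1}\sigma'$. Proposition~\ref{prop:redinversion} yields $\text{In}_{s_{i_1}} \subset \text{In}_\sigma$, not $\text{In}_{\sigma'} \subset \text{In}_\sigma$; length-additivity of a \emph{left} factorization does not give weak \emph{right} Bruhat comparability of the right factor. Concretely, with $\sigma = s_1 s_2$ and $\sigma' = s_2$ one has $\text{In}_{\sigma'} = \{(2,3)\}$ while $\text{In}_\sigma = \{(1,2),(1,3)\}$, so $\text{In}_{\sigma'} \not\subset \text{In}_\sigma$. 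In that example, if $S=\{s_2\}$ then $\sigma' \in P$ but $\sigma \in P^c$, so the implication $\sigma' \in P \Rightarrow \sigma \in P$ really fails without the hypothesis $i_1 < |\alpha|-n$; your closing remark that this hypothesis might be inessential here is therefore mistaken.

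The fix is exactly what the paper does: use that each $\omega \in S$ is a word in the $s_j$ with $j > |\alpha|-n$, hence commutes with $s_{i_1}$. Equivalently, $\text{In}_\omega$ contains only pairs $(a,b)$ with $a,b > |\alpha|-n$, and since $s_{i_1}$ fixes every such position one has $(a,b)\in\text{In}_{\sigma'}$ iff $(a,b)\in\text{In}_{s_{i_1}\sigma'}$ for those particular pairs. This gives $\omega \leq \sigma'$ iff $\omega \leq \sigma$, i.e.\ $\sigma \in P^c$ iff $\sigma' \in P^c$. Your proof of the second claim (that $s_{i_2}\dots s_{i_k}$ is the chosen presentation of $\sigma'$) is correct and matches the paper's argument.
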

    \begin{proof}
         The elements of $S$ are words in the $s_{-i}$ for $i < n$ by condition \ref{it:rightside}.  Since $i_1 < |\alpha|-n$, we have that $s_{i_1}$ commutes with $\omega$ for each $\omega \in S$. So, $\sigma\in P^c$ iff $\sigma'$ is. The claim about the reduced presentation follows from the lexicographic choice of reduced presentations for $P^c$ elements.
    \end{proof}
    \begin{cor}\label{pcdecomp}
        For $\sigma\in P^c$, we may uniquely write $\sigma=\sigma'\sigma''$, where $\sigma'\in S_{|\alpha|-n}$, $\sigma''$ either is prefixed by $s_{j}$ for $j\geq |\alpha|-n$ or is trivial, and the chosen reduced presentation of $\sigma$ is the concatenation of those of $\sigma'$ and $\sigma''$.
    \end{cor}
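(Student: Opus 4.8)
The plan is to recognize the asserted decomposition as the parabolic coset factorization of $\sigma$ relative to the standard parabolic subgroup $S_{|\alpha|-n}=\langle s_1,\dots,s_{|\alpha|-n-1}\rangle\subset S_{|\alpha|}$, and to verify that it is compatible with the choice of reduced presentations. Since $\sigma\in P^c$, its chosen presentation $s_{i_1}\cdots s_{i_k}$ is the lexicographically minimal one. I would let $t$ be the largest index with $i_1,\dots,i_t<|\alpha|-n$ (so $t=0$ if $i_1\geq|\alpha|-n$, and $t=k$ if all $i_j<|\alpha|-n$), and set $\sigma':=s_{i_1}\cdots s_{i_t}$ and $\sigma'':=s_{i_{t+1}}\cdots s_{i_k}$. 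Then $\sigma=\sigma'\sigma''$ with $l(\sigma)=l(\sigma')+l(\sigma'')$, $\sigma'\in S_{|\alpha|-n}$, and $\sigma''$ is trivial or prefixed by $s_{i_{t+1}}$ with $i_{t+1}\geq|\alpha|-n$; this produces a decomposition of the required shape.

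Next I would check that the chosen presentations concatenate, i.e. that $s_{i_1}\cdots s_{i_t}$ and $s_{i_{t+1}}\cdots s_{i_k}$ are the chosen presentations of $\sigma'$ and $\sigma''$. For $\sigma''$ this falls out of applying Lemma \ref{prop1} exactly $t$ times: each strip of the leftmost generator is legal (its index is $<|\alpha|-n$), keeps us in $P^c$, and leaves the truncated word as the chosen presentation, so in particular $\sigma''\in P^c$ with chosen presentation $s_{i_{t+1}}\cdots s_{i_k}$. For $\sigma'$ I would first note $\sigma'\in P^c$ by an inversion-set comparison (Proposition \ref{prop:inversion}): every $\omega\in S$ is a word in the $s_{-j}$ for $j<n$, so $\text{In}_\omega$ involves only pairs inside $\{|\alpha|-n+1,\dots,|\alpha|\}$, while $\text{In}_{\sigma'}$ involves only pairs inside $\{1,\dots,|\alpha|-n\}$, so $\omega\leq\sigma'$ forces $\omega=e$. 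Hence the chosen presentation of $\sigma'$ is the lexicographically minimal reduced presentation; if it were strictly smaller than $s_{i_1}\cdots s_{i_t}$, appending $s_{i_{t+1}}\cdots s_{i_k}$ (lengths add, so the concatenation is reduced and represents $\sigma$) would produce a reduced presentation of $\sigma$ smaller than $s_{i_1}\cdots s_{i_k}$, contradicting minimality. Concatenating the two then recovers $s_{i_1}\cdots s_{i_k}$, as required.

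For uniqueness I would observe that since the chosen presentation of $\sigma''$ is lexicographically minimal and begins with a generator of index $\geq|\alpha|-n$, we get $s_i\not\leq\sigma''$ for every $i<|\alpha|-n$, so $\sigma''$ is the minimal-length representative of the left coset $S_{|\alpha|-n}\sigma$; uniqueness of the factorization with $\sigma'\in S_{|\alpha|-n}$ and $\sigma''$ of this minimal form (and lengths adding) is the standard parabolic factorization in Coxeter groups (cf.\ \cite{bourb}), which can also be extracted directly from Proposition \ref{prop:redinversion}. To close, I would note that any decomposition meeting the corollary's hypotheses must have its two chosen presentations concatenating to $s_{i_1}\cdots s_{i_k}$, and that the left factor contributes only generators of index $<|\alpha|-n$ while the right factor is trivial or starts with a generator of index $\geq|\alpha|-n$, so the splitting point is forced to be $t$ and the decomposition equals $\sigma'\sigma''$.

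I expect the compatibility step to be where care is needed: Lemma \ref{prop1} handles the tail $\sigma''$ immediately, but the left factor $\sigma'$ requires the separate verification $\sigma'\in P^c$ together with the lexicographic lifting argument, and stating uniqueness cleanly hinges on reading the hypothesis on $\sigma''$ as the condition that it be a minimal coset representative.
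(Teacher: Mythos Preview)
Your proof is correct and matches the paper's intended argument: the corollary is stated without proof, meant to follow by iterating Lemma~\ref{prop1}. You supply details the paper omits, notably the verification that $s_{i_1}\cdots s_{i_t}$ is the chosen presentation of $\sigma'$ (Lemma~\ref{prop1} only handles the tail $\sigma''$), and your lexicographic lifting argument for this step is correct.

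One point worth making explicit concerns uniqueness. You rightly flag that it ``hinges on reading the hypothesis on $\sigma''$ as the condition that it be a minimal coset representative.'' Under the literal weak-Bruhat reading of ``prefixed by $s_j$'' used elsewhere in the paper, uniqueness can actually fail: take $|\alpha|=5$, $n=3$, $S=\{s_3s_4\}$, and $\sigma=s_1s_3\in P^c$; then both $(\sigma',\sigma'')=(s_1,s_3)$ and $(e,s_1s_3)$ satisfy all three conditions, since $s_3\leq s_1s_3$ and the chosen presentation of $s_1s_3$ is $s_1s_3$. The paper's subsequent uses (Lemma~\ref{prop2} and the proof of Theorem~\ref{thm:newproj}, where one reads ``the first term of $\tau_{\sigma''}$ is $\tau_j$ for some $j\geq |\alpha|-n$'') confirm that the intended condition is on the first letter of the chosen presentation of $\sigma''$; under that reading your final paragraph gives the correct uniqueness argument.
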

     \begin{lem}\label{prop2}
        Let $\sigma\in P^c$, and let its chosen reduced presentation be $s_{i_1}s_{i_2}\dots s_{i_k}$. Let also $\sigma'\in S_{|\alpha|-n}$. If $i_1 \geq |\alpha|-n$, then $\sigma'\sigma\in P^c$ and its chosen reduced presentation is the concatenation of those of $\sigma'$ and $\sigma$.
    \end{lem}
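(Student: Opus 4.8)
\emph{Proof plan.} This is the converse direction to Corollary \ref{pcdecomp}, and the plan is to translate everything into the inversion-set language of Subsection \ref{bruhat} and then run a short induction on $l(\sigma')$. The crux, and the step I expect to require the most care, is extracting the right structural fact about $\sigma$ from the hypothesis $i_1\geq|\alpha|-n$. Since $\sigma\in P^c$, its chosen presentation is lexicographically minimal, so $i_1=\min\{j:s_j\leq\sigma\}$; hence $i_1\geq|\alpha|-n$ says exactly that $(j,j+1)\notin\text{In}_\sigma$ for every $j\leq|\alpha|-n-1$ (by Proposition \ref{prop:inversion}). The elementary fact that a decrease somewhere in the sequence $\sigma^{-1}(a),\sigma^{-1}(a+1),\dots,\sigma^{-1}(b)$ forces a decrease at some adjacent pair then upgrades this to: $\text{In}_\sigma$ contains no pair $(a,b)$ with $b\leq|\alpha|-n$, equivalently $\sigma^{-1}$ is strictly increasing on $\{1,\dots,|\alpha|-n\}$. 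Throughout I would use that $\sigma'\in S_{|\alpha|-n}$ permutes $\{1,\dots,|\alpha|-n\}$ while fixing $\{|\alpha|-n+1,\dots,|\alpha|\}$ pointwise, together with $(\sigma'\sigma)^{-1}=\sigma^{-1}(\sigma')^{-1}$.

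With this in hand, the length identity and membership in $P^c$ are short. For $(a,b)\in\text{In}_{\sigma'}$ one has $a<b\leq|\alpha|-n$ and $(\sigma')^{-1}(a)>(\sigma')^{-1}(b)$ with both values in $\{1,\dots,|\alpha|-n\}$, so applying the map $\sigma^{-1}$, strictly increasing on that set, gives $(a,b)\in\text{In}_{\sigma'\sigma}$; thus $\text{In}_{\sigma'}\subseteq\text{In}_{\sigma'\sigma}$, and Proposition \ref{prop:redinversion} yields $l(\sigma'\sigma)=l(\sigma')+l(\sigma)$, so concatenating reduced presentations of $\sigma'$ and $\sigma$ is reduced. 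For $P^c$, suppose $\rho\in S$ with $\rho\leq\sigma'\sigma$; every nonempty word in the $s_{-i}$ with $i<n$ permutes $\{|\alpha|-n+1,\dots,|\alpha|\}$, so each pair in $\text{In}_\rho$ has both entries $>|\alpha|-n$, where $\sigma'$ acts trivially, and hence $(a,b)\in\text{In}_{\sigma'\sigma}\iff(a,b)\in\text{In}_\sigma$ for such pairs. Then $\text{In}_\rho\subseteq\text{In}_{\sigma'\sigma}$ forces $\text{In}_\rho\subseteq\text{In}_\sigma$, i.e.\ $\rho\leq\sigma$, contradicting $\sigma\in P^c$. The same comparison with $\sigma'$ in place of $\sigma'\sigma$ shows $\sigma'\in P^c$, so every chosen presentation occurring below is lexicographically minimal.

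It then remains to identify the chosen presentation of $\sigma'\sigma$, which I would do by induction on $l(\sigma')$; the base case $\sigma'=e$ is the hypothesis. For $\sigma'\neq e$ with chosen presentation $s_{a_1}\cdots s_{a_m}$ (so $a_1\leq|\alpha|-n-1$), the argument above shows that for $j\leq|\alpha|-n-1$ the conditions $s_j\leq\sigma'\sigma$ and $s_j\leq\sigma'$ coincide, while any $j$ with $s_j\leq\sigma'\sigma$ not of this form has $j\geq|\alpha|-n>a_1$; hence $a_1=\min\{j:s_j\leq\sigma'\sigma\}$ and the chosen presentation of $\sigma'\sigma$ begins with $s_{a_1}$. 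Deleting that first letter leaves a lexicographically minimal presentation of $(s_{a_1}\sigma')\sigma$, so since $s_{a_1}\sigma'\in S_{|\alpha|-n}$ has length $l(\sigma')-1$ and chosen presentation $s_{a_2}\cdots s_{a_m}$, the inductive hypothesis gives that this equals $s_{a_2}\cdots s_{a_m}s_{i_1}\cdots s_{i_k}$; prepending $s_{a_1}$ finishes the proof. Once $\sigma^{-1}$ is known to be increasing on the first block, each step here is a routine inversion-set check — the first paragraph is where the real work lies.
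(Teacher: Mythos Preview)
Your proposal is correct and follows essentially the same approach as the paper: both establish that $\sigma^{-1}$ does not invert any pair within $\{1,\ldots,|\alpha|-n\}$ from the hypothesis $i_1\geq|\alpha|-n$, then use inversion-set containment to deduce that the concatenation is reduced and that $\sigma'\sigma\in P^c$. Your explicit induction on $l(\sigma')$ to identify the lex-minimal presentation is a careful filling-in of what the paper compresses into the single sentence ``the claim follows from this and from the choice of reduced presentation.''
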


    \begin{proof}
        Such a $\sigma'$  commutes with each of the $\omega$ with $\omega\in S$. So, a similar argument again gives that $\sigma'\sigma\in P^c$ iff $\sigma$ is. For the second claim, note that by Proposition \ref{prop:redinversion}, the concatenation of reduced presentations is not reduced iff there is a pair $1\leq a < b \leq |\alpha|$ with $(a,b)\in \text{In}_{\sigma'}$ and $(a,b)\notin \text{In}_{\sigma'\sigma}$, i.e. if $\sigma' \not\leq \sigma'\sigma$. By choice of reduced presentation, Proposition \ref{prop:inversion} gives that $\sigma^{-1}$ cannot invert any pair amongst the positions $1$ through $|\alpha|-n$. Since ${\sigma'}^{-1}$ can only invert such pairs, we have that $(a,b)\in \text{In}_{\sigma'}\implies (a,b)\in \text{In}_{\sigma'\sigma}$. The claim follows from this and from the choice of reduced presentation.   
    \end{proof}

    \begin{lem}\label{prop4}
         Let $\sigma\in P^c$, and factor it as $\sigma=\sigma'\sigma''$ according to Corollary \ref{pcdecomp}. Suppose that for some $a < b \leq |\alpha|-n$ we have that $(a,b)\in \text{In}_{\sigma}$. Then also $(a,b)\in \text{In}_{\sigma'}$.
    \end{lem}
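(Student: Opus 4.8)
The plan is to work directly with inversion sets. Write $\sigma = \sigma'\sigma''$ as in Corollary \ref{pcdecomp}, where $\sigma' \in S_{|\alpha|-n}$ and $\sigma''$ is either trivial or prefixed by some $s_j$ with $j \geq |\alpha|-n$, and where the chosen reduced presentation of $\sigma$ is the concatenation of those of $\sigma'$ and $\sigma''$. By Proposition \ref{prop:redinversion}, this last fact means $\text{In}_{\sigma'} \subset \text{In}_{\sigma}$, and more precisely $\text{In}_{\sigma} = \text{In}_{\sigma'} \sqcup (\sigma')(\text{In}_{\sigma''})$ in the usual decomposition of the inversion set of a product into reduced factors. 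So the real content is to show that the pairs $(a,b)$ with $a < b \leq |\alpha|-n$ that are inverted by $\sigma$ are all accounted for by $\sigma'$ alone, i.e. none of them come from the $\sigma''$ part.

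First I would record the structural fact about $\sigma''$: since $\sigma''$ has a reduced presentation prefixed by $s_j$ with $j \geq |\alpha|-n$, every reduced word for $\sigma''$ uses only transpositions $s_m$ with $m \geq |\alpha|-n$ — this is exactly Lemma \ref{prop1} applied repeatedly (or the commutation argument underlying it: any letter $s_m$ with $m < |\alpha|-n$ at the front of a reduced word for $\sigma''$ would, since $\sigma$ is $P^c$ and such $s_m$ commute past all of $S$, force that letter to the front of $\sigma$, contradicting the choice of the $\sigma',\sigma''$ split). Consequently $\sigma''$ fixes every position $\leq |\alpha|-n-1$ pointwise and, as a permutation, restricts to the identity on $\{1,\dots,|\alpha|-n-1\}$; in particular ${\sigma''}^{-1}$ does not invert any pair $(a,b)$ with $a<b\leq |\alpha|-n$.

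Next I would chase the pair $(a,b)$ through the factorization. We are given $(a,b) \in \text{In}_\sigma$ with $a < b \leq |\alpha|-n$, i.e. $\sigma^{-1}(a) > \sigma^{-1}(b)$. Write $\sigma^{-1} = {\sigma''}^{-1}{\sigma'}^{-1}$. Apply ${\sigma'}^{-1}$ first: since $\sigma' \in S_{|\alpha|-n}$, it sends $\{1,\dots,|\alpha|-n\}$ to itself, so $a' := {\sigma'}^{-1}(a)$ and $b' := {\sigma'}^{-1}(b)$ both lie in $\{1,\dots,|\alpha|-n\}$. Now apply ${\sigma''}^{-1}$: by the previous paragraph it fixes every position in $\{1,\dots,|\alpha|-n-1\}$, and it also maps the block $\{1,\dots,|\alpha|-n\}$ into itself setwise (its chosen reduced word only involves $s_m$ with $m\geq |\alpha|-n$, so $|\alpha|-n$ is the only position in that block it can move, and it can only swap it with $|\alpha|-n+1$, which a reduced prefix $s_{|\alpha|-n}$ would do—but either way $\{1,\dots,|\alpha|-n\}$ is not preserved if the $|\alpha|-n$ slot leaves; I should be careful here). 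The cleaner route: use that ${\sigma''}^{-1}$ restricted to $\{1,\dots,|\alpha|-n-1\}$ is the identity, so ${\sigma''}^{-1}$ is order-preserving on any two elements of $\{1,\dots,|\alpha|-n-1\}$. Thus if both $a',b' \leq |\alpha|-n-1$, then $\sigma^{-1}(a) = {\sigma''}^{-1}(a') $ compares with $\sigma^{-1}(b)$ exactly as $a'$ compares with $b'$; since $\sigma^{-1}(a) > \sigma^{-1}(b)$ we get $a' > b'$, i.e. ${\sigma'}^{-1}(a) > {\sigma'}^{-1}(b)$, which is precisely $(a,b) \in \text{In}_{\sigma'}$. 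The one remaining case, that one of $a',b'$ equals $|\alpha|-n$, I would handle by using Lemma \ref{prop2}'s argument that $\sigma^{-1}$ (equivalently $\sigma''$) cannot invert pairs within positions $1$ through $|\alpha|-n$ to pin down ${\sigma''}^{-1}$ on that exceptional position, or simply observe that then $b' = |\alpha|-n$ is the largest element of the block and $a' < b'$, and ${\sigma''}^{-1}$ being the identity below $|\alpha|-n$ together with it not inverting pairs in the block forces ${\sigma''}^{-1}(a') < {\sigma''}^{-1}(b')$, again contradicting nothing and giving $a' > b'$ from $\sigma^{-1}(a)>\sigma^{-1}(b)$—wait, this needs care.

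The main obstacle, and the step I would spend the most attention on, is exactly this bookkeeping about what ${\sigma''}^{-1}$ does to the single boundary position $|\alpha|-n$: the hypotheses only directly tell us ${\sigma''}^{-1}$ is the identity on $\{1,\dots,|\alpha|-n-1\}$, not on $|\alpha|-n$ itself. I expect the clean resolution is to invoke the inversion-set description from the proof of Lemma \ref{prop2}: since $\sigma \in P^c$, by the choice of reduced presentation and Proposition \ref{prop:inversion}, $\sigma^{-1}$ cannot invert any pair of positions lying entirely in $\{1,\dots,|\alpha|-n\}$ — but here we are told $(a,b)$ \emph{is} inverted by $\sigma^{-1}$ with $b \leq |\alpha|-n$, which looks contradictory unless I have the direction of the claim reversed. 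Re-reading: the claim is about $(a,b)\in\text{In}_\sigma$ implying $(a,b)\in\text{In}_{\sigma'}$, so such pairs \emph{can} occur and come entirely from the $S_{|\alpha|-n}$-factor $\sigma'$; this is consistent with Lemma \ref{prop2} only because there the relevant $\sigma$ was the $P^c$ element being multiplied on the left, not the full product. So the correct framing is: the chosen reduced word of $\sigma$ is (word of $\sigma'$)(word of $\sigma''$) with the $\sigma''$-part using only $s_m$, $m\geq|\alpha|-n$; hence in the disjoint-union formula $\text{In}_\sigma = \text{In}_{\sigma'} \sqcup \sigma'\!\cdot\!\text{In}_{\sigma''}$, every pair in $\sigma'\!\cdot\!\text{In}_{\sigma''}$ involves at least one position $> |\alpha|-n$ (because $\text{In}_{\sigma''}$ does, and $\sigma'$ fixes the complement of $\{1,\dots,|\alpha|-n\}$), so a pair with both coordinates $\leq |\alpha|-n$ must lie in $\text{In}_{\sigma'}$. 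That disjoint-union formula, derived from Proposition \ref{prop:redinversion} by induction on the letters of $\sigma''$, is the heart of the argument; once it is in place the conclusion is immediate and the boundary-position worry dissolves since $\text{In}_{\sigma''}$ itself contains no pair inside $\{1,\dots,|\alpha|-n\}$ (its reduced word never touches the transposition $s_{|\alpha|-n-1}$ or below, and the smallest position any $s_m$ with $m\geq |\alpha|-n$ moves is $|\alpha|-n$, paired with something $\geq |\alpha|-n+1$).
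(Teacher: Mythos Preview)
Your argument rests on the structural claim that every reduced word for $\sigma''$ uses only transpositions $s_m$ with $m \geq |\alpha|-n$, from which you conclude that $\sigma''$ fixes $\{1,\dots,|\alpha|-n-1\}$ pointwise and that $\text{In}_{\sigma''}$ contains no pair inside $\{1,\dots,|\alpha|-n\}$. This claim is false, and everything downstream depends on it. What Corollary~\ref{pcdecomp} actually gives is only that the chosen (lexicographically minimal) reduced word for $\sigma''$ \emph{begins} with some $s_j$, $j \geq |\alpha|-n$; equivalently, $\sigma''$ is not \emph{prefixed} by any $s_i$ with $i < |\alpha|-n$. Letters of small index can still appear later in the word. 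For a concrete example take $|\alpha|=4$, $n=2$ (so $|\alpha|-n=2$) and $S=\{s_3\}$: then $\sigma = s_2 s_1 \in P^c$ since $s_3 \not\leq s_2 s_1$, the decomposition is $\sigma'=e$, $\sigma''=s_2 s_1$, and $\sigma''$ moves position~$1$. Your justification via Lemma~\ref{prop1} only rules out small-index letters at the \emph{front} of $\sigma''$, not in the interior, so it does not establish what you claim.

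The repair is close to your disjoint-union idea but uses the correct, weaker property of $\sigma''$, and this is exactly the paper's argument. Assume for contradiction that $(a,b)\in\text{In}_\sigma$ with $b\leq|\alpha|-n$ but $(a,b)\notin\text{In}_{\sigma'}$, so ${\sigma'}^{-1}(a) < {\sigma'}^{-1}(b)$. Since $\sigma^{-1}={\sigma''}^{-1}{\sigma'}^{-1}$ inverts $(a,b)$, the pair $({\sigma'}^{-1}(a),{\sigma'}^{-1}(b))$ lies in $\text{In}_{\sigma''}$, and both coordinates are $\leq |\alpha|-n$ because $\sigma'\in S_{|\alpha|-n}$. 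Now any inversion $(a',b')\in\text{In}_{\sigma''}$ forces an adjacent inversion $(i,i+1)\in\text{In}_{\sigma''}$ for some $a'\leq i<b'$ (walk along ${\sigma''}^{-1}(a'),\dots,{\sigma''}^{-1}(b')$ and find a descent), hence $s_i\leq\sigma''$ by Proposition~\ref{prop:inversion} with $i<|\alpha|-n$. This contradicts that $\sigma''$ is not prefixed by any such $s_i$. The point is that ``not prefixed by small $s_i$'' already controls $\text{In}_{\sigma''}$ on the block $\{1,\dots,|\alpha|-n\}$; you do not need, and do not have, the stronger statement about the letters of $\sigma''$.
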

    \begin{proof}
    Suppose towards contradiction that $\sigma'^{-1}(a) < \sigma'^{-1}(b)$. We must have $(\sigma'^{-1}(a), \sigma'^{-1}(b))\in \text{In}_{\sigma''}$. By definition of $\sigma'$, $\sigma'^{-1}(a)$ and $\sigma'^{-1}(b) \leq |\alpha|-n$. Therefore, Proposition \ref{prop:inversion} implies that $s_i \leq \sigma''$ for some $i < |\alpha|-n$, contradicting the definition of $\sigma''$ and the choice of reduced presentations. 

    \end{proof}

    \begin{proof}[Proof (of Theorem \ref{thm:newproj})]
    We now show that $N_l$ is a left $H_{\alpha-\beta}$-module.
    For $i < |\alpha|-n$, we study the image of the maps
    \[\tau_i\times :1_{*v}H_\alpha\rightarrow 1_{*v}H_\alpha.\]
    By condition \ref{it:mostpoly},  \[\tau_i(\bigoplus_{\sigma\in P^c}1_{*v}L\tau_\sigma)\subset \bigoplus_{\sigma\in P^c}1_{*v}L\tau_\sigma \oplus \bigoplus_{\sigma \in P^c}1_{*v}L\tau_i\tau_\sigma.\] So, it is sufficient to show that for $\sigma\in P^c$, we have that $\tau_i\tau_\sigma\in N_l$.  Note that $l(s_i\sigma)=l(\sigma)+1$ iff $\sigma$ has no reduced presentation prefixed by $s_i$. Following Corollary \ref{pcdecomp}, we may write $\sigma=\sigma'\sigma''$ where $\sigma'\in S_{|\alpha|-n}$, the first term of $\tau_{\sigma''}$ is $\tau_{j}$ for some $j\geq |\alpha|-n$, and $\tau_\sigma=\tau_{\sigma'}\tau_{\sigma''}$. We argue by induction on $l(\sigma)$. We have two cases.
    \begin{enumerate}
         
         \item 
        $l(s_i\sigma)=l(\sigma)+1$. By Corollary \ref{cor:error}, $\tau_i\tau_{\sigma'}-\tau_{s_i\sigma'} \in \bigoplus\limits_{\substack{\omega\in S_{|\alpha|-n}\\
        l(\omega)<l(s_i\sigma')}}\kf[x_1,\dots x_{-(n+1)}]\tau_{\omega}$.  Induction on $l(\sigma)$ implies $(\tau_i\tau_{\sigma'}-\tau_{s_i\sigma'})\tau_{\sigma''}\in N_l$. As stated above, $\tau_i\tau_{\sigma'}\tau_{\sigma''}=\tau_i\tau_\sigma$. Lemma \ref{prop2} implies $\tau_{s_i\sigma'}\tau_{\sigma''}=\tau_{s_i\sigma}$. We therefore have $\tau_i\tau_\sigma-\tau_{s_i\sigma}\in N_l$. Evidently, $\tau_{s_i\sigma}\in N_l$. We then have that $\tau_i\tau_\sigma=\tau_{s_i\sigma}+(\tau_i\tau_\sigma-\tau_{s_i\sigma})\in N_l$  as well.
      
    \item $l(s_i\sigma)=l(\sigma)-1$. Then by Lemma \ref{prop4}, since $\sigma$ has a reduced presentation prefixed by $s_i$, so does $\sigma'$. Applying the argument of case 1, we have that $\tau_{\sigma}-\tau_i\tau_{s_i\sigma}\in N_l$. An induction on length shows that $\tau_i(\tau_\sigma-\tau_i\tau_{s_i\sigma})=\tau_i\tau_\sigma-\tau_i^2\tau_{s_i\sigma}$ is contained in $N_l$. Note that, depending on the vector $w$, $1_{w}\tau_i^2$ may be $0$  or $\pm1_{w}(x_i-x_{i+1})^k$ for some $k\geq 0$. In any case, $\tau_i^2\tau_{s_i\sigma}$ is in $N_l$. This implies $\tau_i\tau_\sigma$ is as well.
    \end{enumerate}
    \end{proof}

   \begin{cor}\label{cor:affinebimodulebasis}
       The $(H_{\alpha-\beta},H_\alpha)$-bimodule $M_\alpha$ has a vector space basis given by the equivalence classes of the elements $\{1_{w\vec{\beta}}p(x_1,\dots x_{-n})\tau_\sigma\}$, where $w$ is an arbitrary element of $I^{\alpha-\beta}$, $p(x_1,\dots x_{-n})$ is an arbitrary monomial, and $\sigma$ is an arbitrary element of $S_{|\alpha|}$ not prefixed by any of $s_{-1}$ through $s_{-(n-1)}$.
   \end{cor}
   Note that taking $\alpha=\beta$ shows that $\Delta(\beta)\simeq \kf[x_1]$ as a vector space.
   \begin{proof}
        Lemma \ref{lem:newgens} gives us the relevant construction of $M_\alpha$. More precisely, the objects in Theorem \ref{thm:newproj} match up to those in the definition of $M_\alpha$ via $v=\vec{\beta}$, $N=1_{*\vec{\beta}}(\tau_{-1},\tau_{-2},\dots \tau_{-(n+1)})H_\alpha$, $G_x=\emptyset$, $G_\tau=\{\tau_{-1},\dots \tau_{-(n-1)}\}$, and $L=1_{*\vec{\beta}}\kf[x_1,\dots x_{-n}]$. Condition \ref{it:rightside} is immediate, and conditions \ref{it:goodpoly}, \ref{it:lcmerror}, and \ref{it:removethissoon} follow from noting that $\vec{\beta}$ has no repeated coordinates, and so the braid relations hold for the relevant $\tau_\sigma$. Condition \ref{it:isodelete} holds since none of these $\tau_{-i}$ are isomorphisms. Condition \ref{it:nonisodelete} holds since the $G_\tau$ elements are all length one. We compute that $1_{*\vec{\beta}}\tau_{-i}^2=1_{*\vec{\beta}}(x_{-(i+1)}-x_{-i})$ when $i< n$. It is easy to see from here that $R=1_{*\vec{\beta}}(x_{-1}-x_{-2},x_{-2}-x_{-3},\dots x_{-(n-1)}-x_{-n})\kf[x_{1},\dots x_{-1}]$, so condition \ref{it:mostpoly} holds. All conditions hold, so Theorem \ref{thm:newproj} applies. 
   \end{proof}

Note that Theorem \ref{thm:newproj} is strictly stronger than the earlier results of this subsection. Since $1_{*\vec{\beta}}H_\alpha$ is projective as a left $H_{\alpha-\beta}$-module, the theorem can be used to prove projectivity of the left module $M_\alpha$. One can also deduce Lemma \ref{lem:K0action}.

\subsection{Controlling error terms}\label{subsec:control}
We state and prove a few technical lemmas that will be used later. The natural transformations that we define in the next subsection all come from maps that make sense at the level of KLR algebras. At that level, the desired KLR relations only hold up to error terms involving shorter elements of the symmetric group. The lemmas here show that these error terms disappear upon passing to the quotient $M_\alpha$. This subsection will also make clear why there are only a few valid choices for our presentation of the lowest weight root vector.

For the rest of this subsection, we fix an ADE type quiver $\vec{Q}$ with vertex set $I$ and fix $\alpha\in \N[I]$.  We denote $k:=|\alpha|$ for simplicity of notation. As before, we drop $\vec{Q}$ from our algebras.

\begin{defn}
    Given any $\sigma\in S_k$, we say that a triple $1\leq a < b < c \leq k$ is a \emph{triple inversion} for $\sigma$ if $\sigma^{-1}(a) > \sigma^{-1}(b) > \sigma^{-1}(c)$.
\end{defn}
\begin{defn}
    Given any fixed $\sigma\in S_{k}$ and $v\in I^\alpha$, we define the \emph{obstruction set} of $(\sigma,v)$ to be the set $\mathcal{S}$ of triple inversions $(a,b,c)$ of $\sigma$ with $v_a=v_c$, and $v_b$ adjacent to $v_a$ in our Dynkin diagram. For $\tau_\sigma 1_{v}\in H_\alpha 1_{v}$ associated to any reduced presentation of $\sigma$, we may also refer to this set as the obstruction set of $\tau_\sigma 1_{v}$.
    \end{defn}
 These sorts of triples are exactly those for which $\tau_1\tau_2\tau_1 1_{v_av_bv_c}\neq \tau_2\tau_1\tau_21_{v_av_bv_c}$. We will only use this set $\mathcal{S}$ for bookkeeping purposes.

Theorem \ref{thm:bourb} immediately gives the following.
\begin{prop}
    If $\mathcal{S}$ is empty, then all associated  $H_\alpha 1_v$ elements for $\sigma$ are equal, i.e. the braid relations hold in $H_\alpha 1_v$ for $\sigma$.
\end{prop}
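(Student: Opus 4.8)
The statement to prove is: if the obstruction set $\mathcal{S}$ of $(\sigma,v)$ is empty, then all KLR algebra elements $\tau_\sigma 1_v$ associated to reduced presentations of $\sigma$ coincide in $H_\alpha 1_v$. The natural strategy is to use Theorem \ref{thm:bourb} to pass between any two reduced presentations of $\sigma$ by a sequence of elementary moves, and then to check that each elementary move leaves $\tau_\sigma 1_v$ unchanged in $H_\alpha 1_v$.

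First I would recall that Theorem \ref{thm:bourb} says any two reduced presentations of a fixed $\omega \in S_k$ differ by a finite sequence of far-commutation relations $s_is_j = s_js_i$ (for $|i-j|>1$) and braid relations $s_is_{i+1}s_i = s_{i+1}s_is_{i+1}$. Correspondingly, in $H_\alpha$ the far-commutation $\tau_i\tau_j = \tau_j\tau_i$ holds on the nose by KLR relation (6), so those moves never cause any trouble. The only potentially problematic moves are the braid moves: KLR relation (8) gives
\[
(\tau_{i+1}\tau_i\tau_{i+1} - \tau_i\tau_{i+1}\tau_i)1_w = \delta_{w_i,w_{i+2}}\,\frac{Q_{w_i,w_{i+1}}(x_{i+2},x_{i+1}) - Q_{w_i,w_{i+1}}(x_i,x_{i+1})}{x_{i+2}-x_i}\,1_w,
\]
so $\tau_{i+1}\tau_i\tau_{i+1}1_w = \tau_i\tau_{i+1}\tau_i1_w$ unless $w_i = w_{i+2}$ and $w_{i+1}$ is such that $Q_{w_i,w_{i+1}}$ is nonconstant — which, given the definition $Q_{ij}(u,v) = (1-\delta_{ij})(v-u)^{m_{ij}}(u-v)^{m_{ji}}$ for an ADE (hence simply-laced) quiver, happens exactly when $w_i = w_{i+2} \neq w_{i+1}$ and $w_{i+1}$ is adjacent to $w_i$ in the Dynkin diagram.

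The key step is then to track, through the sequence of moves provided by Theorem \ref{thm:bourb} applied to $\sigma$, what vectors $w$ appear as the relevant idempotent labels. At the stage of the computation where a braid move $s_{i+1}s_is_{i+1} \leftrightarrow s_is_{i+1}s_i$ is applied inside a reduced word for $\sigma$, the three strands being permuted among positions $i,i+1,i+2$ must, in the original word ending at $1_v$, have been routed from three positions $a<b<c$ that form a triple inversion of $\sigma$ — precisely because the local subword $s_{i+1}s_is_{i+1}$ (a reduced expression for a permutation reversing three adjacent positions) can occur in a reduced word for $\sigma$ only when $\sigma^{-1}$ inverts all three of the corresponding strand labels pairwise. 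The labels of those three strands, read in the original vector $v$, are $v_a, v_b, v_c$ (with the middle one sitting between the other two). Thus the braid move has an error term precisely when $v_a = v_c$ and $v_b$ is adjacent to $v_a$ in the Dynkin diagram, i.e.\ precisely when $(a,b,c) \in \mathcal{S}$. Since $\mathcal{S} = \emptyset$, every braid move encountered is error-free, so $\tau_\sigma 1_v$ is unchanged by each move, and hence all associated elements are equal.

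\textbf{Main obstacle.} The delicate point is the bookkeeping that identifies the three strand-labels involved in a braid move with $v_a, v_b, v_c$ for an honest triple inversion $(a,b,c)$ of $\sigma$ — i.e.\ making rigorous the claim that a reduced word for $\sigma$ ending at $1_v$ can only admit the local pattern forcing a nontrivial relation (8) when the corresponding three original positions form a triple inversion with the right adjacency/equality pattern on $v$. This is where one must be careful about the difference between the positions appearing in relation (8) (which are \emph{current} positions $i,i+1,i+2$ partway through the word) and the \emph{original} positions $a,b,c$ in $v$; the map between them is given by the partial product of transpositions applied so far, and one needs that the three strands sitting at positions $i,i+1,i+2$ at the moment the braid move is performed are exactly the images of strands from positions $a,b,c$ with $a<b<c$ reversed. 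I expect this to be a short but slightly fiddly argument via inversion sets (Propositions \ref{prop:inversion} and \ref{prop:redinversion}), after which the conclusion drops out immediately. The remark that the triples in $\mathcal{S}$ are "exactly those for which $\tau_1\tau_2\tau_1 1_{v_av_bv_c} \neq \tau_2\tau_1\tau_2 1_{v_av_bv_c}$" is essentially the local content of relation (8) in the simply-laced case, and can be quoted directly.
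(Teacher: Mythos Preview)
Your proposal is correct and follows exactly the approach the paper intends: the paper simply states that the proposition follows immediately from Theorem \ref{thm:bourb} (Matsumoto's theorem), and your argument spells out precisely why—far-commutations never produce error terms, and the braid-relation error term in KLR relation (8) is nonzero only when the three strands involved come from a triple in the obstruction set. The bookkeeping you flag as the main obstacle (identifying the labels at the intermediate positions $i,i+1,i+2$ with $v_a,v_b,v_c$ for a triple inversion $(a,b,c)$ of $\sigma$) is indeed the only substance, and it is the standard fact that the three strands participating in a braid move inside a reduced word are pairwise crossed exactly once, hence form a triple inversion; the paper takes this as understood.
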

In the quivers of interest (ADE type), adjacent vertices are connected by a unique edge. So, for $(a,b,c)\in \mathcal{S}$, we have that the error $(\tau_1\tau_2\tau_1-\tau_2\tau_1\tau_2)1_{v_av_bv_c}=\pm 1_{v_av_bv_c}$. So, performing three-term swaps of this form within a longer element, we introduce ``error terms" that delete certain intermediate $
\tau_i$. The error terms that arise from these swaps are exactly the obstruction to the braid relations holding for $\sigma$ in the KLR algebra.

Many of the upcoming calculations for our natural transformations involve only certain types of symmetric group elements. Fix $v=v_1v_2v_3\dots v_k\in I^\alpha$ a sequence of vertices in our quiver. Fix some $1\leq k_1\leq k_2\leq k$, and consider the subwords $\vec{r}=v_1\dots v_{k_1}$, $\vec{m}=v_{k_1+1}\dots v_{k_2}$, $\vec{l}=v_{k_2+1}\dots v_{k}$. Then we write that $v$ is the concatenation $\vec{r}\vec{m}\vec{l}$. Consider the swapped concatenation $\vec{l}\vec{m}\vec{r}$. In the symmetric group $S_k$, there is a unique element $\sigma$ taking $\vec{r}\vec{m}\vec{l}$ to $\vec{l}\vec{m}\vec{r}$, where we view each position as distinct regardless of label. However, in the KLR algebra $H_{\alpha}$, the braid relations do not necessarily hold, and therefore there may be multiple ways of expressing this swap. Note also that in the KLR algebra, our notation is such that $\tau_\sigma=1_{\vec{l}\vec{m}\vec{r}}\tau_\sigma 1_{\vec{r}\vec{m}\vec{l}}$, for $\tau_\sigma$ any associated $H_\alpha$ element of $\sigma$.

The main sorts of computations we will have to consider are those in which $\vec{l}$ is a singleton. In this case, while the braid relations may not hold in the KLR algebra, we show that they will hold in our quotient module $M$ or in the appropriate tensor. All braid diagrams in this paper will be drawn from top to bottom.

\begin{lem}\label{lem:errorobst}
    Suppose that $\vec{l}$ is length 1. Suppose $\sigma$ has a reduced presentation of the form \[\sigma=\sigma' s_{i}s_{i\pm 1}s_{i}\sigma'',\] 
    where $\sigma'$ and $\sigma''$ are  symmetric group elements written in a reduced presentation. Then the error element $\sigma'\sigma''$ has no triple inversions.
\end{lem}
\begin{proof}
Note that any triple inversion of $\sigma$ consists of a single element from each of $\vec{l}$, $\vec{m}$, and $\vec{r}$. So, the reduced braid diagram for such an error element will be as follows.

\begin{center}
\begin{tikzpicture}
\draw[decorate,decoration={brace,amplitude=5pt,raise=-1ex}](0,4.5) -- (4, 4.5);
\draw (2, 4.75) node[above]{$\vec{r}$};

\draw (2,3.9) node[above] {$\vec{r}_i=\vec{l}_1$};

\draw[decorate,decoration={brace,amplitude=5pt,raise=-1ex}](5,4.5) -- (7, 4.5);
\draw (6, 4.75) node[above]{$\vec{m}$};

\draw (8,4) node[above] {$\vec{l}_1$};
\begin{knot}[
clip width=5,
clip radius=8pt,
]
\strand [thick] (0,0)
to  (2,4);

\strand[thick] (1,0) to  (5,4);

\strand[thick] (2,0) to 
(6,4);

\strand[thick] (3,0) to (7,4);

\strand[thick] (4,0) to (0,4);

\strand[thick] (5,0) to (1,4);

\strand[thick] (6,0) to (8,4);

\strand[thick] (7,0) to (3,4);

\strand[thick] (8,0) to (4,4);

\end{knot}
\end{tikzpicture}
\end{center}
The claim is true by inspection.
\end{proof}

The following is the main result of this subsection.

\begin{lem}\label{rml}
    Suppose that $\vec{l}$ is length one and its sole vertex $v_k$ is adjacent to at most only the leftmost vertex $v_{k_1+1}$ of $\vec{m}$, where as above, $k_1$ is the rightmost position of $\vec{r}$ as it appears in $v$. Suppose also that $\mathcal{S}$ does not contain any triple $(k_1,k_i,k)$. Then, for any two associated KLR algebra elements $\tau_\sigma$ and $\tau'_\sigma\in H_\alpha 1_v$, we have that $\tau_\sigma 1_v=\tau'_\sigma 1_v$ up to addition of an element in the right submodule of $1_{\sigma(v)}H_\alpha$ generated by \[1_{\sigma(v)}\tau_{k-1}, 1_{\sigma(v)}\tau_{k-2}\tau_{k-1},\dots, 1_{\sigma(v)}\tau_{k-k_1+1}\dots \tau_{k-1}.\] 
\end{lem}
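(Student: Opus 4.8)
The strategy is to reduce the equality $\tau_\sigma 1_v = \tau'_\sigma 1_v$ to repeated application of the braid relation on three consecutive strands, while carefully tracking where the error terms $\pm(\tau_1\tau_2\tau_1 - \tau_2\tau_1\tau_2)1_{v_av_bv_c} = \pm 1_{v_av_bv_c}$ land. By Theorem \ref{thm:bourb}, any two reduced presentations of $\sigma$ differ by a sequence of commutation moves $s_is_j=s_js_i$ (which lift to honest equalities in $H_\alpha$ by KLR relation (6), hence introduce no error) and braid moves $s_is_{i+1}s_i = s_{i+1}s_is_{i+1}$. Thus it suffices to show: whenever we perform one braid move inside a reduced word for $\sigma$, the resulting change in $\tau_\sigma 1_v$ is either zero (when the obstruction set does not register that triple) or an element of the right submodule generated by the $1_{\sigma(v)}\tau_{k-1}, 1_{\sigma(v)}\tau_{k-2}\tau_{k-1}, \dots, 1_{\sigma(v)}\tau_{k-k_1+1}\cdots\tau_{k-1}$. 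Since that submodule is visibly closed under right multiplication by $H_\alpha$, a finite induction on the number of moves then finishes the argument.

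\textbf{Key steps.} First, I would set up the single-move reduction precisely: write a generic intermediate reduced word, isolate the three strands being braided, and invoke the KLR relation (8) (equivalently the classification of the error in ADE type, where adjacent vertices share a unique edge, so the error is exactly $\pm 1$ times the word with the middle $\tau_i$ deleted). Second, I would identify the triple $(a,b,c)$ of positions in $v$ participating in the move. The hypothesis that $\vec l$ has length one and $v_k$ is adjacent to at most the leftmost vertex of $\vec m$, together with the assumption that $\mathcal S$ contains no triple of the form $(k_1, k_i, k)$, is designed to force: either the triple inversion contributing an error does not come from the obstruction set at all (so the error coefficient vanishes and there is literally nothing to track), or — when it does contribute — the deleted-$\tau$ error term factors through exactly one of the listed generators $1_{\sigma(v)}\tau_{k-j}\cdots\tau_{k-1}$. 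This is where Lemma \ref{lem:errorobst} enters: it guarantees that the error word itself has no further triple inversions, so the error term is a genuine single (bracketed) $\tau$-expression of the prescribed shape rather than something that would require a further round of straightening. Third, I would check that the resulting error element, read off from its braid diagram, is a right-multiple of one of the $1_{\sigma(v)}\tau_{k-k_1+j}\cdots\tau_{k-1}$: the strand originally in position $k$ (the singleton $\vec l$) must cross past some suffix of the strands of $\vec r$, and the portion of that crossing that survives the deletion of the middle generator is exactly a consecutive product $\tau_{k-m}\cdots\tau_{k-1}$ sitting at the front (after pushing commuting generators past it using relation (6)).

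\textbf{Main obstacle.} The delicate part is the bookkeeping in the third step — showing that every error term, no matter which braid move produced it and at which stage of the straightening, lands in the specified right submodule and not merely in some larger ideal. The length-one condition on $\vec l$ is essential here: it guarantees every triple inversion of $\sigma$ uses the single position $k$, so the strand diagram of any error has the rigid form drawn in Lemma \ref{lem:errorobst}, and one can literally read off that the surviving crossings form an initial segment $\tau_{k-m}\cdots\tau_{k-1}$. I would handle this by arguing diagrammatically: after deleting the middle crossing of a braid triple $(k_1, k_i, k)$-type move, the $k$-strand now passes directly to its target, and its interaction with the $\vec r$-strands is precisely a product of adjacent transpositions moving it leftward past a block of $\vec r$, which is (up to commuting moves, costless by relation (6)) one of the listed generators times the rest. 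The subtlety I would be most careful about is making sure that no error term requires a triple inversion with $b$ drawn from $\vec r$ or with the $\vec l$-vertex adjacent to a non-leftmost vertex of $\vec m$ — exactly the configurations the hypotheses rule out — since those would produce error terms outside the claimed submodule; verifying this exclusion is really the heart of the lemma, and I expect it to come down to a short case analysis on the position of $b$ relative to $k_1$ and on the adjacency of $v_k$ in the Dynkin diagram.
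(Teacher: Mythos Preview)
Your approach matches the paper's: reduce to a single braid move via Theorem~\ref{thm:bourb}, identify the error as the word with the middle triple deleted, invoke Lemma~\ref{lem:errorobst} to see the error element has empty obstruction set, then read off from its braid diagram that it is prefixed by one of the listed generators.

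There is one point you gloss over that the paper handles explicitly. When you delete the three middle generators from a reduced word for $\sigma$, the resulting word $s_{i_1}\cdots s_{i_j}$ need not be reduced as a word in $S_k$. You only invoke relation (6) (far commutation) to rearrange the error, but a non-reduced word forces you through relation (7), $\tau_i^2 1_v = Q_{v_i,v_{i+1}}(x_i,x_{i+1})1_v$, which could in principle introduce polynomial coefficients and destroy the clean prefix you claim. The paper observes that the only way the error word can fail to be reduced is via a double crossing of the $\vec{l}$-strand with some block $\vec{m}'$ of strands from $\vec{m}$ lying strictly to the right of position $k_1+1$; it then uses precisely the hypothesis that $v_k$ is adjacent only to $v_{k_1+1}$ to conclude that $Q=1$ (distinct non-adjacent labels) or $\tau^2=0$ (equal labels) for each such crossing, so the error word reduces cleanly or vanishes. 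Without this step, Lemma~\ref{lem:errorobst} only controls the underlying permutation $\sigma_\epsilon$, not the KLR element $\tau_{i_1}\cdots\tau_{i_j}1_v$ you actually produced.

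Once the error word is known to be reduced, your remaining argument is exactly the paper's: the hypothesis excluding $(k_1,k_i,k)$ from $\mathcal{S}$ forces $\sigma_\epsilon(k)\neq k$, and the diagram in Lemma~\ref{lem:errorobst} makes the prefix claim immediate.
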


Note that even though $\vec{l}$ is length one, the size of $\mathcal{S}$ may be more than one depending on the contents of $\vec{m}$ and $\vec{r}$.

\begin{proof}

    By Theorem \ref{thm:bourb} we may assume that the two reduced presentations of $\sigma$ differ by a single braid move. So, we write that $\tau_\sigma 1_v = \tau_{i_1}\dots \tau_{i}\tau_{i+1}\tau_{i}\dots \tau_{i_j}1_v$ and $\tau'_\sigma 1_v = \tau_{i_1}\dots \tau_{i+1}\tau_{i}\tau_{i+1}\dots \tau_{i_j}1_v$. If there is no $\mathcal{S}$ member resulting from this braid move, then $\tau_\sigma 1_v=\tau'_\sigma 1_v$. Otherwise, the difference of these two elements is the error term $\epsilon:=\tau_{i_1}\dots \tau_{i_j}$. If this presentation for $\epsilon$ is not reduced, then by assumption on $\vec{m}$, and $\vec{l}$, it is only because it contains a subexpression of the form given in the diagram below. 
    
\begin{center}
    \begin{tikzpicture}
      
        \pic[
        ] at (6,0)
         {braid = {s_2 s_1 s_2 s_1}};
    
    \draw (6,0) node[above] {$\vec{r'}$};
    \draw (7,0) node[above] {$\vec{m'}$};
    \draw (8,0) node[above] {$\vec{l}$};

    \end{tikzpicture}.
\end{center}
In this diagram, $\vec{r'}$ is a subset of the $\vec{r}$ elements to the right of the $v_k$ copy involved in this swap, and $\vec{m'}$ is a subset of the $\vec{m}$ elements 
 to the right of $v_{k_1+1}$. By assumption on the adjacency of $\vec{m}$ and $\vec{l}$, the braid relations hold for this element, and so we may cancel the double swap of $\vec{l}$ with $\vec{m'}$ in $\epsilon$ to get either $1$ or $0$. We may therefore assume that the given presentation of $\epsilon$ is already reduced. So, $\epsilon$ is an associated $H_\alpha 1_v$ element for some $\sigma_\epsilon\in S_k$.  Next, Lemma \ref{lem:errorobst} shows that the obstruction set for $\sigma_\epsilon$ and this $v$ is empty, so it is enough to argue that $\sigma_\epsilon$ is prefixed by some $s_{k-1},\dots,  s_{k-k_1+1}\dots s_{k-2}s_{k-1}$. By assumption on $\mathcal{S}$, $\sigma_\epsilon$ does not fix $k$. The claim is now immediate from the diagram in Lemma \ref{lem:errorobst}.

\end{proof}

Note that this lemma also tells us how to identify which generator of this right ideal will prefix our error terms; if $\epsilon$ is such an error term resulting from the deletion of the triple $(a,b,k)$, then $\epsilon(k)=\sigma(a)$.

 We will conclude this subsection with some notation and a slight modification of the previous lemma. Recall that for any $p\in I$ and $n\in \N$, $H_{np}$ is the $n$-th nil affine Hecke algebra. Since each $\tau_i^2=0$ on this algebra, we cannot view the $\tau_i$ as generators of the group algebra for $S_n$. However, there is an embedding of algebras $\kf[q,q^{-1}][S_n]\hookrightarrow H_{np}$ given by $s_i\rightarrow (x_i-x_{i+1})\tau_i+1$. We henceforth denote by $s_i$ its image in $H_{np}$. Note that $\text{deg}(s_i)=0$. We collect a few simple results about these $s_i$ below. They are all deduced quickly from the KLR algebra relations.
\begin{prop}\label{sirels}
The following hold in $H_{np}$.
\begin{enumerate}
\item $s_ix_i=x_{i+1}s_i$.
\item $s_ix_{i+1}=x_is_i$.
\item $s_i=\tau_ix_{i+1}-x_{i+1}\tau_i=x_i\tau_i-\tau_ix_i$.
\item $s_is_{i+1}\tau_i=\tau_{i+1}s_is_{i+1}$.
\item $\tau_is_{i+1}s_i=s_{i+1}s_i\tau_{i+1}$.

\item 
$s_i\tau_{i+1}s_i=s_{i+1}\tau_is_{i+1}$

\end{enumerate}
\end{prop}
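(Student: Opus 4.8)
\textbf{Proof proposal for Proposition~\ref{sirels}.}

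The plan is to compute directly in $H_{np}$, using only the KLR relations of Definition~\ref{klr} specialized to the single vertex $p$. There $m_{pp}=0$, so $Q_{pp}=0$, which collapses relations (7) and (8) to $\tau_j^2=0$ and the honest braid relation $\tau_i\tau_{i+1}\tau_i=\tau_{i+1}\tau_i\tau_{i+1}$; relation (5) reads $\tau_j x_{j+1}=x_j\tau_j+1$ and $\tau_j x_j=x_{j+1}\tau_j-1$, while $\tau_j x_m=x_m\tau_j$ for $m\neq j,j+1$. Throughout, $s_i=(x_i-x_{i+1})\tau_i+1$ with $\deg s_i=0$, and I will freely write $a:=x_i-x_{i+1}$, $b:=x_{i+1}-x_{i+2}$, $c:=a+b=x_i-x_{i+2}$.

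First I would dispatch (1)--(3). For (1), substitute and push the single $x_i$ past $\tau_i$: $s_i x_i=a\tau_i x_i+x_i=a(x_{i+1}\tau_i-1)+x_i=ax_{i+1}\tau_i+x_{i+1}=x_{i+1}s_i$, using $a\tau_i^2=0$ implicitly and $x_i-a=x_{i+1}$; (2) is identical with the roles of $x_i,x_{i+1}$ interchanged. These two identities record the fact, to be reused below, that conjugation by $s_i$ transposes $x_i$ and $x_{i+1}$ and fixes every other $x_m$. Identity (3) is a one-line rearrangement of relation (5): $\tau_i x_{i+1}-x_{i+1}\tau_i=(x_i\tau_i+1)-x_{i+1}\tau_i=s_i$, and likewise $x_i\tau_i-\tau_i x_i=x_i\tau_i-(x_{i+1}\tau_i-1)=s_i$.

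The substantive identities are (4)--(6). Here I would substitute $s_i=a\tau_i+1$ and $s_{i+1}=b\tau_{i+1}+1$, expand all products, and normalize using three moves only: (i) $\tau_j^2=0$; (ii) the braid relation to replace $\tau_i\tau_{i+1}\tau_i$ by $\tau_{i+1}\tau_i\tau_{i+1}$; (iii) the $x$--$\tau$ relations to sweep every polynomial to the far left, recording the degree-$0$ corrections (the ones that recur are $\tau_i b=c\tau_i+1$, $\tau_i c=b\tau_i-1$, $\tau_{i+1}a=c\tau_{i+1}+1$, $\tau_{i+1}c=a\tau_{i+1}-1$, $\tau_{i+1}b=-b\tau_{i+1}-2$). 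A short calculation then shows both sides of (4) collapse to $ac\,\tau_{i+1}\tau_i\tau_{i+1}+c\,\tau_{i+1}\tau_i+\tau_i$; both sides of (5) collapse to the mirror-image normal form; and for (6) both sides become $ab\,\tau_{i+1}\tau_i\tau_{i+1}+a\,\tau_i\tau_{i+1}+b\,\tau_{i+1}\tau_i+\tau_i+\tau_{i+1}$, the symmetry in the indices $i$ and $i+2$ being visible on each side. I expect the bookkeeping of these degree-$0$ corrections to be the only real obstacle; there is no conceptual difficulty, and each verification is a few lines once the moves (i)--(iii) are fixed.

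As an alternative I would mention the cleaner route via the faithful polynomial representation of the nil affine Hecke algebra on $\kf[x_1,\dots,x_n]$, in which $x_m$ acts by multiplication and $\tau_j$ by the divided difference $\partial_j f=(t_jf-f)/(x_j-x_{j+1})$, $t_j$ the transposition $(j\ j{+}1)$; then $s_i$ acts precisely as the operator $t_i$, and faithfulness follows from Theorem~\ref{pbw} by the standard descending-length argument. Under this picture each of (4)--(6) is the conjugation identity $w\,\partial_j\,w^{-1}=\partial_{j'}$ for a suitable Coxeter element $w$ --- e.g. (4) asserts $\partial_{i+1}=(t_it_{i+1})\,\partial_i\,(t_it_{i+1})^{-1}$ --- which is immediate from $(w\partial_j w^{-1})f=(wt_jw^{-1}f-f)/(x_{w(j)}-x_{w(j+1)})$ once one checks that the relevant $w$ carries $\{j,j+1\}$ onto the index pair appearing on the other side, listed in increasing order (so no sign appears). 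Either route completes the proof; I would present the direct computation and remark on the representation-theoretic shortcut.
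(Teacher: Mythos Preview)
Your proposal is correct and follows exactly the approach the paper indicates: the paper does not give a detailed proof but simply remarks that these identities ``are all deduced quickly from the KLR algebra relations,'' and your direct computations carry this out precisely (your normal forms for (4) and (6) check out line by line). The alternative route via the faithful polynomial representation, where $s_i$ acts as the transposition and (4)--(6) become conjugation identities for divided-difference operators, is a nice additional observation that the paper does not make explicit here, though it is very much in the spirit of Subsection~\ref{subsec:poly}.
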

Braid relations involving one $s$ and two $\tau$ in $H_{np}$ may not hold, however, these will not be needed in the following subsections. If for some $v\in I^\alpha$ and $j< |\alpha|$ we have that $v_j=v_{j+1}$, then we also identify $s_j$ with its image under the obvious non-unital embedding $H_{2i}\hookrightarrow H_\alpha$.

The following is also easily verified.
\begin{prop}\label{sirels2}
Let $p$ and $q$ be adjacent vertices in $I$. The following hold in $H_{2\alpha_p+\alpha_q}$
\begin{enumerate}
    \item $\tau_1\tau_2s_11_{ppq}=s_2\tau_1\tau_2 1_{ppq}$.
    \item $s_1\tau_2\tau_1 1_{qpp}=\tau_2\tau_1s_2 1_{qpp}$.
    \item 
    $\tau_1s_2\tau_1 1_{pqp}=\tau_2 s_1\tau_2 1_{pqp}$.
\end{enumerate}
\end{prop}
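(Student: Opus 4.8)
The plan is to verify all three identities by direct computation inside $H_{2\alpha_p+\alpha_q}$, whose set of words $I^\alpha$ consists only of $ppq$, $pqp$, and $qpp$. The only ingredients needed are the KLR relations of Definition \ref{klr}, the defining formula $s_j1_v=\bigl((x_j-x_{j+1})\tau_j+1\bigr)1_v$ (valid here because in each case the two straddled positions carry equal labels), and the symmetry $Q_{q,p}(u,v)=Q_{p,q}(v,u)$, which is immediate from the definition of $Q$.

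For identities (1) and (2) I would first expand the lone factor $s_j$ via its formula. The ``$+1$'' part contributes the same term to both sides --- literally $\tau_1\tau_2 1_{ppq}$ in (1) and $\tau_2\tau_1 1_{qpp}$ in (2) --- while the remaining term on each side is a polynomial times a product of the form $\tau_a\tau_b\tau_a$. I would then slide that polynomial to the far left using relation (5) of Definition \ref{klr}; the key observation is that at every crossing the two adjacent vertices straddled by the relevant $\tau$ are $p$ and $q$, hence distinct, so relation (5) has no correction term and the polynomial is merely reindexed by a transposition. After this sliding, (1) reduces to $(x_2-x_3)(\tau_1\tau_2\tau_1-\tau_2\tau_1\tau_2)1_{ppq}=0$, which holds by relation (8) because the Kronecker delta $\delta_{v_1,v_3}$ vanishes for $v=ppq$. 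Identity (2) is the left--right mirror of (1), with $v=qpp$, for which again $v_1\neq v_3$.

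Identity (3) is where I expect the real work to be, since the relevant word $pqp$ has $v_1=v_3=p$ and relation (8) now contributes the nonzero correction $\dfrac{Q_{p,q}(x_3,x_2)-Q_{p,q}(x_1,x_2)}{x_3-x_1}\,1_{pqp}$. Expanding $\tau_1 s_2\tau_1 1_{pqp}$ produces, besides the expected $\tau_1\tau_2\tau_1$ term, the summand $\tau_1^2 1_{pqp}=Q_{p,q}(x_1,x_2)1_{pqp}$ coming from relation (7); symmetrically, $\tau_2 s_1\tau_2 1_{pqp}$ produces $\tau_2^2 1_{pqp}=Q_{q,p}(x_2,x_3)1_{pqp}$. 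Sliding polynomials to the left exactly as before (again with no correction terms, since every straddled pair is $\{p,q\}$), both sides become $(x_1-x_3)\tau_1\tau_2\tau_1 1_{pqp}$ plus a polynomial multiple of $1_{pqp}$; using relation (8) to rewrite the difference $(\tau_1\tau_2\tau_1-\tau_2\tau_1\tau_2)1_{pqp}$ and then collecting all polynomial terms, the discrepancy between the two sides collapses to exactly $\bigl(Q_{p,q}(x_3,x_2)-Q_{q,p}(x_2,x_3)\bigr)1_{pqp}$, which is $0$ by the $Q$-symmetry. The one point requiring care throughout is keeping track of which idempotent sits immediately to the right of each $\tau$ while sliding polynomials, since that determines the index substitution; because $|\alpha|=3$ every such computation is a one-liner. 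Alternatively, one could assemble (1)--(3) from the identities of Proposition \ref{sirels}, but the direct expansion above seems cleanest.
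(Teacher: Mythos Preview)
Your proposal is correct and is precisely the kind of direct verification the paper has in mind: the paper offers no proof beyond the sentence ``The following is also easily verified,'' so your expansion of $s_j$ via its defining formula, followed by sliding polynomials using relation (5) and invoking relations (7), (8), and the symmetry $Q_{p,q}(u,v)=Q_{q,p}(v,u)$, is exactly the intended routine computation. Your tracking of idempotents and of the correction term in identity (3) is accurate.
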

\begin{rem}\label{s_iswap}
    There are certain cases in the next subsections in which some of the hypotheses of Lemma \ref{rml} will fail, e.g. $\vec{l}$ will be a singleton, but its unique element does appear within one of the forbidden swaps involving the rightmost element of $\vec{r}$. To remedy this, we will modify our assignment $S_{k}\times I^\alpha \rightarrow H_\alpha$. Before, we would pick a reduced presentation $\sigma=s_{i_1}\dots s_{i_l}$ for each member of $S_{k}$, and map $(\sigma,v)$ to $\tau_{i_1}\dots \tau_{i_l}1_v$. For certain $\sigma$ and $v$, we will instead replace some of the $\tau_{i_j}$ by $s_{i_j}$ via the algebra embedding above. Due to this, the relevant braid relations involving this $s_{i_j}$ hold exactly in the KLR algebra. So, one can effectively remove the problematic elements from $\mathcal{S}$ and run the  arguments of Lemma \ref{rml} again while also moving around the $s_i$ terms according to Propositions \ref{sirels} and \ref{sirels2}.
\end{rem}

\subsection{Defining the natural transformations}\label{dtnt}

We define the natural transformation data of our affine 2-representation. 

The natural transformations we use will each have a common form independent of $\alpha$, so we can define our maps and make arguments weight-wise. Since all of the functors for our 2-representation are given by tensoring with a bimodule, all natural transformations between composites of these functors arise from bimodule homomorphisms. The natural transformations $X_i$, $T_{ii}$, and $T_{ij}$ for $i,j\neq 0$ are exactly those in the right multiplication 2-representation of $U_q(\mathfrak{sl}_{n+1})$ on itself. These all correspond to right multiplication by the corresponding element in the KLR algebra.

We now describe the natural transformations that involve the functor $E_0=\bigoplus_\alpha M_\alpha \otimes_{\alpha} \text{\textendash}$. Define the natural transformation $X_0:E_0\rightarrow E_0$ as the map descended from left multiplication by $x_{-1}$ on $1_{*\vec{\beta}}H_\alpha$. This map is well-defined on $M_{\alpha}$ since left multiplication by $x_{-1}$ preserves the subbimodule $1_{*\vec{\beta}}(\tau_{-1},\dots \tau_{-(n-1)})H_\alpha$. Recall that the polynomials $1_{*\vec{\beta}}(x_{-j}-x_{-1})\in 1_{*\vec{\beta}}(\tau_{-1},\dots \tau_{-(n-1)})H_\alpha$ for each $j \leq n$. So, $X_0$ is equal to the natural transformation for left multiplication by any such $x_{-j}$. This natural transformation corresponds to the map $x\in \text{End}_{H_{\alpha}}(\Delta(\beta))_2$ defined in Section 3 of \cite{bkm}.

Natural transformations $E_0E_0\rightarrow E_0E_0$ correspond to bimodule endomorphisms of $M_{\alpha-\beta}\otimes_{\alpha-\beta} M_{\alpha}$. By the usual tensor product properties, this bimodule is equal to $H_{\alpha-2\beta}\otimes \Delta(\beta)\otimes \Delta(\beta)$, and it is a bimodule quotient of $1_{*\vec{\beta},\vec{\beta}}H_{\alpha}$. Now, let $\omega_{00}\in S_{|\alpha|}$ be the unique element swapping each position $-i$ with $-n-i$ for $i \leq n$. We define $T_{00}:E_0E_0\rightarrow E_0E_0$ as the map descended from left multiplication by $-\tau_{\omega_{00}}$. Note that $\omega_{00}$ has no triple inversions. Therefore, the braid relations hold for $1_{*\vec{\beta},\vec{\beta}}\tau_{\omega_{00}}$, and so our definition of $T_{00}$ does not depend on any specific reduced presentation of $\omega_{00}$. The work of \cite{bkm} shows that this natural transformation is well-defined. This natural transformation is similar to an R-matrix for KLR algebras in the sense of \cite{affinizationklr}.

\begin{prop}[\cite{bkm} Lemma 3.6]\label{lem:T00}
    $T_{00}$ is a well-defined natural transformation of degree -2.
\end{prop}

The presence of the minus sign is due to the fact that iterating $E_0$ fills in the indices in our idempotent $1_{v}$ from right to left as opposed to the left-to-right order in any other $E_i$. It is shown in Lemma 3.8 of \cite{bkm} that $\text{End}_{H_{m\beta}}(\Delta(\beta)^{\otimes m})\simeq H_{m\alpha_1}$. The ``y-axis reflection" involution of $H_{m\alpha_1}$ sending $x_i$ to $x_{-i}$ will send $\tau_i$ to $-\tau_{-i}$. The KLR relations will not hold between these natural transformations without the presence of the minus sign. We also appear to need the positive sign on $X_0$, as seen in part (3) of the proof of Theorem \ref{thm:thebigkahuna}.

We now define the $T_{i0}$ and $T_{0i}$ for $1\leq i \leq n$. These are also descended from maps of KLR algebras. The bimodule associated to $E_iE_0$ is $H_{\alpha-\beta+\alpha_i}1_{*i}\otimes_{\alpha-\beta} M_\alpha$, and the bimodule associated to $E_0E_i$ is $M_{\alpha+\alpha_i}1_{*i}$. 

To obtain our natural transformations, we will compose several maps of the following form.
\begin{prop}[\cite{kk} Corollary 3.4]\label{prop:kk}
    There is an isomorphism of graded $(H_{\alpha-\alpha_j+\alpha_i},H_\alpha)$-bimodules

    \[q^{-C_{ij}}H_{\alpha-\alpha_j+\alpha_i}1_{*i}\otimes_{\alpha-\alpha_j}1_{*j}H_\alpha \oplus \delta_{ij}H_{\alpha}[x_{|\alpha|+1}]\simeq 1_{*j}H_{\alpha+\alpha_i}1_{*i}\]
    given by
    \[(a\otimes_{\alpha-\alpha_j}b,\delta_{ij}c)\rightarrow a\tau_{-1}b + \delta_{ij}c.\]
    We have used the left embeddings of KLR algebras, i.e. $(1_{vi}\otimes_{\alpha-\alpha_j}1_{vj},1_{w})\rightarrow 1_{vij}\tau_{-1}1_{vji}+\delta_{ij}1_{wi}c$.
\end{prop}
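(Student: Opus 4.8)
This is the standard ``commutation'' (Mackey-type) bimodule isomorphism for KLR algebras, identifying the bimodule of the composite functor $E_j^\ast E_i$ with that of $E_iE_j^\ast$ up to the indicated $q$-shift and, when $i=j$, an extra polynomial summand. Diagrammatically, an element of $1_{\ast j}H_{\alpha+\alpha_i}1_{\ast i}$ is a KLR diagram on $|\alpha|+1$ strands carrying a distinguished ``$i$-strand'' entering at the bottom-right (position $|\alpha|+1$) and a distinguished ``$j$-strand'' exiting at the top-right, while $H_{\alpha-\alpha_j+\alpha_i}1_{\ast i}\otimes_{\alpha-\alpha_j}1_{\ast j}H_\alpha$ records those diagrams in which these two distinguished strands cross exactly once, and $H_\alpha[x_{|\alpha|+1}]$ (possible only when $i=j$) records those in which a single distinguished strand runs straight up the right edge carrying a polynomial decoration. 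The plan has three steps: (i) check that $(a\otimes b,\delta_{ij}c)\mapsto a\tau_{-1}b+\delta_{ij}c$ is a well-defined homomorphism of graded $(H_{\alpha-\alpha_j+\alpha_i},H_\alpha)$-bimodules; (ii) prove surjectivity by straightening the distinguished $i$-strand to the right of every diagram; (iii) prove injectivity by a basis/rank argument using Theorem~\ref{pbw}.

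For step~(i): in the tensor product $\otimes_{\alpha-\alpha_j}$ the middle algebra $H_{\alpha-\alpha_j}$ acts on strands $1,\dots,|\alpha|-1$, which are disjoint from the strands $|\alpha|,|\alpha|+1$ touched by $\tau_{-1}=\tau_{|\alpha|}$; by the disjointness relation~(6) of Definition~\ref{klr} these commute, so the assignment descends to the tensor product. Left $H_{\alpha-\alpha_j+\alpha_i}$-linearity and right $H_\alpha$-linearity hold because these actions are realized by composing diagrams above, respectively below, the crossing $\tau_{-1}$, so associativity of multiplication in $H_{\alpha+\alpha_i}$ suffices; and the image of the $\delta_{ij}$ summand is visibly a sub-bimodule isomorphic to $H_\alpha[x_{|\alpha|+1}]$. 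For homogeneity of degree zero: a simple tensor $a\otimes b$ of degree $\deg(a)+\deg(b)$ lies in degree $\deg(a)+\deg(b)-C_{ij}$ after the shift $q^{-C_{ij}}$, matching $\deg(a\tau_{-1}b)=\deg(a)+\deg(b)+\deg(\tau_{-1}1_{\cdots ij\cdots})=\deg(a)+\deg(b)-C_{ij}$; on the unshifted summand $x_{|\alpha|+1}^{k}$ maps to $c\,x_{|\alpha|+1}^{k}$ and $\deg(x_{|\alpha|+1})=2$.

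For steps~(ii) and~(iii): fix a reduced word for each element of $S_{|\alpha|+1}$, chosen (as in Subsection~\ref{bruhat}) so that the bottom strand in position $|\alpha|+1$ is moved first; then Theorem~\ref{pbw} gives a basis $\{p\,\tau_\sigma 1_v\}$ of $1_{\ast j}H_{\alpha+\alpha_i}1_{\ast i}$ with $v$ ending in $i$ and $\sigma(v)$ ending in $j$. If $i\neq j$, the distinguished $i$-strand cannot terminate in position $|\alpha|+1$, so in every such $\sigma$ it must cross the distinguished $j$-strand; sliding it to the right using relations~(5),(7),(8) rewrites $p\,\tau_\sigma 1_v$ as $a\,\tau_{-1}\,b$ plus a $\kf[x_1,\dots]$-combination of $\tau_{\sigma'}1_v$ with $\ell(\sigma')<\ell(\sigma)$, so induction on $\ell(\sigma)$ puts every basis element in the image of the first summand, giving surjectivity; injectivity follows because the leading $\tau_\sigma$-terms of the images of a PBW-type basis of the first summand are themselves distinct PBW basis elements of $H_{\alpha+\alpha_i}$ (those whose permutation has the two distinguished strands crossing exactly once), hence linearly independent. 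If $i=j$, the same argument applies, except that $\sigma$ is additionally allowed to fix position $|\alpha|+1$, in which case, after moving all dots off the straight rightmost strand, $p\,\tau_\sigma 1_v$ equals an element $c\,x_{|\alpha|+1}^{k}$ lying in the $\delta_{ij}$ summand; this accounts precisely for the extra $H_\alpha[x_{|\alpha|+1}]$ factor. (Once surjectivity is known, one may instead compare graded ranks over $H_\alpha$, or $\kf$-dimensions in each fixed degree using left-boundedness, via Theorem~\ref{pbw}, and conclude that the map is an isomorphism.)

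The main obstacle is the straightening in step~(ii): dragging the distinguished strand across the others produces error terms from the quadratic relation~(7) and the deformed braid relation~(8), and one must confirm these are again of the form $a\tau_{-1}b$ plus strictly shorter terms. This is precisely the sort of length- and least-common-multiple bookkeeping developed in Subsections~\ref{bruhat} and~\ref{subsec:control}; for this particular generic ``add one strand, then remove one strand'' situation one may also simply quote the Khovanov--Lauda computation, which is why the result is cited from \cite{kk}.
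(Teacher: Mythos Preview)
The paper does not supply its own proof of this proposition; it is quoted from \cite{kk} (Corollary~3.4 there) and used as a black box throughout Subsection~\ref{dtnt}. Your sketch is the standard Mackey-type argument and is essentially correct: well-definedness of the map on the tensor product follows because $H_{\alpha-\alpha_j}$ acts on strands $1,\dots,|\alpha|-1$, which commute with $\tau_{|\alpha|}$ by relation~(6), and bijectivity follows from Theorem~\ref{pbw} after choosing reduced words so that the distinguished strand at position $|\alpha|+1$ is moved first. One small point of care: your injectivity-by-leading-terms argument implicitly uses that the assignment $(\omega_1,\omega_2)\mapsto \omega_1 s_{|\alpha|}\omega_2$, with $\omega_1,\omega_2\in S_{|\alpha|}$ embedded in $S_{|\alpha|+1}$ as stabilizers of position $|\alpha|+1$ and the product reduced, gives a bijection onto the complement of $S_{|\alpha|}$ in $S_{|\alpha|+1}$; this is standard coset combinatorics, and your parenthetical alternative of comparing graded ranks over $\kf$ via Theorem~\ref{pbw} is the cleanest way to close the argument without spelling this out.
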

Kang and Kashiwara observe in \cite{kk} that this yields natural isomorphisms of functors $q^{-C_{ij}}E_jE_i^* \oplus \delta_{ij} Id\otimes k[x_{-1}]\simeq E_i^*E_j$, which is a categorical version of the $q$-boson relations. Composing several of these morphisms,  we have the following decomposition of $(H_{\alpha-\beta+\alpha_i},H_{\alpha})$ bimodules.
\begin{cor}\label{cor:bigdecomp}
    There is an isomorphism of  $(H_{\alpha-\beta+\alpha_i},H_\alpha)$-bimodules
    \[  H_{\alpha-\beta+\alpha_i}1_{*i} \otimes_{\alpha-\beta}1_{*\vec{\beta}}H_{\alpha}\oplus 1_{*n,n-1,\dots i+1}H_{\alpha-\beta_{i-1}}[x_{|\alpha|-i+2}]\otimes_{\alpha-\beta_{i-1}}1_{*\vec{\beta}_{i-1}}H_{\alpha}\xrightarrow{\psi} 1_{*\vec{\beta}}H_{\alpha+\alpha_i}1_{*i} \]
    given by 
    \[\psi(a\otimes_{\alpha-\beta}b,c\otimes_{\alpha-\beta_{i-1}}d)=a\tau_{-n}\dots \tau_{-1}b+c\tau_{-(i-1)}\dots \tau_{-1}d,\]
    where the second sequence of $\tau_j$ is taken to be empty if $i=1$. Restricted to the first summand, the morphism $\psi$ is degree $-1$ if $i=1$ or $n$, and is degree $0$ otherwise.
\end{cor}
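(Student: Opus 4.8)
The plan is to obtain the isomorphism by iterating Proposition \ref{prop:kk} exactly $n$ times, once for each strand of the block $\vec{\beta}=(n,n-1,\dots,1)$. It is cleanest to first translate everything into functor language, using the dictionary of Example \ref{leftmult}: $E_k^{*}$ is tensoring with $1_{*k}H_{(\cdot)}$ (restriction) and $E_i$ is tensoring with $H_{(\cdot)+\alpha_i}1_{*i}$ (induction). Then $1_{*\vec{\beta}}H_{\alpha+\alpha_i}1_{*i}$, the right-hand side of the corollary, is the bimodule of the composite $E_n^{*}E_{n-1}^{*}\cdots E_1^{*}E_i$ (with $E_i$ applied first), while the first summand on the left is the bimodule of $E_iE_n^{*}\cdots E_1^{*}$. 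In this language Proposition \ref{prop:kk} is precisely the $q$-boson isomorphism $E_k^{*}E_i\cong q^{-C_{ik}}E_iE_k^{*}\oplus \delta_{ik}(\mathrm{Id}\otimes\kf[x])$, realized by $a\otimes b\mapsto a\tau_{-1}b$, and the content of the corollary is simply to use it to carry the $E_i$ leftward past each $E_k^{*}$ in turn.

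Concretely, I would argue by induction on $r=0,1,\dots,n$: after commuting $E_i$ past $E_1^{*},\dots,E_r^{*}$ one has a bimodule isomorphism
\[
1_{*\vec{\beta}}H_{\alpha+\alpha_i}1_{*i}\ \cong\ q^{-\sum_{k\le r}C_{ik}}\bigl(1_{*(n,\dots,r+1)}H_{\alpha-\beta_r+\alpha_i}1_{*i}\otimes_{\alpha-\beta_r}1_{*\vec{\beta}_r}H_\alpha\bigr)\ \oplus\ (\text{the }\delta\text{-summand, present once }r\ge i).
\]
The inductive step applies Proposition \ref{prop:kk} with $j=r+1$ to the first factor, after factoring the suffix idempotent as $1_{*(n,\dots,r+1)}=1_{*(n,\dots,r+2)}\cdot 1_{*(r+1)}$ and left-multiplying the resulting isomorphism by $1_{*(n,\dots,r+2)}$ (which is harmless, as left-multiplication by an idempotent commutes with all the maps and tensor products in sight). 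The $\delta$-term of Proposition \ref{prop:kk} is nonzero only at the step $r=i$, where it produces exactly $1_{*n,n-1,\dots,i+1}H_{\alpha-\beta_{i-1}}[x_{|\alpha|-i+2}]\otimes_{\alpha-\beta_{i-1}}1_{*\vec{\beta}_{i-1}}H_\alpha$ (the polynomial generator records the new strand, of index $|\alpha|-i+2$), and this summand is untouched by the remaining steps $r=i+1,\dots,n$. Composing the maps $a\otimes b\mapsto a\tau_{-1}b$ from the successive steps, and noting that the ambient algebra grows by one strand at each step so the individual $\tau_{-1}$'s become $\tau_{-1},\tau_{-2},\dots,\tau_{-n}$ of $H_{\alpha+\alpha_i}$ (read in the order $\tau_{-n}\cdots\tau_{-1}$ because the crossing with the strand labelled $1$ is innermost), yields $\psi(a\otimes b,0)=a\tau_{-n}\cdots\tau_{-1}b$; since the $\delta$-summand splits off after crossing only the strands labelled $1,\dots,i-1$, one gets $\psi(0,c\otimes d)=c\tau_{-(i-1)}\cdots\tau_{-1}d$, the empty product when $i=1$. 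Finally the telescoped grading shift on the first summand is $q^{-\sum_{k=1}^{n}C_{ik}}$, which in type $A_n$ is $q^{-1}$ for $i\in\{1,n\}$ and $q^{0}$ for $1<i<n$; hence $\psi$ restricted to the first summand has degree $-1$ or $0$ accordingly.

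The whole proof is bookkeeping, with no conceptual obstruction, so the main work — and the one place care is genuinely needed — is keeping the weight subscripts and strand indices consistent: one must check at each step that, after pulling out the suffix idempotent, the factor fed into Proposition \ref{prop:kk} really has the form $1_{*(r+1)}H_{(\cdot)+\alpha_i}1_{*i}$ with the correct weight, so that the tensor products end up over $H_{\alpha-\beta_r}$ and the subscripts $\alpha-\beta_r+\alpha_i$, $\alpha-\beta_{i-1}$ come out exactly as stated, and one must track the index shift closely enough to see that the composite of the elementary maps is literally $\tau_{-n}\cdots\tau_{-1}$ and that the polynomial variable lands on strand $|\alpha|-i+2$. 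I would treat the grading of the second summand only up to the shift dictated by the accumulated $q^{-\sum_{k<i}C_{ik}}$, since the corollary only asserts the degree of $\psi$ on the first summand.
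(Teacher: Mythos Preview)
Your proposal is correct and is precisely the expansion the paper intends: the paper's proof is the single sentence ``Composing several of these morphisms, we have the following decomposition,'' and your induction on $r$ simply writes out that composition, including the bookkeeping for the $\delta$-summand and the degree count $-\sum_k C_{ik}$. There is nothing to add.
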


Let $N_{\alpha+\alpha_i}\subset 1_{*\vec{\beta}}H_{\alpha+\alpha_i}1_{*i}$ be the $(H_{\alpha-\beta+\alpha_i},H_\alpha)$-subbimodule $1_{*\vec{\beta}}(\tau_{-1},\tau_{-2},\dots \tau_{-(n-1)})H_{\alpha+\alpha_i}1_{*i}$, and let $N_{\alpha}\subset 1_{*\vec{\beta}}H_\alpha$ be the $(H_{\alpha-\beta},H_\alpha)$-subbimodule $1_{*\vec{\beta}}(\tau_{-1},\dots \tau_{-(n-1)})H_\alpha$.

For $i\neq 1$, denote by $\bar{\psi}$ the map $\psi$. If instead $i=1$, denote $\bar{\psi}$ as the map  \[  H_{\alpha-\beta+\alpha_i}1_{*1} \otimes_{\alpha-\beta}1_{*\vec{\beta}}H_{\alpha}\oplus 1_{*n,n-1,\dots 2}H_{\alpha}[x_{|\alpha|+1}]\xrightarrow{\bar{\psi}} 1_{*\vec{\beta}}H_{\alpha+\alpha_1}1_{*1} \]
    given by 
    \[\bar{\psi}(a\otimes_{\alpha-\beta}b,c)=a\tau_{-n}\dots s_{-1}b+c.\]
\begin{lem}\label{lem:Ti0welldef}
$\bar{\psi}(H_{\alpha-\beta+\alpha_i}1_{*i}\otimes_{\alpha-\beta} N_{\alpha})\subset N_{\alpha+\alpha_i}.$
\end{lem}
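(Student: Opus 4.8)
The plan is to use that $\bar\psi$ is $(H_{\alpha-\beta+\alpha_i},H_\alpha)$-bilinear while $N_{\alpha+\alpha_i}$ is an $(H_{\alpha-\beta+\alpha_i},H_\alpha)$-subbimodule of $1_{*\vec{\beta}}H_{\alpha+\alpha_i}1_{*i}$, so it is enough to check the inclusion on a bimodule-generating set of $H_{\alpha-\beta+\alpha_i}1_{*i}\otimes_{\alpha-\beta}N_\alpha$. Since left multiplication by $H_{\alpha-\beta}$ commutes past each $\tau_{-j}$ with $j<n$ and past $1_{*\vec{\beta}}$, we have $N_\alpha=\sum_{j=1}^{n-1}1_{*\vec{\beta}}\tau_{-j}H_\alpha$, and hence the domain is generated, over $H_{\alpha-\beta+\alpha_i}$ on the left and $H_\alpha$ on the right, by the elements $1_{*i}\otimes_{\alpha-\beta}1_{*\vec{\beta}}\tau_{-j}$ for $1\le j\le n-1$. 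Thus I only need to show $\bar\psi(1_{*i}\otimes_{\alpha-\beta}1_{*\vec{\beta}}\tau_{-j})\in N_{\alpha+\alpha_i}$ for each such $j$. Unwinding the definitions of $\bar\psi$ (Corollary \ref{cor:bigdecomp}) and of the embedding $H_\alpha\hookrightarrow H_{\alpha+\alpha_i}$ (under which the $\tau_{-j}$ of the right-hand factor becomes $\tau_{-(j+1)}$), this element is, up to the ambient idempotents, the product $\tau_{-n}\tau_{-(n-1)}\cdots\tau_{-1}\cdot 1_{*\vec{\beta}}\tau_{-(j+1)}$ inside $1_{*\vec{\beta}}H_{\alpha+\alpha_i}1_{*i}$, with the leftmost factor $\tau_{-1}$ replaced by $s_{-1}$ when $i=1$.

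For $2\le i\le n$ I would evaluate this by local moves in the KLR algebra. Slide $\tau_{-(j+1)}$ to the left past the mutually commuting $\tau_{-1},\dots,\tau_{-(j-1)}$ until it stands immediately to the right of $\tau_{-j}$, forming the subword $\tau_{-(j+1)}\tau_{-j}\tau_{-(j+1)}$, and then apply the braid relation (the last of the KLR relations, Definition \ref{klr}) to rewrite it as $\tau_{-j}\tau_{-(j+1)}\tau_{-j}$ plus an error term supported on the relevant idempotent, whose polynomial coefficient is computed from the explicit $Q$ attached to the right-facing orientation. In the main term $\tau_{-j}$ commutes with all of $\tau_{-(j+2)},\dots,\tau_{-n}$, so it can be carried to the far left, exhibiting that term as $1_{*\vec{\beta}}\tau_{-j}$ times a word; since $1\le j\le n-1$, this $1_{*\vec{\beta}}\tau_{-j}$ is one of the defining generators of $N_{\alpha+\alpha_i}$, so the main term lies in $N_{\alpha+\alpha_i}$. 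For the error term, a direct count of labels shows that, once the earlier crossings have moved the appended strand into place, the three positions at which the braid relation was applied carry labels $j$, $j+1$ and $i$; hence the error vanishes unless $j=i$ (which forces $i\le n-1$), and in that case it is a scalar multiple of an idempotent. Lemma \ref{rml} then places the residual word into the right submodule of $1_{*\vec{\beta}}H_{\alpha+\alpha_i}1_{*i}$ generated by $1_{*\vec{\beta}}\tau_{-1},\,1_{*\vec{\beta}}\tau_{-2}\tau_{-1},\dots,1_{*\vec{\beta}}\tau_{-(n-1)}\cdots\tau_{-1}$; each of these generators is $1_{*\vec{\beta}}\tau_{-k}$ with $k\le n-1$ followed by a right factor, hence lies in $N_{\alpha+\alpha_i}$, and this finishes the cases $2\le i\le n$.

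It remains to treat $i=1$, where the appended strand and the last strand of the $\vec{\beta}$-block both carry label $1$: a bare $\tau_{-1}$ would cross two equally-labelled strands, so none of the braid relations above hold in the KLR algebra, which is exactly why $\bar\psi$ is built from $s_{-1}$ rather than $\tau_{-1}$. The plan is to rerun the argument of the previous paragraph while transporting the $s_{-1}$ through the remaining factors by means of Propositions \ref{sirels} and \ref{sirels2}, in the manner of Remark \ref{s_iswap}, so that the genuine braid relations are available at every stage and only the usual polynomial errors remain, once more absorbed by Lemma \ref{rml}. I expect the error-term bookkeeping to be the only genuine obstacle in the proof: one has to verify that every term produced by a braid relation that fails in the KLR algebra either annihilates the idempotent $1_{*i}$ or falls into the right submodule supplied by Lemma \ref{rml} (which sits inside $N_{\alpha+\alpha_i}$), while everything else reduces to routine commutation of KLR generators.
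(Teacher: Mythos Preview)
Your reduction to the generators $1_{*i}\otimes 1_{*\vec\beta}\tau_{-j}$ and the computation of their image under $\bar\psi$ are correct, and the overall structure (local braid moves, main term plus error, and the $s_{-1}$ device for $i=1$ via Remark~\ref{s_iswap}) is exactly the mechanism the paper uses.  Two points of comparison are worth noting.

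First, the paper works instead with the alternate generating set $1_{*\vec\beta}\tau_{-k}\cdots\tau_{-1}$ of Lemma~\ref{lem:newgens}, so that the image $1_{*i,\vec\beta}\tau_{-n}\cdots\tau_{-1}\tau_{-(k+1)}\cdots\tau_{-2}$ is literally of the block-swap form $\vec{rml}\to\vec{lmr}$ with $\vec l=(i)$.  This lets the paper invoke Lemma~\ref{rml} once, globally, on the whole word, rather than performing your single braid move and tracking the residue by hand.  Your choice of generators is of course equally valid, and your explicit computation of the main term $1_{*\vec\beta}\tau_{-j}\cdot\tau_{-n}\cdots\tau_{-1}\in N_{\alpha+\alpha_i}$ is fine.

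Second, your appeal to Lemma~\ref{rml} for the error term is not quite right as stated: that lemma controls the \emph{difference} of two reduced presentations of a permutation of the specific shape $\vec{rml}\to\vec{lmr}$, and your permutation $s_{-n}\cdots s_{-1}s_{-(j+1)}$ does not have that shape (the right idempotent has $j$ and $j+1$ transposed inside the $\vec\beta$-block).  Fortunately you do not need Lemma~\ref{rml} here at all.  When $j=i$ the error word is $\pm\,1_{*i,\vec\beta}\tau_{-n}\cdots\tau_{-(i+2)}\,\tau_{-(i-1)}\cdots\tau_{-1}$; since $\tau_{-(i-1)}$ commutes with each of $\tau_{-(i+2)},\dots,\tau_{-n}$ you can slide it to the far left, obtaining $1_{*\vec\beta}\tau_{-(i-1)}\cdot(\text{word})$ with $1\le i-1\le n-1$, which lies in $N_{\alpha+\alpha_i}$ directly.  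With that one-line fix your argument goes through.
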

\begin{proof}
    By linearity and Lemma \ref{lem:newgens}, it is enough to consider the elements $1_{*i}\otimes_{\alpha-\beta} 1_{*\vec{\beta}}\tau_{-k}\dots \tau_{-1}$. We compute that \[\psi(1_{*i}\otimes_{\alpha-\beta} 1_{*\vec{\beta}}\tau_{-k}\dots \tau_{-1})= 1_{*i,\vec{\beta}}\tau_{-n}\dots \tau_{-1}\tau_{-(k+1)}\dots \tau_{-2}.\] The corresponding labelled braid diagram is as follows
    \begin{center}
    \begin{tikzpicture}
      
        \pic[braid/.cd,
        width=15mm,
        ] at (6,0)
         {braid = {s_2 s_3 s_2 s_1}};
    \draw (6,0) node[above] {$n,\dots k+2$};
    \draw (7.5,0) node[above] {$k,\dots 1$};
    \draw (9,0) node[above] {$k+1$};
    \draw (10.5,0) node[above] {$i$};
  
    \end{tikzpicture}
\end{center}
If $i\neq 1$, we can argue similarly to Lemma \ref{lem:T00}, and so Lemma \ref{rml} gives that this is contained in $N_{\alpha+\alpha_i}$. If $i=1$, then following Remark \ref{s_iswap}, we may take the obstruction set as empty due to the replacement of $\tau_{-1}$ by $s_{-1}$. Therefore, the braid relations hold for this element, and so it is also contained in $N_{\alpha+\alpha_i}$.
\end{proof}
As a result, $\bar{\psi}$ descends to a morphism $H_{\alpha-\beta+\alpha_i}1_{*i}\otimes_{\alpha-\beta} M_{\alpha}\rightarrow M_{\alpha+\alpha_i}1_{*i}$. We therefore define 
\[T_{i0}:E_iE_0\rightarrow E_0E_i, a\otimes_{\alpha-\beta}b\rightarrow a\tau_{-n}\dots \bar{\tau}_{-i}\dots \tau_{-1} b,\]
where $\bar{\tau}_{-n}=s_{-n}$, $\bar{\tau}_{-1}=s_{-1}$, and $\bar{\tau}_{-i}=\tau_{-i}$ otherwise. Note that, as elements of $M_{\alpha+\alpha_i}1_{*i}$, we have $s_{-n}\dots \tau_{-1}=((x_{-n-1}-x_{-n})\tau_{-n}+1)\dots \tau_{-1}=(x_{-n-1}-x_{-n})\tau_{-n}\dots \tau_{-1}+\tau_{-(n-1)}\dots \tau_{-1}=(x_{-n-1}-x_{-n})\tau_{-n}\dots \tau_{-1}$. This shows that $T_{n0}$ is in fact well-defined. Note also that $T_{i0}$ has the correct grading; it is zero if $i\neq 1,n$ and $1$ otherwise.

A more precise study of $\bar{\psi}$ tells us how to define $T_{0i}$.
\begin{lem}\label{lem:T0iwelldef}
If $i\neq 1$, then $\bar{\psi}^{-1}(N_{\alpha+\alpha_i})=H_{\alpha-\beta+\alpha_i}1_{*i}\otimes_{\alpha-\beta} N_{\alpha}\oplus 1_{*n,n-1,\dots i+1}H_{\alpha-\beta_{i-1}}[x_{|\alpha|-i+2}]\otimes_{\alpha-\beta_{i-1}}1_{*\vec{\beta}_{i-1}}H_{\alpha}$. If $i=1$, then this preimage of $\bar{\psi}$ restricted to $H_{\alpha-\beta+\alpha_i}1_{*1} \otimes_{\alpha-\beta}1_{*\vec{\beta}}H_{\alpha}$ is $H_{\alpha-\beta+\alpha_i}1_{*1}\otimes_{\alpha-\beta} N_\alpha$.
\end{lem}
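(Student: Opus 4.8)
The plan is to establish the two inclusions of the asserted equality of subbimodules, treating $i\neq 1$ first and then indicating the trivial modification for $i=1$.

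For the inclusion $\supseteq$ (that $\bar\psi$ carries the right-hand side into $N_{\alpha+\alpha_i}$), the first summand $H_{\alpha-\beta+\alpha_i}1_{*i}\otimes_{\alpha-\beta}N_\alpha$ is handled verbatim by Lemma~\ref{lem:Ti0welldef}. For the second summand, $\psi$ restricts to $(0,c\otimes_{\alpha-\beta_{i-1}}d)\mapsto c\,\tau_{-(i-1)}\cdots\tau_{-1}\,d$, and since $i-1\leq n-1$ the leftmost factor $\tau_{-(i-1)}$ is one of the generators $\tau_{-1},\dots,\tau_{-(n-1)}$ of $N_{\alpha+\alpha_i}$. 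A labelled braid-diagram computation entirely parallel to the one in Lemma~\ref{lem:Ti0welldef} --- sliding $\tau_{-(i-1)}$ leftward to the idempotent $1_{*\vec\beta}$ and using Lemma~\ref{rml} (and, if a forbidden swap is met, the substitution of Remark~\ref{s_iswap}) to absorb the resulting error terms --- shows $c\,\tau_{-(i-1)}\cdots\tau_{-1}\,d\in N_{\alpha+\alpha_i}$. This gives $\supseteq$.

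For the inclusion $\subseteq$, note first that by the previous paragraph $\psi$ sends every $(0,c\otimes_{\alpha-\beta_{i-1}}d)$ into $N_{\alpha+\alpha_i}$, so $\bar\psi^{-1}(N_{\alpha+\alpha_i})$ decomposes as the direct sum of the whole second summand with $\bar\psi^{-1}(N_{\alpha+\alpha_i})\cap(\text{first summand})$; hence it remains only to show that no nonzero element of a complement of $H_{\alpha-\beta+\alpha_i}1_{*i}\otimes_{\alpha-\beta}N_\alpha$ in the first summand is sent into $N_{\alpha+\alpha_i}$. Applying the explicit left-$H_{\alpha-\beta}$-module splitting $1_{*\vec\beta}H_\alpha = N_\alpha\oplus\bigoplus_{\sigma\in P^c}1_{*\vec\beta}\kf[x_1,\dots,x_{-n}]\tau_\sigma$ of Corollary~\ref{cor:proveproj}, together with the analogous splitting of $1_{*\vec\beta}H_{\alpha+\alpha_i}1_{*i}$, such a complement is $H_{\alpha-\beta+\alpha_i}1_{*i}\otimes_{\alpha-\beta}\bigoplus_{\sigma\in P^c}1_{*\vec\beta}\kf[x_1,\dots,x_{-n}]\tau_\sigma$, and $\bar\psi$ acts on it by left multiplication by the cycle $\tau_{-n}\cdots\tau_{-1}$. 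One then runs the leading-PBW-term argument from the last paragraph of the proof of Lemma~\ref{lem:newgens}, with the single $\tau_{-i}$ there replaced by this cycle: for a nonzero $\lambda=a\otimes\sum_\sigma p_\sigma\tau_\sigma$, choose $\sigma$ with $p_\sigma\neq 0$ maximizing $l\big((s_{-n}\cdots s_{-1})\sigma\big)$, expand $\tau_{-n}\cdots\tau_{-1}\tau_\sigma$ in the chosen PBW basis of $1_{*\vec\beta}H_{\alpha+\alpha_i}1_{*i}$, and observe that the coefficient of the surviving term $\bar\tau_{(s_{-n}\cdots s_{-1})\sigma}$ equals $p_\sigma$ up to an explicit polynomial factor (a power of $x_{-(n+1)}-x_{-n}$ in the length-decreasing subcase) and so lies in $\bar L=1_{*\vec\beta}\kf[x_1,\dots,x_{-n}]$, while $(s_{-n}\cdots s_{-1})\sigma\in\bar P^c$; thus $\bar\psi(\lambda)$ has a nonzero component in the complement of $N_{\alpha+\alpha_i}$. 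The case $i=1$ is identical, except that one only restricts $\bar\psi$ to the first summand and it acts by $\tau_{-n}\cdots\tau_{-2}s_{-1}$; since $s_{-1}$ is length-preserving the same leading-term computation applies.

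The main obstacle is the $\subseteq$ direction, and within it the verification that the leading PBW coefficient is never swallowed by $N_{\alpha+\alpha_i}$: this forces one to combine the weak-Bruhat-order bookkeeping of Subsections~\ref{bruhat} and~\ref{subsec:control} with the precise description of the complement coming from Theorem~\ref{thm:newproj}, exactly as in Lemma~\ref{lem:newgens}. (A graded-dimension count --- comparing $\mathrm{grdim}$ of $H_{\alpha-\beta+\alpha_i}1_{*i}\otimes_{\alpha-\beta}M_\alpha$ with that of $M_{\alpha+\alpha_i}1_{*i}$ via the explicit bases above --- would give an alternative route to $\subseteq$, but it rests on the same splittings.) By contrast the $\supseteq$ direction is routine once Lemma~\ref{lem:Ti0welldef} is in hand.
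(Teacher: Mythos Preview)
Your proposal is correct and follows the paper's approach almost exactly: Lemma~\ref{lem:Ti0welldef} for the first summand in the $\supseteq$ direction, the splitting of Theorem~\ref{thm:newproj}/Corollary~\ref{cor:proveproj} together with the leading-PBW-term argument of Lemma~\ref{lem:newgens} for the $\subseteq$ direction, and the observation that $s_{-n}\cdots s_{-1}\sigma\in\bar P^c$ whenever $\sigma\in P^c$.

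One point where you work harder than necessary is the second summand in the $\supseteq$ direction. You propose sliding $\tau_{-(i-1)}$ leftward past $c$ and invoking Lemma~\ref{rml} and Remark~\ref{s_iswap} to control error terms. The paper instead observes that this summand is generated as an $(H_{\alpha-\beta+\alpha_i},H_\alpha)$-bimodule by elements $p(x_{|\alpha|-i+2})\otimes_{\alpha-\beta_{i-1}} b$, and on those generators one computes directly
\[
\bar\psi\bigl(p(x_{|\alpha|-i+2})\otimes_{\alpha-\beta_{i-1}} b\bigr)=1_{*\vec\beta}\,p(x_{-i})\,\tau_{-(i-1)}\cdots\tau_{-1}\,b=1_{*\vec\beta}\,\tau_{-(i-1)}\cdots\tau_{-1}\,p(x_{-1})\,b,
\]
which visibly lies in $N_{\alpha+\alpha_i}$; no braid-diagram bookkeeping is needed. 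For the $i=1$ case the paper makes the decomposition $\tau_{-n}\cdots s_{-1}=(x_{-(n+1)}-x_{-1})\tau_{-n}\cdots\tau_{-1}+\tau_{-n}\cdots\tau_{-2}$ explicit and checks that only the first term contributes to the top PBW coefficient, which is precisely your ``$s_{-1}$ is length-preserving'' remark unpacked.
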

\begin{proof}
    For $i\neq 1$, we compute $\bar{\psi}$ on elements of $1_{*n,n-1,\dots i+1}H_{\alpha-\beta_{i-1}}[x_{|\alpha|-i+2}]\otimes_{\alpha-\beta_{i-1}}1_{*\vec{\beta}_{i-1}}H_{\alpha}$. We note that this submodule is generated by elements of the form $p(x_{|\alpha|-i+2})\otimes_{\alpha-\beta_{i-1}} b$ for $p$ a polynomial. Then \[\bar{\psi}(p(x_{|\alpha|-i+2})\otimes_{\alpha-\beta_{i-1}} b)=1_{*\vec{\beta}}p(x_{-i})\tau_{-i+1}\dots \tau_{-2}\tau_{-1}b=1_{*\vec{\beta}}\tau_{-(i-1)}\dots \tau_{-2}\tau_{-1}p(x_{-1})b,\] which is evidently in $N_{n,\alpha+\alpha_i}$.

    For the reverse inclusion, the splitting from Theorem \ref{thm:newproj} gives that it is sufficient to show that nonzero elements in $H_{\alpha-\beta+\alpha_i}1_{*i}\otimes_{\alpha-\beta} \bigoplus_{\sigma\in P^c}1_{*\vec{\beta}}\kf[x_1,\dots x_{-n}]\tau_\sigma$ do not have image in $N_{\alpha+\alpha_i}$. Here, $P^c\subset S_{|\alpha|}$ is again the subset of symmetric group elements with no presentation prefixed by any of the $s_{|\alpha|-1}$ through $s_{|\alpha|-(n-1)}$. The result follows from Theorem \ref{thm:newproj} applied to the target $1_{*\vec{\beta}}H_{\alpha+\alpha_i} $ and the observation that for $\sigma\in P^c$, the $S_{|\alpha|+1}$ element $s_{-n}\dots s_{-1}\sigma$ does not have a reduced presentation prefixed by any of the $s_{-1}$ through $s_{-(n-1)}$.

    A similar argument works in the case $i=1$. We compute that $1_{*1,\vec{\beta}}\tau_{-n}\dots s_{-1}=1_{*1,\vec{\beta}}(x_{-(n+1)}-x_{-1})\tau_{-n}\dots \tau_{-1}+1_{*1,\vec{\beta}}\tau_{-n}\dots \tau_{-2}$. For any $\sigma\in S_{|\alpha|}$, we see that $l(s_{|\alpha|-n+1}\dots s_{|\alpha|}\sigma)=l(\sigma)+n$ and $l(s_{\alpha-n+1}\dots s_{|\alpha|-1}\sigma)\leq l(\sigma)+n-1$. So, in the PBW basis expansion, $\tau_{-n}\dots \tau_{-1}\tau_{\sigma}$ has a nonzero coefficient on $\tau_{s_{-n}\dots s_{-1}\sigma}$, and $\tau_{-n}\dots \tau_{-2}\tau_{\sigma}$ does not. So, it is enough to argue $1_{*1,\vec{\beta}}a(x_{-(n+1)}-x_{-1})\tau_{-n}\dots \tau_{-1}b\notin N_{\alpha+\alpha_1}$. We note that $(x_{-(n+1)}-x_{-1})\notin N_{\alpha+\alpha_1}$, so the argument is now the same as in the case $i\neq 1$.
\end{proof}

\begin{cor}\label{cor:tiso}
    For $i\neq 1$, the direct summand embedding $q^{-\delta_{in}}H_{\alpha-\beta+\alpha_i}1_{*i}\otimes_{\alpha-\beta}1_{*\vec{\beta}}H_\alpha \hookrightarrow 1_{*\vec{\beta}}H_{\alpha+\alpha_i}1_{*i}$ descends to an isomorphism $q^{-\delta_{in}}H_{\alpha-\beta+\alpha_i}1_{*i}\otimes_{\alpha-\beta}M_{\alpha}\xrightarrow{\sim} M_{\alpha+\alpha_i}1_{*i}$.
\end{cor}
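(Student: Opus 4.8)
The plan is to descend the bimodule isomorphism $\psi$ of Corollary \ref{cor:bigdecomp} to the quotients defining $M_\alpha$ and $M_{\alpha+\alpha_i}$, reading off the kernel from Lemma \ref{lem:T0iwelldef}. Since $i\neq 1$ we have $\bar\psi=\psi$, an isomorphism of $(H_{\alpha-\beta+\alpha_i},H_\alpha)$-bimodules $A\oplus B\xrightarrow{\sim}1_{*\vec{\beta}}H_{\alpha+\alpha_i}1_{*i}$, where $A:=H_{\alpha-\beta+\alpha_i}1_{*i}\otimes_{\alpha-\beta}1_{*\vec{\beta}}H_\alpha$ and $B:=1_{*n,n-1,\dots,i+1}H_{\alpha-\beta_{i-1}}[x_{|\alpha|-i+2}]\otimes_{\alpha-\beta_{i-1}}1_{*\vec{\beta}_{i-1}}H_\alpha$. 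Set $A':=H_{\alpha-\beta+\alpha_i}1_{*i}\otimes_{\alpha-\beta}N_\alpha\subset A$. Lemma \ref{lem:T0iwelldef} says exactly that $\psi^{-1}(N_{\alpha+\alpha_i})=A'\oplus B$.

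First I would note that $H_{\alpha-\beta+\alpha_i}1_{*i}$ is projective, hence flat, as a \emph{right} $H_{\alpha-\beta}$-module: it is a direct summand of $H_{\alpha-\beta+\alpha_i}$, which is free over $H_{\alpha-\beta}$ by the usual Mackey/PBW decomposition for KLR algebras (this is the same projectivity that makes the induction functors well-defined on $\He^{fg}$). Applying $H_{\alpha-\beta+\alpha_i}1_{*i}\otimes_{\alpha-\beta}\textendash$ to the short exact sequence $0\to N_\alpha\to 1_{*\vec{\beta}}H_\alpha\to M_\alpha\to 0$ therefore stays exact, giving a canonical identification $A/A'\cong H_{\alpha-\beta+\alpha_i}1_{*i}\otimes_{\alpha-\beta}M_\alpha$.

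Next, consider the composite $\varphi\colon A\xrightarrow{\ \iota\ }A\oplus B\xrightarrow{\ \psi\ }1_{*\vec{\beta}}H_{\alpha+\alpha_i}1_{*i}\twoheadrightarrow 1_{*\vec{\beta}}H_{\alpha+\alpha_i}1_{*i}/N_{\alpha+\alpha_i}=M_{\alpha+\alpha_i}1_{*i}$, where $\iota$ is the inclusion of the first summand; this is precisely the map of the statement (before shifting). It is surjective, since $\psi$ is onto and $\psi(B)\subset N_{\alpha+\alpha_i}$ (because $B\subset\psi^{-1}(N_{\alpha+\alpha_i})$ by Lemma \ref{lem:T0iwelldef}), so already $\psi(A)$ surjects onto the quotient. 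Its kernel consists of those $a\in A$ with $\iota(a)\in\psi^{-1}(N_{\alpha+\alpha_i})=A'\oplus B$, i.e. with $a\in A'$; hence $\ker\varphi=A'$. Thus $\varphi$ induces $A/A'\xrightarrow{\sim}M_{\alpha+\alpha_i}1_{*i}$, that is, $H_{\alpha-\beta+\alpha_i}1_{*i}\otimes_{\alpha-\beta}M_\alpha\xrightarrow{\sim}M_{\alpha+\alpha_i}1_{*i}$. Finally I would insert the grading shift: by Corollary \ref{cor:bigdecomp}, $\psi|_A$ has degree $-1$ when $i=n$ and degree $0$ otherwise (recall $i\neq 1$), i.e. degree $-\delta_{in}$, so $\varphi$ becomes degree $0$ after shifting its source by $q^{-\delta_{in}}$, which gives the stated isomorphism $q^{-\delta_{in}}H_{\alpha-\beta+\alpha_i}1_{*i}\otimes_{\alpha-\beta}M_\alpha\xrightarrow{\sim}M_{\alpha+\alpha_i}1_{*i}$.

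I expect essentially no obstacle here: the entire content is carried by Lemma \ref{lem:T0iwelldef} (and Lemma \ref{lem:Ti0welldef}), and what remains is formal. The one step worth stating carefully is the flatness of $H_{\alpha-\beta+\alpha_i}1_{*i}$ over $H_{\alpha-\beta}$, which is what lets us commute the induction tensor product past the quotient that defines $M_\alpha$; everything else is a kernel/cokernel computation inside the isomorphism $\psi$.
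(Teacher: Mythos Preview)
Your argument is correct and is exactly the (implicit) proof the paper intends: the corollary is stated without proof because once Lemma \ref{lem:T0iwelldef} identifies $\psi^{-1}(N_{\alpha+\alpha_i})=A'\oplus B$, the isomorphism $\psi$ descends immediately to $A/A'\xrightarrow{\sim}M_{\alpha+\alpha_i}1_{*i}$, and the degree computation from Corollary \ref{cor:bigdecomp} gives the shift. One small remark: you do not actually need flatness of $H_{\alpha-\beta+\alpha_i}1_{*i}$ over $H_{\alpha-\beta}$, only right-exactness of the tensor product, to identify $A/A'$ with $H_{\alpha-\beta+\alpha_i}1_{*i}\otimes_{\alpha-\beta}M_\alpha$ (this is also how the paper handles the analogous step in Lemma \ref{lem:newgens}).
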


For $i\neq 1$, we therefore define 
\[T_{0i}:E_0E_i\rightarrow E_iE_0\]
to be the inverse isomorphism. Note that this map also has the desired grading; $0$ if $i\neq n$ and $1$ otherwise.

We must handle the case of $i=1$ separately since $\psi$ is not as well-behaved in this case.

\begin{lem}\label{lem:t01proj}
    The degree 1 projection 
    \[ \phi:1_{*1,\vec{\beta}}H_{\alpha+\alpha_1}1_{*1}\twoheadrightarrow H_{\alpha-\beta+\alpha_1}1_{*1}\otimes_{\alpha-\beta}1_{*\vec{\beta}}H_\alpha\]
    descends to a surjective degree 1 morphism 
    \[ T_{01}:=M_{\alpha+\alpha_1}1_{*1}\twoheadrightarrow H_{\alpha-\beta+\alpha_1}1_{*1}\otimes_{\alpha-\beta} M_\alpha\]
\end{lem}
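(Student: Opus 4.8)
The plan is to produce $T_{01}$ as the morphism that $\phi$ induces between quotient bimodules, so that the only real content is a single containment of sub-bimodules. Recall from Corollary~\ref{cor:bigdecomp}, specialised to $i=1$ (in the $\bar\psi$-form used in Lemmas~\ref{lem:Ti0welldef} and~\ref{lem:T0iwelldef}), that as an $(H_{\alpha-\beta+\alpha_1},H_\alpha)$-bimodule
\[
1_{*1,\vec\beta}H_{\alpha+\alpha_1}1_{*1}\;\cong\;\bigl(H_{\alpha-\beta+\alpha_1}1_{*1}\otimes_{\alpha-\beta}1_{*\vec\beta}H_\alpha\bigr)\oplus\bigl(1_{*n,n-1,\dots,2}H_\alpha[x_{|\alpha|+1}]\bigr),
\]
and that $\phi$ is the projection onto the first summand. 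Being a direct-summand projection of bimodules, $\phi$ is automatically $(H_{\alpha-\beta+\alpha_1},H_\alpha)$-bilinear and surjective, and it has degree $1$ because the matching summand inclusion (the restriction of $\bar\psi$ to the first summand) has degree $-1$ there. Consequently it suffices to show that $\phi$ carries the sub-bimodule $N_{\alpha+\alpha_1}1_{*1}$, the kernel of $1_{*1,\vec\beta}H_{\alpha+\alpha_1}1_{*1}\twoheadrightarrow M_{\alpha+\alpha_1}1_{*1}$, into the sub-bimodule $H_{\alpha-\beta+\alpha_1}1_{*1}\otimes_{\alpha-\beta}N_\alpha$, the kernel of $H_{\alpha-\beta+\alpha_1}1_{*1}\otimes_{\alpha-\beta}1_{*\vec\beta}H_\alpha\twoheadrightarrow H_{\alpha-\beta+\alpha_1}1_{*1}\otimes_{\alpha-\beta}M_\alpha$. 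Once this is done $\phi$ descends to $T_{01}$, which then has the asserted source, target, and degree; surjectivity of $T_{01}$ is automatic, since $\phi$ is surjective and the target of $T_{01}$ is a quotient of the target of $\phi$.

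For the containment, two of the needed ingredients are already available: Lemma~\ref{lem:Ti0welldef} gives $\bar\psi\bigl(H_{\alpha-\beta+\alpha_1}1_{*1}\otimes_{\alpha-\beta}N_\alpha\bigr)\subseteq N_{\alpha+\alpha_1}$, and the $i=1$ part of Lemma~\ref{lem:T0iwelldef} identifies the preimage of $N_{\alpha+\alpha_1}$ inside the first summand exactly with $H_{\alpha-\beta+\alpha_1}1_{*1}\otimes_{\alpha-\beta}N_\alpha$. What remains is to rule out that some element of $N_{\alpha+\alpha_1}1_{*1}$ has, in the displayed decomposition, a first-summand component lying outside $H_{\alpha-\beta+\alpha_1}1_{*1}\otimes_{\alpha-\beta}N_\alpha$, compensated by a nonzero second-summand component. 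By bilinearity and the description of the denominators via Theorem~\ref{pbw} and Lemma~\ref{lem:newgens}(1), this reduces to tracing the generators $1_{*\vec\beta}\tau_{-1},\dots,1_{*\vec\beta}\tau_{-(n-1)}$ (equivalently $1_{*\vec\beta}\tau_{-k}\cdots\tau_{-1}$) of $N_{\alpha+\alpha_1}$, together with their twists by PBW monomials, through the decomposition. For each such element one performs the same kind of KLR manipulation as in the proofs of Lemmas~\ref{lem:T00}, \ref{lem:Ti0welldef}, and~\ref{lem:T0iwelldef}: using the reduced-word control of Lemma~\ref{rml}, the $s_i$-substitution of Remark~\ref{s_iswap}, and the explicit splitting of Theorem~\ref{thm:newproj}, one rewrites it as a sum of an element of $\bar\psi\bigl(H_{\alpha-\beta+\alpha_1}1_{*1}\otimes_{\alpha-\beta}N_\alpha\bigr)$ and an element of $\bar\psi$ of the second summand, after which $\phi$ records only the former.

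I expect the genuine difficulty to lie exactly in this last family of braid computations, and in particular in the bookkeeping forced by the appearance of $s_{-1}$ in place of $\tau_{-1}$ in $\bar\psi$ at $i=1$: this substitution, made necessary by the chosen orientation of the edge $\{1,2\}$, is precisely what prevents $\bar\psi$ from restricting to an isomorphism onto $M_{\alpha+\alpha_1}1_{*1}$ the way it does for $i\neq 1$ in Corollary~\ref{cor:tiso}, and it is why $T_{01}$ can only be produced here as a (degree $1$) surjection rather than as an inverse isomorphism. By contrast, bilinearity, surjectivity, and the grading of $T_{01}$ are all formal consequences of the corresponding properties of $\phi$.
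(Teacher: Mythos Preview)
Your overall strategy is the paper's: reduce everything to the containment $\phi(N_{\alpha+\alpha_1}1_{*1})\subset H_{\alpha-\beta+\alpha_1}1_{*1}\otimes_{\alpha-\beta}N_\alpha$, after which descent, surjectivity, and the degree count are formal. However, the way you try to obtain this containment does not work, for two related reasons.

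First, you conflate $\psi$ and $\bar\psi$. The projection $\phi$ in the statement is the one coming from Corollary~\ref{cor:bigdecomp}, which is the $\psi$-decomposition; for $i=1$ the map $\bar\psi$ uses $s_{-1}$ in place of $\tau_{-1}$ and is a genuinely different map (the paper never asserts that $\bar\psi$ is an isomorphism when $i=1$). Lemmas~\ref{lem:Ti0welldef} and~\ref{lem:T0iwelldef} are about $\bar\psi$, so they do not speak directly about $\phi$.

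Second, even setting aside the $\psi$/$\bar\psi$ mismatch, the two ``ingredients'' you extract do not reduce the problem. Under the $\psi$-decomposition for $i=1$, the second summand $1_{*n,\dots,2}H_\alpha[x_{|\alpha|+1}]$ embeds via $c\mapsto c$, and this is \emph{not} contained in $N_{\alpha+\alpha_1}$ (the constant $1$ already fails). Hence the ``compensation'' you worry about is real: an element of $N_{\alpha+\alpha_1}1_{*1}$ can and typically does have a nonzero second-summand component, and knowing the restricted preimage of $N_{\alpha+\alpha_1}$ on the first summand tells you nothing about its $\phi$-image. What is actually needed is the direct computation the paper performs: for each $k<n$ one decomposes the source of $\tau_{-k}\times$ via $\psi^{-1}$ and computes $\phi\circ(\tau_{-k}\times)$ on both summands, obtaining either an element of $H_{\alpha-\beta+\alpha_1}1_{*1}\otimes_{\alpha-\beta}N_\alpha$ or zero. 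That computation is the entire content of the lemma; your proposal names it as ``remaining work'' but does not carry it out, and the detour through Lemmas~\ref{lem:Ti0welldef} and~\ref{lem:T0iwelldef} does not shorten it.
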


\begin{proof}
    We show that $\phi(N_{\alpha+\alpha_1})\subset H_{\alpha-\beta+\alpha_1}1_{*1}\otimes_{\alpha-\beta}N_\alpha$. Consider the maps
    \[\tau_{-k}\times:1_{*1,s_{-k}\vec{\beta}}H_{\alpha+\alpha_1}1_{*1}\rightarrow 1_{*1,\vec{\beta}}H_{\alpha+\alpha_1}1_{*1}\]
    for each $k < n$. We then must show that each $\phi \circ \tau_{-k}\times$ has image contained in $H_{\alpha-\beta+\alpha_1}1_{*1}\otimes_{\alpha-\beta} N_\alpha$. Similar to Corollary \ref{cor:bigdecomp}, the source of $\phi \circ \tau_{-k}\times$ decomposes as 
    \[ 1_{*1,s_{-k}\vec{\beta}}H_{\alpha+\alpha_1}1_{*1} \xrightarrow{\sim}^{\psi^{-1}} H_{\alpha-\beta+\alpha_1}1_{*1}\otimes_{\alpha-\beta}1_{*s_{-k}\vec{\beta}}H_\alpha \oplus 1_{*n,\dots k+2,k,k+1,\dots 2}H_{\alpha}[x_{|\alpha|+1}],\]
    where the idempotent in the right is $1_{*,n,\dots 2}$ if $k=1$. By the same corollary, the target decomposes as 
    \[ 1_{*1,\vec{\beta}}H_{\alpha+\alpha_1}1_{*1} \xrightarrow{\sim}^{\psi^{-1}} H_{\alpha-\beta+\alpha_1}1_{*1}\otimes_{\alpha-\beta}1_{*\vec{\beta}}H_\alpha \oplus 1_{*n,n-1,\dots 2}H_{\alpha}[x_{|\alpha|+1}].\]
    For the first summand in the domain, an element $a\otimes_{\alpha-\beta} b$ is mapped to 
    \begin{align*}
        a1_{*1}\otimes_{\alpha-\beta} 1_{*s_{-k}\vec{\beta}}b\xrightarrow{\psi} 1_{*1,s_{-k}\vec{\beta}}a\tau_{-n}\dots \tau_{-1}b & \xrightarrow{\tau_{-k}} 1_{*1,\vec{\beta}}\tau_{-k}a\tau_{-n}\dots \tau_{-1}b \\
        &= 1_{*1,\vec{\beta}}a\tau_{-n}\dots \tau_{-k}\tau_{-k-1}\tau_{-k}\dots \tau_{-1}b
    \end{align*}
    by the braid relations. If $k\neq 1$, then this equals $1_{*1,\vec{\beta}}a\tau_{-n}\dots \tau_{-1}\tau_{-(k+1)}b$, which projects to $a1_{*1}\otimes_{\alpha-\beta} 1_{*\vec{\beta}}\tau_{-k}b\in H_{\alpha-\beta+\alpha_1}1_{*1}\otimes_{\alpha-\beta} N_{\alpha}$. If instead $k=1$, then we have
    \begin{align*}
        1_{*1,\vec{\beta}}a\tau_{-n}\dots \tau_{-1}\tau_{-2}\tau_{-1}b &= 1_{*1,\vec{\beta}}a\tau_{-n}\dots \tau_{-1}\tau_{-2}b \pm 1_{*1,\vec{\beta}}a\tau_{-n}\dots \tau_{-3}b\\
        &\xrightarrow{\phi} a1_{*1}\otimes 1_{*\vec{\beta}}\tau_{-1}b+0\in H_{\alpha-\beta+\alpha_1}1_{*1}\otimes_{\alpha-\beta} N_{\alpha}.
    \end{align*}
    We now consider elements of the other summand of the domain, $a\in 1_{*n,\dots k+2,k,k+1,\dots 2}H_{\alpha}[x_{|\alpha|+1}]$. If $k\neq 1$, then this is mapped to 
    \[a \xrightarrow{\psi} 1_{*1,s_{-k}\vec{\beta}}a\xrightarrow{\tau_{-k}} 1_{*1,\vec{\beta}}\tau_{-k}a\xrightarrow{\phi} 0, \] and if $k=1$, then this is mapped to 
    \begin{align*}
    a \xrightarrow{\psi} 1_{1,s_{-1}\vec{\beta}}\tau_{-1}a &\xrightarrow{\tau_{-1}} 1_{*1,\vec{\beta}}\tau_{-1}\tau_{-1}a \\
        &=  1_{*1,\vec{\beta}} (x_{-2}-x_{-1})a\\
        &\xrightarrow{\phi} 0.
    \end{align*}
    So, $\phi$ maps $N_{\alpha+\alpha_1}$ into $H_{\alpha-\beta+\alpha_1}1_{*1}\otimes_{\alpha-\beta}N_{\alpha}$
\end{proof}

We can now formally state our main result. 

\begin{thm}\label{thm:thebigkahuna}
    The data of the endofunctor $E_0=\bigoplus_\alpha M_\alpha\otimes_\alpha \text{\textendash}$ and natural transformations from this subsection $T_{00}$, $T_{i0}$, $T_{0i}$, and $X_0$ for all $1\leq i \leq n$ extend the right multiplication 2-representation of $U_q^+(\mathfrak{sl}_{n+1})$ on $\He$ to a 2-representation of $U_q^+(\hat{\mathfrak{sl}}_{n+1})$.
\end{thm}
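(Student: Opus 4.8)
The plan is to check that the functor $E_0$ and the natural transformations $X_0$, $T_{00}$, $T_{i0}$, $T_{0i}$ ($1\le i\le n$) satisfy, together with the already-available data for the finite part, all of the defining relations (1)--(4) of Definition \ref{2cat} for $\mathcal{U}_q^+(\tilde{\mathfrak{sl}}_{n+1})$. The relations among generators indexed only by $\{1,\dots,n\}$ hold because they are precisely the relations of the right multiplication $2$-representation of $U_q(\mathfrak{sl}_{n+1})$ on $\He$ (Example \ref{leftmult}), and by construction our $E_i$, $X_i$, $T_{ij}$ for $i,j\ge 1$ are exactly that data; so the content is the relations in which the index $0$ appears. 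The required degrees of $X_0$, $T_{00}$, $T_{i0}$, $T_{0i}$ have already been recorded in Subsection \ref{dtnt} and in Lemma \ref{lem:T00}. Since every functor occurring is ``tensor with a bimodule'' and every natural transformation is the descent of left multiplication by an element of a KLR algebra, each relation becomes an identity of $(H_\gamma,H_\alpha)$-bimodule homomorphisms between explicit quotients of modules $1_{*w}H_\alpha$; using Lemma \ref{lem:newgens} and Corollary \ref{cor:bigdecomp} to write down generators of the relevant bimodules, it suffices to evaluate each side on those generators.

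First I would dispose of the relations that involve a vertex $i\in\{2,\dots,n-1\}$, which is disconnected from $0$ in the affine diagram. There $Q_{0i}=Q_{i0}=1$, Corollary \ref{cor:tiso} gives that $T_{0i}$ and $T_{i0}$ are mutually inverse isomorphisms, so relation (3) for the pair $(0,i)$ is immediate; the relations (1)--(2) for $(0,i)$ and $(i,0)$, and any braid relation (4) for a triple consisting of $0$, such an $i$, and a third index, follow by transporting the corresponding statement about $E_i$ (part of the finite $2$-representation) along these isomorphisms, moving $x$-variables past the shuffle element $\tau_{-n}\cdots\tau_{-1}$ defining $T_{i0}$ by means of Proposition \ref{symcommute}. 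This leaves the pairs $(0,0)$, $(0,1)$, $(0,n)$ and the triples involving only $0,1,n$.

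Next come the genuinely new ``$\mathfrak{sl}_2$-type'' relations. For $(0,0)$: relations (1)--(2) reduce, after realizing $X_0$ as left multiplication by whichever $x_{-j}$ ($j\le n$) is convenient modulo the defining submodule, to the nilHecke identities at the two equally labelled strands that $\omega_{00}$ interchanges; and the quadratic relation $T_{00}\circ T_{00}=Q_{00}(\cdots)=0$ is checked by expanding $\tau_{\omega_{00}}^2\,1_{*\vec{\beta},\vec{\beta}}$ and showing, using Lemma \ref{rml} and the fact that $\omega_{00}$ has no triple inversion, that all resulting terms lie in the submodule $N$. For $(0,1)$ and $(0,n)$, where $Q_{0i}(u,v)=v-u$ and $Q_{i0}(u,v)=u-v$, one checks relations (1)--(3) directly from the explicit descriptions of $\psi$, $\bar\psi$, $\phi$ in Corollary \ref{cor:bigdecomp} and Lemmas \ref{lem:Ti0welldef}--\ref{lem:t01proj}: composing the shuffle $T_{i0}$ with the (inverse) projection $T_{0i}$ and applying once the quadratic KLR relation $\tau_j^2 1_v=Q_{v_j,v_{j+1}}(x_j,x_{j+1})1_v$ at the crossing of $\alpha_i$ with the copy of $\alpha_i$ sitting inside $\vec\beta$ produces exactly the polynomial $X_iE_0-E_iX_0$ (or its negative), everything else landing in $N$. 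The substitution of $\tau_{-1},\tau_{-n}$ by the nilHecke symmetrizers $s_{-1},s_{-n}$ in the definition of $T_{i0}$, handled via Propositions \ref{sirels} and \ref{sirels2} and Remark \ref{s_iswap}, is what makes these computations close.

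The main obstacle is the braid relation (4) for the triples involving $0$, namely $(0,0,0)$, $(0,0,j)$, $(0,j,0)$, $(j,0,0)$, and the mixed triples $(i,0,j)$, $(0,i,j)$, $(i,j,0)$. For each, I would write the two composites as left multiplication by KLR elements attached to explicit reduced words, note that these words represent the same element of $S_{|\alpha|}$, and then show that the difference of the two products lies in the submodule $N$ defining the quotient in play. This is exactly the situation engineered in Subsection \ref{subsec:control}: in each such computation the block of strands transported past a $\vec\beta$-block enters one strand at a time, so the tail $\vec l$ of Lemma \ref{rml} is a singleton, the obstruction sets can be read off the braid diagrams as in Lemmas \ref{lem:errorobst} and \ref{rml}, and the error terms produced by the non-braid behaviour of the KLR algebra are all prefixed by the generators $\tau_{-1},\tau_{-2},\dots$ of $N$ and hence vanish in the quotient; a forbidden crossing at vertex $1$ or $n$ is removed from the obstruction set using the $s_i$-substitution of Remark \ref{s_iswap}, after which the $s_i$ are moved around by Propositions \ref{sirels} and \ref{sirels2}. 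The right-hand ``$\delta_{i,k}$'' term in (4) is nonzero only for the triples $(0,j,0)$ with $j\in\{0,1,n\}$, and is matched by the same nilHecke identity used for the $(0,0)$ and $(0,1),(0,n)$ cases. The delicate part is organizing this reduced-word-and-obstruction-set bookkeeping uniformly across the several families of triples; once that is done the KLR relations all hold and the theorem follows.
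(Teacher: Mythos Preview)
Your overall strategy—evaluate each relation on explicit bimodule generators and control the discrepancies via Lemma~\ref{rml} and the obstruction-set bookkeeping—is exactly what the paper does for relations \eqref{smallklr3}, \eqref{smallklr4}, \eqref{smallklr6}, \eqref{smallklr7}, \eqref{smallklr8}. But there are two places where your proposal is too vague to be a proof, and these are precisely where the paper introduces extra machinery you do not mention.

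First, the pure $E_0$ relations (the commutators $T_{00}\circ X_0E_0-E_0X_0\circ T_{00}=\mathrm{id}$ and its dual, $T_{00}^2=0$, and the $(0,0,0)$ braid). You say these ``reduce to the nilHecke identities at the two equally labelled strands that $\omega_{00}$ interchanges,'' but $\omega_{00}$ interchanges $n$ equally labelled pairs, and the KLR commutator produces a contribution at every one of them; in fact $T_{00}X_0E_0-E_0X_0T_{00}=\mathrm{id}$ does \emph{not} hold in the KLR algebra itself (see the final paragraph of Subsection~\ref{subsec:poly}). The paper's argument is to pass to the faithful polynomial representation and prove that the quotient $1_vH_\alpha 1_v/A$ is isomorphic to ${}^0H_2$ with $\tau_{\omega_{00}}\mapsto\partial$ (Theorem~\ref{thm:T_00descendstoBGG} and Lemma~\ref{lem:Iidentify}); the relations then follow from the nil affine Hecke relations for a single pair. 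Your sketch supplies no such reduction. Second, you propose to check all braid relations, including the shapes $(0,j,0)$, $(0,i,j)$, $(i,j,0)$ with their $\delta_{i,k}$-correction terms, by hand. The paper instead proves Lemma~\ref{klrreduce}: once each $T_{i0}$ is a monomorphism and each $T_{0i}$ an epimorphism of functors, all of the ``wrong-direction'' commutator, quadratic, and braid relations follow \emph{formally} from the short list \eqref{smallklr1}--\eqref{smallklr9}. In particular the correction polynomial $\frac{Q_{0j}(X_0E_j,E_0X_j)E_0-E_0Q_{0j}(E_jX_0,X_jE_0)}{X_0E_jE_0-E_0E_jX_0}$ is never matched directly; your assertion that it ``is matched by the same nilHecke identity'' is unsupported, and without the monic/epic reduction this is the hardest relation in the list.
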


\subsection{Verifying KLR relations}\label{subsec:verify}
We conclude the construction of our 2-representation by checking that our natural transformations satisfy the KLR relations. At this point, all of the key insights have been made, and the remaining work is a series of straightforward verifications. The reader may then wish to skip this subsection on a first reading.

We first present a lemma that will allow us to check only a subset of the KLR relations.
\begin{lem}\label{klrreduce}
    For a quiver $\vec{Q}$ with vertex set $I$ along with an additive category $\mathcal{C}$, suppose that a 2-representation of $\U(\vec{Q})$ on $\mathcal{C}$ is given. Let $\vec{Q}'$ be any fixed quiver obtained from $\vec{Q}$ by adding a single new vertex $0$ along with any number of directed edges between $0$ and the vertices of $\vec{Q}$. Suppose that an additive endofunctor $E_0$ of $\mathcal{C}$ is given along with natural transformations $X_0\in End(E_0)$, $T_{00}\in End(E_0E_0), T_{0i}\in Nat(E_0E_i,E_iE_0),$ and $ T_{i0}\in Nat(E_iE_0,E_0E_i)$ for each $i\in I$. Suppose that the following KLR relations are satisfied by these new natural transformations for all $i,j\in I$. 
    \begin{align}
        T_{00}\circ X_0E_0-E_0X_0\circ T_{00}&=\text{id}_{00}, \label{smallklr1}\\
        T_{00}\circ E_0X_0-X_0E_0\circ T_{00} &= -\text{id}_{00},\label{smallklr2}\\
        T_{i0}\circ E_iX_0 &=X_0E_i\circ T_{i0},\label{smallklr3}\\
        T_{i0}\circ X_iE_0&=E_0X_i\circ T_{i0},\label{smallklr4}\\
        T_{00}^2&=0,\label{smallklr5}\\
        T_{i0}\circ T_{0i}&=Q_{i0}(E_0X_i,X_0E_i),\label{smallklr6}\\
        T_{j0}E_i\circ E_jT_{i0}\circ T_{ij}E_0&=E_0T_{ij}\circ T_{i0}E_j\circ E_iT_{j0},\label{smallklr7}\\
        T_{00}E_i\circ E_0T_{i0}\circ T_{i0}E_0&=E_0T_{i0}\circ T_{i0}E_0\circ E_iT_{00},\label{smallklr8}\\
        T_{00}E_0\circ E_0T_{00}\circ T_{00}E_0&=E_0T_{00}\circ T_{00}E_0\circ E_0T_{00}.\label{smallklr9}
    \end{align}
    
    If the $T_{i0}$, $T_{j0}E_i,E_jT_{i0},E_0T_{i0},T_{i0}E_0, E_0(X_iE_i-E_iX_i)$ and $(X_0E_0-E_0X_0)E_i$ are all monomorphisms of functors, then this data defines a 2-representation of $\U(\vec{Q}')$ on $\mathcal{C}$. In case $\mathcal{C}$ is graded and all functors and natural transformations are compatible with the gradings on $\U_q(\vec{Q}')$ and $\mathcal{C}$, then this defines a 2-representation of $\U_q(\vec{Q}')$ on $\mathcal{C}$. 
\end{lem}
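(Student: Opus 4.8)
The plan is to check directly that the assignment $E_a\mapsto E_a$, $X_a\mapsto X_a$, $T_{ab}\mapsto T_{ab}$ (for $a,b\in I\cup\{0\}$) respects every defining relation (1)--(4) of Definition \ref{2cat} for the enlarged quiver $\vec{Q}'$, organized by how many of the indices equal $0$. The relations with no index equal to $0$ hold because they are exactly the relations of the given $2$-representation of $\U(\vec{Q})$. Among the relations involving $0$: relations (1) and (2) for $(0,0)$ are \eqref{smallklr1} and \eqref{smallklr2}; relations (1) and (2) for $(i,0)$ with $i\in I$ are \eqref{smallklr3} and \eqref{smallklr4}; relation (3) for $(0,0)$ is \eqref{smallklr5} (as $Q_{00}=0$); relation (3) for $(i,0)$ is \eqref{smallklr6}; and relation (4) for the triples $(i,j,0)$, $(i,0,0)$ and $(0,0,0)$ with $i,j\in I$ are \eqref{smallklr7}, \eqref{smallklr8} and \eqref{smallklr9}. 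So what remains is to deduce (a) relations (1), (2), (3) for the pairs $(0,i)$, $i\in I$, and (b) relation (4) for the triples $(i,0,k)$, $(0,j,k)$, $(0,0,k)$, $(0,k,0)$ with $i,j,k\in I$. Since every morphism produced along the way will carry exactly the degree that $\U_q(\vec{Q}')$ assigns to it --- already true of $X_0$, $T_{00}$, $T_{i0}$, $T_{0i}$ by hypothesis --- the graded statement will follow from the ungraded one with no additional argument.

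For (a) the key point is that $T_{i0}\circ T_{0i}=Q_{i0}(E_0X_i,X_0E_i)$ by \eqref{smallklr6} is a polynomial in the mutually commuting dots on $E_0E_i$, hence commutes with every such dot. Let $A\in End(E_0E_i)$ be either of the two dots and $\widetilde A\in End(E_iE_0)$ the corresponding dot obtained by following the same strand across the crossing; relations \eqref{smallklr3}--\eqref{smallklr4} say $A\circ T_{i0}=T_{i0}\circ\widetilde A$, so
\[
T_{i0}\circ T_{0i}\circ A=A\circ T_{i0}\circ T_{0i}=T_{i0}\circ\widetilde A\circ T_{0i}.
\]
As $T_{i0}$ is a monomorphism of functors we may cancel it on the left, obtaining $T_{0i}\circ A=\widetilde A\circ T_{0i}$; letting $A$ run over the two dots gives relations (1) and (2) for $(0,i)$. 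Relation (3) for $(0,i)$ then follows by composing on the right with $T_{0i}$: $(T_{0i}\circ T_{i0})\circ T_{0i}=T_{0i}\circ(T_{i0}\circ T_{0i})=T_{0i}\circ Q_{i0}(E_0X_i,X_0E_i)$, and pushing the polynomial through $T_{0i}$ by the dot--sliding relations just proved replaces its arguments by the matching dots on $E_iE_0$ and, using $Q_{ab}(u,v)=Q_{ba}(v,u)$, turns $Q_{i0}$ into $Q_{0i}$; cancelling $T_{0i}$ on the right, legitimate since it is an epimorphism of functors, yields $T_{0i}\circ T_{i0}=Q_{0i}(E_iX_0,X_iE_0)$.

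For (b) I would run the same trick one level up. Each of the four remaining instances of relation (4) has crossings $T_{0j}$, $j\in I$, as its only ingredients not already covered; composing the whole identity, on the correct side and in the correct tensor slot, with suitable copies of $T_{j0}$ pairs each $T_{0j}$ with an adjacent $T_{j0}$, which relation (3) for $(j,0)$ (that is, \eqref{smallklr6}) or for $(0,j)$ turns into the polynomial $Q_{j0}$. Sliding these polynomials through the remaining crossings by the dot--intertwining relations now available leaves an identity in which the only crossings are $T_{ij}$, $T_{i0}$, $T_{00}$ together with dots --- hence an identity following from \eqref{smallklr7}--\eqref{smallklr9}, the relations of $\U(\vec{Q})$, and the cases of (a) already established --- after which the introduced copies of $T_{j0}$ are stripped off again using their mono/epi property. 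A useful shortcut is that, once relation (3) holds for all pairs, relation (4) for $(a,b,c)$ is equivalent to relation (4) for $(c,b,a)$ (reverse the three crossings on each side and cancel the resulting $Q$'s), which reduces $(0,j,k)$ to $(k,j,0)$ and $(0,0,k)$ to $(k,0,0)$ and so leaves only $(i,0,k)$ --- which also subsumes $(k,0,i)$ --- and the palindromic triple $(0,k,0)$. In the case $i=k$ of $(i,0,k)$, and in $(0,k,0)$, the right-hand side of relation (4) is the nonzero difference quotient of the $Q_{ab}$; one must check it is carried to itself by the above composition, which holds because it is an explicit expression in the $Q_{ab}$ and the dots, whose behaviour under these manipulations has already been pinned down.

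The hard part will be the bookkeeping in (b): tracking which $Q_{ab}$ appears, and with which arguments, after each composition with a $T_{j0}$, and verifying the error terms in the cases with $\delta\neq 0$; there is also the easy-to-misstate but essential point that one must compose on the target side in order to use monomorphicity of $T_{j0}$ and on the source side to use epimorphicity of $T_{0j}$, so that the composed identity does not collapse to a triviality. Everything in (a) is short, and once it is available the deductions in (b) are routine, so modulo this bookkeeping the lemma follows; the graded refinement then needs nothing beyond the degree bookkeeping already built into the hypotheses.
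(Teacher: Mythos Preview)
Your proposal is correct and follows essentially the same strategy as the paper: reduce every missing relation to the given ones by composing with copies of $T_{j0}$ (monic) on the left and/or $T_{0j}$ (epic) on the right, converting $T_{0j}T_{j0}$ or $T_{j0}T_{0j}$ into the polynomial $Q$, sliding it through via the dot relations, and cancelling. Your treatment of part (a) is complete and in fact slightly cleaner than the paper's (you establish the dot--sliding for $(0,i)$ first using only monicity, then get $T_{0i}\circ T_{i0}=Q_{0i}$ using only epicity, whereas the paper uses both at once for the latter); for part (b) the paper simply carries out the bookkeeping you describe, case by case, for the four remaining triples. The one point to flag is your ``symmetry shortcut'' that relation (4) for $(a,b,c)$ is equivalent to relation (4) for $(c,b,a)$: this is plausible but not proved in your sketch (the reversal does not literally invert the crossings, and the cancellation of the $Q$'s requires sliding polynomials through further crossings with their own error terms when strands repeat), and the paper does not invoke it --- it just verifies all four triples directly. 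Since your main argument does not depend on the shortcut, nothing is lost by dropping it and doing the four cases as the paper does.
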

In the cases we study, the $T_{i0}$ in fact come from injective bimodule homomorphisms.
\begin{proof}
    We verify that the remaining KLR relations hold. Note that the relations not involving the functor $E_0$ hold because the $E_i$, $X_i$, and $T_{ij}$ constitute a 2-representation of $\U(\vec{Q})$.
    \begin{enumerate}
        \item $T_{0i}\circ T_{i0}=Q_{0i}(E_iX_0,X_iE_0)$. Since $T_{i0}$ is monic, we may verify the equality after composing  $T_{i0}$ on the left. After doing so, the left-hand side becomes 
        \[T_{i0}\circ T_{0i}\circ T_{i0}=Q_{i0}(E_0X_i,X_0E_i)\circ T_{i0}\]
        by relation \eqref{smallklr6} above. The right-hand side is \[T_{i0}\circ Q_{0i}(E_iX_0,X_iE_0)=Q_{i0}(E_0X_i,X_0E_i)\circ T_{i0},\] as desired. 
        
        \item $T_{0i}\circ E_0X_i=X_iE_0\circ T_{0i}$. As above, we may verify equality after applying  $T_{i0}$ on the left. The left-hand side becomes 
        \[T_{i0}\circ T_{0i}\circ E_0X_i=Q_{i0}(E_0X_i,X_0E_i)\circ E_0X_i.\]
        By relations \eqref{smallklr4} and \eqref{smallklr6}, the right-hand side becomes 
        \[T_{i0}\circ X_iE_0\circ T_{0i}=E_0X_i\circ T_{i0}\circ T_{0i}=E_0X_i\circ Q_{i0}(E_0X_i,X_0E_i),\] as desired.

        \item $T_{0i}\circ X_0E_i=E_iX_0\circ T_{0i}$.
        This is essentially the same computation as above.
        \item $T_{0j}E_i\circ E_0T_{ij}\circ T_{i0}E_j-E_jT_{i0}\circ T_{ij}E_0\circ E_iT_{0j}=\delta_{i,j}\frac{Q_{i0}(X_iE_0,E_iX_0)E_i-E_iQ_{i0}(E_0X_i,X_0E_i)}{X_iE_0E_i-E_iE_0X_i}$. We may verify equality after composing $T_{j0}E_i$ on the left. The first term on the left-hand side becomes
        \[T_{j0}E_i\circ T_{0j}E_i\circ E_0T_{ij}\circ T_{i0}E_j=Q_{j0}(E_0X_j,X_0E_j)E_i\circ E_0T_{ij} \circ T_{i0}E_j\]
        by relation \eqref{smallklr6}. The second term on the left-hand side becomes
        \begin{equation*}
        \begin{split}
            &-T_{j0}E_i\circ E_jT_{i0}\circ T_{ij}E_0\circ E_iT_{0j}=\\
            &-E_0T_{ij}\circ T_{i0}E_j\circ E_iT_{j0}\circ E_iT_{0j}=\\
            &-E_0T_{ij}\circ T_{i0}E_j\circ E_iQ_{j0}(E_0X_j,X_0E_j)=\\
            &-E_0T_{ij}\circ Q_{j0}(E_0E_iX_j,X_0E_iE_j)\circ T_{i0}E_j
        \end{split}
        \end{equation*}
            by relations \eqref{smallklr7}, \eqref{smallklr6}, and \eqref{smallklr3}. If $i\neq j$, then we see that this difference is equal to zero. In this case, the right-hand side is $T_{j0}E_i\circ 0 \circ T_{0i}E_j=0$, so we have equality. Suppose instead $i=j$. The right-hand side now reads 
            \begin{equation*}
                \begin{split}
                    &T_{i0}E_i\circ \frac{Q_{i0}(X_iE_0,E_iX_0)E_i-E_iQ_{i0}(E_0X_i,X_0E_i)}{X_iE_0E_i-E_iE_0X_i}= \\
                    &\frac{Q_{i0}(E_0X_i,X_0E_i)E_i-Q_{i0}(E_0E_iX_i,X_0E_iE_i)}{E_0(X_iE_i-E_iX_i)}\circ T_{i0}E_i,
                \end{split}
            \end{equation*}
            where we have used relations \eqref{smallklr3} and \eqref{smallklr4}.
            It is therefore sufficient to verify that 
            \[Q_{i0}(E_0X_i,X_0E_i)E_i\circ E_0T_{ii}-E_0T_{ii}\circ Q_{i0}(E_0E_iX_i,X_0E_iE_i)=\frac{Q_{i0}(E_0X_i,X_0E_i)E_i-Q_{i0}(E_0E_iX_i,X_0E_iE_i)}{E_0(X_iE_i-E_iX_i)}.\]
           By assumption on monicity, it is enough to show equality after multiplying both sides on the left by $E_0(X_iE_i-E_iX_i)$. After rearranging this equation, we are reduced to showing that
           \[Q_{i0}(E_0X_iE_i,X_0E_iE_i)\circ E_0S_{ii}=E_0S_{ii} \circ Q_{i0}(E_0E_iX_i,X_0E_iE_i),\]
           where $S_{ii}=(X_iE_i-E_iX_i)\circ T_{ii}-E_iE_i$. This follows from the observation that, given the hypotheses of the lemma, $S_{ii}\circ E_iX_i=X_iE_i\circ S_{ii}$ and $S_{ii}\circ X_iE_i=E_iX_i\circ S_{ii}$. See also Proposition \ref{sirels}.
        \item $T_{ij}E_0\circ E_iT_{0j}\circ T_{0i}E_j=E_jT_{0i}\circ T_{0j}E_i\circ E_0T_{ij}$. Since $E_jT_{i0}$ is always monic, we may verify equality after composing  $E_jT_{i0}$ on the left. By the previous relation, the left-hand side is
        \begin{equation*}
            \begin{split}
                &E_jT_{i0}\circ T_{ij}E_0\circ E_iT_{0j}\circ T_{0i}E_j=\\
                &T_{0j}E_i\circ E_0T_{ij}\circ T_{i0}E_j\circ T_{0i}E_j+vT_{0i}E_j=\\
                &T_{0j}E_i\circ E_0T_{ij}\circ Q_{i0}(E_0X_i,X_0E_i)E_j+T_{0i}E_j\circ v,
            \end{split}
        \end{equation*}
        where \[v=\delta_{i,j}\frac{Q_{i0}(E_0E_iX_i,X_0E_iE_i)-Q_{i0}(E_0X_i,X_0E_i)E_i}{E_0(X_iE_i-E_iX_i)}.\] The right-hand side is now 
        \begin{equation*}
            \begin{split}
                &E_jT_{i0}\circ E_jT_{0i}\circ T_{0j}E_i\circ E_0T_{ij}=\\
                &E_jQ_{i0}(E_0X_i,X_0E_i)\circ T_{0j}E_i\circ E_0T_{ij}=\\
                &T_{0j}E_i\circ Q_{i0}(E_0E_jX_i,X_0E_jE_i)\circ E_0T_{ij}.
            \end{split}
        \end{equation*}
        A computation similar to the one in the prior part shows that these are equal.
        \item $T_{i0}E_0\circ E_iT_{00}\circ T_{0i}E_0-E_0T_{0i}\circ T_{00}E_i\circ E_0T_{i0}=\frac{Q_{0i}(X_0E_i,E_0X_i)E_0-E_0Q_{0i}(E_iX_0,X_iE_0)}{X_0E_iE_0-E_0E_iX_0}$. This can be shown similarly to parts (4) and (5).
        \item $T_{0i}E_0\circ E_0T_{0i}\circ T_{00}E_i=E_iT_{00}\circ T_{0i}E_0\circ E_0T_{0i}$. This can be shown similarly to parts (4) and (5).
    \end{enumerate}
    We have thus shown that this data yields a 2-representation of $\U(\vec{Q}')$. The claim about converting an appropriately graded 2-representation to one of $\U_q(\vec{Q}')$ is immediate from the definitions.
\end{proof}

\begin{rem}
    Note that this argument works for a more general choice of a $\kf[u,v]$-matrix $Q$ where we only require $Q_{ij}(u,v)=Q_{ji}(v,u)$. See \cite{qha} for a discussion of KLR-algebras in this generality.
\end{rem}
\begin{proof}[Proof (of Theorem \ref{thm:thebigkahuna})]
    We first verify that the $T_{i0}$ are  monomorphisms. The monicity for $T_{i0}$ when $i\neq n$ is established by Lemma \ref{lem:T0iwelldef}. For $i=n$, this lemma and the computations prior to this lemma show monicity. The remaining verifications of monic natural transformations follow from the definitions and from Theorem \ref{thm:newproj}.

   So, due to Lemma \ref{klrreduce}, we only need to check a subset of the KLR relations. Recall that the natural transformations $X_0$ and $T_{00}$ also appear in \cite{bkm} as endomorphisms of powers of $\Delta(\beta)$. Lemmas 3.7 and 3.9 of \cite{bkm} prove conditions 3.1, 3.2, 3.5, and 3.9 of Lemma \ref{klrreduce}. For the remaining relations, denote by $i$ and $j$ arbitrary elements of $I$. 
   \begin{enumerate}
       \item $T_{i0}\circ E_iX_0=X_0E_i\circ T_{i0}$. Both sides of this equality are $(H_{\alpha-\beta+\alpha_i},H_\alpha)$-linear maps defined on $H_{\alpha-\beta+\alpha_i}\otimes_{\alpha-\beta} M_\alpha$ with image in $M_{\alpha+\alpha_i}1_{*i}$. It is therefore sufficient to evaluate both sides on $1_{*i}\otimes_{\alpha-\beta} 1_{*\vec{\beta}}$. The left-hand side sends this to $ 1_{*i,\vec{\beta}}\tau_{-n}\dots \bar{\tau}_{-i}\dots \tau_{-1}x_{-2}$, and the right-hand side sends this to $ 1_{*i,\vec{\beta}}x_{-1}\tau_{-n}\dots \bar{\tau}_{-i}\dots \tau_{-1}$. Equality follows from basic computations.
       \item $T_{i0}\circ X_iE_0=E_0X_i\circ T_{i0}$. Both sides of this equality are $(H_{\alpha-\beta+\alpha_i},H_\alpha)$-linear maps defined on $H_{\alpha-\beta+\alpha_i}\otimes_{\alpha-\beta} M_\alpha$ with image in $M_{\alpha+\alpha_i}1_{*i}$. It is therefore sufficient to evaluate both sides on $1_{*i}\otimes_{\alpha-\beta} 1_{*\vec{\beta}}$. The left-hand side sends this to $1_{*1,\vec{\beta}}x_{-(n+1)}\tau_{-n}\dots \bar{\tau}_{-i}\dots \tau_{-1}$, and the right-hand side sends this to $ 1_{*1,\vec{\beta}}\tau_{-n}\dots \bar{\tau}_{-i}\dots \tau_{-1}x_{-1}$. If $i=1$ or $n$, then since $\bar{\tau}_{-i}=s_{-i}$, these are equal in  $1_{*\vec{\beta}}H_{\alpha+\alpha_i}1_{*i}$. Otherwise, their difference is the element $ 1_{*1,\vec{\beta}}\tau_{-n}\dots \tau_{-(i+1)}\tau_{-(i-1)}\dots \tau_{-1}$. This equals $1_{*1,\vec{\beta}}\tau_{-(i-1)}\dots \tau_{-1}\tau_{-n}\dots \tau_{-(i+1)}$. This difference term is zero in $M_{\alpha+\alpha_i}1_{*i}$.
       \item $T_{i0}\circ T_{0i}=Q_{i0}(E_0X_i,X_0E_i)$. This is clear if $i\neq 1,n$ since these $T_{i0}$ and $T_{0i}$ are inverse isomorphisms. Otherwise, note that both maps come from $(H_{\alpha-\beta+\alpha_i},H_{\alpha})$-linear endomorphisms of $M_{\alpha+\alpha_i}1_{*i}$. If $i=n$, then by Corollary \ref{cor:bigdecomp}, it is sufficient to evaluate on $1_{*n,\vec{\beta}}\tau_{-n}\dots \tau_{-1}$. The left-hand side sends this to $1_{*n,\vec{\beta}}s_{-n}\tau_{-(n-1)}\dots \tau_{-1}=1_{*n,\vec{\beta}}\tau_{-n}\dots \tau_{-1}x_{-1}-1_{*n,\vec{\beta}}x_{-n}\tau_{-n}\dots \tau_{-1}=(E_0X_n-X_0E_n)\circ(1_{*n,\vec{\beta}}\tau_{-n}\dots \tau_{-1})$, which is the desired equality. If $i=1$, a similar computation reveals that the image of $1_{*1,\vec{\beta}}\tau_{-n}\dots \tau_{-1}$ is $(E_0X_1-X_0E_1)\circ(1_{1,\vec{\beta}}\tau_{-n}\dots \tau_{-1})$. In this case, Corollary \ref{cor:bigdecomp} gives that we must also evaluate on elements of $M_{\alpha+\alpha_1}1_{*1}$ descended from $1_{*n,\dots 2}H_{\alpha}[x_{-1}]$. The left-hand side by definition sends such elements to zero. The right-hand side is the map $E_0X_1-X_0E_1$. Note that $E_0X_1$ is right multiplication by $x_{-1}$ and $X_0E_1$ is left multiplication by $x_{-1}$. These are the same on elements of $1_{*n,\dots 2}H_{\alpha}[x_{-1}]$, so the right-hand side also yields zero.
        \item $T_{j0}E_i\circ E_jT_{i0}\circ T_{ij}E_0=E_0T_{ij}\circ T_{i0}E_j\circ E_iT_{j0}$. Both sides of this equality are $(H_{\alpha-\beta+\alpha_i+\alpha_j},H_\alpha)$-linear maps defined on $H_{\alpha-\beta+\alpha_i+\alpha_j}1_{*j,i}\otimes_{\alpha-\beta}M_\alpha$ with image in $M_{\alpha+\alpha_i+\alpha_j}1_{*i,j}$. It is therefore sufficient to evaluate both sides on $1_{*j,i}\otimes_{\alpha-\beta}1_{*\vec{\beta}}$.  The first map sends $1_{*j,i}\otimes_{\alpha-\beta} 1_{*\vec{\beta}}$ to \[1_{*i,j,\vec{\beta}}\tau_{-(n+1)}\tau_{-n}\dots \bar{\tau}_{-i}\dots \tau_{-1}\tau_{-(n+1)}\dots \bar{\tau}_{-(j+1)}\dots \tau_{-2},\] and the second sends it to \[1_{*i,j,\vec{\beta}}\tau_{-n}\dots \bar{\tau}_{-j}\dots\tau_{-1}\tau_{-(n+1)}\dots \bar{\tau}_{-(i+1)}\dots \tau_{-2}\tau_{-1}.\]
        Using $\vec{l}=i$, $\vec{m}=j$, and $\vec{r}=\vec{\beta}$, we obtain the desired equality from Lemma \ref{rml} as long as $i,j\neq 1,n$. Otherwise, without loss of generality we have $i\in \{1,n\}$. If $i\neq j$, then $\bar{\tau}_{-i}=s_{-i}$, and the equality follows from Proposition \ref{sirels2} and Remark \ref{s_iswap}. If $i=j\in \{1,n\}$, then $\bar{\tau}_{-j}=\bar{\tau}_{-i}=s_{-i}$, and so we need only apply the $s_j$-braid relations described in Propositions \ref{sirels} and \ref{sirels2}, and so equality follows from Remark \ref{s_iswap}. 
        \item $T_{00}E_i\circ E_0T_{i0}\circ T_{i0}E_0=E_0T_{i0}\circ T_{i0}E_0\circ E_iT_{00}$. Both sides of this equality are $(H_{\alpha-2\beta+\alpha_i},H_\alpha)$-linear maps defined on $H_{\alpha-2\beta+\alpha_i}1_{*i}\otimes_{\alpha-2\beta} M_{\alpha-\beta}\otimes_{\alpha-\beta} M_\alpha$ with image in $M_{\alpha+\alpha_i-\beta}\otimes_{\alpha+\alpha_i-\beta} M_{\alpha+\alpha_i}1_{*i}$. It is therefore sufficient to evaluate both sides on $1_{*i}\otimes_{\alpha-2\beta} 1_{*\vec{\beta},\vec{\beta}}$. Denote by $\tau_{\omega_{00},-i}$ the element $\tau_{\omega_{00}}\in 1_{\vec{\beta},\vec{\beta}}H_{2\beta}$ acting on positions $-i$ through $-(i+2n+1)$. The first map sends $1_{*i}\otimes  1_{*\vec{\beta},\vec{\beta}}$ to \[-1_{*i,\vec{\beta},\vec{\beta}}\tau_{\omega_{00},-1}\tau_{-2n}\dots \bar{\tau}_{-(i+n)}\dots \tau_{-(n+1)}\tau_{-n}\dots \bar{\tau}_{-i}\dots \tau_{-1},\] and the second sends it to \[-1_{*i,\vec{\beta},\vec{\beta}}\tau_{-2n}\dots \bar{\tau}_{-(i+n)}\dots \tau_{-n-1}\tau_{-n}\dots \bar{\tau}_{-i}\dots \tau_{-1}\tau_{\omega_{00},-2}.\]
        Cancelling signs, we see both expressions correspond to the symmetric group element mapping $\vec{r}\vec{m}\vec{l}\rightarrow \vec{l}\vec{m}\vec{r}$ for $\vec{r}=\vec{m}=\vec{\beta}$ and $\vec{l}=i$, again viewing all braids as distinct. If $i=1$ or $n$, then since $\bar{\tau}_{-i}=s_{-i}$ and $\bar{\tau}_{-(i+n)}=s_{-(i+n)}$, Remark \ref{s_iswap} shows that there are no error terms, and hence, these two expressions are equal. Otherwise, the obstruction set contains only the two elements $(-(1+n+i),-(1+i)\pm 1, -1)$. Any error term resulting from the $\mathcal{S}$ element $(-(1+n+i),-(1+i)-1,-1)$ contains the following subexpression for some $k \leq i-1$.
    \begin{center}
    \begin{tikzpicture}
      
        \pic[braid/.cd,
        width=15mm,
        ] at (6,0)
         {braid = {s_2 s_1 s_2 s_1}};
    \draw (6,0) node[above] {$i-1,\dots k$};
    \draw (7.5,0) node[above] {$i$};
    \draw (9,0) node[above] {$i$};

    \end{tikzpicture}
\end{center}
        We observe that the braid relations hold for this element, and therefore it is zero due to the repeated $(i,i)$-swap. The argument for the other obstruction set element is dual. Note that this relation also holds in the KLR algebra and not just in the quotient.
   \end{enumerate}
\end{proof}

\subsection{Decategorification comments}\label{sec:extrarel}

 Decategorifying the 2-representation of Theorem \ref{thm:thebigkahuna}, we obtain a representation of $\C((q))\otimes U_q^+(\hat{\mathfrak{g}})$ on $\C((q))\otimes U_q^+(\mathfrak{g})$. Recall from Subsection \ref{targets} that our representation comes from one of $U_q^+(\hat{\mathfrak{g}})$ on $U_q^+(\mathfrak{g})$. In fact, since the action of $E_0$ decreases the $\N[I]$-degree by $\alpha_1+\alpha_2+\dots \alpha_n$, this action can be extended to one of $U_q(\hat{\mathfrak{b}})$. Here, $K_i(v)=K_ivK_i^{-1}$ for $i\in I$, and $K_0(v)=KvK^{-1}$ for $K=\prod_{i\in I} K_i^{-1}$.

Most of the quantum Serre relations can be deduced immediately from Corollary \ref{cor:tiso}. For $i\neq 1,n$, this corollary gives the relations $E_0E_i=E_iE_0$. It also shows $E_0E_n=q^{-1}E_nE_0$, which implies both that $E_0^2E_n-(q+q^{-1})E_0E_nE_0+E_nE_0^2=0$ and $E_n^2E_0-(q+q^{-1})E_nE_0E_n+E_0E_n^2=0$. The only quantum Serre relations which are difficult to establish are those involving both $E_1$ and $E_0$.

We might hope to find a non-quantum version of this action. We have $K_0(\He-\text{ungr})=0$, and we cannot directly specialize $(*,*)_L$ to $q=1$ due to the scalar $\frac{1}{1-q^2}$. We can again attempt to study a form on $U(\mathfrak{sl}_n)$ defined by $(1,1)=1, (E_i,E_j)=\delta_{ij}$ and extended to the rest of the algebra by the requiring it to be a bialgebra pairing. Unfortunately, this form is degenerate. For example, for $\mathfrak{sl}_3$, we have that $E_1E_2-E_2E_1$ is in the kernel of $(*,*)$. So, this form is not suitable for studying adjoints.
\section{The \texorpdfstring{$\mathfrak{sl}_2$}{sl2} case}\label{sec:sl2}
Table \ref{orientpic1} summarizes the vertex labels and orientations that we use in this section.
\begin{table}[ht]
\begin{center}
\begin{tikzcd}
    \mathfrak{sl}_2: & {} &  1 \\
   \mathfrak{sl}_3: & 0 \arrow[r] & 1 \\
   \hat{\mathfrak{sl}}_2:& 0  \arrow[r, bend left] \arrow[r, bend right]& 1 
    \end{tikzcd}
\end{center}
\caption{Dynkin diagram labels and orientations in this section}
\label{orientpic1}
\end{table}
\subsection{Preliminaries}
We briefly explain the necessary adjustments for type $\mathfrak{sl}_2$, as well as adjust notation. 

In type $\mathfrak{sl}_2$, it is easy to verify that there is a homomorphism $U_q^+(\hat{\mathfrak{sl}}_2)\rightarrow B(\mathfrak{sl}_2)$. We first compute $0=[e,\frac{1}{1-q^2}]=[e,fe-q^{-2}ef]=-q^{-2}e^2f+(1+q^{-2})efe-fe^2$. Rescaling, we have $e^2f-(1+q^{2})efe+q^{2}fe^2=0$. We note that this deforms the Serre relations for $\mathfrak{sl}_3$, and so gives some intuition for the 2-representation of Lemma \ref{sl3}. Now, if we denote $[x,y]_{q^{-2}}:=xy-q^{-2}yx$, then \[0=[e,e^2f-(1+q^{-2})efe+q^{-2}fe^2]_{q^{-2}}=e^3f-(q^2+1+q^{-2})e^2fe+(q^2+1+q^{-2})efe^2-fe^3,\]
which is precisely the quantum Serre relation for $\hat{\mathfrak{sl}}_2$ up to sign. 

Let $\vec{Q}$ be the Dynkin diagram for $\mathfrak{sl}_2$, i.e. a single vertex with no edges and thus a trivial orientation. As in the previous section, since we have fixed orientations in Table \ref{orientpic1}, we may omit $\vec{Q}$ in our algebras and categories. Note that $H_{n1}= {^0H}_n$ is the $n$-th nil affine Hecke algebra. There is thus only one idempotent generator in any $H_{\alpha}$, so we omit the $1_{v}$. Also, $\N[I]=\N$, so we replace $\alpha$ by $|\alpha|$ whenever it appears. 

Since the lowest root of $\mathfrak{sl}_2$ is just $F_1$, we will define our affine functor $E_0$ on $\He_n$ as 
\[E_{0,n}:=E_1^*=H_n\otimes_n:\He_n\rightarrow \He_{n-1}.\]
We define our natural transformations similarly to before. Firstly, note that $E_1E_1= H_{n+2}\otimes_n$, and $E_0E_0=H_{n+2}\otimes_{n+2}$.
\[X_1:E_1\rightarrow E_1, a\rightarrow ax_{n+1},\]
\[T_{11}: E_1E_1\rightarrow E_1E_1, a \rightarrow a\tau_{n+1},\]
\[X_0:E_0\rightarrow E_0, a\rightarrow x_{n+1}a,\]
\[T_{00}:E_0E_0\rightarrow E_0E_0, a\rightarrow -\tau_{n+1}a.\]

For the remaining natural transformations, we recall from Proposition \ref{prop:kk} that there is a degree $-2$ embedding of $(H_{n},H_n)$-bimodules $H_n\otimes_{n-1} H_n \hookrightarrow H_{n+1}$ via $a\otimes_{n-1} b\rightarrow a\tau_n b$. This embedding splits over $(H_n,H_n)$-bimodules
\begin{equation}\label{sl2split}
    H_{n+1}\simeq H_n\otimes_{n-1} H_n \oplus H_n[x_{n+1}].
\end{equation}
 Now, $E_1E_0=(H_n\otimes_{n-1}H_n)\otimes_n$, and $E_0E_1=H_{n+1}\otimes_n$, both viewed as endofunctors of $\mathcal{H}_n$.

We can therefore define the last two natural transformations
\[T_{01}:E_0E_1\rightarrow E_1E_0, H_{n+1}\simeq H_n\otimes_{n-1} H_n\oplus H_n[x_{n+1}]\twoheadrightarrow H_n\otimes_{n-1} H_n,\]
\[T_{10}:E_1E_0\rightarrow E_0E_1, a\otimes_{n-1} b\rightarrow a(x_{n}-x_{n+1})s_nb.\]
The factor of $(x_{n}-x_{n+1})$ in the definition of $T_{10}$ is the additional modification that is needed for $\mathfrak{sl}_2$ but not for other types. This is due to the fact that the Dynkin diagram for $\mathfrak{sl}_2$ is the unique simply laced finite-type Dynkin diagram whose affine counterpart has more than one edge between any pair of vertices.

The main result of this section is the following theorem. 
\begin{thm}\label{sl2rep}
    The functors $E_i$ and natural transformations $X_i$, $T_{ij}$ for $i,j\in \{0,1\}$ define a 2-representation of $U_q^+(\hat{\mathfrak{sl}}_2)$ on $\He$.
\end{thm}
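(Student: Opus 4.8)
The plan is to apply the reduction lemma, Lemma \ref{klrreduce}, exactly as was done in the proof of Theorem \ref{thm:thebigkahuna} for the $\mathfrak{sl}_{n+1}$ case, but now in the degenerate setting where $\vec{Q}$ is a single vertex and $H_{n1} = {}^0H_n$ is the nil affine Hecke algebra. First I would record that the $E_i$, $X_1$, $T_{11}$ coming from the right-multiplication $2$-representation already satisfy the KLR relations for $\U(\vec{Q})$ (here $\vec{Q}$ has no edges, so this is just the nil-affine-Hecke structure on $\He(\mathfrak{sl}_2)$), so Lemma \ref{klrreduce} reduces the problem to checking relations \eqref{smallklr1}--\eqref{smallklr9} for the new natural transformations $X_0, T_{00}, T_{01}, T_{10}$, together with the hypotheses that each $T_{i0}$ is a monomorphism and each $T_{0i}$ an epimorphism of functors.

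Second I would establish the mono/epi hypotheses. The morphism $T_{01}$ is by construction the projection $H_{n+1}\simeq H_n\otimes_{n-1}H_n\oplus H_n[x_{n+1}]\twoheadrightarrow H_n\otimes_{n-1}H_n$ of \eqref{sl2split}, so it is manifestly an epimorphism of bimodules and hence of functors. For $T_{10}$ one checks it is the composite of the split injection $H_n\otimes_{n-1}H_n\hookrightarrow H_{n+1}$, $a\otimes b\mapsto a\tau_n b$, with left multiplication by the polynomial factor; the only subtlety is that $T_{10}$ is defined via $a\otimes_{n-1}b\mapsto a(x_n-x_{n+1})s_nb$ rather than $a\tau_n b$, and since $(x_n-x_{n+1})s_n = (x_n-x_{n+1})((x_n-x_{n+1})\tau_n+1) = (x_n-x_{n+1})^2\tau_n + (x_n-x_{n+1})$, one verifies this still lands in the correct summand of \eqref{sl2split} and is injective as a map of left $H_n$-modules using the PBW basis of ${}^0H_{n+1}$ — this is the analogue of Lemma \ref{lem:T0iwelldef}, but simpler because there is only one vertex.

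Third, I would verify the nine relations one at a time by explicit computation in ${}^0H_m$ for small $m$, evaluating both sides on the single generating element of the relevant induction bimodule (there are no idempotents $1_v$ to track). The grading bookkeeping needs care: with only one vertex, $\deg(\tau_i) = -2$ everywhere, so $\deg(T_{00}) = -2$, $\deg(X_0) = 2$, and — crucially — the $(x_n-x_{n+1})$ factor makes $\deg(T_{10}) = 0$ while $\deg(T_{01}) = \deg$ of the projection $= 2$; one should confirm these match the grading of the affine Cartan matrix $\tilde{C}$ for $\tilde{\mathfrak{sl}}_2$ (where $\tilde{C}_{01}=-2$). The relations \eqref{smallklr1}, \eqref{smallklr2}, \eqref{smallklr5} reduce to identities in ${}^0H_2$: e.g. $\tau_1 x_1 - x_2\tau_1 = 1$ and $\tau_1^2 = 0$, which are exactly the nil-Hecke relations; the mixed relations \eqref{smallklr3}, \eqref{smallklr4}, \eqref{smallklr6} reduce to computations in ${}^0H_{n+1}$ using Proposition \ref{sirels} (note $Q_{01}(u,v) = (v-u)^2(u-v)^2$-style expression with $m_{01}=m_{10}=$ appropriate multiplicities — one must be careful that for $\tilde{\mathfrak{sl}}_2$ both arrows $0\to 1$ and $0\to 1$ are present so $Q_{01}(u,v) = (v-u)^2(u-v)^0$ or $(v-u)^1(u-v)^1$ depending on the chosen orientation, and the sign in $T_{00} = -\tau_{n+1}$ is chosen to make \eqref{smallklr6} come out right); and the two braid-type relations \eqref{smallklr7}, \eqref{smallklr8}, \eqref{smallklr9} are checked in ${}^0H_{n+2}$ and ${}^0H_{n+3}$ using the $s$-$\tau$ relations of Proposition \ref{sirels}, items (4)--(6), which are precisely the identities needed to move the $(x-x')s$ factors of $T_{10}$ past the $\tau$'s of $T_{00}$ and $T_{01}$.

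The main obstacle I expect is relation \eqref{smallklr6}, $T_{10}\circ T_{01} = Q_{10}(E_0X_1, X_0E_1)$, and the analogous relation in the proof of Lemma \ref{klrreduce} that forces $Q_{10}$ to appear quadratically: because of the extra $(x_n-x_{n+1})$ factor, composing $T_{10}$ after $T_{01}$ does not simply return a multiple of the identity but a polynomial multiplication operator, and one must check this polynomial is exactly $Q_{10}(E_0X_1,X_0E_1) = \pm(x_{n+1}-x_n)^2$ (up to sign and the orientation-dependent form of $Q$); getting the sign conventions in $T_{00}=-\tau$, $T_{10}$, and the definition of $Q_{ij}$ to all line up is the delicate point. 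A secondary difficulty is that \eqref{sl2split} is only a splitting of $(H_n,H_n)$-bimodules and not of $(H_n, H_{n+1})$-bimodules, so when verifying relations that involve evaluating on both summands (the $H_n[x_{n+1}]$ summand, as in the $i=1$ case of the proof of Theorem \ref{thm:thebigkahuna}), one must separately track how $T_{01}$ and $T_{10}$ interact with the ``extra'' polynomial summand; this is where the analysis most closely parallels — and is somewhat simpler than — Lemma \ref{lem:t01proj}.
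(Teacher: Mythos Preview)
Your approach is valid but genuinely different from the paper's. You propose to apply Lemma~\ref{klrreduce} directly to the $\tilde{\mathfrak{sl}}_2$ data $(E_0,X_0,T_{00},T_{01},T_{10})$ and verify relations \eqref{smallklr1}--\eqref{smallklr9} by explicit computation in the nil affine Hecke algebras. The paper instead proceeds in two stages: first it proves Lemma~\ref{sl3}, which establishes a 2-representation of $U^+(\mathfrak{sl}_3)$ on $\He-\text{ungr}$ using the \emph{simpler} natural transformation $T'_{10}(a\otimes b)=as_nb$ (no polynomial factor), and then it invokes Theorem~\ref{lacing}, a general ``add an edge to the quiver'' functor $P:\U(\vec{Q}')\to\U(\vec{Q})$, observing that $T_{10}=T'_{10}\circ(X_1E_0-E_1X_0)$ is exactly the formula $P(T_{ji})$ from that theorem. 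Finally the paper checks that the resulting $\U(\tilde{\mathfrak{sl}}_2)$-action is compatible with gradings. The paper explicitly acknowledges your route would work (``the KLR algebra relations could be checked directly from the definitions above''), but prefers the indirect route because Theorem~\ref{lacing} is of independent interest: it is what later yields Theorem~\ref{surject}, the surjection $U_q^+(\tilde{\mathfrak{sl}}_2)\twoheadrightarrow U_q^+(\mathfrak{sl}_2\times\mathfrak{sl}_2)$. Your route is shorter for this one theorem but does not produce that byproduct, and the computations with $T_{10}$ carrying its $(x_n-x_{n+1})$ factor through relations \eqref{smallklr7}--\eqref{smallklr8} are messier than the corresponding ones for $T'_{10}$ in Lemma~\ref{sl3}.

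One correction to your grading bookkeeping: you assert $\deg(T_{10})=0$, but in fact $\deg(T'_{10})=0$ (since $s_n$ has degree $0$) and the factor $(x_n-x_{n+1})$ raises this by $2$, so $\deg(T_{10})=2$. Likewise $\deg(T_{01})=2$. Both of these match $-\tilde{C}_{01}=-\tilde{C}_{10}=2$ for $\tilde{\mathfrak{sl}}_2$, which is exactly what is needed; your concern that the gradings might not line up is unfounded, but for the opposite reason than you thought.
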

We prove this theorem in two steps. Firstly, we will show that there is a simpler related 2-representation of $U^+(\mathfrak{sl}_3)$ on $\He-\text{ungr}$. We then show that we can modify the natural transformation $T_{10}$ in a precise way to turn this into the desired 2-representation. While the KLR algebra relations could be checked directly from the definitions above, the proof we use will be more general and will better elucidate why the KLR algebra relations are satisfied.
\subsection{2-representation of \texorpdfstring{$U^+(\mathfrak{sl}_3)$}{U+(sl 3)}}
In this subsection, we construct an intermediate 2-representation that is easier to verify. We will only modify a single natural transformation. Define \[T'_{10}:E_1E_0\rightarrow E_0E_1, a\otimes_{n-1} b \rightarrow as_nb.\]

\begin{lem}\label{sl3}
The functors $E_i$, along with natural transformations $X_0,X_1, T_{00},T_{11},T_{01},T'_{10}$ define a 2-representation of $U^+(\mathfrak{sl}_3)$ on $\mathcal{H}-\text{ungr}$.
\end{lem}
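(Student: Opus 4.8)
The plan is to verify that the data $(E_0, E_1, X_0, X_1, T_{00}, T_{11}, T_{01}, T'_{10})$ satisfies the defining relations of $\mathcal{U}^+(\vec{Q}')$ for $\vec{Q}'$ the orientation $0\to 1$ of the $\mathfrak{sl}_3$ Dynkin diagram, and then invoke Lemma \ref{klrreduce} to conclude. Thus I first observe that the $E_i$, $X_i$, $T_{ij}$ not involving $E_0$ already form the right-multiplication 2-representation of $\mathcal{U}^+(\mathfrak{sl}_2)$ (here $\mathfrak{sl}_2$ has the one-vertex Dynkin diagram, so this amounts just to the relations among $X_1$ and $T_{11}$, which hold since $X_1, T_{11}$ are right multiplication by $x_{n+1}, \tau_{n+1}$ in the nil affine Hecke algebra $^0H_n$). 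Next I need to check that $T'_{10}$ is a monomorphism of functors and $T_{01}$ is an epimorphism: both are immediate from the bimodule splitting \eqref{sl2split}, since $T_{01}$ is literally the projection onto the summand $H_n\otimes_{n-1}H_n$, and $T'_{10}\colon a\otimes_{n-1} b\mapsto as_nb$ is injective because $s_n = (x_n-x_{n+1})\tau_n + 1$ is a unit (an isomorphism, being the image of a symmetric group element under the standard embedding $\kf[S_m]\hookrightarrow {^0H_m}$). Then it remains to verify the nine relations \eqref{smallklr1}--\eqref{smallklr9} with $q$ set to $1$, i.e. with all $Q_{i0}(u,v) = (1-\delta_{i0})(v-u)$ since the affine Dynkin diagram for $\tilde{\mathfrak{sl}}_2$ has a double edge $0\to 1$, giving $m_{01}=0$, $m_{10}=1$ or vice versa — here one must be careful about the chosen orientation, but $Q_{01}(u,v) = v-u$ up to sign.

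The relations to check break into three groups. The purely-$E_0$ relations \eqref{smallklr1}, \eqref{smallklr2}, \eqref{smallklr5}, \eqref{smallklr9}: here $T_{00}$ is minus left multiplication by $\tau_{n+1}$ on $E_0E_0 = H_{n+2}\otimes_{n+2}-$, and $X_0$ is left multiplication by $x_{n+1}$ (resp. $x_{n+2}$) on the two $E_0$ slots; these follow directly from the nil affine Hecke relations $\tau_{n+1}x_{n+1} - x_{n+2}\tau_{n+1} = 1$, $\tau_{n+1}^2 = 0$, and the braid relation $\tau_{n+1}\tau_{n+2}\tau_{n+1} = \tau_{n+2}\tau_{n+1}\tau_{n+2}$, all holding on the nose in $^0H_{n+2}$ (no obstruction-set issues arise since $\mathfrak{sl}_2$ has a single vertex, so all braid relations for $\tau_i$ hold exactly). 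The mixed relations \eqref{smallklr3}, \eqref{smallklr4}, \eqref{smallklr6}: $T'_{10}\colon H_n\otimes_{n-1}H_n\to H_{n+1}$, $a\otimes b\mapsto as_nb$; relation \eqref{smallklr3} ($T_{10}\circ E_1X_0 = X_0E_1\circ T_{10}$) follows from $s_n x_{n+1} = x_n s_n$, and \eqref{smallklr4} from $s_n$ commuting with $x_j$ for $j\neq n,n+1$ and the bimodule map being computed on the generator $1\otimes_{n-1}1$; relation \eqref{smallklr6} ($T_{10}\circ T_{01} = Q_{10}(E_0X_1, X_1E_0)$) requires composing the projection with $s_n\cdot$ and recognizing that $s_n\tau_n = x_n\tau_n - \tau_n x_n$ acting appropriately gives the factor $x_n - x_{n+1}$ — this is where the choice $T'_{10} = s_n\cdot$ (rather than the $\mathfrak{sl}_2$ version with the extra $(x_n-x_{n+1})$) matters, since over $\mathfrak{sl}_3$ the edge multiplicities differ. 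The "three-functor" relations \eqref{smallklr7} and \eqref{smallklr8}: \eqref{smallklr7} in the $\mathfrak{sl}_3$ setting involves $i=1,j=1$ (only one finite vertex), reducing to a braid-type identity among $T'_{10}$ and $T_{11}$; \eqref{smallklr8} mixes $T_{00}$ and $T_{10}$ and is checked on $1\otimes_{n-1}1\otimes_{n-1}1$ using $s_n$-braid relations like $s_n s_{n+1}\tau_n = \tau_{n+1}s_n s_{n+1}$ and $\tau_n s_{n+1} s_n = s_{n+1} s_n \tau_{n+1}$ from Proposition \ref{sirels}.

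The main obstacle I anticipate is \textbf{bookkeeping in relations \eqref{smallklr6} and \eqref{smallklr8}}: one must carefully track the bimodule identifications through the splitting \eqref{sl2split} and its iterates (for the triple composite $E_iE_0E_0$ one needs a double splitting), and correctly match the $s_n$-versus-$\tau_n$ normalizations so that the $Q_{i0}$ polynomial comes out with the right degree and sign. The technical tools are exactly Propositions \ref{sirels} and \ref{sirels2}, which encode precisely the braid-type commutations between one $s$ and two $\tau$'s (or two $s$'s and one $\tau$) that arise. Since $\mathfrak{sl}_2$ has no repeated or adjacent distinct vertices among its labels, none of the obstruction-set / error-term machinery of Subsection \ref{subsec:control} is needed here — every braid relation for the $\tau_i$ holds strictly in $^0H_m$ — which is why this intermediate $U^+(\mathfrak{sl}_3)$ statement is cleanly provable by direct computation on generators of the relevant bimodules. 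Having verified \eqref{smallklr1}--\eqref{smallklr9} together with the mono/epi conditions, Lemma \ref{klrreduce} applies and yields the 2-representation of $\mathcal{U}^+(\vec{Q}')$, i.e. of $U^+(\mathfrak{sl}_3)$, on $\mathcal{H}-\mathrm{ungr}$.
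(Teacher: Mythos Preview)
Your overall strategy matches the paper's exactly: reduce to Lemma \ref{klrreduce}, verify the mono/epi hypotheses, and check relations \eqref{smallklr1}--\eqref{smallklr9} on bimodule generators using the $s_i$-identities of Proposition \ref{sirels}. The relation checks you outline are the right ones and go through just as you say.

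The gap is your monicity argument for $T'_{10}$. You claim that $a\otimes_{n-1}b\mapsto as_nb$ is injective ``because $s_n$ is a unit'', but invertibility of $s_n$ in $H_{n+1}$ does not imply that this $(H_n,H_n)$-bimodule map is injective. For a counterexample with the same shape, replace $s_n$ by the unit $1$: the resulting map $a\otimes_{n-1}b\mapsto ab$ kills the nonzero element $x_n\otimes_{n-1}1-1\otimes_{n-1}x_n$. More structurally, using $s_n=x_n\tau_n-\tau_nx_n$ one finds $T'_{10}=\psi\circ\phi$ where $\psi(a\otimes b)=a\tau_nb$ is the known injective embedding and $\phi(a\otimes b)=ax_n\otimes b-a\otimes x_nb$; so what you actually need is injectivity of $\phi$, which is not automatic. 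The paper handles this by writing $H_n\otimes_{n-1}H_n$ as a free right $H_n$-module on the basis $\{\tau_i\cdots\tau_{n-1}x_n^{\,j}\otimes_{n-1}1\}$, computing $T'_{10}$ on these basis elements, and observing the images $\tau_i\cdots\tau_{n-1}x_n^{\,j+1}\tau_n-\tau_i\cdots\tau_{n-1}x_n^{\,j}\tau_nx_n$ are right-$H_n$-linearly independent in $H_{n+1}$.

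A minor side remark: in your discussion of $Q_{10}$ you invoke the double edge of $\tilde{\mathfrak{sl}}_2$, but the lemma concerns the $\mathfrak{sl}_3$ quiver $0\to1$; here $Q_{10}(u,v)=u-v$, which is exactly what you need for \eqref{smallklr6} to read $T'_{10}\circ T_{01}=E_0X_1-X_0E_1$ and match $s_n=\tau_nx_{n+1}-x_{n+1}\tau_n$ on the summand $\operatorname{Im}\psi$.
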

This cannot be quantized to a 2-representation of $U_q^+(\mathfrak{sl}_3)$ since both $T'_{10}$ and $T_{01}$ have the incorrect grading (0 and 2 instead of 1 and 1). 

\begin{proof}
    We show that $T'_{10}$ is monic. Following \cite{kk}, we note that $H_n=\bigoplus_{i=1}^n \tau_i\dots \tau_{n-1}\kf [x_n]\otimes_\kf H_{n-1}$, and therefore, \[H_n\otimes_{n-1}H_n \simeq \bigoplus_{i=1}^n \tau_i\dots \tau_{n-1} \kf [x_n]\otimes_{\kf} H_n.\]
    
    Therefore, the set $\{\tau_i\dots \tau_{n-1} x_n^j\otimes_{n-1} 1\}_{i\leq n, j\in \N}$ is a basis for $H_n\otimes_{n-1} H_n$ as a right $H_n$-module. Since $a\otimes_{n-1} b\rightarrow a\tau_n b$ is injective, the image of this set, the $\{\tau_i\dots \tau_{n-1}x_n^j \tau_n\}$ is linearly independent for the right $H_n$-action. We compute that \[T'_{10}(\tau_i\dots \tau_{n-1}x_n^j\otimes_{n-1} 1)=\tau_i\dots \tau_{n-1}x_n^j s_n=\]
    \[\tau_i\dots \tau_{n-1}x_n^{j+1}\tau_n-\tau_i\dots \tau_{n-1}x_n^j \tau_n x_n.\]
    
    It is easy to see that these elements are also linearly independent. So, $T'_{10}$ is monic. It follows from here that the remaining hypotheses of Lemma \ref{klrreduce} are satisfied. It is therefore sufficient to check the KLR relations enumerated in Lemma \ref{klrreduce}. Relations \eqref{smallklr1}, \eqref{smallklr2}, \eqref{smallklr5}, and \eqref{smallklr9} are immediate from the relations of the KLR algebra.
    \begin{enumerate}
        \item $T'_{10}\circ E_1X_0=X_0E_1\circ T'_{10}$. Both sides of this equality are $(H_n,H_n)$-bimodule maps defined on $H_n\otimes_{n-1}H_n$. So, it is sufficient to verify equality on $1\otimes_{n-1} 1$. We compute that $T'_{10}\circ E_1X_0 (1\otimes_{n-1}1)=T'_{10}\circ (1\otimes_{n-1} x_n)=s_nx_n$, and $X_0E_1\circ T'_{10}(1\otimes_{n-1} 1)=X_0E_1(s_n)=x_{n+1}s_n$. Due to Proposition \ref{sirels}, these are equal.
        \item $T'_{10}\circ X_iE_0=E_0X_i\circ T'_{10}$. Similar to above.
        \item $T'_{10}\circ T_{01}=Q_{10}(E_0X_1,X_0E_1)=E_0X_1-X_0E_1$. Both sides are $(H_n,H_n)$-bimodule maps defined on $H_{n+1}$. Due to the splitting in Equation \eqref{sl2split}, it is sufficient to verify equality after evaluating on $\tau_n$, which is the image under embedding of $1\otimes_{n-1} 1\in H_n\otimes_{n-1} H_n$, and on powers of $x_{n+1}$. We first compute that $T'_{10}\circ T_{01}(\tau_n)=T'_{10}(1\otimes_{n-1} 1)=s_n$. The right-hand side is $(E_0X_1-X_0E_1)(\tau_n)=\tau_nx_{n+1}-x_{n+1}\tau_n$. By Proposition \ref{sirels}, these are equal. We also compute that $T'_{10}\circ T_{01}(x_{n+1}^i)=T'_{10}(0)=0$, and $(E_0X_1-X_0E_1)(x_{n+1}^i)=x_{n+1}^{i+1}-x_{n+1}^{i+1}=0$.
        \item $T'_{10}E_1\circ E_1T'_{10}\circ T_{11}E_0=E_0T_{11}\circ T'_{10}E_1\circ E_1T'_{10}$. Both sides of this equality are $(H_{n+1},H_n)$-bimodule maps defined on $H_{n+1}\otimes_{n-1}H_n$. So, it is sufficient to verify equality on $1\otimes_{n-1} 1$. We compute that $T'_{10}E_1\circ E_1T'_{10}\circ T_{11}E_0 (1\otimes_{n-1} 1)=T'_{10}E_1\circ E_1T'_{10}(\tau_n\otimes_{n-1} 1)=T'_{10}E_1(\tau_n\otimes_n s_n)=\tau_ns_{n+1}s_n$. We also compute that $E_0T_{11}\circ T'_{10}E_1\circ E_1T'_{10}(1\otimes_{n-1}1)=E_0T_{11}\circ T'_{10}E_1(1\otimes_n s_n)=E_0T_{11}(s_{n+1}s_n)=s_{n+1}s_n\tau_{n+1}$. By Proposition \ref{sirels}, these are equal. 
        \item $T_{00}E_1\circ E_0T'_{10}\circ T'_{10}E_0=E_0T'_{10}\circ T'_{10}E_0\circ E_1T_{00}$. Similar to above.
    \end{enumerate}
\end{proof}
\begin{rem}\label{rem:sl2dequant}
    Note that $K_0(\He-\text{ungr})=0$. There is, however, a way of interpreting this action of $U(\mathfrak{sl}_3)$ on $U(\mathfrak{sl}_2)$. The form $(*,*)_L$ on $U_q(\mathfrak{sl}_2)$ cannot be specialized to $q=1$ directly due to the equation $(E_1,E_1)=\frac{1}{1-q^2}$. There is, however, a nondegenerate bilinear form on $U(\mathfrak{sl}_2)$ given by $(E_1^n,E_1^n)=n!$. The operation of right multiplication by $E_1$ has an adjoint $E_1^*$. We have that $E_1^*E_1-E_1E_1^*=1$, and therefore, $E_1$ and $E_1^*$ satisfy the Serre relations for $\mathfrak{sl}_3$. 
\end{rem}
\subsection{Virtually surjective functors}
We construct a functor between different categorified enveloping algebras that is surjective on Grothendieck groups. This functor will allow us to pullback the 2-representation obtained in the previous subsection to complete the proof of Theorem \ref{sl2rep}.

Let $C$ be any generalized Cartan matrix, and fix distinct $i,j \in I$. Let $C'$ be the generalized Cartan matrix obtained by subtracting 1 from both $C_{ij}$ and $C_{ji}$. There is a surjective map of algebras $U^+(C')\rightarrow U^+(C)$ sending each $E_i$ to $E_i$. This can be seen by noting that the classical Serre relations can be viewed as an iterated application of a linear operator;
\[\sum_{n=0}^{1-C_{ij}}\binom{1-C_{ij}}{n}(-1)^nE_i^nE_jE_i^{1-C_{ij}-n}=ad_{E_i}^n(E_j),\]
for $ad_{E_i}(E_j)=E_jE_i-E_iE_j$ the standard Lie bracket. Clearly, if $ad^n_{E_i}(E_j)=0$, then $ad^{n+1}_{E_i}(E_j)=0$ as well. We produce a categorical version of this surjection.

\begin{thm}\label{lacing}
    Let $C$ be any generalized Cartan matrix, and fix distinct $i,j \in I$. Let $C'$ be the generalized Cartan matrix obtained by subtracting 1 from both $C_{ij}$ and $C_{ji}$. Let $\vec{Q}$ be any orientation of the Dynkin diagram for $C$, and let $\vec{Q}'$ be the orientation of the Dynkin diagram of $C'$ obtained by adding to $\vec{Q}$ a unique directed edge from $i$ to $j$. Then there is an additive monoidal functor $P:\U(\vec{Q}')\rightarrow \U(\vec{Q})$ surjective on $ K_0$ given by the following formulae.
    \begin{align*}
         &P(E_i)=E_i,\\
        &P(X_i)=X_i,\\
        &P(T_{ab})=T_{ab} \text{ if }\{a,b\}\neq \{i,j\},\\
        &P(T_{ij})=T_{ij},\\
        &P(T_{ji})=T_{ji}\circ (X_jE_i-E_jX_i)=(E_iX_j-X_iE_j)\circ T_{ji}.
    \end{align*}
\end{thm}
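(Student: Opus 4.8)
The strategy is to define $P$ on objects and generating morphisms by the stated formulae and then verify that the defining KLR relations of $\mathcal{U}'^+(\vec{Q}')$ (Definition \ref{2cat}) are preserved; surjectivity on $K_0$ then follows from a decategorification argument mirroring the classical ``iterate the bracket'' observation. Since $\mathcal{U}^+(\vec{Q})$ is the Karoubi completion of $\mathcal{U}'^+(\vec{Q})$ and $P$ need only be defined on $\mathcal{U}'^+(\vec{Q}')$ before extending formally to idempotent completions, it suffices to check that the images of $E_i$, $X_i$, $T_{ab}$ under $P$ satisfy relations (1)--(4) of Definition \ref{2cat} for the matrix $Q'$ associated to $\vec{Q}'$. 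The only generator whose image is not literally the corresponding generator in $\mathcal{U}'^+(\vec{Q})$ is $T_{ji}$, which maps to $T_{ji}\circ(X_jE_i-E_jX_i)$; note the two expressions given for $P(T_{ji})$ agree because relation (2) of Definition \ref{2cat} for the pair $(j,i)$ with $j\neq i$ reads $T_{ji}\circ E_jX_i - X_iE_j\circ T_{ji}=0$ and relation (1) reads $T_{ji}\circ X_jE_i - E_iX_j\circ T_{ji}=0$, so $T_{ji}\circ(X_jE_i-E_jX_i)=(E_iX_j-X_iE_j)\circ T_{ji}$; I would record this identity first.

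The bulk of the verification is checking relations (1)--(4) with $i,j$ in the roles of the two special indices, since all relations not involving the edge between $i$ and $j$ are sent to themselves and hold automatically. For relations (1) and (2) of Definition \ref{2cat} with the pair $(j,i)$, one computes $P(T_{ji})\circ X_jE_i - E_iX_j\circ P(T_{ji})$, substitutes $P(T_{ji})=T_{ji}\circ(X_jE_i-E_jX_i)$, and uses that $X_jE_i$ and $E_jX_i$ commute together with the original relations (1),(2) for $(j,i)$; since $\delta_{ji}=0$ the right-hand side is $0$ in both $\mathcal{U}'^+(\vec{Q}')$ and after applying $P$, so these reduce to a short commutator manipulation. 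The crucial relation is (3): $P(T_{ij})\circ P(T_{ji}) = Q'_{ij}(E_jX_i,X_jE_i)$. Here $Q'_{ij}(u,v)=(1-\delta_{ij})(v-u)^{m'_{ij}}(u-v)^{m'_{ji}}$ where $m'_{ij}=m_{ij}-1$ if the edge $i\to j$ is present (and $m'_{ji}=m_{ji}$), so $Q'_{ij}(u,v)=(u-v)\cdot Q_{ij}(u,v)$ up to the sign conventions; and indeed $P(T_{ij})\circ P(T_{ji}) = T_{ij}\circ T_{ji}\circ(X_jE_i-E_jX_i) = Q_{ij}(E_jX_i,X_jE_i)\circ(E_jX_i-X_jE_i)$, which matches $Q'_{ij}(E_jX_i,X_jE_i)$ after checking the degree-one factor $(E_jX_i-X_jE_i)$ is exactly the discrepancy between $Q$ and $Q'$. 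I would verify the analogous computation for $P(T_{ji})\circ P(T_{ij})$ and then turn to relation (4), the KLR braid relation involving three strands; this is the step I expect to be the \textbf{main obstacle}, because when the three strand-labels are $a,i,j$ (or involve a repeated label) one must carry the extra factor $(X_jE_i-E_jX_i)$ past a $T$ and control the error term coming from the right-hand side of relation (4) --- the $\delta$-correction term involving $\frac{Q_{ij}(\cdots)-Q_{ij}(\cdots)}{X_iE_jE_i - E_iE_jX_i}$. The key point will be that multiplying relation (4) for $\mathcal{U}'^+(\vec{Q})$ (with matrix $Q'$) through by the degree-one operator relating $Q'$ to $Q$ reproduces relation (4) for $\mathcal{U}'^+(\vec{Q}')$; one reduces to the case $i=j$ is impossible here so only the $a=$ third-label cases with a genuinely present edge need care, and there the divided-difference term factors compatibly.

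Finally, for surjectivity on $K_0$: applying $K_0$, $P$ induces a ring homomorphism $U^+(\mathfrak{g}(C'))\to U^+(\mathfrak{g}(C))$ by Theorem \ref{decat} (ungraded version), sending $E_i\mapsto E_i$ for all $i$. Since the $E_i$ generate $U^+(\mathfrak{g}(C))$ and all lie in the image, $K_0(P)$ is surjective; equivalently, as noted before the theorem statement, the classical Serre relation of $C$ is obtained from that of $C'$ by one further application of $\mathrm{ad}_{E_i}$, so the quotient $U^+(C')\twoheadrightarrow U^+(C)$ is exactly realized. This completes the argument modulo the relation-checking, which is the substance of the proof.
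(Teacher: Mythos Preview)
Your approach is essentially the paper's: define $P$ on generators, verify the KLR relations of Definition~\ref{2cat} for the images (only those involving $T_{ji}$ being nontrivial), and deduce surjectivity on $K_0$. Two points deserve correction. First, since $\vec{Q}'$ is obtained by \emph{adding} an edge $i\to j$, one has $m'_{ij}=m_{ij}+1$, so $Q'_{ij}(u,v)=(v-u)\,Q_{ij}(u,v)$ and $Q'_{ji}(u,v)=(u-v)\,Q_{ji}(u,v)$; your ``up to sign conventions'' hedge is in the right spirit but the direction matters for the computations. Second, your treatment of the three-strand braid relation is too quick: the correction term in relation~(4) is governed by $\delta_{a,c}$ for the outer strand labels, so the triple $(j,i,j)$ genuinely produces a nonzero right-hand side that must be matched after inserting the extra factor $(X_jE_i-E_jX_i)$. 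The paper splits the check of the relation with leftmost label $j$ into three cases according to the third label $k$ ($k\notin\{i,j\}$, $k=i$, $k=j$), and the $k=j$ case is a real computation in which one must commute the linear factor past $T_{jj}$ (picking up the $+1$ error from relation~(1)) and then verify the resulting polynomial identity; your phrase ``$i=j$ is impossible here'' does not capture this. Finally, your surjectivity argument via generation by the $E_i$ gives surjectivity only after tensoring with $\C$; the paper obtains integral surjectivity by noting that $K_0$ is spanned by divided-power projectives and that $P$ manifestly preserves the corresponding idempotents.
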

\begin{proof}

     We check that the defining relations of $\U(\vec{Q}')$ are satisfied. Any KLR relation not involving $T_{ji}$ is satisfied, since these relations are identical in $\U(\vec{Q}')$ and $\U(\vec{Q})$. We denote by $Q'_{ij}(u,v)$ the polynomial matrix associated to $\vec{Q}'$ in the definition of $\U(\vec{Q})$. By our choice of orientation, $Q_{ij}(u,v)*(v-u)=Q'_{ij}(u,v)$, and $Q_{ji}(u,v)*(u-v)=Q'_{ji}(u,v)$.
    \begin{enumerate}
        \item $P(T_{ji})\circ X_jE_i=E_iX_j\circ P(T_{ji})$. We compute that $P(T_{ji})\circ X_jE_i=T_{ji}\circ (X_jE_i-E_jX_i)\circ X_jE_i=T_{ji}\circ X_jE_i\circ (X_jE_i-E_jX_i)=E_iX_j\circ T_{ji}\circ (X_jE_i-E_jX_i)=E_iX_j\circ P(T_{ji})$.
        \item $P(T_{ji})\circ E_jX_i=X_iE_j\circ P(T_{ji})$. Similar to above.
        \item $T_{ij}\circ P(T_{ji})=Q'_{ij}(E_jX_i,X_jE_i)$. We evaluate the left-hand side as 
        \begin{equation*}
            \begin{split}
                &T_{ij}\circ P(T_{ji})=\\
                &T_{ij}\circ T_{ji}\circ (X_jE_i-E_jX_i)=\\
                &Q_{ij}(E_jX_i,X_jE_i)\circ (X_jE_i-E_jX_i)=\\
                &Q'_{ij}(E_jX_i,X_jE_i),
            \end{split}
        \end{equation*}
        which is exactly the right-hand side.
        \item $P(T_{ji})\circ T_{ij}=Q'_{ji}(E_iX_j,X_iE_j)$. Similar to above.
        \item $T_{ik}E_j\circ E_iP(T_{jk})\circ P(T_{ji})E_k-E_kP(T_{ji})\circ P(T_{jk})E_i\circ E_jT_{ik}=\delta_{j,k}\frac{Q'_{ji}(X_jE_i,E_jX_i)E_j-E_jQ'_{ji}(E_iX_j,X_iE_j)}{X_jE_iE_j-E_jE_iX_j}$. We consider three cases. Firstly, if $k\neq i,j$, then the right-hand side is zero. The first term on the left-hand side becomes 
        \begin{equation*}
            \begin{split}
                &T_{ik}E_j\circ E_iT_{jk}\circ (E_iX_j-X_iE_j)E_k\circ T_{ji}E_k=\\
                &T_{ik}E_j\circ (E_iE_kX_j-X_iE_kE_j)\circ E_iT_{jk}\circ T_{ji}E_k= \\
                &E_k(E_iX_j-X_iE_j)\circ T_{ik}E_j\circ E_iT_{jk}\circ T_{ji}E_k.
            \end{split}
        \end{equation*}
        The second term on the right hand side is 
        \begin{equation*}
            \begin{split}
                &-E_k(E_iX_j-X_iE_j)\circ E_kT_{ji}\circ T_{jk}E_i\circ E_jT_{ik}.
            \end{split}
        \end{equation*}
        By the braid relations in $\U(\vec{Q})$, this difference is zero.

        Now suppose $k=i$. Then the first term on the left-hand side becomes
        \begin{equation*}
            \begin{split}
                &T_{ii}E_j\circ E_i(E_iX_j-X_iE_j)\circ E_iT_{ji}\circ (E_iX_j-X_iE_j)E_i\circ T_{ji}E_i=\\
                &T_{ii}E_j\circ E_i(E_iX_j-X_iE_j)\circ (E_iE_iX_j-X_iE_iE_j)\circ E_iT_{ji}\circ T_{ji}E_i=\\
                & T_{ii}E_j\circ (E_iE_iX_j^2-E_iX_iX_j-X_iE_iX_j+X_iX_iE_j)\circ E_iT_{ji}\circ T_{ji}E_i=\\
                &(E_iE_iX_j^2-E_iX_iX_j-X_iE_iX_j+X_iX_iE_j)\circ T_{ii}E_j\circ E_iT_{ji}\circ T_{ji}E_i,
            \end{split}
        \end{equation*}
        where in the last equality we have used Proposition \ref{symcommute}. The second term on the left-hand side becomes
        \begin{equation*}
        \begin{split}
            &-E_i(E_iX_j-X_iE_j)\circ E_iT_{ji}\circ (E_iX_j-X_iE_j)E_i\circ T_{ji}E_i\circ E_jT_{ii}=\\
            &-E_i(E_iX_j-X_iE_j)\circ (E_iE_iX_j-X_iE_iE_j)\circ E_iT_{ji}\circ T_{ji}E_i\circ E_jT_{ii}.
        \end{split}
        \end{equation*}
        By the braid relations in $\U(\vec{Q})$, this difference is zero. The right-hand side is also zero in this case.

        Finally, suppose $k=j$. Then the first term on the left-hand side becomes
        \begin{equation*}
            \begin{split}
                &T_{ij}E_j\circ E_iT_{jj}\circ (E_iX_j-X_iE_j)E_j\circ T_{ji}E_j=\\
                &T_{ij}E_j\circ (E_iE_jX_j-X_iE_jE_j)\circ E_iT_{jj}\circ T_{ji}E_j+T_{ij}E_j\circ T_{ji}E_j=\\
                &E_j(E_iX_j-X_iE_j)\circ T_{ij}E_j\circ E_iT_{jj}\circ T_{ji}E_j + Q_{ij}(E_jX_i,X_jE_i)E_j.
            \end{split}
        \end{equation*}
        The second term on the left-hand side is
        \[
                -E_j(E_iX_j-X_iE_j)\circ E_jT_{ji}\circ T_{jj}E_i\circ E_jT_{ij}.
        \]
        The left-hand side therefore evaluates to 
        \begin{equation*}
            \begin{split}
                &E_j(E_iX_j-X_iE_j)\circ \frac{Q_{ji}(X_jE_i,E_jX_i)E_j-E_jQ_{ji}(E_iX_j,X_iE_j)}{X_jE_iE_j-E_jE_iX_j}+Q_{ij}(E_jX_i,X_jE_i)E_j.
            \end{split}
        \end{equation*}
        Since we have that $(E_iX_j-X_iE_j)Q_{ji}(E_iX_j,X_iE_j)=Q'_{ji}(E_iX_j,X_iE_j)$, we need only verify the following equality
        \[E_j(E_iX_j-X_iE_j)\circ Q_{ji}(X_jE_i,E_jX_i)E_j+(X_jE_iE_j-E_jE_iX_j)\circ Q_{ij}(E_jX_i,X_jE_i)E_j=Q'_{ji}(X_jE_i,E_jX_i)E_j.\]
        We note now that $Q_{ij}(u,v)=Q_{ji}(v,u)$ by definition. So, we compute
        \begin{equation*}
            \begin{split}
                &E_j(E_iX_j-X_iE_j)\circ Q_{ji}(X_jE_i,E_jX_i)E_j+(X_jE_iE_j-E_jE_iX_j)\circ Q_{ij}(E_jX_i,X_jE_i)E_j=\\
                &E_j(E_iX_j-X_iE_j)\circ Q_{ji}(X_jE_i,E_jX_i)E_j+(X_jE_iE_j-E_jE_iX_j)\circ Q_{ji}(X_jE_i,E_jX_i)E_j=\\
                &(X_jE_i-E_jX_i)E_j\circ Q_{ji}(X_jE_i,E_jX_i)E_j=\\
                &Q'_{ji}(X_jE_i,E_jX_i)E_j,
            \end{split}
        \end{equation*}
        as desired. So, in all cases, we have equality.
        \item $P(T_{ji})E_k\circ E_jP(T_{ki})\circ T_{kj}E_i-E_iT_{kj}\circ P(T_{ki})E_j\circ E_kP(T_{ji})=\delta_{k,i}\frac{Q'_{ij}(X_iE_j,E_iX_j)E_i-E_iQ'_{ij}(E_jX_i,X_jE_i)}{X_iE_jE_i-E_iE_jX_i}$. Similar to above.
        \item $P(T_{ki})E_j\circ E_kP(T_{ji})\circ P(T_{jk})E_i-E_iP(T_{jk})\circ P(T_{ji})E_k\circ E_jP(T_{ki})=0$. If $k\neq i,j$, then this computation is similar to those above. If $k=i$, then this is covered by case 5, and if $k=j$, then this is covered by case 6.

    \end{enumerate}
    To see that the functor is surjective on $K_0$, we note that $K_0$ is spanned by the ``divided power" projectives \cite{khla}, and this functor evidently preserves the corresponding idempotents.
\end{proof}
It would be interesting to see whether the functor is itself surjective, although we do not need this.

We now have what we need to prove Theorem \ref{sl2rep}.
\begin{proof}[Proof (of  Theorem \ref{sl2rep})] The 2-representation of $\U(\mathfrak{sl}_3)$ in Lemma \ref{sl3} uses the orientation $0 \rightarrow 1$. Observe that $T'_{10}$ is related to $T_{10}$ by the formula $T_{10}=T'_{10}\circ (X_1E_0-E_1X_0)$. So, Theorem \ref{lacing} shows that the given data comes from a 2-representation of $U(\hat{\mathfrak{sl}}_2)$, where the orientation of the Dynkin diagram for $\hat{\mathfrak{sl}}_2$ has both edges pointing from $0$ to $1$. Finally, we observe that all of the natural transformations are compatible with the grading on $\mathcal{H}$ and have the correct grading for $\U(\hat{\mathfrak{sl}}_2)$; the $X_i$ have grading 2, the $T_{ii}$ have grading $-2$, and $T_{10}$ and $T_{01}$ both also have grading 2. Therefore, we can enrich to a 2-representation of $\U_q(\hat{\mathfrak{sl}}_2)$.
\end{proof}
\subsection{Surjective algebra morphisms}
We use the virtually surjective functors of the last subsection to produce new surjections of quantum enveloping algebras.

Let $C$ and $C'$ be the generalized Cartan matrices of Theorem \ref{lacing}. This theorem categorifies the natural surjection of algebras $U^+(\mathfrak{g}(C'))\twoheadrightarrow U^+(\mathfrak{g}(C))$. Moreover, in our specific application we used this to obtain a 2-representation not just of $U^+(\mathfrak{g}(C'))$ but of $U_q^+(\mathfrak{g}(C'))$. It is natural then to ask whether Theorem \ref{lacing} can be easily quantized. Unfortunately, the corresponding map of algebras does not exist in the quantum case. 

For $x,y$ elements of any associative $\C(q)$-algebra $A$, denote by $S_{q,n}(x,y)$ the $n$th \emph{quantum Serre operator} used in the definition of $U_q^+(\mathfrak{g})$:
\[S_{q,n}(x,y):=\sum_{i=0}^{n}\binom{n}{i}_q (-1)^{i}x^iyx^{n-i}.\]
Unlike in the classical case, the quantum Serre operators cannot be viewed as iterated application of some fixed linear operator, so the classical argument fails. To see that the surjection itself does not exist, consider the most basic example of $\mathfrak{g}(C)=\mathfrak{sl}_2\times \mathfrak{sl}_2$, and $\mathfrak{g}(C')=\mathfrak{sl}_3$. The generators $E_1$ and $E_2$ of $U_q^+(\mathfrak{g}(C))$ satisfy $S_{1,q}(E_1,E_2)=E_2E_1-E_1E_2=0$. The corresponding generators $E'_1$ and $E'_2$ of $U_q^+(\mathfrak{g}(C'))$ satisfy $S_{2,q}(E'_1,E'_2)={E'_1}^2E'_2-(q+q^{-1})E'_1E'_2E'_1+E'_2{E_1'}^2=0$. If this surjective map existed, then in $U_q(\mathfrak{g}(C))$, we must have that $0=E_1^2E_2-(q+q^{-1})E_1E_2E_1+E_2E_1^2=(2-q-q^{-1})E_1^2E_2$. This is false.

Fortunately, a closer look at Theorem \ref{lacing} shows us what the problem is. The grading of $P(T_{ji})$ is now 1 too high, and the grading of $P(T_{ij})$ is 1 too low. But, if we compose two of these functors, the gradings work out precisely. Specifically, we have the following results.
\begin{cor}
    Let $C$ be any generalized Cartan matrix, and fix distinct $i,j \in I$. Let $C''$ be the generalized Cartan matrix obtained by subtracting 2 from both $C_{ij}$ and $C_{ji}$. Let $\vec{Q}$ be any orientation of the Dynkin diagram for $C$, and let $\vec{Q}''$ be the orientation of the Dynkin diagram of $C$ obtained by adding a directed edge from $i$ to $j$ and another from $j$ to $i$ in $\vec{Q}$. Then there is an additive monoidal graded functor $P:\U_q(\vec{Q}'')\rightarrow \U_q(\vec{Q})$ surjective on $K_0$ given by the following formulae.
    \begin{enumerate}
        \item $P(E_i)=E_i$,
        \item $P(X_i)=X_i$,
        \item $P(T_{ab})=T_{ab}$ if $\{a,b\}\neq \{i,j\}$.
        \item $P(T_{ij})=T_{ij}\circ (X_iE_j-E_iX_j)=(E_jX_i-X_jE_i)\circ T_{ij}$,
        \item $P(T_{ji})=T_{ji}\circ (X_jE_i-E_jX_i)=(E_iX_j-X_iE_j)\circ T_{ji}$.
    \end{enumerate}

\end{cor}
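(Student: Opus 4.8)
The plan is to realize $P$ as a composite of two applications of Theorem \ref{lacing}, one for each of the two edges that $\vec{Q}''$ adds to $\vec{Q}$. First I would fix intermediate data: let $C^{(1)}$ be obtained from $C$ by subtracting $1$ from $C_{ij}$ and $C_{ji}$, and let $\vec{Q}^{(1)}$ be the orientation of its Dynkin diagram obtained from $\vec{Q}$ by adding a single directed edge $i\to j$, so that $\vec{Q}''$ is in turn obtained from $\vec{Q}^{(1)}$ by adding a single directed edge $j\to i$. Theorem \ref{lacing}, applied with base quiver $\vec{Q}$, yields an additive monoidal functor $P_1\colon\U(\vec{Q}^{(1)})\to\U(\vec{Q})$ fixing every $E_a$, $X_a$, and every $T_{ab}$ with $\{a,b\}\neq\{i,j\}$, fixing $T_{ij}$, and sending $T_{ji}\mapsto T_{ji}\circ(X_jE_i-E_jX_i)$; applied again with base quiver $\vec{Q}^{(1)}$ and the roles of $i$ and $j$ interchanged, it yields $P_2\colon\U(\vec{Q}'')\to\U(\vec{Q}^{(1)})$ fixing $T_{ji}$ and sending $T_{ij}\mapsto T_{ij}\circ(X_iE_j-E_iX_j)$, all other generators being fixed. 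I would then set $P:=P_1\circ P_2$. Evaluating on generators gives exactly the formulas in the statement: $P(T_{ij})=P_1(T_{ij}\circ(X_iE_j-E_iX_j))=T_{ij}\circ(X_iE_j-E_iX_j)$ and $P(T_{ji})=P_1(T_{ji})=T_{ji}\circ(X_jE_i-E_jX_i)$, while $P$ fixes the $E_a$, $X_a$, and remaining $T_{ab}$. The two displayed forms of $P(T_{ij})$ and $P(T_{ji})$ agree exactly as in Theorem \ref{lacing}, via relations (1) and (2) of Definition \ref{2cat}, whose $\delta_{ij}$-terms vanish because $i\neq j$. Surjectivity on $K_0$ is inherited from the two factors, or directly from the fact that $P$ fixes the single-color idempotents cutting out the divided power projectives, which span $K_0$.

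The one substantive point is to promote $P$ from a functor of ungraded categories to one of graded categories, and this is exactly why the two lacings must be taken together. Individually, $P_1$ lowers the degree of $T_{ij}$ by $1$ and raises that of $T_{ji}$ by $1$ relative to the KLR gradings on its source and target, while $P_2$ does the reverse; so neither $P_1$ nor $P_2$ is graded, but these two discrepancies cancel in $P=P_1\circ P_2$. I would confirm this by a direct degree count using $\deg X_a=2$: in $\U_q(\vec{Q}'')$ one has $\deg T_{ij}=-C''_{ij}=-C_{ij}+2$ and $\deg T_{ji}=-C''_{ji}=-C_{ji}+2$, whereas in $\U_q(\vec{Q})$ the morphisms $T_{ij}\circ(X_iE_j-E_iX_j)$ and $T_{ji}\circ(X_jE_i-E_jX_i)$ have degrees $-C_{ij}+2$ and $-C_{ji}+2$ respectively; and $P$ fixes all other generators, whose degrees are unchanged because $C''$ agrees with $C$ outside the entries $(i,j)$ and $(j,i)$. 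Hence $P$ sends each generator of $\U_q(\vec{Q}'')$ to a morphism of equal degree in $\U_q(\vec{Q})$.

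Finally, since $\U_q(\vec{Q}'')$ is presented by the generators $E_a,X_a,T_{ab}$ and the KLR relations of Definition \ref{2cat}, and those relations hold for the images of the generators — which is exactly the content of the two applications of Theorem \ref{lacing} — the degree-preserving assignment above extends uniquely to a strict graded monoidal functor $\U_q(\vec{Q}'')\to\U_q(\vec{Q})$, surjective on $K_0$, as claimed. I do not expect any genuine obstacle here: no braid-type relation needs to be reverified beyond what Theorem \ref{lacing} already supplies, so the only care required is the combinatorial bookkeeping of the two lacings together with the observation that their degree shifts cancel. As a consistency check, performing the two lacings in the opposite order — adding $j\to i$ first — produces the same functor $P$ on generators.
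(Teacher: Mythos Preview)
Your proposal is correct and follows exactly the approach the paper intends: the Corollary is stated immediately after the remark that ``if we compose two of these functors, the gradings work out precisely,'' so the paper's implicit proof is precisely the double application of Theorem~\ref{lacing} that you carry out, together with the observation that the degree shifts of the two lacings cancel. Your degree bookkeeping and the verification on generators match the paper's reasoning.
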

Taking Grothendieck groups of both categories gives the following.
\begin{thm}\label{surject}
    For $C$ and $C''$ as in the prior corollary, there is a surjection of $\C(q)$-algebras $U_q^+(\mathfrak{g}(C''))\twoheadrightarrow U_q^+(\mathfrak{g}(C))$ given by $E_i\rightarrow E_i$.
\end{thm}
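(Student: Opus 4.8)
The plan is to obtain the map by applying the split Grothendieck group functor to the categorical surjection produced in the preceding corollary, and then identifying the result with the asserted algebra homomorphism via the categorification theorem.

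First I would pass to $K_0$. The preceding corollary gives an additive monoidal graded functor $P\colon \U_q(\vec{Q}'')\to\U_q(\vec{Q})$. Being additive, $P$ induces a homomorphism of abelian groups $[P]\colon K_0(\U_q(\vec{Q}''))\to K_0(\U_q(\vec{Q}))$. Being monoidal, $P$ sends $\otimes$ to $\otimes$ and the unit object to the unit object, so $[P]$ is a homomorphism of unital rings with respect to the products $[M][N]=[M\otimes N]$. Being graded, i.e.\ $P\circ T=T\circ P$, it makes $[P]$ commute with multiplication by $q$, so $[P]$ is in fact a homomorphism of $\Z[q,q^{-1}]$-algebras. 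Applying $\C(q)\otimes_{\Z[q,q^{-1}]}-$ then yields a homomorphism of $\C(q)$-algebras $\C(q)\otimes_{\Z[q,q^{-1}]}K_0(\U_q(\vec{Q}''))\to\C(q)\otimes_{\Z[q,q^{-1}]}K_0(\U_q(\vec{Q}))$.

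Next I would identify both sides. Composing the monoidal graded equivalence $\U_q^+(\vec{Q})\simeq\He^{fg}(\vec{Q})$ recalled in Section~\ref{Background} with Theorem~\ref{decat} gives canonical isomorphisms of $\C(q)$-algebras $\C(q)\otimes_{\Z[q,q^{-1}]}K_0(\U_q(\vec{Q}''))\cong U_q^+(\mathfrak{g}(C''))$ and $\C(q)\otimes_{\Z[q,q^{-1}]}K_0(\U_q(\vec{Q}))\cong U_q^+(\mathfrak{g}(C))$, under which the class $[E_i]$ corresponds to the generator $E_i$ on each side. Since $P(E_i)=E_i$ by the corollary, the transported map $U_q^+(\mathfrak{g}(C''))\to U_q^+(\mathfrak{g}(C))$ sends $E_i\mapsto E_i$, as claimed. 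Surjectivity is immediate: the word ``surjective'' in the corollary means precisely that $[P]$ is onto (proved exactly as in Theorem~\ref{lacing}, namely $K_0$ is spanned over $\Z[q,q^{-1}]$ by the classes of divided-power projectives, whose defining idempotents are preserved by $P$), and surjectivity is preserved by base change along $\Z[q,q^{-1}]\to\C(q)$.

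There is really no hard step here, since all the substantive work is contained in the preceding corollary; the only point deserving care is the bookkeeping that a monoidal \emph{graded} functor descends to a map of $\Z[q,q^{-1}]$-algebras rather than merely of $\Z$-algebras, which is exactly what promotes the classical surjection $U^+(\mathfrak{g}(C''))\twoheadrightarrow U^+(\mathfrak{g}(C))$ to its genuinely $q$-deformed counterpart.
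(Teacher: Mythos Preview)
Your argument is correct: the corollary's graded monoidal functor $P$ induces a $\Z[q,q^{-1}]$-algebra map on split Grothendieck groups, which after base change to $\C(q)$ and identification via Theorem~\ref{decat} is exactly the asserted surjection $E_i\mapsto E_i$. This is precisely the route the paper indicates in the sentence ``Taking Grothendieck groups of both categories gives the following'' immediately preceding the theorem.

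However, the paper's formal proof is a different, purely algebraic argument. The paper observes that for $x,y$ in any associative $\C(q)$-algebra, $S_{q,n}(x,y)=0$ implies $S_{q,n+2}(x,y)=0$. It does this by letting $L,R$ act on $y$ by left and right multiplication by $x$, so that $S_{q,n}(x,y)=\bigl(\sum_{i}\binom{n}{i}_q(-1)^iL^iR^{n-i}\bigr)\cdot y$, and then factoring this polynomial via the quantum binomial theorem as $\prod_{i=0}^{n-1}(R-q^{2i-(n-1)}L)$. Since this product visibly divides the corresponding product for $n+2$, the implication follows. Your categorical argument is the conceptual origin of the result and is entirely sufficient; the paper's algebraic proof buys independence from the categorification machinery and also explains transparently why the analogous statement with $n+1$ in place of $n+2$ fails (the polynomial for $S_{q,n}$ does not divide that for $S_{q,n+1}$).
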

As simple as Theorem \ref{surject} may seem, to our knowledge, it is new. We can verify it directly in the simplest case where $\mathfrak{g}(C)=\mathfrak{sl}_2\times \mathfrak{sl}_2$ and $\mathfrak{g}(C'')=\hat{\mathfrak{sl}}_2$. We see that indeed $S_{q,3}(E_1,E_2)=0$ in $U_q^+(\mathfrak{sl}_2\times \mathfrak{sl}_2)$;
\begin{equation*}
    \begin{split}
        &-E_1^3E_2+(q^2+1+q^{-2})E_1^2E_2E_1-(q^2+1+q^{-2})E_1E_2E_1^2+E_2E_1^3=\\
        &-E_1^3E_2+(q^2+1+q^{-2})E_1^3E_1-(q^2+1+q^{-2})E_1^3E_2+E_1^3E_2=0.
    \end{split}
\end{equation*}

Now that we have seen the map on the level of algebras, a simpler, non-categorical proof can be provided.
\begin{proof}[Proof (of Theorem \ref{surject})]
We prove more generally that if $S_{q,n}(x,y)=0$, then $S_{q,n+2}(x,y)=0$ for $x,y$ in any fixed associative $\C(q)$ algebra $A$. Fix such $x$ and $y$. Let $\C(q)[L,R]$ be the polynomial ring over $\C(q)$ in two commuting variables, $L$ and $R$. It acts on $A$ via $L*v=xv$ and $R*v=vx$. So, 
\[(\sum_{i=0}^n \binom{n}{i}_q(-1)^nL^iR^{n-i})*y=S_{q,n}(x,y).\]
We factorize the polynomial appearing on the left-hand side of this equation. The factorization below is a version of the quantum binomial theorem.
\[\sum_{i=0}^n \binom{n}{i}_q(-1)^iL^iR^{n-i}=\prod_{i=0}^{n-1}(R-q^{2i-(n-1)}L).\]
If there is $y\in A$ for which $\prod_{i=0}^{n-1}(R-q^{2i-(n-1)}L)$ acts by zero on $y$, i.e. $S_{q,n}(x,y)=0$, then so does \[(R-q^{n+1}L)(\prod_{i=0}^{n-1}(R-q^{2i-(n-1)}L))(R-q^{-(n+1)}L)=\prod_{i=0}^{n+1}(R-q^{2i-(n+1)}L).\]
The right-hand acts by $S_{q,n+2}(x,-)$.
\end{proof}
It is also apparent from the proof above why the claim fails if we replace $S_{q,n+2}(x,y)$ by $S_{q,n+1}(x,y);$ the polynomial element of $\C(q)[L,R]$ for $S_{q,n}(x,-)$ does not divide that of $S_{q,n+1}(x,-)$.

\section{Other types}\label{sec:d4}
To define the image of $E_0$ in Jimbo's evaluation homomorphism $U_q(\hat{\mathfrak{sl}}_{n-1})\rightarrow U_q(\mathfrak{gl}_n)$, one first picks a presentation of a lowest weight root vector of $\mathfrak{sl}_n$ of the form $[F_{i_n},[F_{i_{n-1}},\dots [F_{i_2},F_{i_1}]]]$. Moreover, this homomorphism is only defined for the natural linear orderings on the simple roots. We will reformulate our $q$-boson representation to lift this restriction. This same reformulation will also allow us to work outside of type $A_n$.

The following proposition can easily be proven inductively.
\begin{prop}
    Let $E_i^*$ be the adjoint operator under the form $(*,*)_L$ on $U_q^+(\mathfrak{sl}_{n+1})$ for right multiplication by $E_i$. Then 
    \begin{equation*}
        [E_n^*,[E_{n-1}^*,\dots [E_2^*,E_1^*]_q]_q]_q = [[[E_n^*,E_{n-1}^*]_q\dots ,E_2^*]_q,E_1^*]_q.
    \end{equation*}
\end{prop}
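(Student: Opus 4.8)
The plan is to prove a more symmetric statement, from which the proposition is a special case, by induction on the ``span'' of the bracket. For $1\le i\le j\le n$, set
$$R(i,j):=[E_j^*,[E_{j-1}^*,\dots,[E_{i+1}^*,E_i^*]_q\dots]_q]_q,\qquad L(i,j):=[\dots[[E_j^*,E_{j-1}^*]_q,E_{j-2}^*]_q\dots,E_i^*]_q,$$
with the convention $R(i,i)=L(i,i)=E_i^*$; these obey the recursions $R(i,j)=[E_j^*,R(i,j-1)]_q$ and $L(i,j)=[L(i+1,j),E_i^*]_q$. The proposition is the case $(i,j)=(1,n)$ of the claim that $R(i,j)=L(i,j)$ for all $i\le j$, and I would prove this claim by induction on $j-i$. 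The base case $j-i\le 1$ holds by definition.

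For the inductive step, I would fix $i<j$ with $j-i\ge 2$ and assume the claim for all pairs of strictly smaller span. Applying the recursions and the inductive hypothesis to the pairs $(i,j-1)$, $(i+1,j)$ and $(i+1,j-1)$ (all of span $<j-i$), one rewrites
$$R(i,j)=[E_j^*,L(i,j-1)]_q=\bigl[E_j^*,[L(i+1,j-1),E_i^*]_q\bigr]_q,\qquad L(i,j)=[R(i+1,j),E_i^*]_q=\bigl[[E_j^*,L(i+1,j-1)]_q,E_i^*\bigr]_q.$$
Thus it suffices to establish the elementary identity
$$[b,[X,c]_q]_q-[[b,X]_q,c]_q=q\bigl(X(bc-cb)-(bc-cb)X\bigr),$$
valid for arbitrary operators $b,c,X$, specialized to $b=E_j^*$, $c=E_i^*$, $X=L(i+1,j-1)$. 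This identity is verified by directly expanding both $q$-brackets (each side is a sum of four terms $bXc,\ bcX,\ Xcb,\ cXb$ with coefficients in $\{1,-q,q^2\}$); the difference collapses to the right-hand side. Since $j-i\ge 2$, the nodes $i$ and $j$ are non-adjacent in the Dynkin diagram of $\mathfrak{sl}_{n+1}$, so $C_{ij}=0$, and the quantum Serre relation for the operators $E_k^*$ (which satisfy the relations of $B_{\mathfrak{sl}_{n+1}}$ by Kashiwara's theorem) degenerates to $E_i^*E_j^*=E_j^*E_i^*$. Hence $bc-cb=0$, the right-hand side vanishes, and $R(i,j)=L(i,j)$, completing the induction.

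The genuine content of the argument is the single commutation $E_i^*E_j^*=E_j^*E_i^*$ for $|i-j|\ge 2$; everything else is bookkeeping of the nesting together with the routine four-term expansion above. Consequently the only (mild) obstacle is arranging the induction with the correctly strengthened statement ``$R(i,j)=L(i,j)$ for all $i\le j$'' so that each of the three appeals to the inductive hypothesis is to a pair of strictly smaller span.
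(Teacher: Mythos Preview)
Your proof is correct and follows precisely the inductive approach the paper indicates (the paper only remarks that the proposition ``can easily be proven inductively'' and does not spell out the argument). Your strengthening to $R(i,j)=L(i,j)$ for all $i\le j$, the four-term identity $[b,[X,c]_q]_q-[[b,X]_q,c]_q=q\bigl(X(bc-cb)-(bc-cb)X\bigr)$, and the appeal to commutativity $E_i^*E_j^*=E_j^*E_i^*$ for $|i-j|\ge 2$ are exactly what is needed to make the induction go through.
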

So, one may instead think to write our lowest weight root vector in terms of left-nested brackets. The already-given actions of $E_0$ can be expressed in this way due to Dynkin diagram automorphisms of $\mathfrak{sl}_{n+1}$. However, only this left-nested approach works for other presentations of the lowest weight root vector.
\begin{prop}
  Let $E_i^*$ be the adjoint operator under $(*,*)_L$ on $U_q^+(\mathfrak{sl}_{4})$ for right multiplication by $E_i$. Then $E_0:=[[E_2^*,E_3^*]_q,E_1^*]_q$ extends the right multiplication representation to give an action of $U_q^+(\hat{\mathfrak{sl}}_4)$, but $E_0':=[E_1^*,[E_3^*,E_2^*]_q]_q$ does not.
\end{prop}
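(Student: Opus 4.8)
The plan is to verify the two claims by exhibiting, for $E_0 = [[E_2^*,E_3^*]_q,E_1^*]_q$, a candidate $q$-boson subalgebra representation and checking the defining relations, and by producing an explicit obstruction for $E_0' = [E_1^*,[E_3^*,E_2^*]_q]_q$. Both computations live purely inside $\mathrm{End}_{\C(q)}(U_q^+(\mathfrak{sl}_4))$, where the operators $E_i^*$ already act (by the $q$-boson action of Kashiwara), so nothing categorical is needed here; this proposition is meant to be checked by hand at the decategorified level.

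For the positive statement, I would first record that $U_q^+(\tilde{\mathfrak{sl}}_4)$ is generated by $E_0,E_1,E_2,E_3$ subject to the quantum Serre relations for the affine $A_3$ Dynkin diagram (a 4-cycle $0-1-2-3-0$). Since the $E_i$ for $i=1,2,3$ already act via right multiplication (the right-multiplication $1$-representation underlying Example~\ref{leftmult}), the only relations to check are those involving $E_0$: the edge relations $S_{q,2}(E_i,E_0)=S_{q,2}(E_0,E_i)=0$ for $i=1,3$ (which are adjacent to $0$) and the non-edge commutation $[E_2,E_0]=0$ after the appropriate normalization, i.e.\ $S_{q,1}$-type vanishing for the $\{0,2\}$ pair. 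The key reduction is that $[[E_2^*,E_3^*]_q,E_1^*]_q$ is the image of $E_0$ under the reflected evaluation-type formula, and the Dynkin diagram automorphism of $\mathfrak{sl}_4$ swapping nodes $1\leftrightarrow 3$ (and fixing $2$) together with the left/right-nested bracket identity of the preceding proposition identifies this with a genuine evaluation-type homomorphism $U_q^+(\tilde{\mathfrak{sl}}_4)\twoheadrightarrow B_{\mathfrak{sl}_4}$ composed with Kashiwara's action. Concretely I would invoke the main construction (Theorem~\ref{thm:thebigkahuna} specialized to $n=3$, whose decategorification is exactly $[E_3^*,[E_2^*,E_1^*]_q]_q$) and apply the diagram symmetry exchanging the roles of the ordering $(1,2,3)$ versus $(3,2,1)$; the left-nested reformulation in the first proposition of this section is precisely what makes $[[E_2^*,E_3^*]_q,E_1^*]_q$ the same operator as one obtained from a valid linear ordering of the $A_3$ nodes, namely $3,2,1$ read off the path $3-2-1$, with node $3$ then also adjacent to $0$. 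So the positive half follows by citing the already-established $A_3$ case and transporting it along the automorphism.

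For the negative statement, the strategy is to find a single element $u\in U_q^+(\mathfrak{sl}_4)$ on which one of the affine relations fails when $E_0'$ acts. The natural test is the $\{0,2\}$ non-edge relation: in affine $A_3$, nodes $0$ and $2$ are not adjacent, so one needs $E_0'$ to commute with right multiplication by $E_2$ up to the correct power of $q$. But $E_0' = [E_1^*,[E_3^*,E_2^*]_q]_q$ manifestly involves $E_2^*$, and $E_2^*$ does not commute with right multiplication by $E_2$: instead $E_2^* E_2 - q^{-2} E_2 E_2^* = \tfrac{1}{1-q^2}\,\mathrm{id}$ (the $q$-boson relation $f_2 e_2 - q^{-C_{22}} e_2 f_2 = \delta/(1-q^2)$ with $C_{22}=2$). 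I would compute $(E_0' E_2 - q^{?} E_2 E_0')$ applied to $1\in U_q^+(\mathfrak{sl}_4)$, or better to a low-degree monomial such as $E_2$ itself, expand using this $q$-boson relation and the (already valid) commutations of $E_1^*,E_3^*$ with $E_2$, and exhibit a nonzero residual term. Equivalently one checks $S_{q,2}(E_2,E_0')\neq 0$ directly on a weight vector. The main obstacle — and the only real work — is bookkeeping: choosing the cheapest test vector and keeping the $q$-powers straight so that the nonvanishing is unambiguous; one also has to make sure the failure is genuine and not an artifact of an incorrect normalization of $E_0'$, which I would rule out by noting that no rescaling of $E_0'$ can fix a commutator relation whose obstruction term is linearly independent from $E_2 E_0'$ and $E_0' E_2$ in the relevant weight space. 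A clean way to package this is to observe that for the ordering $(1,3,2)$ the two innermost roots $\alpha_1$ and $\alpha_3$ are non-adjacent, so $[E_3^*,E_2^*]_q$ sits "between" $E_1^*$ and the node $2$ in a way that the left-nested/evaluation formalism forbids, and then to make that heuristic precise by the explicit computation above.
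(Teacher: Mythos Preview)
Your argument for the positive claim has a real gap. You try to reduce $E_0 = [[E_2^*,E_3^*]_q,E_1^*]_q$ to the standard $A_3$ formula $[E_3^*,[E_2^*,E_1^*]_q]_q$ by combining the preceding left/right-nesting identity with the Dynkin automorphism $1\leftrightarrow 3$. But that automorphism sends $[[E_3^*,E_2^*]_q,E_1^*]_q$ to $[[E_1^*,E_2^*]_q,E_3^*]_q$, not to $[[E_2^*,E_3^*]_q,E_1^*]_q$; and the nesting identity only rewrites $[E_3^*,[E_2^*,E_1^*]_q]_q$ as $[[E_3^*,E_2^*]_q,E_1^*]_q$, which again is not the operator in question (the inner bracket is reversed, and $[A,B]_q\neq [B,A]_q$). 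The whole point of this proposition is precisely that $[[E_2^*,E_3^*]_q,E_1^*]_q$ is a \emph{new} formula, not obtainable from the two natural linear orderings $(1,2,3)$ and $(3,2,1)$, yet it still yields a valid action. Your reduction would, if valid, make the proposition vacuous. The paper instead checks the single relation $[E_0,E_2]=0$ by a short direct $q$-boson computation (this is the crucial one, since $2$ is the unique non-adjacent vertex) and defers the remaining Serre relations between $E_0$ and $E_1,E_3$ to a computer-algebra verification.

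Your plan for the negative claim is on the right track and matches the paper: test the non-adjacency relation between $E_0'$ and $E_2$. Two small corrections. First, since $0$ and $2$ are non-adjacent in $\tilde A_3$, the required relation is plain commutation $E_0'E_2=E_2E_0'$, i.e.\ $S_{q,1}$, not $S_{q,2}$; your later phrasing ``check $S_{q,2}(E_2,E_0')\neq 0$'' is off. Second, the computation is cleanest done at the level of operators, not by evaluating on a specific vector: expand $[E_0',E_2]$ using the $q$-boson commutation $E_i^*E_2 = q^{-C_{i2}}E_2E_i^* + \delta_{i2}/(1-q^2)$ and observe that the resulting expression in $E_1^*,E_3^*$ is a nonzero multiple of $E_1^*E_3^*$. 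This is exactly what the paper does.
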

\begin{proof}
    We check directly the easiest of the relevant quantum Serre relations, which is that the operator must commute with right multiplication by $E_2$. We apply the $q$-boson relations to compute
    \begin{align*}
        [E_0,E_2] &= [E_2^*E_1^*E_3^*-qE_1^*E_2^*E_3^*-qE_3^*E_2^*E_1^*+q^2E_3^*E_1^*E_2^*,E_2]\\
        &=\frac{1}{1-q^2}(q^2E_1^*E_3^*-q^2E_1^*E_3^*-q^2E_3^*E_1^*+q^2E_3^*E_1^*) =0.
    \end{align*}
    We also see that 
    \begin{align*}
        [E_0',E_2] &= [E_3^*E_1^*E_2^*-qE_3^*E_2^*E_1^*-qE_1^*E_2^*E_3^*+q^2E_2^*E_1^*E_3^*,E_2] \\
        &=\frac{1}{1-q^2}(E_3^*E_1^*-q^2E_3^*E_1^*-q^2E_1^*E_3^*+q^4E_1^*E_3^*) \\
        &= \frac{1-2q^2+q^4}{1-q^2}E_3^*E_1^* \neq 0.
    \end{align*}
    The remaining relations for $E_0$ can be verified in SageMath via the QuaGroup package for GAP \cite{sagemath, QuaGroup1.8.4}. By nondegeneracy of our bilinear form, one needs only remove the $*$'s from the relevant equations and check the corresponding $U_q^+(\mathfrak{sl}_4)$ elements are zero.
\end{proof}
One can similarly check that left-nested expressions do \emph{not} give new evaluation homomorphisms of the expected form from $U_q(\hat{\mathfrak{sl}}_4)$ to $U_q(\mathfrak{gl}_{4})$. We have also verified that $[[[E_2^*,E_3^*]_q,E_1^*]_q,E_4^*]_q$ gives a valid action of $E_0$ on $U_q^+(\mathfrak{sl}_5)$.

The importance of left-nested brackets can be seen categorically. We observed in subsection \ref{subsec:control} that the rightmost vertex in the idempotent associated to the affine bimodule $M$ needed to be an ``extended" vertex, i.e. either $1$ or $n$. This is not possible for a general right-nested expression, but is automatic for a left-nested expression.

The same arguments of Section \ref{sec:general} give that in type $\mathfrak{sl}_4$, the $(H_{\alpha-\beta},H_\alpha)$-bimodule \[1_{*231}H_\alpha/1_{*231}(\tau_{-2},\tau_{-1}\tau_{-2})H_\alpha\] is left-projective and has action on the Grothendieck group equal to $[[E_2^*,E_3^*]_q,E_1^*]_q$. A full categorification in type $A_n$ for $E_0$'s defined via left-nested expressions is work in progress.

This approach also gives us a valid representation in type $\mathfrak{so}_8$, i.e. for the Dynkin diagram of type $D_4$.
\begin{prop}
    Let $E_i^*$ be the adjoint operator under the form $(*,*)_L$ on $U_q^+(\mathfrak{so}_{8})$ for right multiplication by $E_i$. Label the vertices of the Dynkin diagram $1$ through $4$ with $2$ labelling the central vertex. Then \[E_0:=[[[[E_1^*,E_2^*]_q,E_3^*]_q,E_4^*]_q,E_2^*]_q\]
    extends the right multiplication representation to give an action of $U_q^+(\hat{\mathfrak{so}}_8)$.
\end{prop}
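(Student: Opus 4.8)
The plan is to verify, one relation at a time, the defining quantum Serre relations of $U_q^+(\tilde{\mathfrak{so}}_8)$ for the operators $E_0,E_1,\dots,E_4$, reducing each to a finite computation inside $U_q^+(\mathfrak{so}_8)$ in the spirit of the $\mathfrak{sl}_4$ proposition above. Since $E_1,\dots,E_4$ act by right multiplication they already satisfy the quantum Serre relations of $\mathfrak{so}_8$, so the work lies entirely in the relations between $E_0$ and the $E_i$. For $\tilde{\mathfrak{so}}_8=D_4^{(1)}$ the extending node $0$ is joined to the central node $2$ by a \emph{single} edge and to the leaves $1,3,4$ by no edge, so $C_{02}=C_{20}=-1$ and $C_{0i}=C_{i0}=0$ for $i\in\{1,3,4\}$; hence the relations to be checked are $E_0E_i=E_iE_0$ for $i\in\{1,3,4\}$ together with the two length-three relations $E_0^2E_2-(q+q^{-1})E_0E_2E_0+E_2E_0^2=0$ and $E_2^2E_0-(q+q^{-1})E_2E_0E_2+E_0E_2^2=0$.

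The reduction mechanism is the one already used for $\mathfrak{sl}_4$: by Kashiwara's $q$-boson relations $E_i^*e_j-q^{-C_{ij}}e_jE_i^*=\delta_{ij}/(1-q^2)$ between the adjoints $E_i^*$ and the right multiplications $e_j$ (by $E_j$), and the faithfulness of the $B_{\mathfrak{so}_8}$-action together with the identification of the subalgebra generated by the $E_i^*$ with $U_q^+(\mathfrak{so}_8)$, one may move every $e_j$ to the right of every $E_i^*$ in each of the relations above; since $E_0$ is a word in the $E_i^*$, this normal ordering turns each relation into the statement that certain elements of $U_q^+(\mathfrak{so}_8)$ vanish, i.e. one ``removes the stars'' and checks vanishing in $U_q^+(\mathfrak{so}_8)$.

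For the three commuting relations I would exploit that the weight space $U_q^+(\mathfrak{so}_8)_\theta$, with $\theta=\alpha_1+2\alpha_2+\alpha_3+\alpha_4$ the highest root, is one-dimensional, so the nested bracket defining $E_0$ is a nonzero scalar multiple of the highest root vector, as is the left-nested bracket built from \emph{any} chain of simple roots whose consecutive partial sums are all roots and whose total is $\theta$. In particular, for each $i\in\{1,3,4\}$ one may present $E_0$ (up to a nonzero scalar) with $E_i^*$ as the innermost bracket entry. Computing $[E_0,e_i]$ by normal ordering then parallels the $\mathfrak{sl}_4$ computation exactly: because $(\theta,\alpha_i)=0$, all pass-through contributions reassemble into $e_iE_0$ and cancel, leaving $\tfrac{1}{1-q^2}$ times the effect on $E_0$ of the twisted skew-derivation $\partial_i$ determined by $\partial_i E_j^*=\delta_{ij}$; since $E_i^*$ is innermost and $(\alpha_i,\alpha_2)=-1$, the two-term cancellation $\partial_i([E_i^*,E_2^*]_q)=0$ propagates through the Leibniz rule to $\partial_i(E_0)=0$, whence $[E_0,E_i]=0$.

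The remaining, and hardest, part is the pair of length-three Serre relations at the central node $2$: here $E_2^*$ occurs twice in every presentation of $E_0$, so there is no single-bracket collapse, and after normal ordering one is left with genuine vanishing statements in multi-dimensional weight spaces of $U_q^+(\mathfrak{so}_8)$. I expect to verify these by direct, computer-assisted computation with the QuaGroup package for GAP via SageMath \cite{sagemath}\cite{QuaGroup1.8.4}, exactly as the ``remaining relations'' are handled in the $\mathfrak{sl}_4$ proposition; by nondegeneracy of $(*,*)_L$ it suffices to check that the starred-removed elements of $U_q^+(\mathfrak{so}_8)$ are zero. The main obstacle is thus concentrated entirely in this last step; the rest is bookkeeping with the $q$-boson relations and the one-dimensionality of the top weight space.
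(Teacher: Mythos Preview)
Your overall strategy---reduce via the $q$-boson relations to vanishing statements in $U_q^+(\mathfrak{so}_8)$ and then verify these by hand or computer---matches the paper's. But your argument for the commutation relations $[E_0,E_i]=0$, $i\in\{1,3,4\}$, rests on a false premise: the weight space $U_q^+(\mathfrak{so}_8)_\theta$ is \emph{not} one-dimensional. Its dimension is Kostant's partition function $P(\theta)$, which for $\theta=\alpha_1+2\alpha_2+\alpha_3+\alpha_4$ in $D_4$ is much larger than one. You are conflating the root space $\mathfrak{g}_\theta$ (one-dimensional) with the weight space of the enveloping algebra. Consequently, different left-nested $q$-brackets summing to $\theta$ need not be proportional, and you cannot freely re-present $E_0$ with $E_3^*$ or $E_4^*$ innermost. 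The paper's own closing remark furnishes direct evidence against your claim: the alternate presentation $[[[[E_2^*,E_1^*]_q,E_3^*]_q,E_4^*]_q,E_2^*]_q$ is shown \emph{not} to yield a valid action of $E_0$, so it cannot be a scalar multiple of the given one. One can also check concretely that $[[E_1^*,E_2^*]_q,E_3^*]_q$ and $[[E_3^*,E_2^*]_q,E_1^*]_q$ differ by $(1-q^2)(E_1^*E_2^*E_3^*-E_3^*E_2^*E_1^*)\neq 0$.

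The paper instead handles $[E_0,E_i]=0$ for $i=1,3,4$ by direct hand computation with the $q$-boson relations, exactly parallel to the displayed $\mathfrak{sl}_4$ calculation; no re-presentation is needed, since each leaf generator $E_i^*$ appears only once in $E_0$ and one just tracks the single $\delta$-term. For the relations with $E_2$, the paper goes further than you: it computes $\epsilon:=[E_0,E_2]_{q^{-1}}$ explicitly as an eight-term expression in the $E_i^*$, then verifies the relation $E_2^2E_0-(q+q^{-1})E_2E_0E_2+E_0E_2^2=0$ by hand via $\epsilon E_2-qE_2\epsilon=0$, and only delegates the remaining relation $[E_0,\epsilon]_q=0$ (equivalent to the other length-three Serre relation) to SageMath. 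Your plan to send both to the computer is acceptable but coarser.
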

\begin{proof}
    Commutation with right multiplication by $E_1$, $E_3$, or $E_4$ is again easy to verify by hand. We compute
    \begin{align*}
        \epsilon:=[E_0,E_2]_{q^{-1}} &=E_1^*E_2^*E_3^*E_4^*-qE_2^*E_1^*E_3^*E_4^*-qE_3^*E_1^*E_2^*E_4^*-qE_4^*E_1^*E_2^*E_3^*\\
        &+q^2E_1^*E_3^*E_4^*E_2^*+q^2E_3^*E_2^*E_1^*E_4^*+q^2E_4^*E_2^*E_1^*E_3^*-q^3E_4^*E_3^*E_2^*E_1^*.
        \end{align*}
Verifying the quantum Serre relation $E_2^*E_0-(q+q^{-1})E_2E_0E_2+E_0E_2^2$ amounts to verifying that $\epsilon E_2-qE_2\epsilon=0$. We check that 
\begin{align*}
    \epsilon E_2-qE_2 \epsilon &= q^2 E_1^*E_3^*E_4^*-q^4 E_1^*E_3^*E_4^*-q^2E_3^*E_1^*E_4^*-q^2E_4^*E_1^*E_3^* \\
    &+ q^2E_1^*E_3^*E_4^* +q^4 E_3^*E_1^*E_4^*+ q^4 E_4^*E_1^*E_3^*-q^4 E_4^*E_3^*E_1^*\\
    &=(2q^2-2q^2+2q^4-2q^4)E_1^*E_3^*E_4^* =0.
\end{align*}
The remaining quantum Serre relation is a verification that $[E_0,\epsilon]_q=0$. This is accomplished via SageMath.
\end{proof}
It is not difficult to check that the corresponding right-nested endomorphism does not yield an action of $E_0\in U_q^+(\hat{\mathfrak{so}}_8)$. Up to Dynkin diagram automorphism, there are only two left-nested expressions for the lowest weight root vector of $\mathfrak{so}_8$. One can verify that the remaining presentation, $[[[[F_2,F_1],F_3],F_4],F_2]$, does \emph{not} yield an action of $E_0$. This suggests either that there is a more precise dependence on the positive root poset for $\mathfrak{g}$ or that this is also not the ``correct" formulation of the action of $E_0$. One possible approach is to give the $E_0$ action in terms of Lusztig's braid group action as in \cite{unipotent} and \cite{unipotent2}. This approach would also make sense categorically due to the appearance of a standard root module in the definition of our $E_0$ functor. See \cite{unipotent2} and Remark \ref{rem:certainprefund} for more discussion on the limitations of these constructions.

One can similarly prove the following.
\begin{prop}
    Define the algebra $U_q^+(\mathfrak{sp}_4)$ and bilinear form $(*,*)_L$ as the type $C_2$ algebra \textbf{f} and bilinear form of \cite{lusbook}, with Lusztig's $v^{-1}$ equal to our $q$. Let $E_i^*$ be the adjoint operator under the form $(*,*)_L$ on $U_q^+(\mathfrak{sp}_{4})$ for right multiplication by $E_i$. Label the vertices of the Dynkin diagram $1$ and $2$ with $1$ corresponding to the short simple root. Then \[E_0:=[[E_2^*,E_1^*]_{q^2},E_1^*]=E_2^*E_1^*E_1^*-(1+q^2)E_1^*E_2^*E_1^*+q^2E_1^*E_1^*E_2^*\]
    extends the right multiplication representation to give an action of $U_q^+(\hat{\mathfrak{sp}}_4)$.
\end{prop}
 Categorification in all finite types is also work in progress.

\section{Prefundamental representations}\label{sec:prefund}
\subsection{Background}
In this section, fix $n\geq 2$. We will give a more careful study of the $U_q(\hat{\mathfrak{b}})$ representation on $U_q^+(\mathfrak{sl}_{n+1})$. 

We use several definitions from the literature in this section. Many of these are summarized for brevity, with the details given in the references. When we use a formula from the literature, we exchange $q$ with $q^{-1}$ and $K_i$ with $K_i^{-1}$ due to our convention that our $q$ is Lusztig's $v^{-1}$. In this section, we also specialize $q$ to be a fixed nonzero complex number that is not a root of unity to match the conventions of \cite{herjim}. So, our vector spaces are now over $\C$.

For any $2\leq k \leq n$ and $a\in \C^\times$, denote by 
\begin{equation}\label{eqn:e0ka}
    E_{0,k}\coloneqq a[E_k^*,[E_{k-1}^*,\dots [E_2^*,E_1^*]_q\dots ]_q]_q.\end{equation}

When $k=n$ and $a=1$, this is exactly the operator $E_0$ from Section $\ref{sec:general}$. We also denote by $E_{0,1}$ the operator $aE_1^*$. Since the quantum Serre relations are homogeneous in $E_0$, any choice of $a$ gives a well-defined action of $U_q(\hat{\mathfrak{b}})$ on $U_q^+(\mathfrak{sl}_{n+1})$.

Denote by $\Delta^+$ the set of positive roots of $\mathfrak{sl}_{n+1}$. For each $i\in I$, we identify the simple positive root $\alpha_i$ with the element $\alpha_i\in \N[I]$, and extend this identification to the rest of $\Delta^+$.

The representation theory of $U_q(\hat{\mathfrak{b}})$ is often studied via loop weights, or $\ell$-weights, which refine the ordinary notion of weights for a Kac-Moody Lie algebra. An $\ell$-weight is an $I$-tuple $(\psi_i(z))_{i\in I}$ of power series in a variable $z$ for which all $\psi_i(0)\neq 0$. The $z^k$-coefficient of $\psi_i(z)$ is the eigenvalue of an element $\psi_{i,k}^+\in U_q(\hat{\mathfrak{b}})$. These $\psi_{i,k}^+$ are certain generators in Drinfeld's loop presentation of $U_q(\hat{\mathfrak{sl}}_{n+1})$. There is a notion of lowest $\ell$-weights for certain $U_q(\hat{\mathfrak{b}})$ representations which is based on the usual notion of lowest weights for $\mathfrak{g}$ representations. 

\begin{defn}\label{defn:lowestlweight}
    A representation $V$ of $U_q(\hat{\mathfrak{b}})$ is of \emph{lowest $\ell$-weight} $(\psi_i(z))_{i\in I}$ if there is an element $v\in V$ for which
    \[U_q(\hat{\mathfrak{b}})(v)=V, \qquad U_q(\hat{\mathfrak{b}})^-(v)=\C v, \qquad \sum_{k=0}^\infty \psi_{i,k}^+(v)z^k=\psi_i(z)v ,\]
    where $U_q(\hat{\mathfrak{b}})^-$ is defined in terms of Drinfeld's loop generators, see \cite{herjim}.
\end{defn}

 Hernandez and Jimbo identified certain distinguished representations of $U_q(\hat{\mathfrak{b}})$, called prefundamental representations \cite{herjim}. These are not finite-dimensional. However, all simple representations in an appropriate version of category $\mathcal{O}$ for $U_q(\hat{\mathfrak{b}})$ arise as subquotients of tensors of the prefundamental representations and 1-dimensional representations. We will be mainly interested in the dual category, denoted by $\mathcal{O}^*$. The category $\mathcal{O}^*$ contains prefundamental representations denoted by $R_{i,b}^{\pm}$. For $i\in I$ and $b\in \C^\times$, the prefundamental representations $R_{i,b}^\pm$ are the unique simple representations in $\mathcal{O}^*$ with lowest $\ell$-weight $\psi_i(z)=(1-bz)^{\pm1}$ and $\psi_j(z)=1$ for $j\neq i$. The precise statement we will need is the following.
\begin{prop}[\cite{herjim} Propositions 3.4, 3.18]\label{prop:prefund_quot}
    Fix $i\in I$ and $b\in \C^*$. Let $V$ be a representation in $\mathcal{O}^*$ with lowest $\ell$-weight $(\psi_j(z))_{j\in I}$  such that $\psi_i(z)=(1-bz)^{\pm 1}$ and $\psi_j(z)=1$ otherwise. Then $R^{\pm}_{i,b}$ arises as a subquotient of $V$ containing the lowest $\ell$-weight vector.
\end{prop}
It is easy to check that our representation $U_q^+(\mathfrak{sl}_{n+1})$ is in the category $\mathcal{O}^*$ and that, taking $v=1$, the first two conditions of Definition \ref{defn:lowestlweight} are satisfied. This makes it possible to study the relation between $U_q^+(\mathfrak{sl}_{n+1})$ and the prefundamental modules $R_{i,b}^-$. One of the main results in this section is to compute the lowest $\ell$-weight of $U_q^+(\mathfrak{sl}_{n+1})$, which makes it possible to identify a quotient of $U_q^+(\mathfrak{sl}_{n+1})$ with some $R^-_{i,b}$.

The prefundamental representations were originally constructed as limits of Kirillov-Reshetikhin modules, and so the characters (as $U_q^+(\mathfrak{sl}_{n+1})$-modules) were given in terms of limits of characters. More concrete formulas for characters of the prefundamental representations (and generally, for minimal affinizations) were conjectured by Mukhin and Young and later proven by several authors, see \cite{charform} and references therein. 
\begin{defn}
    For $i\in I$, denote by 
    \[\chi_{MY,i}\coloneqq \frac{1}{\prod_{\alpha\in \Delta^+} (1-e^{\alpha})^{[\alpha]_i}}.\]
    Here, we denote by $[\alpha]_i$ the coefficient of $\alpha_i$ in $\alpha=\sum_{j\in I}[\alpha]_j\alpha_j$. We have used the multiplicative notation $e^{\alpha}$ to refer to the formal variable representing the weight space for the weight $\alpha$. 
\end{defn}
It is known that in type $A_n$, if $\chi(V)$ refers to the $U_q^+(\mathfrak{sl}_{n+1})$-character of a representation $V$, that $\chi(R_{i,b}^+)=\chi(R_{i,b}^-)=\chi_{MY,i}$. As a result of our construction, we will produce new proofs of these formulas for $i=n$.

Denote as before $\hat{I}$ the set of vertices of the Dynkin diagram for $\hat{\mathfrak{sl}}_{n+1}$, with $0$ denoting the extending affine vertex.
Lusztig introduced an action of the braid group $\hat{W}$ for $\hat{\mathfrak{sl}}_{n+1}$ on $U_q^+(\hat{\mathfrak{sl}}_{n+1})$ \cite{lusbook}.
\begin{defn}
    For $i\in \hat{I}$, denote by $T_i$ the $\C(q)$-algebra automorphism of $U_q(\hat{\mathfrak{sl}}_{n+1})$ given on generators as 
    \begin{align*}
        T_i(E_i)&= -F_iK_i^{-1}, && T_i(E_j) = \sum_{r+s=-C_{ij}}(-1)^rq^rE_i^{(s)}E_jE_i^{(r)} \text{ for $j\neq i$},\\
        T_i(F_i)&=-K_i^{}E_i, && T_i(F_j) = \sum_{r+s=-C_{ij}}(-1)^rq^{-r}F_i^{(r)}F_jF_i^{(s)} \text{ for $j\neq i$},\\
        T_i(K_j) &= K_jK_i^{-C_{ji}}.
    \end{align*}
\end{defn}
Here, $E_i^{(s)}$ refers to the divided power $E_i^s/[s]_q!$. Note again that we have inverted some exponents compared to \cite{unipotent} since our $q$ is Lusztig's $v^{-1}$.

Within the root system of $\hat{\mathfrak{sl}}_{n+1}$, denote by $\alpha_0$ the simple root associated to the algebra element $E_0$ and denote by $\theta$ the highest root of $\mathfrak{sl}_{n+1}$. Then denote by $\delta$ the imaginary root $\alpha_0+\theta$. Denote by $\tilde{W}\coloneqq \mathcal{T}\ltimes \hat{W}$ the extended affine Weyl group for $\hat{\mathfrak{sl}}_{n+1}$, where $\mathcal{T}$ is a certain subgroup of Dynkin diagram automorphisms for $\hat{\mathfrak{sl}}_{n+1}$, see \cite{beckforms}.

\begin{defn}
    Fix $i\in I$. Pick a reduced expression $\delta-\alpha_i=s_{i_1}s_{i_2}\dots s_{i_{k-1}}(\alpha_{i_k})$ with each $s_{i_j}\in \tilde{W}$  as in Lemma 1.1 of \cite{beckforms}. Then define the $U_q(\hat{\mathfrak{sl}}_{n+1})$ element \[E_{\delta-\alpha_i}\coloneqq T_{i_1}\dots T_{i_{k-1}}(E_{i_k}).\]
    This definition is independent of the choice of reduced presentation.
\end{defn}
For any $k>1$, one can similarly define $E_{k\delta-\alpha_i}$. For brevity, we shall instead use a formula obtained in \cite{unipotent} as a definition.
\begin{defn}{(\cite{unipotent} Lemma 4.3)}\label{lem:Ekp1deltaformula}
    For $k>0$ and $i\in I$, define
    \begin{align*}E_{(k+1)\delta-\alpha_i}\coloneqq \frac{-1}{q+q^{-1}}&(E_{\delta-\alpha_i}E_{i}E_{k\delta-\alpha_i}-q^{2}E_{i}E_{\delta-\alpha_i}E_{k\delta-\alpha_i}\\
    &-E_{k\delta-\alpha_i}E_{\delta-\alpha_i}E_{i}+q^2E_{k\delta-\alpha_i}E_{i}E_{\delta-\alpha_i}).
    \end{align*}
\end{defn}

 The following proposition follows quickly from the definition of our representation and from Lemma 1.5 of \cite{beckforms}.
\begin{prop}\label{prop:psiaction}
    Fix a function $o:I\rightarrow \{\pm 1\}$ such that $o(i)=-o(j)$ if vertices $i$ and $j$ are adjacent. For the action of $U_q(\hat{\mathfrak{b}})$ on $U_q^+(\mathfrak{sl}_{n+1})$ obtained using our choice of $E_0$ and for the element $1\in U_q^+(\mathfrak{sl}_{n+1})$, we have that $\psi_{i,0}^+(1)=K_i(1)=1$, and for $k>1$, 
    \[\psi_{i,k}^+(1)=o(i)^k(q^{-1}-q)E_{k\delta-\alpha_i}(E_{i}).\]
\end{prop}

Definition \ref{lem:Ekp1deltaformula} and Proposition \ref{prop:psiaction} reduce the computation of the lowest $\ell$-weight of $U_q^+(\mathfrak{sl}_{n+1})$ to a computation of a few values of the actions of the $E_{\delta-\alpha_i}$. We need one more formula to make this computation feasible.

The following is the appropriate dual to Lemma 4.7 of \cite{unipotent}, and it can be proven using the same techniques.
\begin{prop}\label{prop:Edeltaminusalphaiformula}
    For any $i\in I$, we have
    \[E_{\delta-\alpha_i}=E_0(E_1E_2\dots E_{i-1})(E_nE_{n-1}\dots E_{i+1})+\sum_{j_1,\dots j_n}C_{j_1,\dots j_n}E_{j_1}\dots E_{j_n},\]
    where the sum is over all sequences of $\hat{I}$-elements $(j_1,\dots,j_n)$ for which $\sum_{k=1}^n\alpha_{j_k}=\delta-\alpha_i$ and such that $j_1\neq 0$. Here, the elements $C_{j_1,\dots j_n}$ are some constants in $\C$.
\end{prop}
\subsection{Submodule of \texorpdfstring{$U_q^+(\mathfrak{sl}_{n+1})$}{Uq+(sl n+1)}}
In this subsection, we show that the operator $E_0$ on $U_q^+(\mathfrak{sl}_{n+1})$ satisfies properties similar to the twisted derivation properties of the $E_i^*$. We use this to produce a proper $U_q(\hat{\mathfrak{b}})$-submodule of $U_q^+(\mathfrak{sl}_{n+1})$ and to show that the corresponding quotient is simple. 

It is known that the operators $E_i^*$ are twisted derivations, meaning that for any $\N[I]$-homogeneous $A,B\in U_q^+(\mathfrak{sl}_{n+1})$ of weights $\alpha_A$ and $\alpha_B$, we have
\begin{equation}\label{eqn:deriv}
    E_i^*(AB)=q^{-(\alpha_i,\alpha_B)}E_i^*(A)B+AE_i^*(B).
\end{equation}
Here, the bilinear form on $\N[I]$ is defined on generators by $(\alpha_i,\alpha_j)=-C_{ij}$ and extended linearly to the rest of $\N[I]$. See, for example, chapter 1 of \cite{lusbook}.

\begin{lem}\label{lem:E0deriv}
    For any $\N[I]$-homogeneous $A,B\in U_q^+(\mathfrak{sl}_{n+1})$ of weights $\alpha_A$ and $\alpha_B$ and any $2\leq k \leq n$, we have
    \[E_{0,k}(AB)=E_k^*(E_{0,k-1}(AB))-q^{1-(\alpha_k,\alpha_B)}E_{0,k-1}(E_k^*(A)B)-qE_{0,k-1}(AE_k^*(B)).\]
\end{lem}
\begin{proof}
    By Equation \ref{eqn:e0ka}, 
    \begin{align*}
    E_{0,k}(AB)&=[E_k^*,E_{0,k-1}]_q(AB)\\
    &=E_k^*(E_{0,k-1}(AB))-qE_{0,k-1}(E_k^*(AB)).
    \end{align*}
    Equation \ref{eqn:deriv} gives the claim.
\end{proof}
\begin{lem}\label{lem:submodule}
    The operator $E_{0,n}$ commutes with left multiplication by $E_i$ for any $i<n$. In particular, if we denote by $M\subset U_q^+(\mathfrak{sl}_{n+1})$ the right ideal generated by $E_1,E_2,\dots E_{n-1}$, then $M$ is a $U_q(\hat{\mathfrak{b}})$-submodule of $U_q^+(\mathfrak{sl}_{n+1})$.
\end{lem}
\begin{proof}
     Fix $\N[I]$-homogeneous $B$  with $B$ of weight $\alpha_B$. If $n=1$, there is nothing to check. If $n=2$, we compute that 
    \[E_{0,2}(E_1B)=q^{-(\alpha_1+\alpha_2,\alpha_B)}E_{0,2}(E_1)B+q^{-(\alpha_2,\alpha_B)}(q^{-1}-q)E_2^*(E_1)E_1^*(B)+E_1E_{0,2}(B).\]
    The first two terms become zero, so we have the claim.
    
    Now, suppose $n\geq 3$. For $i<n$, denote by $M_i$ the right ideal generated by just $E_i$. Lemma \ref{lem:E0deriv} gives that
    \[E_{0,n}(E_iB)=E_n^*(E_{0,n-1}(E_iB))-q^{1-(\alpha_n,\alpha_B)}E_{0,n-1}(E_n^*(E_i)B)-qE_{0,n-1}(E_iE_n^*(B)).\]
    If $i\leq n-2$, then by Equation \ref{eqn:deriv}, this simplifies to 
    \[E_iE_n^*(E_{0,n-1}(B))-0-qE_iE_{0,n-1}(E_n^*(B))=E_iE_{0,n}^*(B)\in M_i.\] If instead $i=n-1$, then again the middle term of our expansion becomes zero. We must therefore show that $E_n^*(E_{0,n-1}(E_{n-1}B))-qE_{0,n-1}(E_{n-1}E_n^*(B))\in M_{n-1}$. Applying Lemma \ref{lem:E0deriv} again, we compute that 
    \begin{align*}
        &E_n^*(E_{0,n-1}(E_{n-1}B))-qE_{0,n-1}(E_{n-1}E_n^*(B))\\
        =&E_n^*(E_{n-1}^*(E_{0,n-2}(E_{n-1}B)))-q^{1-(\alpha_{n-1},\alpha_B)}E_n^*(E_{0,n-2}(E_{n-1}^*(E_{n-1})B))-qE_n^*(E_{0,n-2}(E_{n-1}E_{n-1}^*(B)))\\
        -&qE_{n-1}^*(E_{0,n-2}(E_{n-1}E_n^*(B)))+q^{2-(\alpha_{n-1},\alpha_B-\alpha_n)}E_{0,n-2}(E_{n-1}^*(E_{n-1})E_n^*(B))+q^2E_{0,n-2}(E_{n-1}E_{n-1}^*(E_n^*(B))).
    \end{align*}
    Equation \ref{eqn:deriv} gives that the two rightmost terms are in $M_{n-1}$. By the quantum Serre relations, $E_{0,n-2}$ commutes with $E_n^*$, so the middle two terms cancel. Equation \ref{eqn:deriv} allows us to rewrite the right two terms as 
    \begin{align*}
    &E_n^*(E_{n-1}^*(E_{0,n-2}(E_{n-1}B)))-qE_{n-1}^*(E_{0,n-2}(E_{n-1}E_n^*(B)))\\
    =&E_n^*(E_{n-1}^*(E_{n-1}E_{0,n-2}(B)))-qE_{n-1}^*(E_{n-1}E_{0,n-2}(E_n^*(B)))\\
    =&E_n^*(E_{n-1}E_{n-1}^*(E_{0,n-2}(B)))+q^{-(\alpha_{n-1},\alpha_B-\alpha_1-\dots\alpha_{n-2})}E_n^*(E_{n-1}^*(E_{n-1})E_{0,n-2}(B))\\
    -&qE_{n-1}E_{n-1}^*(E_{0,n-2}(E_n^*(B)))-q^{1-(\alpha_{n-1},\alpha_B-\alpha_1,...-\alpha_{n-2}-\alpha_n)}E_{k-1}^*(E_{k-1})E_{0,k-2}(E_k^*(B)).
    \end{align*}
    By the same logic as before, we see that the left two terms are in $M_{n-1}$ and the right two terms cancel. So indeed $E_{0,n}(E_{n-1}B)\in M_{n-1}$.
    
    So, for any $i\leq n-1$, we have $E_{0}(E_iB)\in M_i\subset M$. By definition, $M$ is generated by these $E_iB$, so $M$ is closed under the $E_0$ action. It is closed under the action of all other $E_i$ by construction, and so the second claim is true.
\end{proof}

We will soon show that $M$ is maximal.

\begin{lem}\label{lem:quotchar}
    $\chi(U_q^+(\mathfrak{sl}_{n+1})/M)=\chi_{MY,n}$.
\end{lem}
\begin{proof}
    Consider the ``down and to the right" convex ordering on the positive roots, i.e. the one given by $\alpha_1 < \alpha_2 + \alpha_1 < \alpha_2 < \alpha_1+\alpha_2+\alpha_3 <\dots <\alpha_n$. For any positive roots $\alpha$ and $\alpha'$, this order satisfies $\alpha<\alpha'$ if $[\alpha]_n < [\alpha']_n$. The PBW theorem for $U_q^+(\mathfrak{sl}_{n+1})$ gives us $\N[I]$-homogeneous elements $X_\alpha$ of weight $\alpha$ for each $\alpha\in \Delta^+$, as well as a basis of $U_q^+(\mathfrak{sl}_{n+1})$ of the form $\prod_{\alpha\in \Delta^+} X_\alpha^{k_\alpha}$, where each $k_\alpha\in \N$, and the product is taken with lower roots on the left. Due to our choice of total ordering, the PBW theorem also shows that a basis for $M$ is given by the set of the elements $\prod_{\alpha\in \Delta^+}X_\alpha^{k_\alpha}$ for which $k_\alpha>0$ for some $\alpha$ with $[\alpha]_n=0$. Therefore, a basis for $U_q^+(\mathfrak{sl}_{n+1})/M$ is given by the image of the elements $\prod_{\alpha\in \Delta^+}X_\alpha^{k_\alpha}$ for which $k_\alpha=0$ if $[\alpha]_n=0$. The claim then follows from the definition of $\chi_{MY,n}$.
    
\end{proof}

We can describe the $U_q(\hat{\mathfrak{b}})$-module structure on $U_q^+(\mathfrak{sl}_{n+1})/M$ very explicitly in terms of certain monomials in the $E_i$.

\begin{defn}
    A sequence of generators $(E_{i_1},E_{i_2},\dots E_{i_k})$ is called \emph{ordered} if each $i_{k}\geq i_{k+1}$. The monomial $E_{i_1}E_{i_2}\dots E_{i_k}$ associated to any ordered sequence can be written uniquely in the form $E_{n}^{k_n}E_{n-1}^{k_{n-1}}\dots E_1^{k_1}$ for some $k_i\geq 0$. We say that the corresponding monomial is \emph{descending} if each $k_i\geq k_{i+1}$.
\end{defn}

\begin{lem}\label{lem:prefundbasis}
    The images of the descending monomials in $U_q^+(\mathfrak{sl}_{n+1})/M$ form a basis.
\end{lem}
\begin{proof}
    We first show that they span the quotient. Denote by $B$ the set of all monomials in the $E_i$ and $B_{d}$ the set of ordered, descending monomials. By definition of $U_q^+(\mathfrak{sl}_{n+1})$, $B$ spans this algebra. We show that $B_{d}\cup (B\cap M)$ spans $U_q^+(\mathfrak{sl}_{n+1})$, which implies that the image of $B_{d}$ spans the quotient. It is enough to show that any monomial is in the span of $B_{d}\cup (B\cap M)$. For a sequence  $(E_{i_1},\dots E_{i_k})$ of generators, we define the \emph{disorder} of this sequence to be the number of pairs $1\leq a < b \leq k$ for which $i_a<i_b$. Each such sequence determines a monomial $E_{i_1}\dots E_{i_k}$ by concatenation. We argue by induction on the disorder of the given sequence for any monomial.
    
    For the base case, assume that the disorder is zero, i.e., that the given sequence for our monomial is ordered. We may therefore write the monomial as $E_{n}^{k_n}E_{n-1}^{k_{n-1}}\dots E_1^{k_1}$. If this monomial is not descending, then let $j$ be the greatest integer for which $k_{j}<k_{j-1}$. Iteratively applying the quantum Serre relations to $E_{j}^{k_{j}}E_{j-1}^{k_{j-1}}$ shows that this element is in $E_{j-1}U_q^+(\mathfrak{sl}_{n+1})$. Applying the quantum Serre relations again shows that $E_n^{k_n}\dots E_{j+1}^{k_{j+1}}E_{j-1}\in E_{j-1}U_q^+(\mathfrak{sl}_{n+1})\subset M$, and therefore the same is true of the entire monomial.

    For the inductive step, fix a sequence $(E_{i_1},\dots E_{i_k})$ of positive disorder and assume that the monomial associated to any sequence of less disorder is within the span of $B_d\cup (B\cap M)$. Let $j$ be the least integer for which $(E_{i_1},\dots E_{i_j})$ is ordered but $i_{j+1}>i_j$. If $i_{j+1}>i_j+1$, then we may swap these two entries of our sequence and obtain a sequence of less disorder, which is in the span of $B_d\cup (B\cap M)$ by induction. So, assume $i_{j+1}=i_j+1$. Then the monomial associated to this sequence may be written as $E_{n}^{k_n}E_{n-1}^{k_{n-1}}\dots E_{i_j+1}^{k_{i_j+1}}E_{i_j}^{k_{i_j}}E_{i_{j}+1}V$ for some monomial $V$. Moreover, by the base case, we may assume that each $k_i\geq k_{i-1}$. By assumption, $k_{i_j}$, and therefore also $k_{i_j+1}$ are positive. The quantum Serre relations show that $E_{i_j+1}^{k_{i_j+1}}E_{i_j}^{k_{i_j}}E_{i_{j+1}}\in \C(q)E_{i_{j+1}}^{k_{i_j+1}+1}E_{i_j}^{k_{i_j}}+E_{i_j}U_q^+(\mathfrak{sl}_{n+1})$. The first term yields a monomial of strictly less disorder, and the second term, after applying quantum Serre relations, yields a linear combination of monomials in $M$. This completes the induction. 

    We have shown that the descending monomials span $U_q^+(\mathfrak{sl}_{n+1})/M$. We now show linear independence. One can quickly verify from Lemma \ref{lem:quotchar} that if $\alpha$ is the weight of a descending monomial, that $\text{dim}((U_q^+(\mathfrak{sl}_{n+1})/M)_\alpha)=1$, and therefore each descending monomial has nonzero image in the quotient. There is at most one descending monomial of any particular $\N[I]$-degree. The images of the descending monomials are therefore linearly independent from each other due to the $\N[I]$-grading.
\end{proof}
\begin{lem}\label{lem:E0vals}
    Let $i_1,i_2,\dots i_n\in I$ be such that $\sum_{k=1}^n \alpha_{i_k}=\sum_{i=1}^n \alpha_{i}$. Fix nonnegative integers $l_1,l_2,\dots l_n$.
    Then we have
    \[E_0(E_{i_n}^{l_n}E_{i_{n-1}}^{l_{n-1}}\dots E_{i_1}^{l_{1}})=\begin{cases}
        \frac{aq^{1-l_n}(\prod_{i=1}^n [l_i]_q)}{1-q^2}E_{i_n}^{l_n-1}E_{i_{n-1}}^{l_{n-1}-1}\dots E_{i_1}^{l_{1}-1} & i_k=k \text{ for all $k$}\\
        0 & \text{otherwise}
    \end{cases}.\]
\end{lem}
\begin{proof}
    If $i_n\neq n$, then by Lemma \ref{lem:submodule}, $E_0$ commutes with multiplication on the left by $E_{i_n}^{l_n}$, so we compute that $E_0(E_{i_n}^{l_n}\dots E_{i_1}^{l_1})=E_{i_n}^{l_n}E_0(E_{i_{n-1}}^{l_{n-1}}\dots E_{i_1}^{l_1})$. Note that $E_0(E_{i_{n-1}}^{l_{n-1}}\dots E_{i_1}^{l_1})$ must be a $U_q^+(\mathfrak{sl}_{n+1})$ element whose $\N[I]$ weight $\alpha$ satisfies $[\alpha]_{i_n}=-1$, so this element therefore must be zero. Suppose instead $i_n=n$. If $n=1$, then we may use the $q$-boson relations to compute that \[aE_1^*(E_1^{l_1})=a(1+q^{-2}+\dots q^{2-2l_1})E_1^{l_1-1}/(1-q^2)=aq^{1-l_1}[l_1]_qE_1^{l_1-1}/(1-q^2),\] as desired. So, assume $n\geq 2$. Lemma \ref{lem:E0deriv} gives that \begin{align*}
        &E_{0,n}(E_n^{l_n}\times E_{i_{n-1}}^{l_{n-1}}\dots E_1^{l_1})\\
        =&E_n^*(E_{0,n-1}(E_{n}^{l_n}\dots E_{i_1}^{l_1}))-q^{1-(\alpha_n,\sum_j^{n-1} l_j\alpha_{i_j})}E_{0,n-1}(E_n^*(E_n^{l_n})E_{i_{n-1}}^{l_{n-1}}\dots E_{i_1}^{l_1})-qE_{0,n-1}(E_n^{l_{n}}E_n^*(E_{i_{n-1}}^{l_{n-1}}\dots E_{i_1}^{l_{1}})).
        \end{align*}
        Since no $i_j=n$ for $j<n$, the last term vanishes. The first term simplifies to
        \begin{align*}E_n^*(E_{0,n-1}(E_n^{l_n}\dots E_{i_1}^{l_1}))&=E_n^*(E_n^{l_n}E_{0,n-1}(E_{i_{n-1}}^{l_{n-1}}\dots E_{i_1}^{l_1}))\\
        &=q^{-(\alpha_n,\sum_j^{n-1} (l_j-1)\alpha_{i_j})}E_n^*(E_n^{l_n})E_{0,n-1}(E_{i_{n-1}}^{l_{n-1}}\dots E_{i_1}^{l_1})\\
        &=q^{l'_{n-1}-1}E_n^*(E_n^{l_n})E_{0,n-1}(E_{i_{n-1}}^{l_{n-1}}\dots E_{i_1}^{l_1}),\end{align*}
        where $l'_{n-1}$ is the exponent of $E_{n-1}$.
        The second term simplifies to 
        \[-q^{1-(\alpha_n,\sum_j^{n-1} l_j\alpha_{i_j})}E_{0,n-1}(E_n^*(E_n^{l_n})E_{i_{n-1}}^{l_{n-1}}\dots E_{i_1}^{l_1})=-q^{1+l'_{n-1}}E_n^*(E_n^{l_n})E_{0,n-1}(E_{i_{n-1}}^{l_{n-1}}\dots E_{i_1}^{l_1}).\]
        
        So, \begin{align*}
            E_{0,n}(E_n^l\dots E_{i_1})&=(q^{l'_{n-1}-1}-q^{1+l'_{n-1}})E_n^*(E_n^{l_n})E_{0,n-1}(E_{i_{n-1}}^{l_{n-1}}\dots E_{i_1}^{l_1})\\
            &=q^{l'_{n-1}-1}(1-q^2)E_n^*(E_n^{l_n})E_{0,n-1}(E_{i_{n-1}}^{l_{n-1}}\dots E_{i_1}^{l_1}).
        \end{align*} The claim is then true by induction.
\end{proof}

\begin{lem}\label{lem:quotsimple}
    $U_q^+(\mathfrak{sl}_{n+1})/M$ is a simple $U_q(\hat{\mathfrak{b}})$-module.
\end{lem}
\begin{proof}
    Fix a nonzero element $v$ of $U_q^+(\mathfrak{sl}_{n+1})/M$. We show that a nonzero scalar is within the submodule generated by $v$. Due to the action of the $K_i$, we may assume $v$ is $\N[I]$-homogeneous, and so by Lemma \ref{lem:prefundbasis}, we may assume $v$ is the descending monomial $E_n^{k_n}\dots E_1^{k_1}$ with each $k_i\geq k_{i-1}$. If each $k_i=0$ then we are done. Otherwise, assume some $k_i\neq 0$ but $k_{i-1}=\dots=k_1=0$. Then \[E_1^{k_i}E_2^{k_i}\dots E_{i-1}^{k_i}(v)=E_n^{k_n}\dots E_i^{k_i}E_{i-1}^{k_i}\dots E_1^{k_i}.\] By Lemma \ref{lem:E0vals}, $E_0^{k_i}(E_n^{k_n}\dots E_i^{k_i}E_{i-1}^{k_i}\dots E_1^{k_i})$ is a nonzero multiple of $E_n^{k_n-k_i}\dots E_{i+1}^{k_{i+1}-k_i}$. This monomial has strictly fewer nonzero exponents than did $v$, so an induction on the number of nonzero exponents gives the claim.
\end{proof}
\subsection{Lowest \texorpdfstring{$\ell$}{l}-weight}
We show that the lowest $\ell$-weight of $U_q^+(\mathfrak{sl}_{n+1})$ agrees with that of $R^-_{n,o(n)aq^{-1}}$, and therefore deduce that $U_q^+(\mathfrak{sl}_{n+1})/M\simeq R_{n,o(n)aq^{-1}}^-$.

\begin{lem}\label{lem:lowestloopweight}
    The $\ell$-weight of $1\in U_q^+(\mathfrak{sl}_{n+1})$ is $\psi_i(z)(1)=1$ if $i\neq n$ and $\psi_n(z)(1)=(1-o(n)aq^{-1}z)^{-1}$, i.e.
    \[\psi_{i,k}^+(1)=\begin{cases}
        (o(n)aq^{-1})^k & i=n\\
        \delta_{k,0} & i\neq n.
    \end{cases}\]
\end{lem}
\begin{proof}
    We induct on $k$. For $k=0$, $\psi_{i,0}^+=k_i$, so the claim is clear. For $k=1$, Proposition \ref{prop:psiaction} gives that $\psi_{i,1}^+(1)=o(i)(q^{-1}-q)E_{\delta-\alpha_i}(E_{i})$. Due to the $\N[I]$-grading, the terms inside the summation in Proposition \ref{prop:Edeltaminusalphaiformula} act by zero on $E_{i}$ since they involve an action of $E_0$ on a $U_q^+(\mathfrak{sl}_{n+1})$ element of weight less than $\theta$. So, by the same proposition, $E_{\delta-\alpha_i}(E_{i})=E_0(E_{i}E_{i+1}\dots E_nE_{i-1}E_{i-2}\dots E_1)$. Lemma \ref{lem:E0vals} gives that this is $\delta_{i,n}a/(1-q^2)$. Therefore, $\psi_{i,1}^+(1)=\delta_{i,n}o(i)aq^{-1}$, as desired.

    For the inductive step, assume that $\psi_{i,k}^+(1)$ is given by the claimed formula for some fixed $k\geq 1$. Proposition \ref{prop:psiaction} again gives that $\psi_{i,k+1}^+(1)=o(i)^{k+1}(q^{-1}-q)E_{(k+1)\delta-\alpha_i}(E_i)$. Unwinding the definition, we have
    \begin{align*}
        \psi_{i,k+1}^+(1)=-\frac{o(i)^{k+1}(q^{-1}-q)}{q+q^{-1}}&(E_{\delta-\alpha_i}E_iE_{k\delta-\alpha_i}-q^2E_iE_{\delta-\alpha_i}E_{k\delta-\alpha_i}\\
        &-E_{k\delta-\alpha_i}E_{\delta-\alpha_i}E_i+q^2E_{k\delta-\alpha_i}E_iE_{\delta-\alpha_i})(E_i).
    \end{align*}
    By the inductive hypothesis and Lemma \ref{lem:E0vals}, we may compute each term separately. For simplicity, denote $C\coloneqq o(n)aq^{-1}$.
    \begin{align*}
        E_{\delta-\alpha_i}E_iE_{k\delta-\alpha_i}(E_i)&=E_{\delta-\alpha_i}(\delta_{i,n}E_iC^k/(o(i)^k(q^{-1}-q)))\\
        &=\delta_{i,n}aC^k/(o(n)^k(q^{-1}-q)(1-q^2)),
    \end{align*}
    \begin{align*}
        -q^2E_iE_{\delta-\alpha_i}E_{k\delta-\alpha_i}(E_i)=0
    \end{align*}
    since $E_{\delta-\alpha_i}E_{k\delta-\alpha_i}(E_i)$ must be homogeneous of degree $-\alpha_i$,
    \begin{align*}
    -E_{k\delta-\alpha_i}E_{\delta-\alpha_i}E_i(E_i)&=-\delta_{i,n}E_{k\delta-\alpha_i}(a(1+q^{-2})E_n/(1-q^2))\\
    &=-\delta_{i,n}a(1+q^{-2})C^k/(o(n)^k(q^{-1}-q)(1-q^2)),
    \end{align*}
    and
    \begin{align*}
        q^2E_{k\delta-\alpha_i}E_iE_{\delta-\alpha_i}(E_i) &=\delta_{i,n}q^2E_{k\delta-\alpha_i}(aE_i/(1-q^2))\\
        &=aq^2C^k/(o(n)^k(q^{-1}-q)(1-q^2)).
    \end{align*}
    Therefore,
    \begin{align*}
        \psi_{i,k+1}^+(1)&=\delta_{i,n}\frac{-ao(n)C^k}{(1-q^2)(q+q^{-1})}(1-1-q^{-2}+q^2)\\
        &=\delta_{i,n}o(n)aq^{-1}C^k=\delta_{i,n}C^{k+1},
    \end{align*}
    as desired. So, the claim holds for all $k\geq 0$.
\end{proof}
\begin{rem}
    Note the lack of a factor of $q-q^{-1}$ in the lowest $\ell$-weight compared to the highest $\ell$-weights obtained in \cite{unipotent}. This is due to the different choice of normalization of the bilinear form on $U_q^+(\mathfrak{sl}_{n+1})$.
\end{rem}
\begin{thm}\label{thm:quotisprefund}
As $U_q(\hat{\mathfrak{b}})$-modules, $U_q^+(\mathfrak{sl}_{n+1})/M\simeq R^-_{n,o(n)aq^{-1}}$
\end{thm}
\begin{proof}
    By Lemma \ref{lem:lowestloopweight}, $U_q^+(\mathfrak{sl}_{n+1})/M$ and $R^-_{n,o(n)aq^{-1}}$ have the same lowest $\ell$-weight. Due to Proposition \ref{prop:prefund_quot}, we have that $R^-_{n,o(n)aq^{-1}}$ arises as a subquotient of $U_q^+(\mathfrak{sl}_{n+1})/M$. But by Lemma \ref{lem:quotsimple}, $U_q^+(\mathfrak{sl}_{n+1})/M$ is simple. So, we must have the isomorphism $U_q^+(\mathfrak{sl}_{n+1})/M\simeq R^-_{n,o(n)aq^{-1}}$.
\end{proof}
\begin{cor}
    $\chi(R_{n,o(n)aq^{-1}}^-)=\chi_{MY,n}$.
\end{cor}
\begin{proof}
    Combine Theorem \ref{thm:quotisprefund} with Lemma \ref{lem:quotchar}.
\end{proof}
\begin{rem}
    Due to the relatively explicit nature of this quotient in terms of the Kac-Moody generators, it is natural to hope for a categorification of the prefundamental representation $R^-_{n,o(n)aq^{-1}}$. This is especially interesting since these representations are generally defined in terms of the actions of Drinfeld's loop generators, but no such action of loop generators is known categorically. Such a categorification is work in progress.
\end{rem}
\begin{ex}
    We combine the results in this section to describe precisely the $U_q(\hat{\mathfrak{b}})$-module structure on the prefundamental $R^-_{n,o(n)aq^{-1}}\simeq U_q^+(\mathfrak{sl}_{n+1})/M$. A basis for this space is given by length-$n$ sequences of nonincreasing nonnegative integers $(k_n,k_{n-1},\dots k_1)$. In the formulas below, a sequence that is not nonincreasing or not nonnegative is taken to be the zero vector.
    \begin{align*}
        K_i(k_n,\dots k_1)&=q^{\sum_{j\in I} k_j\hat{C}_{ij}}(k_n,\dots k_1),\\
        E_1(k_n,\dots k_1)&=(k_n,\dots ,k_1+1),\\
        E_i(k_n,\dots k_{i},k_{i-1},\dots k_1)&=\frac{[k_i+1-k_{i-1}]_q}{[k_i+1]_q}(k_n,\dots k_i+1,k_{i-1},\dots k_1) \text{ for $i>1$},\\
        E_0(k_n,\dots k_1)&= \frac{aq^{1-k_n}(\prod_{i=1}^n [k_i]_q)}{1-q^2}(k_n-1,\dots k_1-1).
    \end{align*}
    
    These formulas should be compared to the examples in Section 5 of \cite{herjim} and Section 5 of \cite{unipotent}.
\end{ex}
\begin{rem}
    We have described the action with respect to certain ordered monomials of $U_q^+(\mathfrak{sl}_{n+1})$. In light of how PBW bases are used in \cite{unipotent}, \cite{bkm}, and our proof of Lemma \ref{lem:quotchar}, it may be interesting to also express this action in terms of a well-chosen PBW-type basis of $U_q^+(\mathfrak{sl}_{n+1})$.
\end{rem}
\begin{rem}\label{rem:certainprefund}
    It is natural to look for related representations for other vertices in type $A_n$ or in types besides $A_n$ similar to the constructions in \cite{unipotent} and \cite{unipotent2}. We have verified that in type $A_3$, taking $E_0=[[E_2^*,E_3^*]_q,E_1^*]_q$ as in Section \ref{sec:d4} gives a $U_q(\hat{\mathfrak{b}})$ action on $U_q^+(\mathfrak{sl}_{4})$ with submodule $(E_1,E_3)U_q^+(\mathfrak{sl}_4)$. Similarly, we have verified that in type $C_2$, taking $E_0=[[E_2^*,E_1^*]_{q^2},E_1^*]$ gives a $U_q(\hat{\mathfrak{b}})$ action on $U_q^+(\mathfrak{sp}_4)$ with submodule $E_1U_q^+(\mathfrak{sp}_4)$. It is straightforward to verify that the corresponding quotients have the expected characters $\chi_{MY,i}$.

    There is likely a limit to which prefundamental representations can be constructed in this way. For certain vertices $i$ outside of type $A_n$, the prefundamental characters $\chi_{MY,i}$ may have factors $1/(1-e^{\alpha})$ of multiplicity more than one, and therefore our proof of the analog of Lemma \ref{lem:quotchar} would fail. This suggests that the prefundamental representation $R_{i,b}^-$ is too complex to be constructed as a quotient of $U_q^+(\mathfrak{g})$ using the techniques of this paper. A related difficulty for these vertices is encountered in \cite{unipotent2}, where the unipotent coordinate ring representation is no longer simple. This suggests that, given $E_0=[E_{i_1}^*,E_{i_2}^*]_{q^{l_1}},E_{i_3}^*]_{q^{l_2}},\dots E_{i_k}^*]_{q^{l_{k-1}}}$, we do not obtain an action of $U_q(\hat{\mathfrak{b}})$ on $U_q^+(\mathfrak{g})$ unless $\alpha_{i_1}$ has multiplicity $1$ in the highest root $\theta$ of $\mathfrak{g}$. This aligns with some difficulties in types $D_4$ and $C_2$ mentioned in Section \ref{sec:d4}. To obtain the prefundamental representation $R_{i,b}^-$, one may want to start instead with a representation of size at least $U_q^+(\mathfrak{g})^{\otimes [\theta]_{i}}$.
\end{rem}

\bibliographystyle{amsalpha}
    \bibliography{ref.bib}
\end{document}